\documentclass[11pt,a4paper]{article}
\usepackage[T1]{fontenc}
\usepackage[utf8]{inputenc}
\usepackage{amsmath}
\usepackage{mathtools}
\usepackage{amsfonts}
\usepackage{amsthm}
\usepackage{amssymb}
\usepackage{bm}
\usepackage[colorlinks,unicode]{hyperref}
\usepackage{url}
\usepackage{color, xcolor}
\usepackage{psfrag, float}  

\usepackage{verbatim}   
\usepackage{booktabs}	
\usepackage{grffile}    

\newtheorem{ansatz}{Ansatz}
\newtheorem{lemma}{Lemma}[section]
\newtheorem{theorem}[lemma]{Theorem}
\newtheorem{corollary}[lemma]{Corollary}
\newtheorem{definition}[lemma]{Definition}

\newtheorem{notation}[lemma]{Notation}
\newtheorem{proposition}[lemma]{Proposition}

\theoremstyle{remark}
\newtheorem{remark}[lemma]{Remark}

\newcommand{\eps}{\varepsilon}
\newcommand{\ga}{\gamma}
\newcommand{\dps}{\displaystyle}
\newcommand{\RR}{\mathbb{R}}
\newcommand{\R}{\mathbb{R}}
\newcommand{\NN}{\mathbb{N}}
\newcommand{\CC}{\mathbb{C}}
\newcommand{\TT}{\mathbb{T}}
\newcommand{\ZZ}{\mathbb{Z}}
\newcommand{\MM}{\mathcal{M}}

\newcommand{\PP}{\mathcal{P}}
\newcommand{\GG}{\mathcal{G}}
\newcommand{\II}{\mathcal{I}}
\newcommand{\BB}{\mathcal{B}}
\newcommand{\LL}{\mathcal{L}}

\newcommand{\KK}{\mathcal{K}}
\newcommand{\SSS}{\mathcal{S}}
\newcommand{\DD}{\mathcal{D}}

\newcommand{\OO}{\mathcal{O}}
\newcommand{\FF}{\mathcal{F}}
\newcommand{\HH}{\mathcal{H}}

\newcommand{\RRR}{\mathcal{R}}

\newcommand{\UU}{\mathcal{U}}
\newcommand{\NNN}{\mathcal{N}}
\newcommand{\EE}{\mathcal{E}}
\newcommand{\JJ}{\mathcal{J}}
\newcommand{\E}{\mathbb{E}}
\newcommand{\bE}{\mathbf{E}}
\newcommand{\AAA}{\mathcal{A}}
\newcommand{\tA}{\mathtt{A}}
\newcommand{\tJ}{\mathbf{J}}
\newcommand{\bG}{\mathbf{G}}
\newcommand{\tD}{\mathtt{D}}

\newcommand{\CCC}{\mathcal{C}}
\newcommand{\Id}{\mathrm{Id}}

\newcommand{\bms}{\bm{\sigma}}
\newcommand{\bmmu}{\bm{\mu}}
\newcommand{\bbb}{\bm{b}}

\newcommand{\ii}{^{-1}}
\newcommand{\de}{\delta}
\newcommand{\pa}{\partial}
\newcommand{\ee}{e_0}
\newcommand{\la}{\lambda}

\newcommand{\inn}{\mathrm{in}}
\newcommand{\out}{\mathrm{out}}

\newcommand{\kk}{\kappa}
\newcommand{\rr}{\rho}

\newcommand{\ol}{\overline}

\newcommand{\al}{\alpha}
\newcommand{\ero}{\zeta}
\newcommand{\om}{\omega}

\renewcommand{\Re}{\mathrm{Re\, }}

\newcommand{\wt}{\widetilde}
\newcommand{\wh}{\widehat}

\newcommand{\g}{\hat g}
\newcommand{\Lb}{\Lambda}
\newcommand{\lb}{\lambda}
\newcommand{\ccirc}{\mathrm{circ}}
\newcommand{\eell}{\mathrm{ell}}

\newcommand{\SM}{\mathcal{SM}}

\newcommand{\Prob}{\mathrm{Prob}}
\newcommand{\iinn}{\mathrm{inn}}
\newcommand{\im}{\mathrm{i}}
\newcommand{\ecc}{\mathbf{e}}
\newcommand{\tm}{\mathtt{m}}

\newcommand{\Jminusnew}{-1.581}   
\newcommand{\Jplusnew}{-1.485} 

\usepackage{hyperref}
\usepackage{subcaption} 

\mathtoolsset{showonlyrefs}

\begin{document}
	
	\title{Stochastic behavior along  mean motion resonances in 
		the restricted planar 3-body problem}
	\author{
		M. Guardia\footnote{Universitat de Barcelona \& Centre de Recerca Matem\`atica,
			guardia@ub.edu}, \ \ 
		V. Kaloshin\footnote{Institute of Science and Technology, Austria 
			vadim.kaloshin@gmail.com},\ \  
		P. Mart\'in \footnote{Universitat Polit\`ecnica de Catalunya \& Centre de Recerca Matem\`atica, p.martin@upc.edu},
		\ \
		P. Roldan, \footnote{Universitat Polit\`ecnica de Catalunya, pablo.roldan@upc.edu}
	} 
	
	\maketitle

	\begin{abstract}
		
		One of the most remarkable instability zones in the Solar system are Kirkwood gaps in the asteroid belt.  In this paper we analyze instabilities in the famous Kirkwood gap $3:1$ in the regime of small eccentricity of Jupiter. Mathematically speaking, we study the evolution of asteroids under the influence of the Sun and Jupiter using the restricted planar elliptic 3-body problem (RPE3BP) for initial conditions near a mean motion resonance 3:1. The main result exhibits stochastic diffusing behavior of the eccentricity of the asteroid for a rich set of initial conditions. Roughly speaking, for small eccentricity $e_0$ of Jupiter, the evolution of the eccentricity of the asteroid $\ecc(t\cdot e_0^{-2})$  at the Kirkwood gap $3:1$ behaves like a diffusion process on the line, where the randomness comes from the initial conditions.
		
		\smallskip
		Along with KAM theory, we have mixed behavior  in the asteroid belt, that is coexistence of quasiperiodic (deterministic) and stochastic (diffusive) behavior.
		
		\smallskip
		
		Our proof consists of four main steps:
		\smallskip
		
		1. {\it (Separatrix map)} Compute the first and second orders of the perturbation theory of the separatrix map for the RPE3BP near 3:1 resonance with respect to small eccentricity of Jupiter.  
		
		\smallskip
		
		2. {\it (Normally Hyperbolic Invariant Lamination) } Establish the existence of a normally hyperbolic invariant lamination $\Lambda$ for the time one map of RPE3BP, which is a weakly invariant object locally given as the product of a Cantor set and a 2-dimensional cylinder. 
		
		\smallskip
		
		3. {\it (Skew-shift model)} The dynamics restricted to $\Lambda$, after a change of coordinates, is given by a skew-shift. In the simplified form we have  \newline \smallskip 
		$\qquad f:(\omega,I,\theta)\to 
		(\sigma \omega,I+e_0 A_\omega(I)
		\cos (\theta+\psi_\omega)
		+\OO(e^2_0),\theta+\Omega_\omega(I)+\OO(e_0)),$ 
		\newline
		\smallskip 
		where $ I\in [a,b],\  \theta\in \mathbb T,\ \omega\in\Sigma=\{0,1\}^\mathbb Z$,  the space of sequences of $0,1$’s, $\sigma:\Sigma \to \Sigma$ is the shift in this space, 
		$\Omega_\omega$ is the shear,\ $A_\omega$ is an amplitude, and 
		$e_0$ is the eccentricity of Jupiter, which is taken as a small parameter,
		\smallskip

		4. {\it (Martingale analysis)} Study the evolution of a probability measure $\mu$ supported on $\Lambda$ and given as a product of a Bernoulli measure on $\Sigma$ and either the Lebesgue measure or an atomic measure on $\mathbb T$.  A delicate martingale analysis shows that the dynamics restricted to $\Lambda$ exhibits diffusing behavior of the eccentricity of the orbit of the  asteroid. This martingale analysis strongly relies on decay of correlations of compact Lie group extensions of hyperbolic maps and a special Central Limit Theorem for sums of dependent random variables.
		
		\medskip

		
		%
		%
	\end{abstract}
	

	\tableofcontents
	
	\section{Introduction}\label{sec:introduction}
	
	The stability of the Solar System is a longstanding problem. 
	Over the centuries, mathematicians and astronomers have 
	spent an inordinate amount of energy proving stronger and 
	stronger stability theorems for dynamical systems closely 
	related to the Solar System, generally within the framework of 
	the Newtonian $N$-body problem:
	\[
	\ddot q_i = \sum_{j \ne i} m_j \frac{q_j-q_i}{\|q_j-q_i\|^3}, 
	\qquad q_i\in \RR^2,\quad m_i>0, \qquad  i = 0, 1,\dots , N - 1,
	\]
	and its planetary subproblem, where the mass $m_0$ (modeling 
	the Sun) is much larger than the other masses $m_i$ (the planets).

	Letting the masses $m_i$ tend to $0$ for $i=2,\dots ,N - 1$, we 
	obtain a collection of $N-2$ independent {\it restricted problems}:
	\[
	\ddot q_i = m_0 \frac{q_0-q_i}{\|q_0-q_i\|^3}+
	m_1 \frac{q_0-q_1}{\|q_0-q_1\|^3}, \qquad q_i\in \RR^2, \qquad  
	i = 2,\dots , N - 1.
	\]
	where the massless bodies are influenced by, without 
	themselves influencing, the primaries of masses $m_0$ and 
	$m_1$. For $N = 3$, this model is often used to approximate 
	the dynamics of the Sun--Jupiter--asteroid or other 
	Sun-planet-object problems, and it is the simplest 
	one conjectured to have a wide range of instabilities.
	
	\subsection{An example of relevance in astronomy: 
		The asteroid belt and the restricted planar 3-body problem} 
	One place in the Solar System where the dynamics is well
	approximated by the restricted 3-body problem is 
	the asteroid belt. The asteroid belt is located between the orbits of 
	Mars and Jupiter and consists of over one million objects ranging 
	from asteroids of 950 kilometers to one kilometer in diameter bodies. 
	The total mass of the asteroid belt is estimated to be 3\% that 
	of the Moon. Since the mass of Jupiter is approximately 2960 
	masses of Mars, away from close encounters with Mars, 
	one can neglect its influence on the asteroids and 
	focus on the influence of Jupiter. We also omit interactions 
	with the second biggest planet in  the Solar System, namely 
	Saturn, which actually is not so small\footnote{Indeed,  
		Saturn's mass is about a third of the mass of Jupiter and 
		its semi-major axis is about 1.83 times the semi-major axis 
		of Jupiter. This implies that the strength of interaction with 
		Saturn is around 10\% of strength of interaction with Jupiter. 
		However, instabilities discussed in this paper are fairly robust 
		and we believe that they are not destroyed by the interaction 
		with Saturn (or other celestial bodies), which to some degree 
		averages out.}. With these assumptions one can model 
	the motion of the objects in the asteroid belt by 
	{\it the restricted 3-body problem.} 
	
	Denote by 
	$\mu = m_1/(m_0 + m_1)$ the mass ratio, where $m_0$ is 
	the mass of the Sun and $m_1$ is the mass of Jupiter. For 
	$\mu = 0$ 
	bounded orbits of the asteroid are ellipses. Up to orientation, 
	the ellipses are characterized by their semi-major axis $a$ and 
	eccentricity $e$. The famous theorem of Lagrange asserts that, 
	for small $\mu > 0$ and away from collisions, the semimajor axis $a(t)$ 
	of an asteroid satisfies $|a(t)-a(0)| \lesssim \mu$ for all $|t| < 1/\mu$. 
	For very small $\mu$ the time of stability was considerably improved by 
	Nekhoroshev and Niederman  \cite{Nekhoroshev77,Niederman96}.
	
	
	Application of the KAM theory shows that there is a positive 
	measure set of invariant (KAM) tori, each having quasiperiodic 
	orbits \cite{Arnold:1963, Fejoz04}. The measure of these tori is fairly large in certain open sets 
	of the asteroid belt, nevertheless, if one looks at the distribution 
	of asteroids in terms of their semi-major axis, one encounters 
	several gaps, the so-called Kirkwood gaps. The largest of them is the $3:1$ gap associated to the mean motion resonance, i.e. 
	when periods of Jupiter and a hypothetical asteroid have $3:1$ 
	ratio. This gap is followed by $5:2$, $7:3$, $2:1$ and etc. gaps 
	(see Figure \ref{fig:distribution}). It is believed that in the asteroid belt
	there is {\it a mixed behavior}, namely, there are two sets of 
	positive measure with qualitatively different behavior. As 
	the aforementioned KAM theory indicates, one set is the set
	of tori with quasiperiodic motions. We conjecture that there 
	is {\it  another non-small set with a stochastic diffusive behavior 
		of eccentricity.} 
	
	\begin{figure} [h!]
		\begin{center}
			\includegraphics[scale=0.625]{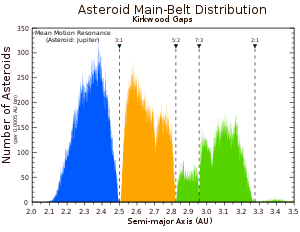}
			\caption{Distribution of asteroids in the asteroid belt}
			\label{fig:distribution}
		\end{center}
	\end{figure}

	The formation of these gaps can follow from a considerable variation 
	of the eccentricity. For example, by the Second Kepler law 
	for an asteroid being in $3:1$ Kirkwood gap means that its 
	semimajor axis $a$ is close to $3^{-2/3}$. If the orbit of this asteroid 
	has instant eccentricity $\ecc$, then the perihelion distance equals 
	$a(1-\ecc)$. Thus, for $\ecc=\ecc^*$ such that $a(1-\ecc^*)$ equals the semimajor 
	axis of Mars, an instant ellipse  of the asteroid will start crossing 
	the orbit of Mars. This would lead to a close encounter 
	which results either 
		in collision\footnote{See \url{https://en.wikipedia.org/wiki/List_of_craters_on_Mars}.},
		or capture\footnote{See the discussion of the origin of two moons of Mars in \url{https://en.wikipedia.org/wiki/Moons_of_Mars}. It is an open problem to prove 
			existence of solutions of the restricted planar
			four body problem Sun-Jupiter-Mars-asteroid with masses 
			$m(\text{Sun})\gg m(\text{Jupiter})\gg m(\text{Mars})>0=m(\text{asteroid})$
			such that they start in the Kirkwood 
			gap in the 3:1 resonant between Jupiter and asteroid and end 
			up being captured by Mars.},
		or ejection.   
	
	\subsection{The Wisdom mechanism}
	The first explanation of formation of Kirkwood gaps was
	proposed by Wisdom \cite{Wisdom82} and
	Neishtadt \cite{Neishtadt87}. Suppose that the ratio between the  mass ratio and
	the eccentricity of Jupiter satisfies
	\[
	\frac{\sqrt \mu}{e_0}\ \ll\  1.
	\]
	Then the dynamics in the mean motion resonances of the restricted
	planar 3-body problem can be described by a system with three time scales:
	fast, intermediate and slow. In reality, this ratio is approximately $\approx 0.632$ and
	not small.
	
	According to Wisdom in the main approximation, on a carefully chosen energy surface, 
	the secular evolution of the asteroid motion near 3:1 resonance with Jupiter is described 
	by a particular ``slow'' Hamiltonian system of two degrees of freedom. This system exhibits 
	a jump in eccentricity (see Figure \ref{fig:wisdom-3/1}).
	\begin{figure}[h]
		\begin{center}
			\includegraphics[width=5.25cm]{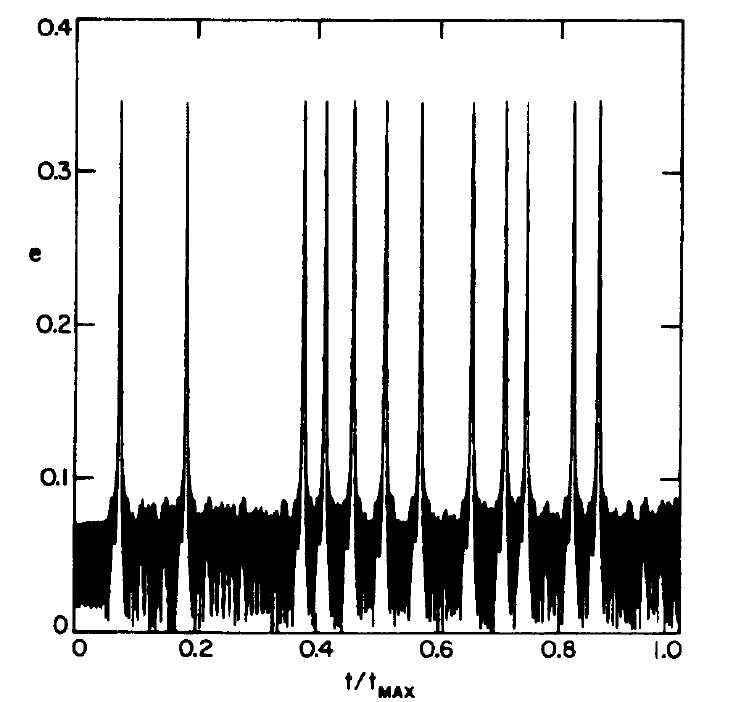}
		\end{center}
		\caption{Wisdom's plot of eccentricity evolution (see \cite{Wisdom82})}
		\label{fig:wisdom-3/1}
	\end{figure}
	This jump leads to a close passage next to Mars and is a potential explanation
	of the Kirkwood gap 3:1, under the assumption that $\frac{\sqrt \mu}{e_0}\ \ll\  1$.
	For more details about the Wisdom mechanism see Appendix \ref{wisdom}.

	The main goal of this paper is to describe possible dynamical instabilities
	in the $3:1$ Kirkwood gap in the regime opposite to Wisdom, that is
	\[
	\frac{e_0}{\sqrt \mu}\ll 1.
	\]
	In this regime, as we show, dynamics
	of the eccentricity can be approximated by a 1-dimensional stochastic
	diffusion process. This paper is a continuation of the investigations in
	\cite{FejozGKR15} where for small (unrealistic) eccentricity of Jupiter in the $3:1$
	resonance we exhibit a dynamical structure which leads to considerable
	fluctuations of eccentricity. We believe that a similar mechanism applies to
	the $5:2$ and $7:3$ Kirkwood gaps, but it requires verification of certain
	nondegeneracy conditions (Ansatz 1, 2 and 3 in 	\cite{FejozGKR15}). Now we turn to the mathematical
	model we study in this paper.
	
	\subsection{The restricted planar elliptic 3-body problem}
	Consider the restricted 3-body problem and assume that the massless 
	body moves in the same plane as the other bodies, 
	called primaries. We normalize the total mass to one, and we call 
	the three bodies the Sun (mass $1-\mu$),  Jupiter (mass $\mu$ 
	with $0 < \mu < 1$) and the asteroid (zero mass). If the energy of 
	the primaries is negative, their orbits describe two ellipses with 
	the same eccentricity, say $e_0 \in [0,1)$. For convenience, we denote 
	by $q_0(t)$ the normalized position of the primaries (or “fictitious body”)
	at time $t$, so that the Sun and Jupiter have respective positions 
	$-\mu q_0(t)$ and $(1-\mu)q_0(t)$. The Hamiltonian of 
	the asteroid is
	\begin{equation} \label{eq:RPE}
		\HH(p,q, t) =
		\frac{\|p\|^2}{2}- \frac{1-\mu}{\|q + \mu q_0(t)\|}-
		\frac{\mu}{\|q -(1- \mu) q_0(t)\|},
	\end{equation}
	where $q, p \in \R^2$ and $\|\cdot\|$ is the Euclidean distance. 
	Without loss of generality one can assume that $q_0(t)$ has 
	semi-major axis 1 and period $2\pi$. In what follows we 
	abbreviate the name of this problem as RPE3BP.
	
	For $e_0 \in[ 0,1)$, the RPE3BP has two and a half degrees of freedom. When $e_0 = 0$, 
	the primaries describe uniform circular motions around their center of mass. This system is often called 
	{\it the restricted planar circular 3-body problem} and, in what follows, is abbreviated RPC3BP. 
	
	
	Thus, in a frame rotating with the primaries, the system becomes autonomous and hence
	has only two degrees of freedom. Its energy in the rotating frame is a first integral, called 
	{\it the Jacobi integral or the Jacobi constant},  defined by
	\begin{equation} \label{eq:Jacobi}
		\tJ(q,p,t) =\frac{\|p\|^2}{2}- \frac{1-\mu}{\|q + \mu q_0(t)\|}-
		\frac{\mu}{\|q -(1- \mu) q_0(t)\|}-(q_1p_2-q_2p_1).
	\end{equation}
	The aforementioned KAM theory applies to both the circular 
	and the elliptic problem \cite{Arnold:1963, SiegelM95} and asserts that 
	if the mass of Jupiter is small enough, there is a set of initial 
	conditions of positive Lebesgue measure leading to quasiperiodic 
	motions, in the neighborhood of circular motions of the asteroid (see also \cite{Fejoz02b}).
	If Jupiter has a circular motion, since the system has only two 
	degrees of freedom, KAM invariant tori are $2$-dimensional 
	and separate the $3$-dimensional energy surfaces. But in 
	the elliptic problem, $3$-dimensional KAM tori do not prevent 
	orbits from wandering on a $5$-dimensional phase space. 
	In this paper we prove the existence of a wide enough set of 
	orbits in the RPE3BP whose eccentricity has {\it chaotic behavior 
		and can be approximated by a 1-dimensional stochastic 
		diffusion process. }

	Let us write the Hamiltonian \eqref{eq:RPE} as follows 
	\[
	\HH_\mu( p,q, t) = \HH_0( p,q) + \HH_1( p,q, t, \mu),
	\]
	where 
	\begin{equation} \label{eq:2BP}
		\begin{split}
			\HH_0( p,q)& =\frac{\|p\|^2}{2}-\frac{1}{\|q\|}\\
			\HH_1( p,q, t, \mu)& =\frac 1q-\frac{1-\mu}{\|q + \mu q_0(t)\|}-
			\frac{\mu}{\|q -(1- \mu) q_0(t)\|}.
		\end{split}
	\end{equation}

	The Keplerian part $\HH_0$ allows us to associate elliptical elements 
	to every point $( p,q)$ of the phase space of negative energy $\HH_0$. 
	We are interested in the fluctuations of the Jacobi constant $\tJ$ and 
	the osculating eccentricity $\ecc$ under the $t$-evolution. 
	
	To analyze this evolution,  in the following two sections, we introduce two main ingredients 
	that will be used in our main results, Theorems   \ref{thm:main-thm} and 
	\ref{thm:main-thm-bis} below. These are diffusion processes and the kind 
	of measures that we consider.
	
	\subsection{Diffusion processes}
	To state the main result about diffusive behavior, we need to recall some basic 
	probabilistic notions. A random process $\{B_t, t\ge 0\}$ is called {\it the Wiener process} or 
	{\it a Brownian motion} if the following  conditions hold: 
	
	\begin{itemize}
		\item $B_0=0$, $B_t$ is almost surely continuous and 
		$B_t-B_s\sim \mathcal N(0,t-s)$ for any $0\le s\le t$,
		where $\mathcal N(\bmmu,\bms^2)$ denotes the normal 
		distribution with expected value $\bmmu$ and variance $\bms^2$. 
		\item $B_t$ has independent increments. That is, if 
		$0 \leq s_1 \leq t_1 \leq s_2 \leq t_2$, then $B_{t_1}-B_{s_1}$ 
		and $B_{t_2}-B_{s_2}$ are independent random variables. 
	\end{itemize}
	A Brownian motion is a properly chosen limit of the standard 
	random walk. A generalization of a Brownian motion is 
	{\it a diffusion process} or {\it an It\^o (or It\^o-D\"oblin) 
	diffusion}. To define it, let $(\Omega,\Sigma,P)$ be a probability space. Let 
	$X_t:[0,+\infty) \times \Omega \to \R$. It is called an It\^o diffusion if
	it satisfies {\it a stochastic differential equation} of the form
	\begin{equation} \label{eq:diffusion}
		\mathrm{d} X_{t} = \bbb(X_{t}) \, \mathrm{d} t + 
		\bms (X_{t}) \, \mathrm{d} B_{t},
	\end{equation}
	where $B_t$ is a Brownian motion and $\bbb : \R \to \R$ 
	and $\bms : \R \to \R$ are the drift and the variance respectively. 
	For a point $x \in \R$, let $\mathbf{P}^x$ denote the law of $X_t$ 
	given initial data $X_0 = x$, and let $\mathbb{E}^x$ denote the 
	expectation with respect to $\mathbf{P}^x$.
	
	

	The {\it infinitesimal generator} of $X_t$ is the operator $A$, 
	which is defined to act  on suitable functions $f :\R\to \R$ by
	\[
	A f (x) = \lim_{t \downarrow 0} \dfrac{\E^{x} [f(X_{t})] - f(x)}{t}.
	\]
	The set of all functions $f$ for which this limit exists at 
	a point $x$ is denoted $D_A(x)$, while $D_A$ denotes 
	the set of $f$'s for which 
	the limit exists for all $x\in \R$. One can show that any 
	compactly-supported $\mathcal{C}^2$  function $f$ 
	lies in $D_A$ and that
	\begin{equation}\label{eq:diffusion-generator}
		Af(x)=\bbb(x) \dfrac{\partial f}{\partial x}+ \dfrac 12 \bms^2(x)
		\dfrac{\partial^2 f}{\partial x \partial x}.
	\end{equation}
	The distribution of a diffusion process is characterized  by 
	the drift $\bbb(x)$ and the variance $\bms(x)$.

	\subsection{Invariant measures supported on normally hyperbolic invariant laminations
	}\label{subsec:measures}
	The main results stated in the next section will show how the pushforward of a certain 
	class of measures along the flow associated to the Hamiltonian \eqref{eq:RPE} ``behave'' as 
	an It\^o process at certain time scales. Before giving precise statements, let us explain 
	how are the measures that we consider. 
	
The Hamiltonian \eqref{eq:RPE} fits what is usually called an 
	a priori chaotic setting in Arnold diffusion. A simplified model of such Hamiltonians is 
	a small perturbation of a perturbed pendulum like dynamics (i.e. a saddle with invariant 
	manifolds intersecting transversally) times an integrable twist map on a cylinder (see 
	Figure \ref{fig:NHIMCircular} below). 	For such models, one can build a normally hyperbolic (weakly) invariant lamination 
	(NHIL for short, see Definition \ref{def:NHIL} below). This NHIL is homeomorphic to 
	$\Sigma\times\TT\times \II$ where $\Sigma=\{0,1\}^{\ZZ}$ is the space of sequences 
	of two symbols and $\II\subset\RR$ is an interval.
	
	The measures whose pushforward we analyze are of two following  types:
	\begin{itemize}
		\item[1] We consider measures which are a Bernouilli measure on $\Sigma$ times an atomic measure at \emph{any given values} of angle $\theta\in\TT$ and $I\in\II$.
		\item[2] We consider measures of the form Bernouilli measure on $\Sigma$ times Lebesgue measure on $\TT$ times an atomic measure for any given value  $I\in\II$.
	\end{itemize}
	Roughly speaking Theorem \ref{thm:main-thm} below will show diffusive behavior for the pushforward of such measures projected onto the action coordinate $I$. Moreover, the diffusive behavior will follow the same It\^o process for either the type 1 measures (for any initial condition for the angle $\theta$) or the type 2 measures.
	
	\subsection{Stochastic diffusion along mean motion resonances:  Main Results}
	
	
	Consider the Hamiltonian $\HH_\mu$ in \eqref{eq:RPE}
	with $e_0\in [0,1)$. Denote by $\ecc=\ecc(p,q)$ the instant eccentricity of the asteroid 
	(see \eqref{def:ecc} for a precise formula) and by $\tJ=\tJ(p,q,t)$ its Jacobi constant (see \eqref{eq:Jacobi}).
	Let us consider an interval  $(\ecc_-,\ecc_+)\subset (0,1)$
	(respectively an interval $(\tJ_-, \tJ_+)$) 
	and denote by $X=(q,p,t)$  a point  in the whole phase space,  by $\Phi^tX$ the flow associated to the Hamiltonian $\HH_\mu$ with $X$
	as the initial condition, and by   $\Pi_\tJ(X)$ and   $\Pi_L(X)=L$ 
	the Jacobi constant at $X$ and the square root of the semimajor axis respectively.

	Let $\tJ^* \in [\tJ_-,\tJ_+]$ and $X^*=(p,q,t)$ be such that $\Pi_\tJ(X^*)=\tJ^*$
	Once 
	an orbit reaches the boundary of $[\tJ_-,\tJ_+]$ we stop the flow as follows
	\begin{equation}
		\label{eq:stop}
		{\small
			\widetilde \Phi^{t} X^*= 
			\begin{cases}
				\Phi^{t} X^* \qquad \qquad \ &  \text{if}\ 
				\Pi_\tJ(\Phi^{s} X^*) \in [\tJ_-,\tJ_+] \ \text{ for all } 
				\  0< s \le t. 
				\\
				\widetilde \Phi^{t} X^*=\Phi^{t^*} X^* \ & \text{if}\ \exists t^*(0,t)\,\,\text{such that }
				\Pi_\tJ(\Phi^{s} X^*) \in [\tJ_-,\tJ_+] \text{ for } 0< s < t^* \\ 
				&\text{and }  \ \Pi_\tJ(\Phi^{t^*} X^*) \in \partial[\tJ_-,\tJ_+]. 
		\end{cases}}
	\end{equation}
	%

	Let $\bbb(\tJ),\bms(\tJ):[\tJ_-,\tJ_+]\to \RR$ be smooth functions.
	Let $\tJ_s$ be the It\^o diffusion process with  drift $\bbb(\tJ)$ and 
	variance $\bms^2(\tJ)$, starting at $\tJ^*\in(\tJ_-, \tJ_+)$, i.e. 
	\begin{equation}\label{def:ItoIntegralForm}
	\tJ_s =\tJ^*+\int_0^s \bbb(\tJ_\tau)\, d\tau+\int_0^s \bms(\tJ_\tau)\, dB_\tau,
	\end{equation}
	where $B_s$ is the Brownian motion. 
	
	Given two functions $\bbb:[\tJ_-,\tJ_+]\to \mathbb R$ and $\bms:[\tJ_-,\tJ_+]\to \mathbb R_+$,
	we say that a family of probability measures $\nu_{\tJ^*,e_0},\ \tJ^*\in [\tJ-,\tJ_+]$, induces a stochastic 
	process with drift $\bbb(\tJ)$ and variance $\bms(\tJ)$ if, for any $s>0$, the distribution 
	of the pushforward of the Jacobi constant under the Hamiltonian flow $\Phi^{t}$ 
	in the time scale $t_{e_0}(s)=s\, e_0^{-2}$ (stopped if hits the boundary of $[\tJ_-,\tJ_+]$), 
	$\Pi_\tJ(\, \widetilde \Phi^{\,t_{e_0}(s)}_* \nu_{\tJ^*,e_0})$, converges weakly, as $e_0 \to 0$, 
	to the distribution of $\tJ_s$  given by \eqref{def:ItoIntegralForm}.

	\begin{theorem} \label{thm:main-thm} Fix $\mu=0.95387536\times10^{-3}$,  an interval $[\tJ_-,\tJ_+]$ and assume Ans\"atze \ref{ans:NHIMCircular:bis}
		and \ref{ansatz:Melnikov:1} hold. Then, for the Hamiltonian $\HH_\mu$ given by 
		\eqref{eq:RPE} there are smooth functions $\bbb(\tJ)$ and $\bms(\tJ),\ \tJ\in [\tJ_-,\tJ_+]$ such that: 
		for each $\tJ^*\in (\tJ_-,\tJ_+)$, there exist probability measures $\nu_{\tJ^*,e_0}$
		such that 
		\begin{itemize}
			\item they are supported inside the $3:1$ Kirkwood gap, i.e. 
			\[\Pi_L(\textup{supp }\nu_{\tJ^*,e_0}) \subset [3^{-1/3}-42\mu,3^{-1/3}+42\mu],\] 
			\item each initial condition in the support of $\nu_{\tJ^*,e_0}$ has 
			the Jacobi constant approximately equal to $\tJ^*$, i.e. 
			\[\Pi_\tJ (\textup{supp }\nu_{\tJ^*,e_0}) =\tJ^*+\OO(e_0).\]
		\end{itemize}
		Then,  for any $s>0$, the $J$-distribution of the pushforward under 
		 $\widetilde\Phi^{t}$ in the time scale 
		$t_{e_0}(s)=s\, e_0^{-2}$,
		i.e. $\Pi_\tJ (\, \widetilde \Phi^{\,t_{e_0}(s)}_* \nu_{\tJ,e_0})$ 
		converges weakly, as $e_0 \to 0$, to the distribution of the diffusion proces $\tJ_s$ by  given by \eqref{def:ItoIntegralForm} evaluated at the time $s$ (stopped  if hits the boundary of $[\tJ_-,\tJ_+]$).
	\end{theorem}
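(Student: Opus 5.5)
The plan follows the four-step scheme announced in the abstract, using Ans\"atze \ref{ans:NHIMCircular:bis} and \ref{ansatz:Melnikov:1} as the nondegeneracy input.

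\textbf{Step 1: separatrix map.} For $e_0=0$, Ansatz \ref{ans:NHIMCircular:bis} provides, near the $3:1$ resonance, a family of hyperbolic periodic orbits parametrized by the Jacobi constant $\tJ\in[\tJ_-,\tJ_+]$. Their stable and unstable manifolds intersect transversally, by Ansatz \ref{ansatz:Melnikov:1} and the associated Melnikov computation. I will work in Delaunay-type coordinates adapted to the resonance, supported in the region $\Pi_L\in[3^{-1/3}-42\mu,3^{-1/3}+42\mu]$, and compute the separatrix map of the elliptic problem. This map returns to a section near the periodic orbits after one excursion along one of the two homoclinic channels. I will compute it to second order in $e_0$. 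The first-order term in the $\tJ$-component is an oscillatory Melnikov-type function $e_0A_\omega(\tJ)\cos(\theta+\psi_\omega)$. The second-order term gives a deterministic averaged contribution, which will produce the drift.

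\textbf{Step 2: normally hyperbolic invariant lamination.} Using the transversal homoclinic channels and a Conley--Moser / shadowing argument uniform in $e_0$, I will build a normally hyperbolic weakly invariant lamination $\Lambda\simeq\Sigma\times\TT\times\II$ for the time-one (or return) map. I will then conjugate the restricted dynamics to the skew-shift
\[
f(\omega,I,\theta)=\bigl(\sigma\omega,\ I+e_0A_\omega(I)\cos(\theta+\psi_\omega)+e_0^2B_\omega(I,\theta)+\OO(e_0^3),\ \theta+\Omega_\omega(I)+\OO(e_0)\bigr).
\]
Here $I$ is a smooth function of $\tJ$ up to $\OO(e_0)$. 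The measures $\nu_{\tJ^*,e_0}$ are then defined as Bernoulli$\times$(atomic or Lebesgue)$\times\delta_{I^*}$ on $\Lambda$, pushed forward to phase space. This immediately gives the two support properties in the statement.

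\textbf{Step 3: martingale analysis.} This is the main obstacle. The increments $\Delta I_n$ are not independent, because $\theta_n$ depends on the past through the twist. I would decompose
\[
\Delta I_n=e_0M_n+e_0^2D_n,
\]
where $M_n=A_{\sigma^n\omega}(I_n)\cos(\theta_n+\psi_{\sigma^n\omega})$. To show that $\sum M_n$ behaves like a martingale with quadratic variation $\sim\bms^2$, I need averages over $n\lesssim e_0^{-2}$ of $M_n$, of $M_n^2$, and of $D_n$ (minus the part coming from the $\OO(e_0)$ angle correction correlated with $M_n$) to converge to their averages with respect to the Bernoulli$\times$Lebesgue measure. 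For this I would invoke decay of correlations for the compact group extension $(\omega,\theta)\mapsto(\sigma\omega,\theta+\Omega_\omega(I))$. This is exponential for $I$ outside a thin set where the extension is not mixing; the needed aperiodicity should follow from the nondegenerate twist $\partial_I\Omega_\omega\neq0$. Freezing $I$ on blocks of length $e_0^{-a}$ with $0<a<2$ makes the slow variable essentially constant while the correlations decay. Summing block contributions, I would apply a CLT for dependent variables via martingale approximation (Gordin-type coboundary correction) to get the limiting generator $Af=\bbb f'+\tfrac12\bms^2f''$. Here
\[
\bms^2(I)=\mathbb E_\omega\bigl[A_\omega^2/2\bigr]+\text{correlation sums},
\]
and $\bbb$ comes from the second-order terms.

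\textbf{Step 4: convergence and stopping.} Tightness of the rescaled process follows from moment bounds on the increments. Identification of the limit as the solution of \eqref{def:ItoIntegralForm} comes from the martingale problem, since $f(I_n)-\sum e_0^2Af(I_k)$ is an approximate martingale. Stopping at $\partial[\tJ_-,\tJ_+]$ is handled by the continuous mapping theorem for the stopped process. For the atomic-in-$\theta$ measures, one extra step is needed: after $\OO(\log e_0^{-1})$ iterates the $\theta$-distribution equidistributes, again by decay of correlations, with negligible drift in $I$. This gives the same limit as for the Lebesgue-in-$\theta$ measures. Finally, the return-time normalization converts the iterate count into the flow time $s\,e_0^{-2}$, and $\Pi_\tJ=I+\OO(e_0)$ transfers the convergence to the Jacobi constant.
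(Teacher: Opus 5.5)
Your overall architecture matches the paper's: a second-order separatrix map, a normally hyperbolic invariant lamination, a skew-shift, a coboundary correction of the first-order term (which the paper implements as the normal form of Proposition~\ref{prop:Cylindermaps:normalform}), and a Freidlin--Wentzell type limit. Step~3, however, rests on the wrong mechanism and does not close as written. You allow the frozen extension $(\omega,\theta)\mapsto(\sigma\omega,\theta+\beta_\omega(I))$ (your $\Omega_\omega$) to fail to mix on a thin set of $I$, and you point to the twist $\partial_I\beta_\omega\neq0$ as the source of aperiodicity.

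The twist is not what controls mixing. Once $I$ is frozen, mixing is a property of the cocycle $\omega\mapsto\beta_\omega(I)$ at that single value of $I$; at best a twist makes the bad set thin. Any bad set is fatal: the limiting diffusion visits every $I$, while near a bad $I$ the spectral gap of the twisted transfer operators closes. The constants $C,r$ in every correlation bound then degenerate, the series defining $\bbb$ and $\bms^2$ are no longer uniformly controlled, and you give no mechanism for crossing such resonances.

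The missing idea is Lemma~\ref{lemma:WeakMixing}: condition~\eqref{def:WeakMixing} holds for $m=1,2,4$ at \emph{every} $I\in[I_-,I_+]$. The input is the second item of Ansatz~\ref{ansatz:Melnikov:1}, $\alpha_1(I)\neq\alpha_2(I)$ with $|\alpha_i|\le100\mu$, not the twist. By~\eqref{def:Omega}, $\beta_\omega=\nu(I)\ol g+\alpha_{\omega_0}(I)+\eta_\omega(I)$ with $\eta_\omega$ small in $\rr$. Each channel map alone is resonant for many $I$, but the common rotation $\nu(I)\ol g$ cancels when you compare Birkhoff sums of $\beta$ along two periodic sequences differing in a single symbol. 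An equivariant eigenfunction would therefore force $e^{\im m(\alpha_1-\alpha_2)}=1+\OO(\rr^{\zeta})$. This is impossible once $\rr$ is small, since $|m(\alpha_1-\alpha_2)|$ is bounded below on $[I_-,I_+]$ and is at most $800\mu<2\pi$. So the two channel maps are never simultaneously resonant, and the gap from \cite{FieldMT03} is uniform in $I$.

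Two smaller points. Transversality of the homoclinic channels is item~2 of Ansatz~\ref{ans:NHIMCircular:bis}, a numerical fact at fixed $\mu$, not a Melnikov consequence of Ansatz~\ref{ansatz:Melnikov:1}. You also never use the third item of Ansatz~\ref{ansatz:Melnikov:1}, $\bms_0^2\neq0$; the paper needs it for the exit-time bounds of Proposition~\ref{lemma:exittime:Igamma}, and you need it for $\bms$ to be smooth.

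Your treatment of the atomic-in-$\theta$ measures has a second gap. Equidistribution of $\theta$ after $\OO(\log e_0^{-1})$ iterates is not available: only the harmonics $m=1,2,4$ are known to mix, and the paper stresses that the extension need not be weakly mixing. Even granting equidistribution, transferring the law of the whole subsequent $e_0^{-2}$-long evolution would need a coupling argument, since that evolution depends on all harmonics of $\theta$.

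The paper instead runs the entire martingale computation with $\E_\omega$ at fixed $\theta_0$, and the Lebesgue case follows by averaging. This way only trigonometric polynomials of degree at most $2$ in $\theta$ ever appear. The price is fixed-angle estimates that do not follow directly from the Bernoulli$\times$Haar results. Terms like $\E_\omega\bigl[P(\sigma^n\omega)Q(\sigma^k\omega)e^{\im\sum_{j<n}\beta_{\sigma^j\omega}}e^{\im\sum_{j<k}\beta_{\sigma^j\omega}}\bigr]$ vanish after Haar averaging, but at fixed angle they must be shown to be $\OO(r^n)$. The paper does this with the twisted operators $\LL_\beta,\LL_{2\beta}$ on the one-sided shift and a Parry--Pollicott reduction (Theorem~\ref{thm:Dima:FixedAngle}, Lemma~\ref{lemma:tripledecay}). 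It adds a fixed-angle CLT via Ibragimov--Linnik blocking, which feeds the exit-time estimates.

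Finally, freezing $I$ works only on much shorter blocks than your range $0<a<2$ allows. Since $\theta_n-\theta_n^{\ccirc}=\OO(e_0n^2)$, the paper uses strips of width $e_0^{\ga}$ with $\ga\in(7/8,1)$, i.e.\ time blocks of length about $e_0^{-2(1-\ga)}$. It also keeps the first-order angle correction explicitly, which is what produces $\bbb_3$.
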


	Ans\"atze \ref{ans:NHIMCircular:bis} 
	and \ref{ansatz:Melnikov:1} are stated   in Sections \ref{sec:separatrixmap} and \ref{sec:RandomCylinder} respectively, since they need certain notions to be introduced. Roughly speaking they require the existence of certain hyperbolic invariant objects and that their associated invariant manifolds are in general position. They are explained heuristically in Section \ref{sec:planproof}.

	The measures provided by Theorem \ref{thm:main-thm} are of the two types introduced in 
	Section \ref{subsec:measures}. In particular, for the first type of measures 
	this implies that the limiting evolution of the Jacobi constant ``follows'' an It\^o process at certain time scales which  is independent of the initial condition on the angle. 
	

	We also have a slightly more involved version of Theorem \ref{thm:main-thm} for the evolution of 
	the eccentricity $\ecc$ of the asteroid. Roughly speaking, up to a deterministic  $\OO(\mu)$ correction and an arbitrarily small random correction, the eccentricity
	has also an It\^o diffusive  behavior. To this end, we define the function $\mathtt{E}$ which gives the equivalence between eccentricity and Jacobi constant at the $3:1$ mean motion resonance, that is
	 \begin{equation}\label{def:JtoE}
	 \mathbf{E}(\tJ)=\sqrt{1-\frac{\left(\frac{1}{L_0^2}+\tJ\right)^2}{L_0^2}},\qquad L_0=3^{-1/3}.
	 \end{equation}
	%

	\begin{theorem} \label{thm:main-thm-bis} 
 Fix $\mu=0.95387536\times10^{-3}$,  an interval $[\tJ_-,\tJ_+]$ and $\rho>0$ small enough. Assume Ans\"atze \ref{ans:NHIMCircular:bis} and \ref{ansatz:Melnikov:1} hold, fix $\tJ^*\in (\tJ_-,\tJ_+)$ and consider a  probability measure $\nu_{\tJ^*,e_0}$ given by Theorem \ref{thm:main-thm}.
		Then, there are  $C>1$ independent of $e_0$, an interval $(\ecc_-,\ecc_+)$ and  smooth functions
		$\wt \bbb(\ecc)$ and $\wt\bms(\ecc), \ecc \in (\ecc_-,\ecc_+)$, such that
		there exist 
		\begin{itemize}
			\item Correction functions $\delta \ecc(t,\ecc,i),\ \ecc \in (\ecc_-,\ecc_+),\ i=1,2,$ and $E(t,X)$, $X\in \textup{supp }\nu_{\tJ^*,e_0}$, satisfying $|E(t,X)|\leq \rho$,
			\item A sequence of times $\{t_j\}_{j \in \NN}$ and a sequence of indices 
			$\eta(t_j)\in \{1,2\}$ that depend only on position at time $t_j$ and such that, for each 
			$j \in \NN$,
			\[
			C^{-1}|\log \rho|< t_{j+1}-t_j <C |\log \rho|,\]
		\end{itemize}
	such that the  adjusted eccentricity defined piecewisely, for $t\in (t_j,t_{j+1})$, as 
			\[
			\tilde \ecc(X,t)= \ecc(\wt \Phi^t X)-\delta \ecc(t-t_j,\ecc(t_j),\eta(t_j))-E(\wt \Phi^tX)
			\] 
			satisfies the following.
		
		For any $s>0$, the distribution of the evolution of the adjusted eccentricity $\tilde \ecc$ at the  time scale $t_{e_0}(s)=s\, e_0^{-2}$
		(stopped if hits the boundary of $[\tJ_-,\tJ_+]$), i.e. $\Pi_{\tilde\ecc} (\, \widetilde \Phi^{\,t_{e_0}(s)}_* \nu_{\ecc,e_0})$
		converges weakly, as $e_0 \to 0$, to the distribution of $\, \wt \ecc_s$, where $\wt \ecc_\bullet$ is
		the diffusion process with the drift $\wt \bbb(\tilde \ecc)$ and the variance $\wt \bms(\tilde \ecc)$, starting at
		\[
		\ecc_{0}=\ecc^*=\mathbf{E}(\tJ^*)
		\]
		 evaluated at  time $s$.
	\end{theorem}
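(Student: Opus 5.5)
We derive Theorem~\ref{thm:main-thm-bis} from Theorem~\ref{thm:main-thm}, using the relation between eccentricity and Jacobi constant near the $3:1$ resonance together with the structure of the normally hyperbolic invariant lamination.

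\emph{Step 1: relate eccentricity to the Jacobi constant.} In Delaunay-type coordinates $(L,\ell,G,g)$ we have $\ecc=\sqrt{1-G^2/L^2}$. The Jacobi constant is $\tJ=-\frac{1}{2L^2}-G+\OO(\mu)$, which gives $G=-\frac{1}{2L^2}-\tJ+\OO(\mu)$. On the support of the measures, and along the orbits we follow, $L$ stays within $\OO(\mu)$ of $L_0=3^{-1/3}$. Substituting $L=L_0$ therefore gives $\ecc=\mathbf{E}(\tJ)+\Delta(X)$, where the definition \eqref{def:JtoE} is (up to the normalization of the $L_0$ terms) exactly this substitution. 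The remainder $\Delta$ is of size $\OO(\mu)$, but it is \emph{not} small as $e_0\to0$.

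\emph{Step 2: split $\Delta$ into a deterministic part and a small error.} We use the lamination $\Lambda$. Its points shadow the circular-problem dynamics: long passages near the hyperbolic periodic orbit $P_\tJ$ (resonant motion), separated by excursions along one of two homoclinic orbits, which we label by $\eta\in\{1,2\}$. Each excursion takes a time of order $|\log\rho|$, where $\rho$ fixes how close to $P_\tJ$ we cut the orbit. We choose the times $t_j$ to be the successive returns to a $\rho$-neighbourhood of a fixed section of $P_\tJ$. By hyperbolicity these return times satisfy $C^{-1}|\log\rho|<t_{j+1}-t_j<C|\log\rho|$. On each interval $(t_j,t_{j+1})$ the orbit is $\OO(\rho)+\OO(e_0)$-close to the homoclinic orbit $\gamma_\eta(\tJ(t_j))$ of the circular problem. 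We then set
\[
\delta\ecc(\tau,\ecc,\eta)=\ecc(\gamma_\eta(\tau))-\mathbf{E}(\tJ(\gamma_\eta)),
\]
written as a function of $\ecc=\ecc(t_j)$; this is the deterministic $\OO(\mu)$ oscillation along the circular separatrix loop. The leftover $E(\wt\Phi^tX)$ is controlled by the shadowing distance, so $|E|\le\rho$ once $\rho$ is small and $e_0\ll\rho$. Along the way we also absorb the $\OO(e_0)$ drift of $\tJ$ during one loop into $E$.

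\emph{Step 3: pass to the limit.} After Steps 1 and 2, the adjusted eccentricity satisfies $\tilde\ecc(X,t)=\mathbf{E}(\tJ(\wt\Phi^tX))+o(1)$ uniformly on the support, as $e_0\to0$. By Theorem~\ref{thm:main-thm}, $\Pi_\tJ(\wt\Phi^{t_{e_0}(s)}_*\nu_{\tJ^*,e_0})$ converges weakly to the law of $\tJ_s$. Since $\mathbf{E}$ is smooth and strictly monotone on $[\tJ_-,\tJ_+]$, we apply the continuous mapping theorem and Slutsky's lemma to conclude that the law of $\tilde\ecc$ converges to that of $\mathbf{E}(\tJ_s)$. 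Itô's formula shows that $\wt\ecc_s=\mathbf{E}(\tJ_s)$ is a diffusion started at $\mathbf{E}(\tJ^*)$, with
\[
\wt\bbb=\Bigl(\mathbf{E}'\bbb+\tfrac12\mathbf{E}''\bms^2\Bigr)\circ\mathbf{E}^{-1},\qquad \wt\bms=\bigl(|\mathbf{E}'|\bms\bigr)\circ\mathbf{E}^{-1}.
\]
The stopping at the boundary of $[\tJ_-,\tJ_+]$ transfers because $\mathbf{E}$ is monotone; we take $(\ecc_-,\ecc_+)=\mathbf{E}((\tJ_-,\tJ_+))$.

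The main obstacle is Step 2. We must show that the decomposition is uniform along the whole orbit over times of order $e_0^{-2}$: the orbit must keep shadowing circular homoclinic loops, with the index $\eta(t_j)$ and the times $t_j$ determined only by the current position. This requires the normal hyperbolicity estimates on $\Lambda$ from the separatrix map analysis to hold uniformly in $\tJ$. I would first try to obtain this by reading $\eta$ off the symbolic coordinate $\omega_0$ of the point's lamination image.
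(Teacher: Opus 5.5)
Your proposal is correct and follows essentially the same route as the paper. You write $\ecc=\mathbf{E}(\tJ)+\OO(\mu)$ via $G=-\tJ-\frac{1}{2L^2}+\OO(\mu)$; you split the $\OO(\mu)$ remainder on each loop $(t_j,t_{j+1})$ into the deterministic value along the circular homoclinic orbit $\gamma_\eta$ plus a small shadowing error (your $\delta\ecc$ equals the paper's $\EE_1^1+\EE_2^1$); and you conclude from Theorem~\ref{thm:main-thm} by It\^o's lemma. The differences are only cosmetic: the paper takes $t_j$ as hitting times of the section $\{q=\rho\}$ transverse to $W^u$ of the cylinder, and its shadowing error is $\lesssim\rho^{\zeta_*}$ (from Theorem~\ref{thm:NHILElliptic}) rather than $\OO(\rho)$, which is handled by relabelling $\rho$.
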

	
	In Section \ref{sec:maintheorem2}, we explain how to deduce this theorem from Theorem \ref{thm:main-thm}. In particular, we express the new drifts and variance in terms of the ones obtained in Theorem \ref{thm:main-thm}.

	\begin{remark}
	Relying on Ans\"atze 1 and 2, one can see that the correcting term $\delta \ecc(t-t_j,\ecc(t_j),\eta(t_j))$ satisfies
	\[
	|\delta \ecc(t-t_j,\ecc(t_j),\eta(t_j))|\leq 252 \mu,
	\]
(see also Section \ref{sec:maintheorem2}).
	\end{remark}

Theorems \ref{thm:main-thm} and \ref{thm:main-thm-bis} provide stochastic behavior in the energy and eccentricity drift at time scales $\sim e_0^{-2}$. Relying on the same analysis, one can construct orbits that drift at much faster pace.  This is explained in Appendix \ref{app:fastdrift} (see Theorem \ref{thm:fastdrift}).

		\begin{remark}\label{rmk:intervals}
		In the companion paper \cite{paperNHIL23} Ans\"atze \ref{ans:NHIMCircular:bis}
		and \ref{ansatz:Melnikov:1} are verified numerically in several intervals of Jacobi constant. In particular, we show that they are satisfied in the intervals
		\[
		\mathcal{I}_1=[-1.551, -1.485], \quad \mathcal{I}_2=[-1.535, -1.485].
		\]
		That is, the Jacobi constant has an ``stochastic It\^o  evolution'' as stated in Theorem \ref{thm:main-thm} in these intervals.
		In terms of the instant eccentricity of the asteroid $\ecc$ (see Theorem \ref{thm:main-thm-bis}), the interval $\mathcal{I}_1$ corresponds to $\EE_1=[0.676,0.77]$ and $\mathcal{I}_2$ corresponds to $\EE_2=[0.7,0.77]$. We expect that these numerical computations can be rigorously verified through a computer assisted proof.
	\end{remark}

	\begin{remark}[Comparison with \cite{CapinskiGidea23}]\label{rmk:CapinskiGide} The recent paper \cite{CapinskiGidea23} by 
		Capinski and Gidea develop tools to  analyze stochastic behavior 
		associated to Arnold diffusion in an a priori chaotic setting. Then, they 
		apply their tools to diffuse along the center manifold associated to the 
		Lagrange point $L_1$ in the  RPE3BP, modelling the  Neptune-Triton-Asteroid dynamics. Let us compare both papers. 
		\smallskip
		

		\begin{itemize}
			\item In the present paper we construct a NHIL and study Bernoulli probability measures supported on it. 
			As a result we obtain stochastic diffusion processes with variable drift and variance. Both drift and variance 
			can be dynamically computed  through Melnikov's type integrals. This is in contrast with   \cite{CapinskiGidea23}.
			The authors fix constant variance $\bms^2$ and constant drift $\bmmu$ and construct a probability measure 
			whose push forward converges to a stochastic process with such a drift and variance. 
				In both papers the measures do not have maximal Hausdorff dimension.
			\item We also would like to point out that our ``size'' of diffusion interval is over 0.06 (with respect to the Jacobi constant, which corresponds to 0.09 in eccentricity), while in \cite{CapinskiGidea23},
			size of diffusion interval of energy is $10^{-6}$. 
			\item In \cite{CapinskiGidea23} there is not second order analysis in the  perturbation parameter, as 
			usually happens when looking at diffusive behavior. Instead, they perform a backwards selection of initial conditions. In the present paper the second order in $e_0$ 
			plays a fundamental role in defining the drift of the It\^o process.
		\end{itemize}
	\end{remark}
	A  diffusive behavior phenomenon at  instabilities for nearly integrable systems was 
	first observed by Chirikov (see \cite{Chirikov:1979}).  A first example of unstable motion had been constructed 
	earlier by Arnold \cite{Arn64}, which inspired to Chirikov to coin  the name Arnold diffusion.

	\begin{remark}
		We believe that the stochastic diffusive mechanism that leads to Theorem \ref{thm:main-thm} is fairly general 
		and that it widely applies to \emph{a priori} chaotic models. Here are some of them,
		\begin{itemize}
			\item Mather's problem: Consider a geodesic flow plus a periodic time dependent potential. In the higher 
			energy regime, this model is a priori chaotic and one can obtain unbounded growth of energy 
			(see \cite{Mather96, DelshamsLS00, BolotinT99}). The time dependence can be generalized to 
			be quasiperiodic (see \cite{DelshamsLS06b}).
			\item Drift along secular resonances in the spatial 4 Body Problem: In the recent papers \cite{ClarkeFG22,ClarkeFG23}, 
			the authors show drift in eccentricity, semimajor axis and inclination for some of  bodies in 
			a 4 Body Problem. Such drift is achieved along secular resonances both in the planetary regime 
			(one body much larger than the other) and in the hierarchical regime (bodies increasingly separated). 
			In the hierarchical regime, this model is a priori chaotic.
			\item Arnold diffusion near $L_1$ and $L_2$: The Lagrange points $L_1$ and $L_2$ are saddle center 
			critical points and Arnold diffusion along their center manifolds for the RPE3BP fits the a priori chaotic 
			setting for small enough eccentricity of the primaries. Note that, as mentioned in Remark \ref{rmk:CapinskiGide}, Arnold diffusion 
			along the center manifold of $L_1$ has already been analyzed in \cite{CapinskiGidea23} (see also \cite{DelshamsGR26}), and along the center manifold of $L_2$  in \cite{Capinski17}.
			\item Another setting in the RPE3BP where one can achieve drift of angular momentum is relying on 
			an invariant cylinder at infinity and its associated invariant manifolds (the so-called parabolic motions). 
			Such setting was analyzed by 
			Delshams, de la Rosa, Kaloshin and Seara in \cite{DelshamsKRS19} 
			(see also \cite{GuardiaPS23}).
			
			\item In \cite{Gelfreich:2008}, Gelfreich and Turaev   consider Hamiltonian systems with slow 
			time dependence. The models they consider, which include not only the aformentioned Mather problem, 
			but also billiards with time dependent boundaries also fit an a priori chaotic setting. They show 
			the existence of unbounded growth of energy orbits.
			
			
			\item As far as the authors know the first results showing stochastic behavior related to Arnold 
			diffusion are by Marco and Sauzin \cite{MarcoSauzin04}. They showed existence of random 
			walks in nearly integrable Hamiltonian systems obtained by the so-called Herman method. 
			These Hamiltonians also fit the a priori chaotic setting.
			
		\end{itemize}
	\end{remark}
	
	\medskip 
	


	\subsection{Plan of the proof}\label{sec:planproof}

	In the restricted planar elliptic 3-body problem the primaries 
	have eccentricity $e_0\in (0,1)$. In the case $e_0=0$, one obtains 
	the circular problem. Denote by $q_0(t,e_0)$ the normalized 
	position of the primaries at time $t$ with eccentricity $e_0$. Then, 
	the Hamiltonian of the elliptic problem, defined in \eqref{eq:RPE},
	can be written 
	\[
	\HH_\mu( p,q, t) =
	\frac{\|p\|^2}{2}- \frac{1-\mu}{\|q + \mu q_0(t,e_0)\|}-
	\frac{\mu}{\|q -(1- \mu) q_0(t,e_0)\|}
	\]
	as a $e_0$-perturbation of the corresponding 
	Hamiltonian of the circular problem 
	\[
	\HH_{\text{circ},\mu}( p,q, t) =
	\frac{\|p\|^2}{2}- \frac{1-\mu}{\|q + \mu q_0(t,0)\|}-
	\frac{\mu}{\|q -(1- \mu) q_0(t,0)\|}.
	\]
	where $q_0(t,0)$ is just given by $q_0(t,0)=(\cos t, \sin t)$.

		This perturbative structure is very important in our analysis. 
		First, we shall construct  "horseshoes" (a hyperbolic invariant set)
		$\mathcal C_\tJ$ at level sets $\tJ$ of the Jacobi constant for the circular 
		problem and which depend smoothly on $\tJ\in (\tJ_-,\tJ_+)$. These invariant 
		sets gives rise to a normally hyperbolic invariant lamination 
		$\Lambda$  for the elliptic problem for $e_0>0$ small enough. 
		Over each point of $\mathcal C_\tJ$ we have a fiber given 
		by a 2-dimensional cylinder. 
	
	Let us be more precise on the steps of the proof.
	We start with the Hamiltonian of the 2-body problem $\HH_0$ and 
	the associated Jacobi constant $\tJ$, defined in \eqref{eq:Jacobi}. 
	Later we make two perturbations: 
	\begin{itemize}
		\item From the 2-body Hamiltonian $\HH_0$ to the circular Hamiltonian 
		$\HH_{\ccirc,\mu}$. 
		\item From the circular Hamiltonian $\HH_{\ccirc,\mu}$ to the elliptic Hamiltonian $\HH_\mu$. 
	\end{itemize}

	The more detailed scheme of the proof is the following 
	\begin{enumerate}
		\item 
		For the 2-body problem the Kirkwood gap $3:1$ is defined by 
		the semimajor axis $a=3^{-2/3}$, where $2a=\HH_0^{-1}$. For each 
		eccentricity $\ecc \in [0,1)$ there are one parameter families of 
		periodic orbits, each represented by an ellipse with semimajor 
		axis $a$ and eccentricity $\ecc$. It turns out that at the Kirkwood 
		gap $\ecc$ is an implicit function of $\tJ$.

		\item Consider the Hamiltonian of the circular problem $\HH_{\ccirc,\mu}$,
		which is an $\OO(\mu)$-pertur\-bation of $\HH_0$ (recall that we are taking $\mu=0.95387536\times10^{-3}$). Ansatz \ref{ans:NHIMCircular:bis} assumes the following.
		\begin{enumerate}
			\item There exists a family of saddle periodic orbits $p_\tJ$ of the RPC3BP at 
			 the $42\mu$-neighbor\-hood of the Kirkwood gap $3:1$ parametrized by the Jacobi constant $\tJ\in [\tJ_-,\tJ_+]$. 
			\item For values $\tJ\in [\tJ_-,\tJ_+]$,
			the associated invariant manifolds
			$W^s(p_\tJ)$ and $W^u(p_\tJ)$ intersect transversally (within the Jacobi constant level) at two distinct homoclinic orbits. Each transverse intersection 
			gives rise to a homoclinic channel. 
		\end{enumerate}
		Note that such behavior is typical in Hamiltonian systems. Thus Ansatz \ref{ans:NHIMCircular:bis} just assumes that this indeed happens for the RPC3BP at the $3:1$ mean motion resonance. This is verified numerically in the companion paper \cite{paperNHIL23}.
		
		\item For each homoclinic channel, we determine a collection of open 
		sets (see Figure \ref{fig:separatrix-map})
		where a separatrix (return) map $\SM$ is well defined and 
		consists of $n$ iterates.
		Then we compute $\SM$ (see Theorems \ref{thm:FormulasSMii:circ} and \ref{thm:FormulasSMii} below). 
		
		\item We prove that the separatrix map $\SM$ restricted to suitable open sets 
		is  partially hyperbolic and, using 
		an isolating block technique, we contruct a normally hyperbolic 
		lamination $\Lb$ (see Theorems \ref{thm:NHILCircular} and \ref{thm:NHILElliptic} below).

		\begin{figure} [h!]
			\begin{center}
				\includegraphics[scale=1.0]{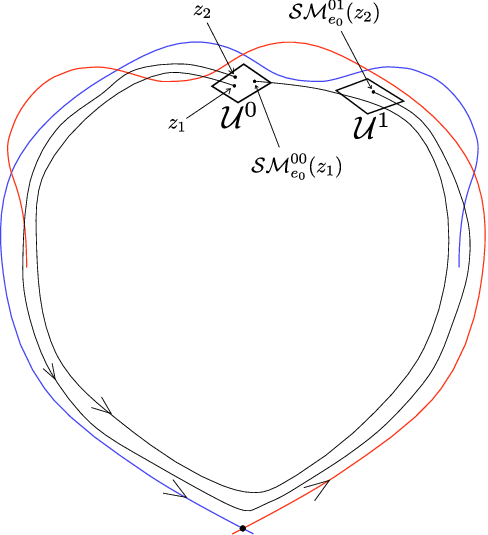}
				\caption{Separatrix map, which is defined on open sets in $\UU^0$ and $\UU^1$, close to the two homoclinic channels.}
				\label{fig:separatrix-map}
			\end{center}
		\end{figure}
		
		\item We derive a normal form (Proposition \ref{prop:Cylindermaps:normalform}) for the dynamics of $\SM$ restricted 
		to the lamination $\Lb$ such that it is conjugated to 
		a skew-shift  model 
		\[
		f:(\omega,I,\theta)\to 
		(\sigma \omega,I+e A_\omega(I)
		\cos (\theta+\psi_\omega)
		+\OO(e^2_0),\theta+\Omega_\omega(I)+\OO(e_0)),
		\]
		where $\theta\in \mathbb T,\  I\in [a,b],\  \Sigma=\{0,1\}^\mathbb Z$ is the space of sequences of $0,1$’s, $\sigma:\Sigma \to \Sigma$ is the shift in this space, 
		$\Omega_\omega$ is the shear,\ $A_\omega$ is the amplitude, and 
		$e_0$ is the eccentricity of Jupiter, which is taken as a small parameter,
		(see Lemma \ref{lemma:laminationmap} below).
		
		Ansatz \ref{ansatz:Melnikov:1} assumes that some of these functions are non-trivial in certain senses. In particular, on the one hand, we assume  that the shears $\Omega_\omega$ depend non-trivially on $\omega_0$ and have a non-resonance property. On the other hand, we assume that $A_\omega$ and $\psi_\omega$ (which are obtained through Melnikov-like integrals) are non-degenerate.
		These properties are also  verified numerically in the companion paper \cite{paperNHIL23}.
		
		\item To prove stochastic diffusive behavior for the above skew
		shift model, we analyze first a ``reduced model'': a Lie group (circle) extension of hyperbolic maps, for which one can prove decay of correlations and central limit theorems. For the measures of type 2 (see Section \ref{subsec:measures}) such properties are proven in \cite{FieldMT03} (see Theorem \ref{thm:Dima}). For type 1 measures, we prove both decay of correlations and a central limit theorem (see Theorem \ref{thm:Dima:FixedAngle}).
		
		\item Last step is to prove the convergence to the It\^o process through a martingale analysis  (see Lemma \ref{lemma:expectationlemsubstrip}). It is done in two steps. First we analyze it locally (on small Jacobi constant intervals) relying on the tools developed on the analysis of circle extensions of hyperbolic maps. Finally, we go from local to global by analyzing the visits of the orbits to the Jacobi constant intervals as a random walk (see Section \ref{sec:LocalToGlobal}). This completes the proof of Theorem \ref{thm:main-thm}.
	\end{enumerate}

\section*{Acknowledgements}
\addcontentsline{toc}{section}{\protect\numberline{}Acknowledgements}
We warmly thank Dmitry Dolgopyat and Ian Melbourne for many useful discussions concerning decay of correlations and central limit theorems for equivariant observables of compact Lie group extensions of hyperbolic maps, which are significant steps in the proof.

This work was partially supported by the grant PID-2021-122954NB-100 and PID2021-123968NB-100  funded by MCIN/AEI/10.13039/
501100011033 and “ERDF A way of making Europe. M. G. is
supported by the Catalan Institution for Research and Advanced Studies via an ICREA Academia Prize
2018 and 2023. V.K. is also partially supported by the ERC Advanced Grant SPERIG (\# 885707).
P. M. is also partially supported by PID2024-158570NB-I00.
This work is also supported by the Spanish State Research Agency through the Severo Ochoa and
Mar\'ia de Maeztu Program for Centers and Units of Excellence in R\&D(CEX2020-001084-M).

\section{A good system of coordinates and a time reparameterization}\label{sec:delaunay}

We start by writing the RPE3BP Hamiltonian $\HH_\mu$, defined in
\eqref{eq:RPE},  in the classical Delaunay coordinates $(L,\ell,G, \g,t)$. They are the action-angle coordinates for the Kepler problem and are defined as follows. Introducing $r$ and $\varphi$ by 
\[
 q=(r\cos \varphi,r\sin\varphi),
\]
we define
\begin{itemize}
 \item $L$ is the square of the semimajor axis of the ellipse. It satisfies
 \[
  -\frac{1}{2L^2}=\frac{\|p\|^2}{2}-\frac{1}{\|q\|}.
 \]
 \item $G$ is the angular momentum. That is $G=q\times p$. By $L$ and $G$, one can define the eccentricity of the ellipse as
 \begin{equation}\label{def:ecc}
  \ecc=\sqrt{1-\frac{G^2}{L^2}}.
 \end{equation}
 \item $\g$ is the argument of the pericenter.
 \item $\ell$ is the mean anomaly, which can be obtained using the true anomaly $v$ and the eccentric anomaly $u$. The true anomaly is the angle of the body with respect to the perihelion measured from the focus of the ellipse. That is, $\varphi=v+\g$. From the true anomaly, one can compute the  eccentric anomaly by
 \[
  \tan\frac{v}{2}=\sqrt{\frac{1+\ecc}{1-\ecc}}\tan\frac{u}{2}
 \]
and from it the mean anomaly by
\[
 \ell=u-\ecc\sin u.
\]
\end{itemize}

Then, in these coordinates, the Hamiltonian \eqref{eq:RPE} becomes
\begin{equation}\label{def:HamDelaunayNonRot}
\hat H(L,\ell,G, \g-t,t)=
-\frac{1}{2L^2}+\mu\Delta H_\ccirc(L,\ell,G,
  \g-t,\mu)
  + \mu e_0\Delta H_\eell(L,\ell,G, \g-t,t,\mu,e_0),
\end{equation}
where $e_0$ is the eccentricity of the primaries. 

Define the new angle $g=\g -t$ (the argument of the pericenter, measured in
the rotating frame) and a new variable $I$ conjugate to the time $t$.
Then, we have
\begin{equation}\label{def:HamDelaunayRot}
  H(L,\ell,G, g,I,t)=
  -\frac{1}{2L^2}-G+\mu\Delta H_\ccirc(L,\ell,G, g,\mu)
  +
  \mu e_0 \Delta H_\eell(L,\ell,G, g,t,\mu,e_0)+I.
\end{equation}
Without loss of generality we can restrict our study to $H=0$.

We consider the RPE3BP as a perturbation of the circular one, i.e.  $e_0=0$,
\begin{equation}\label{def:HamDelaunayCirc}
  H_\ccirc(L,\ell,G, g)=-\frac{1}{2L^2}-G+\mu \Delta H_\ccirc(L,\ell,G, g,\mu).
\end{equation}
Following \cite{FejozGKR15}, we  modify
our system so that $g$ is the new time. That is , we reparameterize
time and consider the equation
\begin{equation}\label{def:Reduced:ODE}
  \begin{array}{rlcrl}
    \frac{d}{ds} \ell=&\dps\frac{\pa_L H}{-1+\mu\pa_G\Delta H_\ccirc +\mu
e_0\pa_G\Delta H_\eell}&\text{   }&\frac{d}{ds} L=&\dps-\frac{\pa_\ell
H}{-1+\mu\pa_G\Delta H_\ccirc +\mu e_0\pa_G\Delta H_\eell}\\
    \frac{d}{ds} g=&1&\text{   }&\frac{d}{ds} G=&\dps-\frac{\pa_g
H}{-1+\mu\pa_G\Delta H_\ccirc +\mu e_0\pa_G\Delta H_\eell}\\
    \frac{d}{ds} t=&\dps\frac{1}{-1+\mu\pa_G\Delta H_\ccirc +\mu e_0\pa_G\Delta
H_\eell}&\text{   }&\frac{d}{ds} I=&\dps-\frac{\mu e_0\pa_t \Delta
H_\eell}{-1+\mu\pa_G\Delta H_\ccirc +\mu e_0\pa_G\Delta H_\eell}.
  \end{array}
\end{equation}
This system can be written as a non-autonomous Hamiltonian system
using the Poincar\'e-Cartan reduction. It is enough to consider
a  time-periodic Hamiltonian $K$ such that\footnote{Note that we change the order of the variables so that the new time $g$ is at the end.}
\[
   H(L,\ell,-K(L,\ell,I,t, g), g,I,t)=0.
\]
Note that it can be rewritten as
\begin{equation}\label{def:HamEll:Reparam}
 K(L,\ell,I,t,g)=-\frac{1}{2L^2}-I+\mu\Delta K_\mathrm{circ}(L,\ell, I,g)+\mu
e_0 \Delta K_\eell(L,\ell,I,t,g;e_0).
\end{equation}
Notice that $\Delta K_\mathrm{circ}$ and $\Delta K_\eell$
 depend on $\mu$. We omit
 this dependence, when inessential. 
It can be easily checked that the Hamiltonian equations coincide with
\eqref{def:Reduced:ODE}. We also define
\begin{equation}\label{def:HamCirc:Reparam}
 K_\mathrm{circ}(L,\ell, I,g)=-\frac{1}{2L^2}-I+\mu\Delta
K_\mathrm{circ}(L,\ell, I,g).
\end{equation}

\section{The normally hyperbolic invariant cylinder and the separatrix maps}\label{sec:separatrixmap}
In this section we discuss the first of the key ingredients of
our proof: the separatrix map. That is, a return map close to homoclinic channels. It was first introduced
and studied by Zaslavskii (see \cite{Treschev02a}) for  one and a half degrees of freedom
Hamiltonian systems with a separatrix loop and by Shilnikov for vector fields with a transverse separatrix crossing to an equilibrium (see the survey of Piftankin and Treschev
\cite{PiftankinT07}). Later, in  \cite{Treschev02a, GuardiaK15} it was constructed for a priori unstable nearly
integrable Hamiltonian systems. 

Relying on 
Ansatz \ref{ans:NHIMCircular:bis}, we compute the separatrix map associated to a normally hyperbolic invariany cylinder of 
the Hamiltonian \eqref{def:HamEll:Reparam}, 
which differs from the aforementioned models. 
Indeed, the RPE3BP with $\mu=10^{-3}$ and $0\le \ee\ll1$
is what is called an {\it a priori chaotic system:} for $\ee=0$, the system has
a first integral (the Jacobi constant) but it is not
integrable\footnote{It posesses a hyperbolic periodic orbit
with associated transverse homoclinic points at each level
of the first integral.}. 
Note that in a priori chaotic settings, the angle between the invariant manifolds of the normally hyperbolic invariant manifold is not small. This implies that one cannot study the
separatrix map in a whole fundamental domain as done in \cite{Treschev02a,
GuardiaK15} but  in small
neighborhoods of  transverse homoclinic channels to the cylinder.

We follow more closely \cite{Piftankin06}, which studies
the Mather problem which is also a priori chaotic. Even though there are similarities
with \cite{Piftankin06}, we need to adapt to the particular feature of the REP3BP.


We express the separatrix map in suitable variables, so that
its pullback to the invariant lamination (see Section \ref{sec:NHIL}) is integrable for the circular
problem.
Moreover, we use a suitable Poincar\'e map to simplify the numerical
computations needed to verify the ans\"atze of Theorem~\ref{thm:main-thm} (see \cite{paperNHIL23}).

%
%

\subsection{The invariant cylinder and the associated homoclinic channels}
Relying on numerical computations done in \cite{paperNHIL23}, 
we assume the following ansatz.

%

\begin{ansatz}\label{ans:NHIMCircular:bis}
  Consider the Hamiltonian \eqref{def:HamDelaunayCirc} with
  $\mu=0.95387536\times10^{-3}$.  In every energy level $\tJ\in [\tJ_-, \tJ_+]$:
  \begin{itemize}
  \item   There exists a hyperbolic periodic orbit
  $\lb_\tJ=(L_\tJ(t), \ell_\tJ(t), G_\tJ(t), g_\tJ(t))$ of period $T_\tJ$ with
  \begin{equation}\label{def:periodestimate:intro}
 9\mu< \left| T_\tJ-2\pi\right|<15 \mu,
  \end{equation}
  such that
  \[
  \left| L_\tJ(t)-3^{-1/3}\right|< 19\mu
  \]
  for all $t\in [0,T_\tJ]$.  The periodic orbit and its period
  depend analytically on $\tJ$.

\item The stable and unstable invariant manifolds of every $\lb_\tJ$, $W^{s}(\lb_\tJ)$
  and $W^{u}(\lb_\tJ)$, intersect transversally at \emph{two primary homoclinic points}. Moreover, these homoclinic points depend analytically on $J$ and their orbits are confined in the interval 
  \[
  \left| L-3^{-1/3}\right|< 42\mu.
  \]
\end{itemize}
\end{ansatz}
Note that the the exact lower and upper bounds in \eqref{def:periodestimate:intro} are not important. What is crucial is that 
\[
0< \left| T_\tJ-2\pi\right|<\frac{\pi}{2},
\]
to avoid certain resonances.
\begin{figure} [h!]
\begin{center}
\includegraphics[scale=0.59]{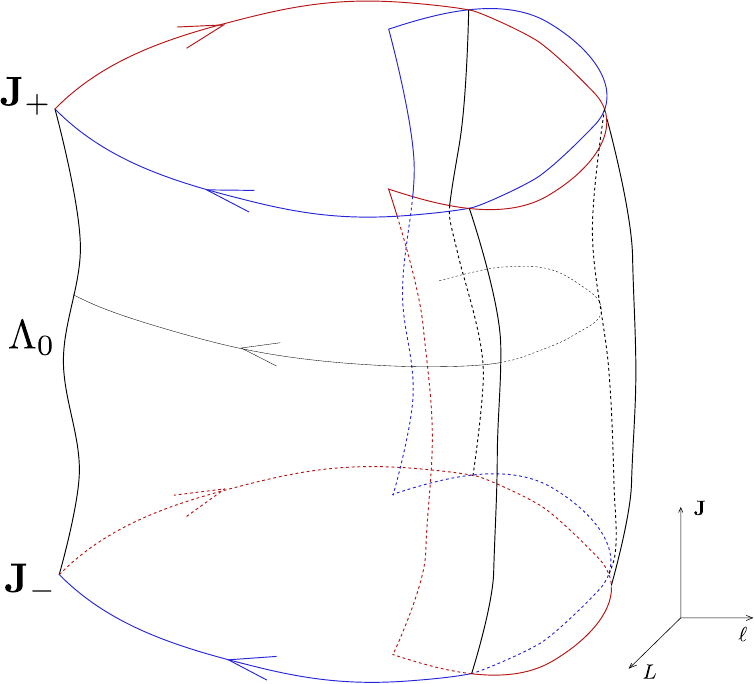}
\caption{The periodic orbits provided by Ansatz \ref{ans:NHIMCircular:bis} give rise to a normally hyperbolic invariant cylinder, as stated in Corollary \ref{coro:NHIMCircular}. It posseses two transverse homoclinic channels.}\label{fig:NHIMCircular}
\end{center}
\end{figure}
In \cite{paperNHIL23} this ansatz is verified for the interval  $[\tJ_-, \tJ_+]=[\Jminusnew, \Jplusnew]$. Note that in \cite{FejozGKR15} the authors considered a similar ansatz. Indeed, Item 1 is the
same. However, in Item 2, they only assumed the existence of  one
transverse homoclinic orbit for each $J$ instead of two. The analyticity with
respect to the period is just a consequence of the analyticity of the Hamiltonian 
\eqref{def:HamDelaunayCirc}.
Later we impose an additional ansatz (see Ansatz \ref{ansatz:Melnikov:1}),
a non-degeneracy condition for dynamics along
the 
homoclinic orbits.
\smallskip

\begin{remark}
 In \cite{FejozGKR15} we showed numerically the existence of homoclinic
tangencies. To overcome that problem, we considered two families of homoclinic
points.  In the present paper, we
only consider intervals of Jacobi constants 
 between two of the
homoclinic tangencies detected in \cite{FejozGKR15}. In this way, in these intervals there are two transverse homoclinic channels.
\end{remark}

After a time reparameterization and recalling that the right hand side of the
following system is $t$ independent, Ansatz \ref{ans:NHIMCircular:bis} implies
that the system defined by the $(\ell, L, g, G)$ components of
\begin{equation}\label{def:Reduced:ODE:circular}
  \begin{array}{rlcrl}
    \frac{d}{ds} \ell=&\dps\frac{\pa_L H_\ccirc}{-1+\mu\pa_G\Delta H_\ccirc}&\text{
}&\frac{d}{ds} L=&\dps-\frac{\pa_\ell
H_\ccirc}{-1+\mu\pa_G\Delta H_\ccirc }\\
    \frac{d}{ds} g=&1&\text{   }&\frac{d}{ds} G=&\dps-\frac{\pa_g
H_\ccirc}{-1+\mu\pa_G\Delta H_\ccirc}\\
    \frac{d}{ds} t=&\dps\frac{1}{-1+\mu\pa_G\Delta H_\ccirc}&\text{
}&\frac{d}{ds} I=&\dps0
  \end{array}
\end{equation}
where $H_\ccirc$ is the Hamiltonian introduced in \eqref{def:HamDelaunayCirc}, has a periodic orbit for each
\[
I_0\in [I_-,I_+]=[-\tJ_+,-\tJ_-].
\]
Since the
equation is $t$-independent,  such periodic orbits can be parameterized as
\begin{equation}\label{eq:PeriodicOrbitsReparam}
 \begin{split}
 \ell&=\ell_0(I,g)\\
 L&=L_0(I,g),
 \end{split}
\end{equation}
for analytic functions $(\ell_0, L_0)$.

\begin{corollary}\label{coro:NHIMCircular}
  Assume Ansatz~\ref{ans:NHIMCircular:bis} holds. The system
\eqref{def:Reduced:ODE:circular} with $\mu=0.95387536\times10^{-3}$  has an
analytic normally hyperbolic\footnote{See Definition \ref{def:NHIM} below
for the precise definition of Normally Hyperbolic Invariant Manifold.} 
invariant $3$-dimensional cylinder $\Lambda_0$, which is foliated 
by $2$-dimensional invariant tori.

The cylinder $\Lambda_0$ has 
stable and
unstable invariant manifolds, denoted $W^{s}(\Lambda_0)$ and
$W^{u}(\Lambda_0)$. In the  
invariant  planes $I=I_0$,  for every $I_0\in [I_-,I_+]$, 
these invariant manifolds 
intersect transversally at two homoclinic channels $\CCC^1_{0}$
and $\CCC^2_{0}$.
\end{corollary}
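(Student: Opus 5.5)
The idea is to assemble the cylinder as the union over $I_0\in[I_-,I_+]$ of the invariant tori generated by the periodic orbits of Ansatz~\ref{ans:NHIMCircular:bis}. Then normal hyperbolicity and the transversality of the homoclinic channels are read off from the hyperbolicity and transversality assumed in the Ansatz, uniformly in the compact parameter interval.

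First I relate the two systems. In \eqref{def:Reduced:ODE:circular} the $(\ell,L,g,G)$ components do not involve $t$, and $I$ is constant. On $H=0$, with $H$ the Hamiltonian \eqref{def:HamDelaunayRot} at $e_0=0$, the value of $I$ fixes the Jacobi constant, $\tJ=-I$. This is why $[I_-,I_+]=[-\tJ_+,-\tJ_-]$. The reparametrization $ds/d\tau=-1+\mu\partial_G\Delta H_\ccirc$ is nonvanishing for $\mu$ small, so it maps orbits to orbits and preserves hyperbolicity of periodic orbits and transversality of intersections inside an energy level.

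Since $g$ is the new time and $\dot g=1$, the hyperbolic periodic orbit $\lb_\tJ$ becomes a $2\pi$-periodic (in $g$) orbit, parametrized as in \eqref{eq:PeriodicOrbitsReparam}. Its $G$-component is recovered from $H=0$. In the full system the variable $t$ evolves by $dt/ds$, which is independent of $t$, so $t\in\TT$ is a free angle. The set $\{(\ell_0(I,g),L_0(I,g),g,t,I)\}$ for fixed $I$ is therefore an invariant $2$-torus in $(g,t)$. The union over $I\in[I_-,I_+]$ is a $3$-dimensional cylinder $\Lambda_0$, analytic because $\lb_\tJ$ depends analytically on $\tJ$ by the Ansatz.

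For normal hyperbolicity I take the time-$2\pi$ map in $s$, which is the Poincaré map to $\{g=g_0\}$. Restricted to a fixed $I$ and to the $(\ell,L)$ directions, its linearization at $\Lambda_0$ is the monodromy of $\lb_\tJ$. That monodromy has eigenvalues $\lambda^{\pm1}$ with $|\lambda|>1$, and these depend continuously on $I$ over a compact interval, so they are uniformly bounded away from $1$. The tangential directions $(t,I)$ evolve at most polynomially: $I$ is fixed and $t$ is rotated by an $I$-dependent amount. This gives the dichotomy of Definition~\ref{def:NHIM} with uniform rates.

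The stable and unstable manifolds are then the unions over $I$ and $t$ of $W^{s,u}(\lb_\tJ)$. The two transverse primary homoclinic points of each $\lb_\tJ$ give, after sweeping in $g$ and $t$, the two channels $\CCC^1_0$ and $\CCC^2_0$. Transversality inside each plane $I=I_0$ follows directly from transversality in the Jacobi level, since $t$ is a trivial product direction.

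The main technical care is at the boundary of $[I_-,I_+]$, where the cylinder has boundary; there one works with a slightly extended interval or with the standard notion for manifolds with boundary. I would also check carefully that the $t$-direction is genuinely a neutral, center direction.
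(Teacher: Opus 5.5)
Your proposal is correct and follows the same reasoning as the paper. The paper gives no separate proof of this corollary and treats it as an immediate consequence of Ansatz~\ref{ans:NHIMCircular:bis} once time is reparametrized by $g$: the cylinder is the union over $I$ of the tori $\{(\ell_0(I,g),L_0(I,g))\}\times\TT_t$, and normal hyperbolicity and the two channels are inherited from the hyperbolicity and the two transverse homoclinic points of $\lb_\tJ$.

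Two small remarks. Since $\mu$ is fixed, what you actually need is that $\mu\,\pa_G\Delta H_\ccirc$ stays away from $1$ on the region $|L-3^{-1/3}|<42\mu$ visited by these orbits, which is already implicit in writing \eqref{def:Reduced:ODE:circular} at all. Your boundary worry does not arise here: $I$ is a first integral, so the boundary tori are themselves invariant.
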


Recall that the equation \eqref{def:Reduced:ODE:circular} coincides with the Hamiltonian equations of motion  associated to $K_\mathrm{circ}$ in \eqref{def:HamCirc:Reparam}. In the next two lemmas, we apply time dependent (i.e. $g$ dependent) symplectic changes of 
coordinates to Hamiltonian $K_\mathrm{circ}$. 

\begin{lemma}\label{lemma:ChangePO}
There exists an analytic $g$--time dependent symplectic change of coordinates
\[(\wh L,\wh\ell, I,\wh t, g)=\Psi(L,\ell,t,I,g),\] periodic in $g$  and of the form
\[
 \begin{split}
 \wh L&=L-L_0(I,g)\\
 \wh \ell&=\ell-\ell_0(I,g)
 \end{split}\qquad \begin{split}
 I&=I\\
 \wh t&=t+T(L,\ell,I,g)
  \end{split}
\]
such that the periodic orbit
\eqref{eq:PeriodicOrbitsReparam} is translated to
$(\wh L,\wh\ell, I)=(0,0,I)$ and the function $T$ satisfies
 \begin{equation}\label{def:0onPO}
  T(L_0(I,g),\ell_0(I,g),I,g)=0.
 \end{equation}
\end{lemma}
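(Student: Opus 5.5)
The plan is to build $\Psi$ from a $g$-dependent generating function of the second kind in the symplectic pairs $(L,\ell)$ and $(I,t)$, with $g$ playing the role of time. I take old angles $(\ell,t)$ and new actions $(\wh L, I)$ as the independent variables. The natural first choice is
\[
F(\ell,t,\wh L,I,g)=\bigl(\wh L+L_0(I,g)\bigr)\bigl(\ell-\ell_0(I,g)\bigr)+I\,t+\varphi(I,g),
\]
with $\varphi$ still to be chosen.

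The relations $L=\pa_\ell F$, $\wh\ell=\pa_{\wh L}F$, $I=\pa_t F$ and $\wh t=\pa_I F$ give, in order:
\begin{itemize}
\item $L=\wh L+L_0(I,g)$;
\item $\wh\ell=\ell-\ell_0(I,g)$;
\item the action $I$ is unchanged;
\item $\wh t=t+T$, where
\[
T(L,\ell,I,g)=\pa_I L_0(I,g)\,\bigl(\ell-\ell_0(I,g)\bigr)-L\,\pa_I\ell_0(I,g)+\pa_I\varphi(I,g).
\]
\end{itemize}
So $\Psi$ has exactly the stated form, and it is symplectic at each fixed $g$ because it comes from a generating function. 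The transformed Hamiltonian is $K_\mathrm{circ}+\pa_g F$, expressed in the new variables.

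The normalization \eqref{def:0onPO} then fixes $\varphi$. On the periodic orbit $L=L_0$ and $\ell=\ell_0$, so $T=-L_0\,\pa_I\ell_0+\pa_I\varphi$. I choose
\[
\varphi(I,g)=\int_{I_-}^{I}L_0(s,g)\,\pa_s\ell_0(s,g)\,ds ,
\]
which makes $T$ vanish on the orbit. Analyticity of $\Psi$ follows from the analyticity of $(\ell_0,L_0)$ in \eqref{eq:PeriodicOrbitsReparam}, which comes from Ansatz~\ref{ans:NHIMCircular:bis}. By construction the orbit $(L_0,\ell_0)$ is sent to $(\wh L,\wh\ell)=(0,0)$. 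Since this orbit is invariant under the flow of \eqref{def:Reduced:ODE:circular}, the point $(0,0,I)$ is invariant for the new, $g$-dependent system.

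The one delicate point, and the main obstacle, is periodicity in $g$ together with the angular nature of $\ell$ and $t$. The functions $L_0(I,g)$ and $\pa_I\ell_0(I,g)$ are genuinely $2\pi$-periodic in $g$, because the orbit closes after one period of the new time. The function $\ell_0$ itself may be a lift that winds, $\ell_0(I,g+2\pi)=\ell_0(I,g)+2\pi k$ with $k$ independent of $I$ by continuity. In that case $\wh\ell=\ell-\ell_0$ is still a well-defined angle, and $\pa_I\ell_0$, $\varphi$ and $T$ are all $2\pi$-periodic in $g$. The generating function $F$ itself is multivalued in $\ell$, but only its derivatives enter $\Psi$, so this causes no problem. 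I would state this explicitly and check that the derivative formulas are single-valued.
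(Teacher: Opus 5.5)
Your construction is correct and is essentially the paper's. The paper uses the time-one map of $\Gamma=L_0(I,g)\ell-\ell_0(I,g)L+Q(I,g)$ instead of a generating function, but symplecticity forces $\pa_L T=-\pa_I\ell_0$ and $\pa_\ell T=\pa_I L_0$, so after imposing \eqref{def:0onPO} both routes give the same map, $T=\pa_I L_0\,(\ell-\ell_0)-\pa_I\ell_0\,(L-L_0)$.

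One correction to your last paragraph concerns the term $\pa_I L_0\,(\ell-\ell_0)$. Because of it, the formula for $\wh t$ is not single-valued in the angle $\ell$, and when $\ell_0$ winds it is not $2\pi$-periodic in $g$ at fixed lifted $\ell$, so the check you propose would fail globally. As the paper remarks, this is harmless: $\Psi$ is only used near the saddle, where $\wh\ell$ is a real coordinate near $0$, and there $T=\pa_I L_0\,\wh\ell-\pa_I\ell_0\,(L-L_0)$ is genuinely $2\pi$-periodic in $g$.
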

\begin{proof}
The proof of this lemma is straightforward. Indeed, it is
enough to define a change of coordinates as a time-one map
of the Hamiltonian
\[
 \Gamma(L,\ell,I,g)=L_0(I,g)\ell-\ell_0(I,g)L+Q(I,g),
\]
for a suitable $Q$ so that \eqref{def:0onPO} is satisfied.
\end{proof}

Note that we are not requiring that this change of
coordinates keeps the $2\pi$-periodicity in $\wh\ell$.
This is not important, since the study in the
$(\wh L,\wh\ell)$--plane is local close to the saddle and
the associated local stable and local unstable
invariant manifolds.

Now that we have placed the periodic orbit at the origin,
we can apply the Moser normal form \cite{Moser56}. Note that $\wh I$ is
a first integral and, therefore, it can be treated as a parameter.
That is, we apply the Moser normal form for each value
$I_0\in [I_-,I_+]$.

\begin{lemma}\label{lemma:MoserNF}
Consider the Hamiltonian $K_\mathrm{circ}$  in
\eqref{def:HamCirc:Reparam} and the change of
coordinates $\Psi$ introduced in Lemma \ref{lemma:ChangePO}.
Then, there exists an  analytic  $g$--time dependent  symplectic change of 
coordinates
$(I,s,p,q, g)=\Upsilon(\wh L,\wh\ell,\wh I,\wh t, g)$ of
the form 
\[
 \begin{split}
 I&=I\\
 s&=\wh t+B(\wh L,\wh \ell,\wh  I,g)\\
 p&=P(\wh L,\wh \ell,\wh  I,g)\\
q&=Q(\wh L,\wh \ell,\wh  I,g),
  \end{split}
\]
such that
\[
 B(0,0,\wh  I,g)=0
\]
and
the
Hamiltonian $K_\mathrm{circ}\circ \Psi^{-1}\circ\Upsilon^{-1}$ is of the form
\begin{equation}\label{def:HamCircFinal}
\KK_0(I,p,q)= K_\mathrm{circ}\circ \Psi^{-1}\circ\Upsilon^{-1}(I,p,q)=E(I)+F(I,pq),
\end{equation}
where the frequency
\begin{equation}\label{def:nu}
\nu(I)=\pa_IE(I)
\end{equation}
 satisfies
\begin{equation}\label{def:Twist}
 \left|\nu(I)-1\right|<\frac{15}{2\pi}\cdot \mu
\end{equation}
and $F$ satisfies $F(I,pq)=\la(I)pq+\OO_2(pq)$, where $\la(I)$ is the 
strictly positive Floquet exponent of the  periodic orbit
$(p,q)=(0,0)$ given by Ansatz \ref{ans:NHIMCircular:bis}.
\end{lemma}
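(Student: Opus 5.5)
The plan is to apply Moser's analytic normal form for a hyperbolic periodic orbit of a two-degree-of-freedom Hamiltonian, treating $I$ as a parameter, and then repair the conjugate variable $\wh t$ so that the full change is symplectic in $(I,s,p,q)$.

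\emph{Step 1: reduce to one and a half degrees of freedom.} After composing with $\Psi$ from Lemma \ref{lemma:ChangePO}, the Hamiltonian $K_\mathrm{circ}\circ\Psi^{-1}$ does not depend on $\wh t$, because $K_\mathrm{circ}$ is $t$-independent. Hence $I$ is a first integral. For each fixed $I_0\in[I_-,I_+]$ we obtain a $g$-periodic Hamiltonian in $(\wh L,\wh\ell)$ with a hyperbolic periodic orbit at the origin. Equivalently, the time-$T$ map in $g$ (with $T=2\pi$) has a hyperbolic fixed point. Its multipliers are $e^{\pm 2\pi\la(I)}$, where $\la(I)>0$ is the Floquet exponent coming from Ansatz \ref{ans:NHIMCircular:bis}.

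\emph{Step 2: apply Moser's theorem with analytic dependence on $I$.} By \cite{Moser56}, there is an analytic symplectic change $(\wh L,\wh\ell)\mapsto(p,q)$, depending periodically on $g$, that brings the Hamiltonian to the form $\wt F(I,pq)$, with $\wt F(I,pq)=\la(I)pq+\OO_2(pq)$. One first removes the $g$-dependence of the linear part by Floquet theory; since the multipliers are positive reals, there is no period doubling, so a $2\pi$-periodic Floquet transformation exists. One then constructs the normalizing transformation, which converges because of Moser's theorem in the hyperbolic case.

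Moser's construction is by a convergent iterative scheme, and its homological equations are solved with divisors $\la(I)(j-k)+\im m$ for $j\neq k$ (here $m$ is the Fourier index in $g$). These divisors are bounded away from zero uniformly for $I$ in a compact complex neighbourhood of $[I_-,I_+]$, because $\la(I)$ stays bounded away from zero there. So the whole scheme depends analytically on the parameter $I$. This uniformity in $I$ is the step I expect to be the main technical obstacle: one must check that Moser's convergence estimates hold uniformly on a complex neighbourhood of the interval. I would handle it by running the proof on a complex strip and using compactness.

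\emph{Step 3: complete to a symplectic change in all variables.} The map $(\wh L,\wh\ell)\mapsto(P,Q)$ depends on $I$, so it is not symplectic in the full space unless $\wh t$ is corrected. Write it as generated by a $g$-dependent, $I$-dependent generating function $S(\wh\ell,p;I,g)$. Then include $I\,\wh t$ in the generating function, so that $s=\wh t+\pa_I S$. This defines $B$, up to adding a function of $(I,g)$, and we choose that function so that $B(0,0,I,g)=0$. This is possible because the origin is fixed by the transformation.

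The transformed Hamiltonian has the form $E(I)+F(I,pq)$. The term $E(I)$ is the value of the Hamiltonian on the periodic orbit, averaged appropriately. Indeed, the $I$-dependence of $S$ produces terms $\pa_g S$ evaluated at $p=q=0$, and these can be absorbed into $(I,g)$-dependent shifts that only affect $s$. The $g$-dependence of $\pa_g S$ restricted to the orbit is removed by the normalization at order $0$ in $(p,q)$, leaving a function $E(I)$ alone.

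\emph{Step 4: the estimate on $\nu$.} On $p=q=0$ we have $\dot s=\pa_I E(I)=\nu(I)$. The variable $s$ coincides with $\wh t$ on the orbit (since $B=0$ there), and $\wh t$ coincides with $t$ (since $T=0$ there). So $\nu(I)$ is the average rate of $t$ with respect to the new time $g$ along the periodic orbit. Over one period $2\pi$ in $g$, the orbit advances $t$ by $T_\tJ$, which gives $\nu=T_\tJ/2\pi$ up to orientation conventions. The bound \eqref{def:periodestimate:intro} then gives
\[
|\nu(I)-1|<\frac{15\mu}{2\pi}.
\]
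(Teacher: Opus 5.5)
Your route is the paper's route: a fibrewise Moser normal form with $I$ frozen, then a correction of $\wh t$ to make the change symplectic in all variables, then reading off $\nu$ from the period. The one step that does not work as written is Step 3. There you use the single free function of $(I,g)$ in the generating function twice: once to force $B(0,0,I,g)=0$, and once to remove the $g$-dependence of the $(p,q)$-independent part of the Hamiltonian. These two requirements are incompatible in general.

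Along the periodic orbit $\wh L=\wh\ell=0$ you have $s=t+B(0,0,I,g)$, because $T=0$ there. In the normal form $E(I)+F(I,pq)$ one has $ds/dg=\nu(I)$ on $p=q=0$. Hence
\[
\partial_g B(0,0,I,g)=\nu(I)-\frac{dt}{dg}\Big|_{\text{orbit}} .
\]
So $B(0,0,I,\cdot)\equiv 0$ would force $dt/dg=1/(-1+\mu\partial_G\Delta H_{\ccirc})$ to be constant along $\la_I$. This is false in general, which is also why Proposition \ref{prop:Melnikov} works with the time $\wt\la_I(\sigma)$ rather than $\nu(I)\sigma$. Once $E$ has been made $g$-independent, $B(0,0,I,\cdot)$ is determined up to an additive function of $I$ alone. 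It can therefore be normalized to vanish at one value of $g$ (e.g. on the section $\{g=0\}$, which is all that is used later), but not identically. The paper's one-line proof glosses over the same point when it asserts $\dot t=\dot s$ on the periodic orbits.

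The conclusion you need survives. The displayed identity shows that $s-t$ is $2\pi$-periodic in $g$ along the orbit. Integrating over one period gives $2\pi\nu(I)=\Delta t=T_\tJ$ (up to the orientation convention), and \eqref{def:periodestimate:intro} then yields \eqref{def:Twist}. So in Step 4 you should drop the claim that $s$ and $t$ coincide on the orbit and use only this averaged identity. That is in fact all your phrase ``average rate'' requires.

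A smaller point: the Ansatz only gives hyperbolicity. The positivity of the multipliers, which you invoke to get a real $2\pi$-periodic Floquet form with $F=\la pq+\dots$ and $\la>0$, is therefore an extra assumption. It is implicit in the statement of the lemma rather than derived.
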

\begin{proof}
This proof is a direct consequence of the Moser normal form. Indeed,
$K_\mathrm{circ}\circ \Psi^{-1}$ is $\wh t$ independent. Therefore, for fixed $\wh
I$, the Moser
normal form gives a change of coordinates $(p,q)=\Upsilon^I(\wh L,\wh\ell)$ which
transforms $K_\mathrm{circ}\circ \Psi^{-1}$ to $\KK_0$ and is
symplectic with respect to $(\wh L,\wh\ell)$ (it preserves
the symplectic form $d\wh L\wedge d\wh \ell$ treating $I$ as a constant).
To have a full symplectic change of coordinates
for varying $(I,\wh t)$, it is enough to choose a suitable $B$ to
define $s$.

Estimate \eqref{def:Twist} is a direct consequence of Ansatz
\ref{ans:NHIMCircular:bis}, since on the periodic orbits we have
$\dot t=\dot s=\pa_I E(I)=\nu(I)$ and, therefore, the period is
just  $2\pi\nu(I)$.
\end{proof}

%

\begin{corollary}\label{coro:NHIMCircular:MoserCoord}
Assume Ansatz~\ref{ans:NHIMCircular:bis} holds. Then the Hamiltonian \eqref{def:HamCircFinal} with $\mu=0.95387536\times10^{-3}$  has an analytic normally 
hyperbolic invariant $3$-dimensional cylinder
\begin{equation}\label{def:NHIMMoser}
\Lambda_0=\left\{(p,q)=(0,0),\ I\in [I_-,I_+],\ (s,g)\in\TT^2\right\},
\end{equation}
 which is foliated by $2$-dimensional invariant tori.

Moreover, in the constant invariant
planes $I=I_0$,  for every $I_0\in [I_-,I_+]$, 
the invariant manifolds of the cylinder,  $W^{s}( \Lambda_0)$ and $W^{u}( \Lambda_0)$, 
intersect transversally at two homoclinic channels $\CCC^1_{0}$ and
$\CCC^2_{0}$. Moreover, these channels intersected with $\{g=0\}$ are 
parameterized as
\begin{equation}\label{def:HomoChannels}
 (I,s,p,q)=( I,s,0,q^1(I)) \qquad \text{ and }\qquad  (I,s,p,q)=(I,s,0,q^2(I))
\end{equation}
for some smooth functions $q^i:[I_-,I_+]\to\RR$ and $s\in\TT$, respectively.
\end{corollary}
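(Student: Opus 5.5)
The plan is to transport Corollary \ref{coro:NHIMCircular} through the analytic changes of coordinates $\Psi$ and $\Upsilon$ of Lemmas \ref{lemma:ChangePO} and \ref{lemma:MoserNF}, and then to read off the claimed structure from the integrable normal form \eqref{def:HamCircFinal}. Since $\Upsilon\circ\Psi$ is an analytic, $g$-periodic, symplectic diffeomorphism on the relevant domain, it maps invariant objects to invariant objects. It also preserves normal hyperbolicity, because the rates are conjugation invariant up to bounded constants, and it preserves transversality of intersections.

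\textbf{Step 1: the cylinder.} By Lemma \ref{lemma:ChangePO}, the periodic orbits \eqref{eq:PeriodicOrbitsReparam} are sent to $(\wh L,\wh\ell)=(0,0)$. Since $P,Q$ in Lemma \ref{lemma:MoserNF} come from the Moser normal form, which fixes the saddle, they vanish at $(\wh L,\wh \ell)=(0,0)$. Hence $\Lambda_0$ becomes $\{p=q=0\}$, with $I\in[I_-,I_+]$ and $(s,g)\in\TT^2$.

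Invariance can also be checked directly from \eqref{def:HamCircFinal}. There $\dot p=-\pa_q\KK_0=-p\,\pa_{pq}F$ and $\dot q=q\,\pa_{pq}F$, so $\{p=q=0\}$ is invariant. On it $\dot I=0$, $\dot s=\nu(I)$ and $\dot g=1$, so each level $\{I=\text{const}\}$ is an invariant $2$-torus. Normal hyperbolicity follows from the linearization in $(p,q)$, which is $\mathrm{diag}(-\la(I),\la(I))$ with $\la(I)>0$ uniformly on the compact interval, while the tangential dynamics is a rigid rotation. Analyticity is inherited from Corollary \ref{coro:NHIMCircular} and from the analyticity of the coordinate changes.

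\textbf{Step 2: invariant manifolds and channels.} In the normal form, $pq$ is conserved, so $W^s(\Lambda_0)=\{p=0\}$ and $W^u(\Lambda_0)=\{q=0\}$ locally. The homoclinic channels $\CCC^i_0$ of Corollary \ref{coro:NHIMCircular} are carried to transverse intersections of the images of the global manifolds. To get the parameterization \eqref{def:HomoChannels}, I would follow each homoclinic orbit forward until it enters the local stable manifold $\{p=0\}$ in the Moser chart, and then restrict to the section $\{g=0\}$. Because the flow in $g$ is just time and the system is $\wh t$-independent, the channel is invariant under translations in $s$. On $\{g=0\}$ it is therefore the graph of a point $q^i(I)$ in the $(p,q)$-plane, independent of $s$, with $p=0$. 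Smoothness of $q^i$ in $I$ follows from the analytic dependence of the homoclinic points on $\tJ$ in Ansatz \ref{ans:NHIMCircular:bis}, since $I=-\tJ$.

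\textbf{Main obstacle.} The delicate point is making sure the homoclinic points actually lie inside the domain of the Moser chart, whose convergence region in $(p,q)$ may be small. The plan is to use the invariance of the flow: iterate the homoclinic point forward in time (which is $g$) by a multiple of $2\pi$ until it lies in the Moser neighborhood. This does not change the section $\{g=0\}$ modulo $2\pi$. A uniform choice over the compact interval $[I_-,I_+]$ then gives analytic functions $q^i(I)\neq0$, one for each of the two distinct channels.
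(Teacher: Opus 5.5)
Your route is the one the paper intends. The corollary is stated without a separate proof, as an immediate consequence of Corollary~\ref{coro:NHIMCircular} transported through the analytic symplectic changes $\Psi,\Upsilon$ of Lemmas~\ref{lemma:ChangePO} and~\ref{lemma:MoserNF}. The normal form \eqref{def:HamCircFinal} then makes invariance of $\{p=q=0\}$, the foliation by tori, normal hyperbolicity and the $s$-independence of the channels immediate.

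There is, however, a stable/unstable mix-up that you need to fix. Your own equations $\dot p=-p\,\pa_2F$, $\dot q=q\,\pa_2F$ are correct and match the paper's conventions (compare the factors $e^{-\Theta^iN^{ij}}$ on $p$ and $e^{\Theta^iN^{ij}}$ on $q$ in Theorem~\ref{thm:FormulasSMii:circ}). Under them $p$ contracts and $q$ expands, so locally $W^s(\Lambda_0)=\{q=0\}$ and $W^u(\Lambda_0)=\{p=0\}$, the opposite of what you wrote.

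As a consequence, the forward iterates of a homoclinic point enter the Moser chart along $\{q=0\}$. They give points of the form $(I,s,p,0)$, not the form \eqref{def:HomoChannels}. Compare the term $(p_0^i(I),0)$ in Theorem~\ref{thm:FormulasSMii:circ}, which is the image of the channel point under the global map. To obtain $(I,s,0,q^i(I))$ you must iterate the homoclinic point \emph{backward}, by multiples of $2\pi$ in $g$, until it lies on $W^u_{\mathrm{loc}}=\{p=0\}$ inside the chart. On that axis $pq=0$, so the Poincaré map acts as $q\mapsto e^{2\pi\la(I)}q$. Hence a single backward iterate count can be chosen uniformly over the compact $I$-interval, and $q^i(I)$ is analytic. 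With this correction your argument is complete.

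A minor point: on $\Lambda_0$ the flow $s\mapsto s+\nu(I)g$ is a twist, not a rigid rotation, so tangential derivatives grow linearly in time. This growth is still subexponential, so normal hyperbolicity is unaffected.
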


Corollaries \ref{coro:NHIMCircular} and \ref{coro:NHIMCircular:MoserCoord}
give the existence of a normally hyperbolic invariant cylinder with
transverse homoclinic channels. Classical perturbation theory ensures the
persistence of such structure for $e_0>0$ small enough.
To analyze the perturbed problem, we start by expressing the elliptic  Hamiltonian
perturbation $\Delta K_\eell$ in \eqref{def:HamEll:Reparam} in the new coordinates. We define
\begin{equation}\label{def:EllipticPerturbMoser}
 \Delta \KK_\eell=\Delta K_\eell\circ \Psi^{-1}\circ\Upsilon^{-1}
\end{equation}
where $\Psi$ and $\Upsilon$ are the symplectic changes of coordinates introduced in Lemmas
\ref{lemma:ChangePO} and 
\ref{lemma:MoserNF} respectively.
We
 define  the transformed elliptic Hamiltonian
\begin{equation}\label{def:EllipticMoser}
\KK=K\circ \Psi^{-1}\circ\Upsilon^{-1}=\KK_0+\mu
e_0 \Delta \KK_\eell
\end{equation}
(see \eqref{def:HamCircFinal}). Classical perturbative arguments imply that this Hamiltonian  has a normally
hyperbolic invariant
cylinder $\Lambda_{e_0}$, which is $e_0$-close to the invariant cylinder
$\Lambda_0$ in \eqref{def:NHIMMoser}.

Analogously, we can define the cylinder for the Poincar{\'e} map
$\PP_{e_0}$ associated to this Hamiltonian and the section $\{g=0\}$. It is defined by
$\wt\Lambda_{e_0}=\Lambda_{e_0}\cap \{g=0\}$. (Recall that changes performed
in Lemmas \ref{lemma:ChangePO} and \ref{lemma:MoserNF} have not modified $g$ and
thus the section is defined independently of the system of coordinates).


By Corollary \ref{coro:NHIMCircular}, for $I\in [I_-,I_+]$,  the Poincar\'e map $\PP_0$ has transverse homoclinic channels 
\begin{equation}\label{def:channelsPoincarecirc}
\wt\CCC_{0}^i=\CCC_{0}^i\cap\{g=0\}, \qquad\ i=1,2.
\end{equation}
By classical perturbative arguments, for
 $e_0>0$ small enough,
the elliptic problem also possesses 
transverse homoclinic channels
$\wt\CCC^i_{\ee},\ i=1,2$ 
which are $\OO(e_0)$-close to $\wt\CCC_{0}^i$.

\begin{theorem}\label{th:Elliptic:NHIM}
Assume Ansatz~\ref{ans:NHIMCircular:bis} holds.
Let $\PP_{e_0}$ be the Poincar{\'e} map associated to 
the Hamiltonian $\KK$ in \eqref{def:EllipticMoser} and the section $\{g=0\}$.
For any $\de>0$, there exists $e_0^\ast>0$ such that for 
$e_0\in (0,e_0^\ast)$ the map $\PP_{e_0}$  has a  normally  hyperbolic  
weakly invariant manifold  $\wt\Lambda_{e_0}$, which is $e_0$-close in 
the $C^1$-topology to the unperturbed cylinder  
$\wt\Lambda_0=\Lambda_0\cap \{g=0\}$ of  the circular problem and 
can be written as a graph over it. Namely, there exists a function 
$\GG_{e_0}: [I_-+\de,I_+-\de]\times\TT\rightarrow \RR^3\times\TT$ such that
  \[
  \wt\Lambda_{e_0}=\left\{ \GG_{e_0}(I,s):(I,s)\in[I_-+\de,I_+-\de]\times
\TT\right\}.
  \]

Moreover, in the region $I\in [I_-+\de,I_+-\de]$,  the invariant
manifolds $W^{u}(\wt\Lambda_{e_0})$ and 
 $W^{s}(\wt\Lambda_{e_0})$ intersect transversally
along homoclinic channels $\wt\CCC^i_{\ee},\ i=1,2$,
which are $\ee$-close to the homoclinic channels
$\wt\CCC^i_{0},\ i=1,2$ respectively.
\end{theorem}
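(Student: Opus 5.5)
The plan is to view $\PP_{e_0}$ as a $C^1$-small perturbation of $\PP_0$ and invoke the persistence theory of normally hyperbolic invariant manifolds with boundary (Fenichel, Hirsch–Pugh–Shub, in the weakly invariant form of Delshams–de la Llave–Seara for manifolds with boundary). Then transversality of the homoclinic channels is handled by a separate open-condition argument.

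First I would fix the unperturbed object. By Corollary~\ref{coro:NHIMCircular:MoserCoord}, in the coordinates $(I,s,p,q)$ the cylinder $\wt\Lambda_0=\{p=q=0\}$ is invariant under $\PP_0$. The dynamics on $\wt\Lambda_0$ is $(I,s)\mapsto(I,s+2\pi\nu(I))$, which is an isometry in $s$ and the identity in $I$. The normal dynamics satisfies $p\mapsto e^{-2\pi\la(I)}p+\dots$ and $q\mapsto e^{2\pi\la(I)}q+\dots$, where $\la(I)>0$ is bounded below on the compact interval $[I_-,I_+]$. Hence the normal contraction and expansion rates dominate the tangential ones by a uniform gap, and $\wt\Lambda_0$ is $r$-normally hyperbolic for every $r$.

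Next comes the perturbation step. Since $\KK=\KK_0+\mu e_0\Delta\KK_\eell$ is analytic, the flow up to the section $g=2\pi$ (a time of order one) depends smoothly on $e_0$, so $\|\PP_{e_0}-\PP_0\|_{C^1}=\OO(e_0)$ on a neighborhood of $\wt\Lambda_0$. Because $\wt\Lambda_0$ has boundary at $I=I_\pm$, I would modify $\PP_{e_0}$ by a bump function near $I\in[I_-,I_-+\de/2]\cup[I_+-\de/2,I_+]$, or equivalently extend the cylinder to $I\in\RR$ with the circular dynamics, so that it becomes boundaryless and overflowing. The persistence theorem then gives an invariant manifold of the modified map, written as a graph $\GG_{e_0}$ over $\wt\Lambda_0$ with $\|\GG_{e_0}-\mathrm{Id}\|_{C^1}=\OO(e_0)$. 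Restricting to $[I_-+\de,I_+-\de]$, where the modification is inactive, yields a locally (weakly) invariant manifold for $\PP_{e_0}$ itself. This is exactly why $\de$ appears and why $e_0^\ast$ depends on it: the non-uniqueness coming from the cutoff is tolerated by the word \emph{weakly}. The stable and unstable manifolds $W^{s,u}(\wt\Lambda_{e_0})$ are also $C^1$ $\OO(e_0)$-close to those of $\wt\Lambda_0$ on compact pieces, by the same theory applied to the stable and unstable foliations.

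Finally, I would handle the homoclinic channels. For $e_0=0$ the manifolds intersect transversally along $\wt\CCC^i_0$, given by \eqref{def:HomoChannels}; this is transversality within each level $I=I_0$, and since $I$ is also a coordinate it gives transversality in the full section. Take compact pieces of $W^{u}$ and $W^{s}$ containing a neighborhood of $\wt\CCC^i_0$ for $I\in[I_-+\de,I_+-\de]$. Transverse intersection of $C^1$ submanifolds is open under $C^1$-small perturbations, so for $e_0$ small the perturbed pieces intersect transversally along manifolds $\wt\CCC^i_{e_0}$ that are $\OO(e_0)$-close to $\wt\CCC^i_0$. The implicit function theorem, applied to the difference of graph parametrizations in the $(p,q)$ directions, gives the parametrization explicitly.

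I expect the main obstacle to be the rigorous bookkeeping for the manifold with boundary: making the cutoff compatible with the twist dynamics and with the spectral gap, and ensuring the $C^1$ closeness of the local invariant manifolds is uniform up to $I_\pm\mp\de$. The rest is standard once $\wt\Lambda_0$ is shown to be normally hyperbolic with uniform rates.
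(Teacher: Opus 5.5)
Your proposal is correct and follows the same route as the paper, which gives no detailed argument and simply appeals to classical persistence of the normally hyperbolic cylinder and to the openness of transverse intersections. The paper obtains the cylinder for the flow and then intersects it with $\{g=0\}$; this is equivalent to your direct treatment of the Poincar\'e map with a cutoff near $I=I_\pm$. You also rightly upgrade transversality within each level $I=I_0$ to transversality in the full section before perturbing.

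One small correction: the inner map $(I,s)\mapsto(I,s+2\pi\nu(I))$ is a twist map, not an isometry, so its differential grows linearly in $n$ through the shear term $2\pi\nu'(I)$. This growth is still subexponential, so the rate gap you need holds and nothing else in your argument changes.
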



\subsection{The separatrix map for the circular and   elliptic problems}

We now define the separatrix maps associated to this normally hyperbolic
invariant manifold and the attached homoclinic channels.
Consider open domains $\mathcal U^i\subset\{g=0\}$ close the  homoclinic channels $\wt\CCC^i_{\ee}$. Then we define the return time functions $N^{ij}:\UU^i\to \ZZ$ as
\begin{equation}\label{def:ReturnTime}
 N^{ij}(z)=\min\left\{n>0: \PP_{e_0}^n(z) \in \UU^j \right\}.
\end{equation}
Note that $N^{ij}$ may be infinite for some points in $\UU^i$. We define the  open  sets 
\[
\UU^{ij}=\left\{z\in\UU^i: N^{ij}(z)<\infty\right\},\qquad i,j\in\{1,2\},
\]
and the separatrix maps
\begin{equation}\label{def:SMij} \SM_{e_0}^{ij}:\UU^{ij}\to \UU^j\qquad \text{
as }\qquad  \SM_{e_0}^{ij}(z)=\PP_{e_0}^{N^{ij}(z)}(z).
\end{equation}
Note that $N^{ij}$ is locally constant. Thus, in suitable open subsets of $\UU^{ij}$ these maps are smooth.

To derive formulas for the separatrix maps in Theorems
\ref{thm:FormulasSMii:circ} and \ref{thm:FormulasSMii} below, we restrict them
to strictly smaller domains $\UU_\rho^{ij}\subset\UU^{ij}$ defined as follows.
Fix $\de>0$ small and  $\kk>1$,  we define the subsets
\begin{equation}\label{def:SubDomainUij}
 \UU_\rho^{ij}\subset \left\{
(I,s,p,q)\in  \UU^{ij} :   I\in [I_-+\de,I_+-\de],\ 
s\in\TT,\
\rr^\kk\leq  p\leq \rr,\
|q-q^i(I)|\leq \rr\right\},\,\, i,j=1,2,
\end{equation}
 where $q^i$ are the homoclinic channels introduced in
\eqref{def:HomoChannels}. 


%
%
%
%
We give the formulas for the separatrix map $\SM^{ij}_{e_0}$ in two steps.
First, in Theorem \ref{thm:FormulasSMii:circ}, we provide them for
$\SM_{e_0}^{ij}$ with $e_0 = 0$ (that is for the RPC3BP). Then, in Theorem
\ref{thm:FormulasSMii}, we give formulas for $\SM_{e_0}^{ij}$ for $0<e_0\ll 1$.

Before, stating these theorems we introduce some notation.

\begin{notation}\label{not:harmonics} For every smooth function $f$ that is
$2\pi$-periodic in $t$, we define $\NNN (f )$ as the set of integers $k\in\ZZ$
such that the $k$-th harmonic of $f$ (possibly depending on other variables) is
non-zero.
\end{notation}

\begin{theorem}\label{thm:FormulasSMii:circ}
Assume  Ansatz \ref{ans:NHIMCircular:bis} and fix $\de>0$, $\kk>1$. There 
exists $0<\rr\ll 1$
such that, on the domain $ \UU_\rho^{ij}$ in \eqref{def:SubDomainUij},
%
%
the separatrix map $\SM_0^{ij}:  \UU_\rho^{ij}\to \UU^j$  in
 \eqref{def:SMij}
is well defined and its components 
$\SM_0^{ij}=(\FF^{ij}_{I,0},\FF^{ij}_{s,0},\FF^{ij}_{p,0},\FF^{ij}_{q,0})$ 
have the following form.
%
\begin{itemize}
\item The $(I,s)$ components are given by
\[
\begin{pmatrix}
\FF_{I,0}^{ij}\\\FF_{s,0}^{ij}\end{pmatrix}=\begin{pmatrix}I\\
s+\al_i(I)+(\nu(I)+\pa_I F(I,pq))N^{ij}(I,p,q)+ D^i(I,p,q)\end{pmatrix},
\]
where  $F$ and $\nu$ are the functions introduced in
Lemma \ref{lemma:MoserNF}, $N^{ij}$ is the return time introduced in 
\eqref{def:ReturnTime}, which is locally constant, and satisfies 
\[
 N^{ij}(I,p,q)\sim |\log\rr|
\]
and $D^i$ is of the form
\[
D^i(I,p,q)=\, a^i_{10}(I)p+
a_{01}^i(I)\left(q-q^i(I)\right)+\OO_2\left(p,q-q^i(I)\right).
 \]
\item The $(p,q)$ components are of the form
\[
\begin{split}
& \begin{pmatrix}\FF_{p,0}^{ij}\\\FF_{q,0}^{ij}\end{pmatrix}=
\\
& \,\begin{pmatrix}
  e^{-\Theta^i(I,p,q)N^{ij}} &0\\
 0 &e^{\Theta^i(I,p,q)N^{ij}}
 \end{pmatrix}\left[\begin{pmatrix}p_0^i(I)\\0\end{pmatrix}+ \begin{pmatrix}
 b_{10}^i(I) &b_{01}^i(I)\\
 c_{10}^i(I) &c_{01}^i(I)
 \end{pmatrix}
\begin{pmatrix}
  p\\ q-q^i(I)
 \end{pmatrix}+\OO_2\left(p,q-q^i(I)\right)\right],
 \end{split}
\]
with 
\begin{equation}\label{def:Theta}
 \Theta^i(I,p,q)=\pa_2 F\left(I^i_*(I,p,q), 
p_*^i(I,p,q)q_*^i(I,p,q)\right),\end{equation}
where $I_*^i, p_*^i, q_*^i$ are the images of the gluing map introduced in \cite{paperNHIL23} (see
Lemma~7.5)  and $F$ is given by Lemma 
\ref{lemma:MoserNF}.
\end{itemize}
Moreover,
%
the functions $a_*^i$, $b_*^i$, $c_*^i$ and $p_0^i$  satisfy
\begin{equation}\label{eq:GluingSymplectic}
 b_{10}^ic_{01}^i-b_{01}^ic_{10}^i=1,\quad a_{10}^i=c_{10}^i\pa_Ip_0^i,\quad
a_{01}^i=c_{01}^i \pa_Ip_0^i
\end{equation}
and
\begin{equation}\label{eq:GluingTransversality}
 c_{01}^i\neq 0.
\end{equation}
\end{theorem}

\begin{theorem}\label{thm:FormulasSMii}
Assume Ansatz \ref{ans:NHIMCircular:bis} and fix $\de>0$, $\kk>1$,  $M>0$. 
There exists $0<\rr\ll 1$ and $e_0^*>0$
such that for any $e_0\in (0,e_0^ *)$, there exists a system of
coordinates
$(\wh I,\wh s,\wh p,\wh
q)$ on
\[
 \mathcal D=\left\{(I,s,p,q,g): I\in [I_-+\de,I_+-\de], (s,g)\in\TT^2,
|p|,|q|\leq M, |pq|\leq\rr\right\},
\]
satisfying
\[
(\wh I,\wh s,\wh p,\wh q)= (I,s,p,q)+\OO(e_0),
\]
such that, on the domain  $\UU_\rho^{ij}$ introduced in \eqref{def:SubDomainUij},
which satisfies $\UU_\rho^{ij}\subset \mathcal D$,
%
%
the separatrix map $\SM_{e_0}^{ij}:  \UU_\rho^{ij}\to \UU^j$  in
 \eqref{def:SMij}
is well defined.
Moreover, dropping the hats in the coordinates,
 the separatrix map
$\SM_{e_0}^{ij}=(\FF^{ij}_{I,e_0},\FF^{ij}_{s,e_0},\FF^{ij}_{p,e_0},\FF^{ij}
_{q,e_0})$ is as follows.
%
\begin{itemize}
\item The $(I,s)$ components are given by\footnote{In the estimates of the errors below, note that the power in the logarithms could be more accurate. Indeed, the $C^0$ norm of the errors is $\OO(e_0^3\log \rr)$ and the $C^1$ norm is $\OO(e_0^3\log^2 \rr)$. Since, we consider $\rr$ fixed and take $e_0$ arbitrarily small, these logarithms do not play any role.}
\[
\begin{pmatrix}\FF^{ij}_{I,e_0}\\
\FF^{ij}_{s,e_0}\end{pmatrix}=\begin{pmatrix}\FF^{ij}_{I,0}+e_0
\FF^{ij}_{I,1}+e_0^2
\FF^{ij}_{I,2}+\OO_{C^2}\left(e_0^3\log^3\rr\right)\\
\FF^{ij}_{s,0}+e_0
\FF^{ij}_{s,1}+\OO_{C^2}\left(e_0^2\log^3\rr\right)
\end{pmatrix},
\]
where $\FF^{ij}_{I,0}$, $\FF^{ij}_{s,0}$ are the functions introduced in
Theorem \ref{thm:FormulasSMii:circ} and
\[
\begin{split}
\FF^{ij}_{I,1}&=\MM^{I,1}_i(I,s)+\MM^{I,1}_{i,pq}(I,s,p,q)\\
\FF^{ij}_{I,2}&=\MM^{I,2}_i(I , s)+\MM^{I,2}_{i,pq}(I, s,p,q)\\
\FF^{ij}_{s,1}&=\MM^{s,2}_i(I,s)+\MM^{s,1}_{i,pq}(I,s,p,q),
\end{split}
\]
where
$\MM^{*,*}_i$ are $C^2$ functions which satisfy
\[
\MM^{I,l}_i=\OO_{C^2}(1),\quad  \MM^{s,1}_i=\OO_{C^2}(\log\rr), \quad
\MM^{I,l}_{i,pq}=pq\OO_{C^2}(1),\quad  
\MM^{s,1}_{i,pq}=pq\OO_{C^2}(\log\rr),
\]
and
\[
\NNN(\MM^{z,1}_{i})=\NNN(\MM^{z,1}_{i,pq})=\{\pm 1\},\,\,\,z=I,s\quad
\text{and}\quad \NNN(\MM^{I,2}_i)=\NNN(\MM^{I,2}_{i,pq})=\{0,\pm 2\}.
\]
\item The $(p,q)$ components are of the form
%
\[
\begin{pmatrix}\FF^{ij}_{p,e_0}\\
\FF^{ij}_{q,e_0}\end{pmatrix}=\begin{pmatrix}\FF^{ij}_{p,0}+e_0
\FF^{ij}_{p,1}+\OO_{C^2}\left(e_0^2\log\rr\right)\\
                                           \FF^{ij}_{q,0}+e_0
\FF^{ij}_{q,1}+\OO_{C^2}\left(e_0^2\log\rr\right)
                                          \end{pmatrix},
\]
where $\FF^{ij}_{p,0}$, $\FF^{ij}_{q,0}$ are the functions introduced in
Theorem \ref{thm:FormulasSMii:circ}
and the functions $\FF^{ij}_{p,1}$, $\FF^{ij}_{q,1}$ satisfy
$\FF^{ij}_{p,1},\FF^{ij}_{q,1}=\OO_{C^2}(\log\rr)$ and
$\NNN(\FF^{ij}_{p,1})=\NNN(\FF^{ij}_{q,1})=\{\pm 1\}$.
\end{itemize}
\end{theorem}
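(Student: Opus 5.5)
We propose to prove Theorem \ref{thm:FormulasSMii} by a perturbative extension of the construction behind Theorem \ref{thm:FormulasSMii:circ}. The separatrix map is split into a \emph{local map} near the cylinder $\Lambda_{e_0}$ and a \emph{global map} along the homoclinic channel $\wt\CCC^i_{e_0}$, and each piece is expanded in powers of $e_0$. The first step is to build the coordinates $(\wh I,\wh s,\wh p,\wh q)$ on $\mathcal D$ by normalizing $\KK=\KK_0+\mu e_0\Delta\KK_\eell$ near the cylinder. We would apply a near-identity symplectic change, $\OO(e_0)$-close to the identity and generated by solving homological equations for the $(p,q)$-dependent part of $\Delta\KK_\eell$. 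After this change, $\Lambda_{e_0}$ is straightened to $\{p=q=0\}$ and its invariant manifolds become $\{p=0\}$ and $\{q=0\}$. The transformed Hamiltonian then has the form $E(I)+F(I,pq)+\mu e_0 R_1(I,s,pq,g,t)+\mu e_0^2R_2+\OO(e_0^3)$, with the remainder divisible by $pq$ away from $\Lambda_{e_0}$ up to the retained terms. We cannot remove the dependence on the angles $(s,g)$ on the cylinder, because it carries the Melnikov information.

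The harmonic structure comes from tracking the dependence on the physical time $t$. The function $\Delta K_\eell$ is the $e_0$-derivative of the perturbation caused by $q_0(t,e_0)$, and $q_0(t,e_0)=(\cos t,\sin t)+e_0(\ldots)$, where the correction has harmonics $0,\pm2$ in $t$ relative to the rotating frame. In the rotating Delaunay variables the dependence appears through $e^{\pm \im t}$ multiplying functions of the remaining variables. Thus the order-$e_0$ terms have harmonics $\{\pm1\}$ in $t$ (equivalently in $s$, since $s=\wh t+B$ and $t$ advances linearly along orbits). The order-$e_0^2$ terms are quadratic in first-order quantities and therefore have harmonics $\{0,\pm2\}$. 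This gives the claimed sets $\NNN(\cdot)$.

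The local map is handled as a Shilnikov-type passage. In the normalized coordinates we integrate the equations for a time $N^{ij}\sim|\log\rr|$. The $(p,q)$ dynamics is hyperbolic with rate $\Theta^i$, and $\dot I=\OO(e_0 pq)+e_0\pa_s R_1|_{pq=0}$. Integrating the cylinder term over $N$ iterates gives contributions that are bounded uniformly in $N$. The reason is that the inner dynamics is a near-identity shift $s\mapsto s+\nu(I)$ with $\nu\approx1$, and we use the non-resonance $0<|T_\tJ-2\pi|<\pi/2$ to solve the cohomological equation on the cylinder. This is why $\MM^{I,l}_i=\OO(1)$. The $s$-component, by contrast, picks up $\OO(N)=\OO(\log\rr)$ from the variation of the frequency. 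The $pq$-terms accumulate with the factor $pq$, giving $\MM_{i,pq}$.

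The global map is a finite-time flow from a neighbourhood of $\{q=0\}$ to a neighbourhood of $\{p=0\}$ along the channel. Its $e_0$-expansion comes from variational equations. The first-order term in $I$ is a Melnikov integral $\int \{I,\Delta\KK_\eell\}$ along the unperturbed homoclinic orbit, which converges because of hyperbolic decay. The second-order term comes from the second variational equation.

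Finally, we would compose the global map with the local map and expand, keeping track of $C^2$ norms. Derivatives of the local passage with respect to $(p,q)$ each cost a factor $\log\rr$, which produces the $\log^3\rr$ in the remainders. The gluing identities \eqref{eq:GluingSymplectic} and the transversality condition \eqref{eq:GluingTransversality} are inherited from Theorem \ref{thm:FormulasSMii:circ}.

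We expect the main obstacle to be the second-order computation of $\FF^{ij}_{I,2}$ with an error of genuinely $\OO(e_0^3)$, uniform in the return time $N\sim|\log\rr|$. The natural Taylor expansion produces cross terms between the first-order deviation of the orbit and the first-order perturbation, and these grow linearly in $N$ along the cylinder. To control them we would first perform the averaging/normal form on the cylinder to second order in $e_0$, using the non-resonance of $\nu$. This removes the $s$-dependent terms of order $e_0$ and $e_0^2$ from the inner Hamiltonian, except for their averages of harmonic $0$. Only after this would we do the local passage, so that the secular growth appears only in the $s$-component.
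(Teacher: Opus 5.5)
The paper does not prove this theorem here; it defers the proof to \cite{paperNHIL23}. Your architecture is the one the rest of the paper relies on (cf.\ the gluing map in Theorem~\ref{thm:FormulasSMii:circ} and the $B_i^{\inn}/B_i^{\out}$ splitting in Proposition~\ref{prop:Melnikov}): coordinates straightening $\Lambda_{e_0}$ and its invariant manifolds, a Shilnikov-type local passage composed with a gluing map along the channel, and harmonic bookkeeping via the $s$-equivariance of the circular flow. However, there is a genuine gap in how you treat the inner dynamics at \emph{first} order.

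You assert that the $s$-dependence of the perturbation on the cylinder cannot be removed. You then argue that solving the cohomological equation makes the local contribution to the $I$-component bounded in $N$. Boundedness is not what the statement asks for. It requires $\FF^{ij}_{I,1}=\MM^{I,1}_i(I,s)+\MM^{I,1}_{i,pq}$, with $\MM^{I,1}_i$ depending on $i$ only.

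Suppose the first-order term $\pa_s\Delta\KK_\eell|_{\Lambda_0}\neq 0$ is kept. Then the passage near $\Lambda$ contributes $e_0\bigl[\Psi(I,s'+\Delta s)-\Psi(I,s')\bigr]$. Here $\Psi$ solves the cohomological equation on $\{g=0\}$, $s'$ is the angle after the gluing map, and $\Delta s$ is the phase gained by $s$ during the passage. This term carries no factor $pq$. Since $\nu(I)\neq 1$ by \eqref{def:periodestimate:intro}, $\Delta s$ modulo $2\pi$ changes with $N^{ij}$, so the term depends on $j$ and on the component of $\UU^{ij}_\rr$. It therefore does not have the claimed form.

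The same issue breaks your global-map step. The Melnikov integrand $\{I,\Delta\KK_\eell\}$, which is $\pa_s\Delta\KK_\eell$ up to sign, does not decay along the homoclinic orbit. It becomes asymptotic to a non-zero quasi-periodic function along $\la_I$, so its integral does not converge ``by hyperbolic decay''. This is exactly why Proposition~\ref{prop:Melnikov} subtracts the periodic-orbit integrand shifted by $\alpha_i^\pm$ and adds $B_i^\inn$, which has the divisor $1-e^{\im 2\pi\nu(I)}$.

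The repair is the normalization you only invoke in your last paragraph. It must be built into $(\wh I,\wh s,\wh p,\wh q)$ from the start, and it is needed already at order $e_0$, not only to control secular growth at order $e_0^2$. Concretely:
\begin{itemize}
\item Use a generating function $S(I,s,g)$ to remove the $s$-dependence of the $pq$-free part of the Hamiltonian at orders $e_0$ and $e_0^2$. The divisors are $k\nu(I)+l$ with $k=\pm1,\pm2$, and they are non-zero because $0<|T_\tJ-2\pi|<\pi/2$. The $(k,l)=(0,0)$ part is $s$-independent and harmless. The harmonic structure is preserved because the unperturbed Hamiltonian is $s$-independent.
\item In these coordinates, $\pa_s$ of the $pq$-free Hamiltonian vanishes on $\Lambda$ up to $\OO(e_0^3)$. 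Hence $I$ changes only by $\OO(e_0\,pq)+\OO(e_0^3N^{ij})$ during the passage.
\item The $pq$-free parts of $\FF^{ij}_{I,1}$ and $\FF^{ij}_{I,2}$ then come entirely from the finite-time gluing map and are independent of $N$.
\item The Melnikov integrand now does decay exponentially.
\item The Melnikov information you feared losing reappears as the coboundary $\Psi$ evaluated at the two ends of the excursion, which is $B_i^\inn$.
\end{itemize}
So drop the claim that the angular dependence on the cylinder cannot be removed.

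Two smaller points. First, the $\{0,\pm2\}$ harmonics at order $e_0^2$ also come from the $e_0^2$-coefficient of the Hamiltonian itself, i.e.\ the second-order term of $q_0(t,e_0)$, not only from products of first-order terms. Second, the first-order correction of $q_0$ has harmonics $\pm1$ in the rotating frame; in complex notation it is $e_0(e^{\im t}/2-3e^{-\im t}/2)$. The set $\{0,\pm2\}$ describes it in the inertial frame.
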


The proofs of these theorems can be found in \cite{paperNHIL23}.

For our
analysis we need explicit formulas for the functions $\MM_i^{I,1}$ and
$\al_i(I)$. Moreover, we need to express these functions in terms of the
original system
\eqref{def:Reduced:ODE}. Indeed, they will be defined through the functions
$\Delta H_\ccirc$ and $\Delta H_\eell$ introduced in
\eqref{def:HamDelaunayNonRot}. It can be checked (see \cite{FejozGKR15}) that
they satisfy
\begin{equation}\label{def:HarmonicsHams}
 \NNN\left(\Delta H_\ccirc\right)=\{0\}\qquad\text{ and }\qquad\NNN\left(\Delta
H_\eell |_{e_0=0} \right)=\{\pm 1\}.
\end{equation}

We need to introduce some more notation. Consider system
\eqref{def:Reduced:ODE:circular}. Since the right hand side does
not depend on $t$, one can consider it for the coordinates $(\ell, L, g, G)$
 treating $I$ as a constant. Denote by $\Phi^\ccirc_0$ its flow. Ansatz
\ref{ans:NHIMCircular:bis} implies that, for each
value $I\in [I_-, I_+]$,  this flow has a periodic orbit with two transverse
homoclinic orbits. We denote the time-parameterization of this periodic orbit
by $\la_I(s)$ and those of the two
homoclinic orbits by $\ga_I^i(s)$ with $i=1,2$. Moreover, since $\dot t$ in
\eqref{def:Reduced:ODE:circular} only depends on $(\ell, L, g, G, I)$, it can be
integrated on the periodic orbit to obtain
\[
 t=t_0+\wt\la_I(s),\qquad \wt\la_I(s)=\int_0^s\frac{1}{-1+\mu\pa_G\Delta
H_\ccirc(\la_I(\sigma))}d\sigma
\]
and analogously on the homoclinic orbits to obtain  $t=t_0+\wt\ga^i_I(s)$.

\begin{proposition}\label{prop:Melnikov}
The functions $\alpha_i(I)$ and $\MM_i^{I,1}(I,s)$, introduced in  Theorems \ref{thm:FormulasSMii:circ} and 
\ref{thm:FormulasSMii} respectively, satisfy the following.
\begin{itemize}
\item $\alpha_i(I)$ can be written as $ \alpha_i(I)= \alpha^+_i(I)-
\alpha^-_i(I)$
with
\begin{equation}\label{def:omega}
\alpha^\pm_i(I)
=\mu\lim_{N\to\pm\infty}\left(\int_0^{2\pi N}\frac{\left(\pa_G
\Delta H_\ccirc\right)\circ \ga_I^i(\sigma)}{-1+\mu \left(\pa_G
\Delta H_\ccirc\right)\circ \ga_I^i(\sigma)}d\sigma+2\pi N\nu(I)\right)
\end{equation}
where $\nu$ is the function defined in \eqref{def:nu}.
\item The function  $\MM_i^{I,1}(I,s)$ satisfies
$\NNN(\MM_i^{I,1}(I,s))=\{\pm 1\}$ and can be written as
\[
\MM_i^{I,1}(I,s)=B_i(I)e^{\im s}+\ol{B_i}(I)e^{-\im s}
\]
with $B_i=B_i^\inn+B_i^\out$ where
%
\begin{align}\label{def:Melnikovs}
 B_i^\inn(I)=&\im\mu\frac{1-e^{\im\alpha_i
(I)}}{1-e^{\im 2\pi  \nu(I)}}\int_0^{2\pi}
\frac{\Delta H_\eell^{1,+}\circ\la_I(\sigma)}{-1+\mu \pa_G\Delta
H_\ccirc\circ\la_I(\sigma)}
e^{\im\wt\la_I(\sigma)}d\sigma.\\
 B_i^\out(I)=&- \im\mu\lim_{T\rightarrow+\infty}\int_0^T\left(\frac{\Delta H^{1,+}_\eell\circ\gamma_I^i(\sigma)}{-1+\mu\pa_G\Delta H_\ccirc\circ\gamma_I^i(\sigma)}e^{ \im\wt\gamma_I^i(\sigma)}\right.\nonumber\\
    &\qquad\qquad\qquad\left.-\frac{\Delta
H_\eell^{1,+}\circ\la_I(\sigma)}{-1+\mu\pa_G\Delta
H_\ccirc\circ\la_I(\sigma)}e^{
\im\left(\wt\la_I(\sigma)+\alpha^+_i(I)\right)}\right) d\sigma\\
    & +\im\mu\lim_{T\rightarrow-\infty}\int_0^T\left(\frac{\Delta H_\eell^{1,+}\circ\gamma_I^i(\sigma)}{-1+\mu\pa_G\Delta H_\ccirc\circ\gamma_I^i(\sigma)}e^{ \im\wt\gamma_I^i(\sigma)}\right.\nonumber\\
    &\qquad\qquad\qquad\left.-\frac{\Delta
H_\eell^{1,+}\circ\la_I(\sigma)}{-1+\mu\pa_G\Delta
H_\ccirc\circ\la_I(\sigma)}e^{
\im\left(\wt\la_I(\sigma)+\alpha^-_i(I)\right)}\right)d\sigma,\nonumber
\end{align}
where $\Delta H_\eell^{1,+}$ is defined by $\Delta
H_\eell^{1}(L, \ell,G, g,t)=\Delta H_\eell^{1,+}(L, \ell,G, g)e^{\im t}+\Delta H_\eell^{1,-}(L, \ell,G, g)e^{-\im t}$,  with $\Delta
H_\eell^{1}=\Delta
H_\eell|_{e_0=0}$
(see \eqref{def:HarmonicsHams}).
\end{itemize}
\end{proposition}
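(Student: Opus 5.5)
The plan is to compute both quantities by first-order perturbation along the unperturbed orbits that make up one loop of the separatrix map. The separatrix map $\SM^{ij}_{e_0}$ is the composition of two pieces: a passage close to the periodic orbit $\la_I$, the ``inner'' part of about $N^{ij}$ iterates of $\PP_{e_0}$ near $\Lambda_{e_0}$; and an excursion along the homoclinic orbit $\ga^i_I$, the ``outer'' part. I would express each $e_0$-coefficient as the corresponding integral along $\la_I$ and along $\ga_I^i$, and then undo the Moser change of Lemma \ref{lemma:MoserNF} and the change of Lemma \ref{lemma:ChangePO} to return to the original system \eqref{def:Reduced:ODE}. Since $\Psi$ and $\Upsilon$ leave $I$ unchanged and are symplectic, the variation of $I$ is intrinsic. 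This is why $\MM_i^{I,1}$ can be written in terms of $\Delta H_\eell$ and of the flow $\Phi_0^\ccirc$.

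\emph{Formula for $\alpha_i$.} Set $e_0=0$. Then $I$ is conserved, and $s$ agrees with $t$ up to the correction $B$, which vanishes on the periodic orbit. Along $\ga_I^i$ we have $\dot t = 1/(-1+\mu\pa_G\Delta H_\ccirc)$, whereas on $\Lambda_0$ the rate is $\nu(I)$. Here I would take $\nu$ to be the averaged rate over one period of $\la_I$ (the Moser frequency), up to normalization of the sign convention. The phase shift $\alpha_i$ is then the asymptotic difference between the time accumulated along the homoclinic and along the periodic orbit, obtained as $N\to+\infty$ minus $N\to-\infty$. Writing
\[
\frac{1}{-1+\mu x}=-1+\frac{\mu x}{-1+\mu x}
\]
turns this difference into the integral \eqref{def:omega}, with the $2\pi N\nu$ term subtracting the linear growth coming from the periodic orbit. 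The limits exist because $\ga_I^i$ converges exponentially fast to $\la_I$, with rate $\la(I)>0$. Matching this with the $s$ component of Theorem \ref{thm:FormulasSMii:circ} only uses $B(0,0,I,g)=0$ and \eqref{def:0onPO}.

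\emph{Formula for $\MM_i^{I,1}$.} By \eqref{def:Reduced:ODE},
\[
\frac{dI}{ds}=-\mu e_0\,\frac{\pa_t\Delta H_\eell}{-1+\mu\pa_G\Delta H_\ccirc}.
\]
By \eqref{def:HarmonicsHams}, at first order only the harmonics $e^{\pm\im t}$ of $\Delta H^1_\eell$ contribute, so $\pa_t$ produces a factor $\pm\im$. This gives $\NNN(\MM^{I,1}_i)=\{\pm1\}$, and $B_i$ is the coefficient of $e^{\im s}$. The outer contribution comes from integrating along $\ga_I^i$ while subtracting the asymptotic periodic-orbit integrand, phase-shifted by $\alpha_i^\pm$. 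The subtraction is what makes the integrals converge, and the result is $B_i^\out$. The inner contribution is the sum over the $N$ laps around $\la_I$. Each lap contributes the one-period integral times $e^{\im 2\pi\nu k}$, which is a geometric series. The lap structure differs before and after the excursion by the phase $e^{\im\alpha_i}$, and after the $\OO(e_0)$ change of coordinates $(\wh I,\ldots)$ of Theorem \ref{thm:FormulasSMii} removes the $N$-dependent part, what remains is the factor $(1-e^{\im\alpha_i})/(1-e^{\im2\pi\nu})$. That change is chosen precisely so that the inner dynamics on $\Lambda_{e_0}$ is normalized to first order, and it exists because $2\pi\nu\notin 2\pi\ZZ$ by \eqref{def:Twist}.

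\emph{Main obstacle.} The hard step is the bookkeeping that shows the inner contribution collapses to exactly $B_i^\inn$, independently of $N^{ij}$ and of $(p,q)$ up to the $pq\,\OO(1)$ terms. To do this I would fix the coordinate change explicitly as the time-$e_0$ map of a generating function solving the cohomological equation on $\Lambda_0$. I would then check that the leftover boundary terms at the entrance to and exit from the homoclinic excursion combine into the factor $1-e^{\im\alpha_i}$, using the gluing relations \eqref{eq:GluingSymplectic}. The error terms are controlled by the exponential convergence along $W^{s,u}$ and by $p\ge\rr^\kk$.
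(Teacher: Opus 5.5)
Your overall architecture is the right one: computing $\alpha_i$ via $\frac{1}{-1+\mu x}=-1+\frac{\mu x}{-1+\mu x}$, regularized Melnikov integrals along $\ga_I^i$, a geometric series over the laps, and a cylinder cohomological equation with divisor $1-e^{\im 2\pi\nu}$. The paper itself defers the proof to \cite{paperNHIL23}.

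\textbf{The gap: the itinerary of $\SM^{ij}_{e_0}$.} One application of the separatrix map does not run along the whole homoclinic orbit $\ga_I^i$.
\begin{itemize}
\item A point of $\UU^{ij}_\rr$ lies near $(p,q)=(0,q^i(I))$, which is on $W^u_{\mathrm{loc}}=\{p=0\}$.
\item Its orbit follows only the \emph{forward} half of $\ga_I^i$ (gluing map, then $W^s_{\mathrm{loc}}$) and makes its laps.
\item It then leaves along $\{p=0\}$ up to $q\approx q^j(I)$, i.e.\ along the \emph{backward} half of $\ga_I^j$.
\end{itemize}
On that last piece the first-order drift $\dot I=-\mu e_0\pa_s\Delta\KK_{\eell}$ does not vanish, and the piece has length of order one ($q^j=\OO(1)$). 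So the bookkeeping you describe gives the regularized forward integral along $\ga_I^i$, plus the regularized backward integral along $\ga_I^j$, plus the inner term. This is a $j$-dependent $\OO(1)$ function of $(I,s)$. It is neither the two-sided integral along $\ga_I^i$ in $B_i^{\out}$, nor something that can be absorbed into $\MM^{I,1}_{i,pq}=pq\,\OO(1)$.

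\textbf{The missing idea: the choice of the $\OO(e_0)$ coordinates of Theorem \ref{thm:FormulasSMii}.} They must be a first-order normal form on all of $\mathcal D$, not only on $\Lambda_0$. That is, $\wh I$ must be constant to first order along the local stable and unstable fibres; equivalently, the $s$-dependence of the first-order perturbation is removed on $\{\wh p\wh q=0\}$.

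Concretely, let $c\,e^{\im s}+\ol c\,e^{-\im s}$ be the first-order $I$-change over one lap of $\la_I$. Let $\phi=a e^{\im s}+\ol a e^{-\im s}$, with $a=c/(1-e^{\im 2\pi\nu})$, be the cylinder solution. One needs
\begin{itemize}
\item $\wh I=I+e_0(\phi(s_-)-R^u)$ on $W^u_{\mathrm{loc}}$, and
\item $\wh I=I+e_0(\phi(s_+)+R^s)$ on $W^s_{\mathrm{loc}}$.
\end{itemize}
Here $s_\pm$ are the asymptotic phases, and $R^u,R^s$ are the regularized backward and forward integrals of the first-order $\dot I$.

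With this choice the $\wh I$-change of $\SM^{ij}$ is that of the gluing piece alone:
\begin{itemize}
\item the $\ga_I^j$ contribution cancels against $\wh I$ at the end point;
\item the backward half of $\ga_I^i$ enters through $\wh I$ at the starting point;
\item $\phi(s+\alpha_i)-\phi(s)$ yields $B_i^{\inn}$, with no dependence on $N^{ij}$ left.
\end{itemize}
A generating function that solves the cohomological equation only on $\Lambda_0$, as you propose, does not achieve this.

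For the same reason, the factor $1-e^{\im\alpha_i}$ comes from evaluating $\phi$ at the two asymptotic phases of $\ga_I^i$. The relations \eqref{eq:GluingSymplectic} express zeroth-order symplecticity of the gluing map and play no role there.

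A minor slip: $e^{\im 2\pi\nu}\neq 1$ follows from the lower bound in \eqref{def:periodestimate:intro}. It does not follow from \eqref{def:Twist}, which is only an upper bound.
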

This proposition is proved in \cite{paperNHIL23}.

\section{The normally hyperbolic invariant lamination}\label{sec:NHIL}

In this section we construct the Normally Hyperbolic Invariant Lamination in two steps. 
First we analyze the Poincar\'e map associated to the circular problem \eqref{def:HamDelaunayCirc}. 
This system is autonomous and hence $I$ is a constant of motion. Therefore building the lamination 
is reduced to proving the existence of a horseshoe at each level $I=\text{constant}$ and showing its 
regularity with respect to $I$. We prove its existence by an isolating block technique.
As a second step, we show the existence of the NHIL for  (the Poincar\'e map of) the elliptic problem 
\eqref{def:HamDelaunayRot} with $0<e_0\ll 1$. This is a regular perturbation problem and, therefore, 
the existence of the lamination is a consequence of normal hyperbolicity.

A normally hyperbolic invariant lamination generalizes the concepts of hyperbolic sets and normally 
hyperbolic invariant manifolds (see~\cite{HirschPS77}). Here we  consider a less general definition than the one  in~\cite{HirschPS77}, 
which is be enough for our purposes. It follows the ideas in~\cite{KaloshinZ15}, where normally 
hyperbolic invariant laminations for a priori unstable nearly integrable Hamiltonian systems are constructed. 

First, we define a normally hyperbolic invariant manifold.

\begin{definition}\label{def:NHIM}
Given a $C^1$ map $F$ on a manifold $M$, we say that  that a submanifold 
$N\subset M$ is normally hyperbolic for the map $F$ if it is $F$--invariant and 
one can split $T_NM$ into three invariant subbundles,
\[
 T_xM=T_xN\oplus E_x^s\oplus E_x^u,\qquad \text{for all }\quad x\in N
\]
which satisfy
\[
 \begin{split}
  \|DF^n(x)|_{E^s}\|\leq C \la^n\qquad \text{ for }n\geq 0\\
  \|DF^n(x)|_{E^u}\|\leq C \la^{|n|}\qquad \text{ for }n\leq 0\\
  \|DF^n(x)|_{TN}\|\leq C \eta^{|n|}\qquad \text{ for }n\in\NN
 \end{split}
\]
for some $C>0$, $\la<1$ and $\eta\geq 1$ such that $\eta\la<1$.
\end{definition}

The concept of normally hyperbolic invariant lamination is a generalization of 
this definition. To this end, we define first the following.

\begin{definition}
\label{def:Shift}
Let $\Sigma = \{0,1\}^{\ZZ}$. We define the Bernoulli  
shift $\sigma:\Sigma \to \Sigma$ as $(\sigma\omega)_k=\omega_{k+1}$.
\end{definition}

\begin{definition}\label{def:NHIL}
Consider a $C^1$ map $F$ defined on a manifold $M$. We say that 
a closed set $\Xi$ is a normally hyperbolic invariant lamination if 
\[
 \Xi=\bigcup_{\omega\in\Sigma}\Xi_\omega
\]
where $\Xi_\omega$ are $C^1$ manifolds which satisfy 
$F(\Xi_\omega)=\Xi_{\sigma\omega}$ and is normally hyperbolic in the following 
sense. There exist constants  $C>0$, $0<\la<1$ and $\eta\geq 1$ satisfying 
$\eta\la<1$ and subbundles $E^s$ and $E^u$ such that for any $\omega\in\Sigma$ 
and $x\in\Xi_\omega$,
\[
 T_xM=T_x\Xi_\omega\oplus E_x^s\oplus E_x^u
\]
which are invariant and satisfy 
\[
 \begin{split}
  \|DF^n(x)|_{E^s}\|\leq C \la^n\qquad \text{ for }n\geq 0\\
  \|DF^n(x)|_{E^u}\|\leq C \la^{|n|}\qquad \text{ for }n\leq 0\\
  \|DF^n(x)|_{T\Xi_\omega}\|\leq C \eta^{|n|}\qquad \text{ for }n\in\NN.
 \end{split}
\]
Finally, we say that $\Xi$ is a  normally hyperbolic lamination weakly invariant under $F$ if there exist a neighborhood $U$ of $\Xi$ such that, if there exist $x\in\Xi$ such that $F(x)\not\in \Xi$ implies $F(x)\not\in U$.
\end{definition}

The next two theorems prove the existence of such objects. First, Theorem 
\ref{thm:NHILCircular} for the RPC3BP (Hamiltonian 
\eqref{def:HamDelaunayCirc}) and then Theorem \ref{thm:NHILElliptic} for the 
RPE3BP with $0<e_0\ll 1$ (Hamiltonian \eqref{def:HamDelaunayRot}).

\begin{theorem}\label{thm:NHILCircular}
Fix $0<\rr\ll 1$ and consider $\rr$-neighborhoods $B^i_\rr$ of the homoclinic 
channels $\{\wt\CCC^i_0\}_{I\in \II}$ (see \eqref{def:channelsPoincarecirc}) of  the Poincar\'e map of the circular problem 
\eqref{def:HamDelaunayCirc}. Then, the  separatrix map $\SM_0$ associated to this 
problem given in  Theorem \ref{thm:FormulasSMii:circ} have a NHIL 
denoted by $\LL_0$ satisfying $\LL_0\subset B^1_\rr\cup B^2_\rr$. That is,
\begin{itemize}
\item
The set $\LL_0$ is invariant: there exists an embedding
\[
 L_0:\Sigma\times\TT\times \II\to B^1_\rr\cup B^2_\rr\qquad L_0(\omega, I, 
s)=(I,s,P_0(\omega,I), Q_0 (\omega,I))
\]
and a function $\beta (\omega,I)$, which are $C^3$ in $I$ and $\vartheta$-H\"older in 
$\omega$, for some $\vartheta\in (0,1)$ independent of $\rho$, such that
\[
 \SM_0\left(I,s,P_0(\omega,I), Q_0(\omega,I)\right)=\left(I,s+\beta(\omega,I),P_0({\sigma\omega},I), Q_0 ({\sigma\omega},I)\right).
\]
\item The set $\LL_0$ is normally hyperbolic.
\end{itemize}
Moreover, there exists $\ero'>\ero_*>0$ such that
\[
 \rr^{\ero'}\lesssim  \inf_{z\in \LL_0, \ z'\in\widetilde{\CCC}^i_0,\ i=1,2}\mathrm{ dist}(z,z')\leq  \sup_{z\in \LL_0, \ z'\in\widetilde{\CCC}^i_0,\ i=1,2}\mathrm{ dist}(z,z')\lesssim  \rr^{\ero_*}.
\]
\end{theorem}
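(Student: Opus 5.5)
The circular separatrix map $\SM_0$ of Theorem \ref{thm:FormulasSMii:circ} has a skew-product structure: the $I$ component is preserved, and the $s$ component is a rotation whose increment does not depend on $s$. So I would fix $I\in\II$ and treat the $(p,q)$ components as a two-dimensional map $\Phi_I^{ij}(p,q)=(\FF^{ij}_{p,0},\FF^{ij}_{q,0})$ from $\UU^{ij}_\rr\cap\{I\}$ to $\UU^j$. The plan has three steps. First, build a uniformly hyperbolic horseshoe for $\Phi_I=\{\Phi_I^{ij}\}$ by an isolating block argument. Second, show that the horseshoe depends $C^3$ on $I$. Third, lift it to $\Sigma\times\TT\times\II$ by setting $\beta(\omega,I)=\al_i(I)+(\nu(I)+\pa_IF(I,P_0Q_0))N^{ij}+D^i(I,P_0,Q_0)$ with $i=\omega_0$, $j=\omega_1$. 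Because $I$ and $s$ decouple, normal hyperbolicity then reduces to hyperbolicity of the $(p,q)$ dynamics, uniformly in $I$.

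For the blocks, take the rectangles $R^i=\{\rr^\kk\le p\le\rr,\ |q-q^i(I)|\le\rr\}$. By the formula for $\FF_{p,0},\FF_{q,0}$, the image of $R^i$ under $\Phi^{ij}_I$ is the rectangle $p^i_0+\OO(\rr)$ multiplied by $e^{-\Theta N}$ in $p$ and by $e^{\Theta N}$ in $q$. Choose $N^{ij}$ (a locally constant function of order $|\log\rr|$) on suitable horizontal substrips of $R^i$ so that the $q$-expansion maps each strip across the whole $q$-range $|q-q^j|\le\rr$ of $R^j$, while the $p$-contraction places the image inside $\rr^\kk\le p\le\rr$. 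For this choice I would take $\kk$ close to $1$ and require $e^{-\Theta N}p_0^i\in[\rr^\kk,\rr]$. The transversality condition $c^i_{01}\neq0$ in \eqref{eq:GluingTransversality} is exactly what guarantees that the $q$ image depends monotonically and with full range on $q-q^i$, so each strip is a ``vertical'' full crossing. Cone fields then come from the derivative, which is $\mathrm{diag}(e^{-\Theta N},e^{\Theta N})$ times a bounded matrix with nonzero $c_{01}$. A horizontal cone around the $q$ direction is mapped into itself with expansion of order $e^{\Theta N}\sim\rr^{-c}$, and the analogous statement holds for the vertical cone under the inverse map. The Conley–Moser conditions (or the Capiński–Zgliczyński covering relations) then give, for each $\omega\in\Sigma$, a unique point $(P_0(\omega,I),Q_0(\omega,I))=\bigcap_n\Phi^{\omega_{-n}\cdots}(\cdot)$. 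The coding is Hölder in $\omega$ with exponent $\vartheta$ determined by the ratio of contraction rates; since the rates are powers of $\rr$, $\vartheta$ can be made independent of $\rr$. The distance bounds $\rr^{\ero'}\lesssim\mathrm{dist}\lesssim\rr^{\ero_*}$ follow because $P_0\in[\rr^\kk,\rr]$ and $|Q_0-q^i|\le\rr$, together with the fact that points of the horseshoe lie at distance at least about $\rr^\kk$ from the channel in the $p$ direction.

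The main obstacle is regularity in $I$. $P_0,Q_0$ are fixed points of a graph transform over the space of sequences, and $I$ enters as a parameter. Since $I$ is neutral (the map is the identity in $I$) while $(p,q)$ is strongly hyperbolic, the spectral gap is huge: the rate $\rr^{-c}$ beats any power needed. I would apply the $C^r$ section theorem of Hirsch–Pugh–Shub, or equivalently differentiate the fixed-point equations $\Phi_I(P_0(\omega),Q_0(\omega))=(P_0(\sigma\omega),Q_0(\sigma\omega))$ up to three times in $I$. Each derivative solves a linear contraction equation, because the stable and unstable components are solved forward and backward respectively; the $I$-dependence of $N^{ij}$ is harmless since $N$ is locally constant. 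Finally, the splitting $T\Xi_\omega=\mathrm{span}(\pa_I,\pa_s)$ together with the stable and unstable cone directions satisfies Definition \ref{def:NHIL}. On the tangent space the growth is at most polynomial, because $\pa_I\beta$ grows linearly in $n$, so $\eta$ can be taken arbitrarily close to $1$, while $\la\sim\rr^{c}$. This gives normal hyperbolicity.
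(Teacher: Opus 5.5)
Your proposal follows essentially the same route as the paper, which (deferring details to \cite{paperNHIL23}) reduces the construction to a horseshoe for the $(p,q)$ components of $\SM_0$ at each fixed level $I$, obtained by an isolating block argument, and then proves regularity in $I$ and lifts the horseshoe to the $(I,s)$ cylinder. Two details to fix when you write it out. First, the stable cone must be centered at the tangent direction $(c^i_{01},-c^i_{10})$ of $W^s$ rather than being a thin cone around the $p$-axis; this is a second place where $c^i_{01}\neq 0$ enters. Second, since $\FF^{ij}_{s,0}$ depends on $(p,q)$, the bundles $E^{s}$ and $E^{u}$ in the full space acquire $s$-components, which are determined by the usual invariance (graph) equation.
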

This theorem is proven in \cite{paperNHIL23}.

For $e_0>0$ small enough the NHIL persists for the Hamiltonian system associated to  \eqref{def:HamDelaunayRot} and it is smooth with respect to $e_0$.
\begin{theorem}\label{thm:NHILElliptic}
 Fix $0<\rr\ll 1$ and consider $\rr$-neighborhoods $B^j_\rr$ of the homoclinic 
channels $\{\wt\CCC^i_{e_0}\}_{I\in \II}$ (see Theorem \ref{thm:NHILElliptic}) of  the Poincar\'e map of the elliptic problem 
\eqref{def:HamDelaunayRot}. Then, the  separatrix map $\SM_{e_0}$ associated to this 
problem given in  Theorem \ref{thm:FormulasSMii} has a NHIL denoted by 
$\LL_{e_0}$ which is $e_0$-close to the NHIL $\LL_0$ obtained in Theorem 
\ref{thm:NHILCircular} and  satisfies $\LL_{e_0}\subset B^1_\rr\cup B^2_\rr$. That 
is,
\begin{itemize}
\item
The set $\LL_{e_0}$ is invariant: there exists an embedding
\[
 L_{e_0}:\Sigma\times\TT\times \II\to B^1_\rr\cup B^2_\rr\qquad L_{e_0}(\omega, I, s)=(I,s,P_{e_0}(\omega,I,s), Q_{e_0} (\omega,I,s))
\]
and functions $S_{e_0}(\omega,I,s)$ and $J_{e_0}(\omega,I,s)$, which are 
$C^3$ in $(I,s,e_0)$ and $\vartheta$-H\"older in $\omega$, for some $\vartheta\in (0,1)$ independent of $\rho$,  such that
\begin{multline}
 \SM_{e_0}\left(I,s, P_{e_0}(\omega,I,s), Q_{e_0}(\omega,I,s)\right) \\
 \label{eq:inveq_lamination_elliptic}
 =\left(\II_{e_0}(\omega,I,s),\SSS_{e_0}(\omega,I,s),P_{e_0}\circ\FF_{\iinn}(\omega,I,s), Q_{e_0}\circ\FF_{\iinn}(\omega,I,s)\right)
\end{multline}
with
\begin{equation}
\label{def:inner_map_elliptic}
\FF_{\iinn}(\omega,I,s)=(\sigma\omega,  \II_{e_0}(\omega,I,s), \SSS_{e_0}(\omega,I,s))
\end{equation}
and
\[
\begin{split}
 \II_{e_0}(\omega,I,s)&=I+J_{e_0}(\omega,I,s)\\
\SSS_{e_0}(\omega,I,s)&=s+\beta(\omega,I)+S_{e_0}(\omega,I,s).
\end{split}
\]
\item The set $\LL_{e_0}$ is normally hyperbolic.
\end{itemize}
Moreover,
\begin{itemize}
\item There exists $\ero'>\ero_*>0$ such that
\[
 \rr^{\ero'}\lesssim   \inf_{z\in \LL_0,\ z'\in \wt\CCC^i_{e_0},\ i=1,2}\mathrm{ dist}(z,z')\leq\sup_{z\in \LL_0,\ z'\in \wt\CCC^i_{e_0},\ i=1,2}\mathrm{ dist}(z,z')\lesssim\rr^{\ero_*}.
\]
\item The functions $P_{e_0}$, $Q_{e_0}$ are of the form
\[
(P_{e_0}, Q_{e_0})=(P_0, Q_0)+e_0(P_1, Q_1)+\OO(e_0^2)\quad \text{with}\quad \NNN(P_1)=\NNN(Q_1)=\{\pm 1\},
\]
where $(P_0, Q_0)$ are the functions obtained in Theorem \ref{thm:NHILCircular}.
\item The functions $S_{e_0}$ and $J_{e_0}$ are of the form
\[
\begin{split}
J_{e_0}=&\,e_0J_1+e_0^2J_2+\OO(e_0^3)\qquad \text{with}\qquad \NNN(J_1)=\{\pm 1\},\,\, \NNN(J_2)=\{0,\pm 2\},\\
S_{e_0}=&\, e_0S_1+\OO(e_0^2)\qquad \text{with}\qquad \NNN(S_1)=\{\pm 1\}.
\end{split}
\]
\end{itemize}
\end{theorem}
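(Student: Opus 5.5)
The plan is to treat Theorem~\ref{thm:NHILElliptic} as a regular perturbation of Theorem~\ref{thm:NHILCircular}. I will use the persistence theory for normally hyperbolic invariant laminations in the spirit of~\cite{HirschPS77,KaloshinZ15}, carried out by a graph transform in the isolating block built for $\SM_0$. Then I will extract the harmonic structure of the $e_0$-expansion from Theorem~\ref{thm:FormulasSMii}.

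\textbf{Step 1 (persistence).} By Theorem~\ref{thm:FormulasSMii}, $\SM_{e_0}=\SM_0+\OO_{C^2}(e_0\log^3\rr)$ on $\UU^{ij}_\rr$. The return times $N^{ij}$ are locally constant, so the branches of the separatrix map are unchanged for $e_0$ small with $\rr$ fixed. The isolating-block cone conditions used for $\LL_0$ are open conditions: the $(p,q)$ directions are contracted and expanded by factors $e^{\mp\Theta N}\sim\rr^{\pm c}$, while the $(I,s)$ directions have derivative $\OO(|\log\rr|)$. Hence these conditions persist. For each $\omega$ I will run the graph transform on sections $(I,s)\mapsto(P,Q)$. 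The stable and unstable components are obtained by iterating forward along $\omega_{k}$, $k<0$, and backward along $\omega_k$, $k\ge0$, respectively. This produces $L_{e_0}(\omega,I,s)$, now depending on $s$ because $I$ is no longer conserved. The contraction rate in $\omega$ gives $\vartheta$-H\"older dependence with $\vartheta$ set by $\log\la/\log\eta$-type ratios, independent of $\rr$. Smoothness $C^3$ in $(I,s,e_0)$ follows from the fiber contraction theorem, since $\eta^3\la<1$ holds for $\rr$ small. The invariance equation~\eqref{eq:inveq_lamination_elliptic} then defines $\II_{e_0},\SSS_{e_0}$ as the $(I,s)$ components of $\SM_{e_0}\circ L_{e_0}$. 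Comparing with $e_0=0$ gives $J_{e_0}=\OO(e_0)$ and $S_{e_0}=\OO(e_0)$. The distance estimates to the homoclinic channels follow from those of Theorem~\ref{thm:NHILCircular} together with the $\OO(e_0)$ closeness of $\wt\CCC^i_{e_0}$ to $\wt\CCC^i_0$ (Theorem~\ref{th:Elliptic:NHIM}).

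\textbf{Step 2 (expansion and harmonics).} I will write $(P_{e_0},Q_{e_0})=(P_0,Q_0)+e_0(P_1,Q_1)+\OO(e_0^2)$ and differentiate the invariance equation in $e_0$ at $e_0=0$. The first-order terms $(P_1,Q_1)$ solve a linear cohomological equation along the circular inner dynamics $(I,s)\mapsto(I,s+\beta(\omega,I))$. Its forcing is $\FF^{ij}_{p,1},\FF^{ij}_{q,1}$ evaluated on $L_0$, which has harmonics $\{\pm1\}$ in $s$. Since $\SM_0$ commutes with $s$-translations, the linearized operator commutes with $s$-translations and preserves each Fourier mode. The unique bounded solution, given by the stable and unstable series that converge by hyperbolicity, therefore has $\NNN(P_1)=\NNN(Q_1)=\{\pm1\}$.

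Substituting into the $I$ component then gives
\[
J_1=\FF_{I,1}+\pa_{(p,q)}\FF_{I,0}\cdot(P_1,Q_1).
\]
Here $\FF_{I,0}=I$, so the second term vanishes and $\NNN(J_1)=\{\pm1\}$. The same reasoning gives $\NNN(S_1)=\{\pm1\}$, using $\NNN(\FF_{s,1})=\{\pm1\}$ and the fact that $\FF_{s,0}$ is $s$-independent. At second order, $J_2$ collects three kinds of terms: $\FF_{I,2}$, with harmonics $\{0,\pm2\}$; products of first-order quantities, with harmonics $\{\pm1\}+\{\pm1\}=\{0,\pm2\}$; and $\pa_s\FF_{I,1}\cdot S_1$-type corrections, again $\{0,\pm2\}$. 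This yields $\NNN(J_2)=\{0,\pm2\}$.

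\textbf{Main obstacle.} The delicate point is Step 1's uniformity: the derivatives of $\SM_{e_0}$ in the center directions carry $\log\rr$ factors, and the remainders are only $C^2$ with $\log^3\rr$ losses. I would handle this by fixing $\rr$ first, as the footnote in Theorem~\ref{thm:FormulasSMii} allows, and then taking $e_0^*(\rr)$ small, so that all logarithmic constants are absorbed. I would also check that the rate condition needed for $C^3$ regularity in $I$ still holds. If $C^3$ fails with only $C^2$ remainders, I would instead use the analytic dependence on $e_0$ of the underlying flow and the Poincar\'e map, rather than the truncated formulas.
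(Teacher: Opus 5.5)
Your proposal is correct and follows the paper's route. The paper treats Theorem~\ref{thm:NHILElliptic} as a regular perturbation of the circular lamination $\LL_0$, for fixed $\rr$ and $e_0\ll\rr$: existence, regularity and closeness come from persistence of normal hyperbolicity, and the harmonic structure of $(P_1,Q_1)$, $J_1$, $J_2$, $S_1$ is inherited from the separatrix map of Theorem~\ref{thm:FormulasSMii}; your Steps 1--2 carry this out in more detail, whereas the paper only asserts it. There are two harmless slips in Step 2: the first-order equation for $(P_1,Q_1)$ also contains the known forcing $\partial_I(P_0,Q_0)(\sigma\omega,I)\,J_1$ (still harmonics $\pm1$ only), and no $\pa_s\FF_{I,1}\cdot S_1$ term enters $J_2$, which is simply $\FF_{I,2}+\pa_{(p,q)}\FF_{I,1}\cdot(P_1,Q_1)$ evaluated on $\LL_0$.
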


\section{Dynamics on the NHIL}\label{sec:RandomCylinder}
We analyze the dynamics on the invariant lamination constructed  in Theorem \ref{thm:NHILElliptic}. The separatrix map, obtained in Theorem \ref{thm:FormulasSMii}, induces a map on the lamination, given by
\begin{equation}\label{def:laminationmap}
\begin{split}
 \FF:\,\, \Sigma\times [I_-,I_+]\times\TT &\longrightarrow  \Sigma\times \RR\times\TT\\
(\omega, I,s)&\ \mapsto \ \left(\sigma\omega, \FF_\omega (I,s)\right).
 \end{split}
 \end{equation}
 In \cite{KaloshinZ15}, the authors construct laminations in a priori unstable nearly integrable Hamiltonian systems, which are $\eps$-close to a pendulum plus a rotator. In their setting, the lamination is $\eps$-close to the 
homoclinic channels and therefore the leaves are $\eps$-close to each other. 
This implies that the lamination map has a very explicit first order which is  
essentially given by the Melnikov potential and the first symbol $\omega_0$. 
That is
\[
 \FF_\omega (I,s)=\FF^0 (\omega_0,I,s)+\text{h.o.t in $\eps$ depending on all $\omega$}.
\]
That is, the dynamics on the lamination can be ``essentially'' reduced to the random iteration of two cylinder maps. These higher order terms need to be $\OO(\eps^{2+a})$ in the $I$-component to carry out the martingale analysis.

Instead, we consider different laminations, whose distance to the homoclinic channels is small but independent of $e_0$ (see Section \ref{sec:NHIL}). Indeed, the leaves, which  are cylinders with
boundaries,  are  localized  $\rr^{\ero_*}$-close to the homoclinic channels
where $\ero_*=\min_{I\in [I_-,I_+]}\ero(I)$ (see Theorem \ref{thm:NHILElliptic}). Note that this is possible because
this model is \emph{a priori chaotic} (instead of \emph{a priori unstable}). This implies that the dynamics cannot be reduced to two
cylinder maps plus small errors.
However, since both $e_0$ and $\rr$ are taken small, $e_0\ll\rr$, we have ``good''
expansions on the separatrix maps restricted into the weakly invariant lamination.
%

Next lemma gives a finer asymptotic expansion for the induced map compared to that in Theorem \ref{thm:NHILElliptic}. It is a direct consequence of Theorems \ref{thm:FormulasSMii} and \ref{thm:NHILElliptic}.

\begin{lemma}\label{lemma:laminationmap}
The separatrix map, obtained in Theorem \ref{thm:FormulasSMii}, induces a map 
$\FF$ of the form \eqref{def:laminationmap}
on the lamination, which is derived in Theorem \ref{thm:NHILElliptic},
\begin{equation}\label{def:stochasticmap}
 \FF_\omega: \begin{pmatrix} I\\ s\end{pmatrix}\to \begin{pmatrix} I+
e_0\AAA_\omega(I,s)+e_0^2\BB_\omega(I,s)+\OO\left(e_0^{2+a}\right)\\
s+\beta_\omega(I)+e_0\DD_\omega(I,s)+\OO\left(e_0^{1+a}\right)
\end{pmatrix}
\end{equation}
where $\beta_\omega$ satisfies
\begin{equation}\label{def:Omega}
\beta_\omega(I)=\beta^0_{\omega_0}(I)+\eta_{\omega}(I),\qquad
\beta^0_{\omega_0}(I)=\nu(I)\ol g+\alpha_{\omega_0}(I)
\end{equation}
and the other functions satisfy $\NNN( \AAA_\omega)=\NNN(\DD_\omega)=\{\pm 1\}$ and $\NNN(\BB_\omega)=\{0,\pm 2\}$,
\begin{equation}\label{def:ExpansionCylinderMap}
 \begin{split}
  \AAA_\omega(I,s)&=\MM^{I,1}_{\omega_0}(I,s)+\RRR^{I,1}_{\omega}(I,s)\\
  \BB_\omega(I,s)&=\MM^{I,2}_{\omega_0}(I,s)+\RRR^{I,2}_{\omega}(I,s)\\
  \DD_\omega(I,s)&=\wt\MM^{s}_{\omega_0}(I,s,\ol
g)+\RRR^{s}_{\omega}(I,s).
 \end{split}
\end{equation}
The functions $\MM^\ast_{\omega_0}$ and $\alpha_{\omega_0}$ are those
introduced in Theorem \ref{thm:FormulasSMii} (see also Proposition
\ref{prop:Melnikov}), $\nu$ is defined in \eqref{def:nu} and the time $\ol g$ is independent of $e_0$ and  satisfies $\ol g\sim |\log \rr|$.
Moreover,  the remainders in formulas \eqref{def:Omega} and
\eqref{def:ExpansionCylinderMap} satisfy the following estimates.
\[
\begin{split}
\left\|\RRR^{I,j}_{\omega}\right\|_{\Sigma\times\TT\times[I_-,I_+]} \ \ \ \ \ \ 
&\lesssim \ \ \  \rr^{\ero_*}\\
\left\|\eta_{\omega}\right\|_{\Sigma\times\TT\times[I_-,I_+]},
\left\|\RRR^{s}_{\omega}\right\|_{\Sigma\times\TT\times[I_-,I_+]}
\ \ \  & \lesssim \ \ \ \rr^{\ero_*}|\log\rr|.
\end{split}
\]
Moreover, there exists $\vartheta\in (0,1)$, such that the functions $\RRR^{I,j}_{\omega}$, $\RRR^{s}_{\omega}$, $\eta_{\omega}$ are $\vartheta$-H\"older with respect to $\omega$.
\end{lemma}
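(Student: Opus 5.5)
The plan is to compose the invariance equation \eqref{eq:inveq_lamination_elliptic} of Theorem \ref{thm:NHILElliptic} with the explicit expansion of the separatrix map in Theorem \ref{thm:FormulasSMii}, and to read off each coefficient by Taylor expanding in $e_0$ around the unperturbed leaf $(P_0(\omega,I),Q_0(\omega,I))$.

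\textbf{Zeroth order.} By Theorem \ref{thm:NHILElliptic} the inner map $\FF_\omega$ is $(I,s)\mapsto(\II_{e_0},\SSS_{e_0})$. Hence it suffices to identify
\[
\II_{e_0}=\FF^{ij}_{I,e_0}\circ L_{e_0}, \qquad \SSS_{e_0}=\FF^{ij}_{s,e_0}\circ L_{e_0},
\]
with $i=\omega_0+1$ and $j=\omega_1+1$. For $e_0=0$, Theorem \ref{thm:FormulasSMii:circ} gives
\[
\beta(\omega,I)=\alpha_{\omega_0}(I)+(\nu(I)+\pa_IF(I,P_0Q_0))N^{ij}+D^{\omega_0}(I,P_0,Q_0).
\]
The return time $N^{ij}$ is locally constant on the lamination; call it $\ol g\sim|\log\rr|$. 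I would choose it as a common value, which is possible by restricting the horseshoe to a single block of return times, as in the construction of Theorem \ref{thm:NHILCircular}. The remaining terms are then handled as follows. Since the leaf points are $\rr^{\ero_*}$-close to the channel, we have $|P_0|\lesssim\rr^{\ero_*}$ and $|Q_0-q^{\omega_0}|\lesssim\rr^{\ero_*}$. Therefore $\pa_IF(I,P_0Q_0)\,\ol g=\OO(\rr^{\ero_*}|\log\rr|)$ (because $\pa_I F$ vanishes at $pq=0$) and $D^{\omega_0}=\OO(\rr^{\ero_*})$. Collecting these into $\eta_\omega$ yields \eqref{def:Omega}, and the H\"older dependence on $\omega$ is inherited from $P_0,Q_0$.

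\textbf{First and second orders in $e_0$.} I write $P_{e_0}=P_0+e_0P_1+\OO(e_0^2)$ and similarly for $Q_{e_0}$, and substitute into the $I$-component $I+e_0\FF_{I,1}+e_0^2\FF_{I,2}+\OO(e_0^3)$. The term $\FF^{ij}_{I,0}=I$ contributes nothing. Note that $\FF_{I,1}=\MM^{I,1}_{\omega_0}+\MM^{I,1}_{\omega_0,pq}$, where the second term is $pq\,\OO(1)=\OO(\rr^{\ero_*})$ on the lamination. This gives
\[
\AAA_\omega=\MM^{I,1}_{\omega_0}+\RRR^{I,1}_\omega .
\]
At order $e_0^2$ we get $\MM^{I,2}_{\omega_0}$, plus $\MM^{I,2}_{\omega_0,pq}$, plus the cross term $\pa_{(p,q)}\MM^{I,1}_{\omega_0,pq}\cdot(P_1,Q_1)$. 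Both of the latter carry at least a factor of $p$ or $q-q^i$ (or their derivatives), since $\MM_{pq}$ is divisible by $pq$. They are therefore $\OO(\rr^{\ero_*})$, provided $P_1,Q_1=\OO(1)$ uniformly.

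The harmonic content follows by multiplying supports. We have $\{\pm1\}\cdot\{\pm1\}\subset\{0,\pm2\}$ for $\BB$, and the relevant factors satisfy $\NNN(P_1)=\NNN(Q_1)=\{\pm1\}$ while $\NNN(\MM^{I,1})=\{\pm1\}$. This is consistent with $\NNN(J_1),\NNN(J_2)$ in Theorem \ref{thm:NHILElliptic}. The $s$-component is treated in the same way at first order, giving $\DD_\omega$ with error $\rr^{\ero_*}|\log\rr|$ from the $\log\rr$ factors in $\MM^{s,1}_{pq}$. The residual $\OO(e_0^3\log^3\rr)$ and $\OO(e_0^2\log^3\rr)$ remainders are absorbed into $\OO(e_0^{2+a})$ and $\OO(e_0^{1+a})$, since $\rr$ is fixed.

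\textbf{Main obstacle.} The main obstacle is the order-$e_0^2$ term in the $I$-component. One needs the corrections $(P_1,Q_1)$ of the leaves to be bounded uniformly in $\omega$, so that every cross term is genuinely $\OO(\rr^{\ero_*})$ and not merely $\OO(1)$. I would get this from the normal-hyperbolicity (graph transform) characterization of $L_{e_0}$. Writing $P_1,Q_1$ as solutions of the linearized invariance equation along the hyperbolic splitting gives geometric series. These converge uniformly, with the forcing being $\FF_{p,1},\FF_{q,1}=\OO(\log\rr)$, which is compensated by contraction rates of order $e^{-\la\ol g}\sim\rr^{c}$. With that bound, every term involving $(p,q-q^i)$ is small in $\rr$.
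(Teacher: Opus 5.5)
The student's route matches the paper's, but the proof of $\|\RRR^{I,2}_\omega\|\lesssim\rr^{\ero_*}$ has a genuine gap. The route is to evaluate $\SM_{e_0}$ from Theorem~\ref{thm:FormulasSMii} on the leaves $L_{e_0}$ of Theorem~\ref{thm:NHILElliptic} and expand in $e_0$ around $(P_0,Q_0)$. The paper asserts the lemma as a direct consequence of these two theorems, and your handling of $\beta_\omega$ is fine (common return time, $\pa_IF(I,0)=0$, $D^{\omega_0}=\OO(\rr^{\ero_*})$).

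\textbf{Where the argument fails.} The failure is at the step you yourself flagged. Write $\MM^{I,1}_{i,pq}=pq\,h$ with $h=\OO_{C^2}(1)$. The cross term in the $e_0^2$-coefficient is $\pa_p\MM^{I,1}_{\omega_0,pq}\,P_1+\pa_q\MM^{I,1}_{\omega_0,pq}\,Q_1$, and $\pa_p(pq\,h)=q\,h+pq\,\pa_ph$. On the lamination $Q_0=q^{\omega_0}(I)+\OO(\rr^{\ero_*})$, where $q^{\omega_0}(I)$ is the position of the channel on $\{p=0\}$. This number is of order one and nonzero. So the coefficient of $P_1$ is $\OO(1)$: it carries a factor $q$, not $q-q^i$. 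Uniform boundedness of $P_1$ therefore only gives $\RRR^{I,2}_\omega=\OO(\|P_1\|)$.

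\textbf{Why the geometric series does not help.} $p$ is the contracting direction, so $P_1$ comes from forward iteration of the linearized invariance equation:
\[
P_1(\sigma\omega,I,s+\beta_\omega(I))=\FF^{ij}_{p,1}(I,s,P_0,Q_0)+\OO(\rr^{\ero_*}|\log\rr|)\cdot(P_1,Q_1,J_1)(\omega,I,s).
\]
The forcing enters with coefficient one, and $e^{-\Theta N}$ damps only the propagated part. It does damp the forcing in the $Q_1$-equation, which is solved backwards after dividing by $\pa_q\FF^{ij}_{q,0}\sim e^{\Theta N}$. With only $\FF^{ij}_{p,1}=\OO(\log\rr)$ you get $\|P_1\|\lesssim|\log\rr|$, hence $\RRR^{I,2}_\omega=\OO(|\log\rr|)$. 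That destroys the leading-order identification of $\BB_\omega$, which is what later determines the drift.

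\textbf{The same problem in $\DD_\omega$.} There you omitted $\pa_{(p,q)}\FF^{ij}_{s,0}\cdot(P_1,Q_1)$ altogether. Here $\pa_p\FF^{ij}_{s,0}$, which contains $N\,\pa_p\pa_IF$, and $\pa_q\FF^{ij}_{s,0}\approx a^{\omega_0}_{01}$ are not small; generically they are of size $|\log\rr|$ and $1$. Moreover this term depends on $\omega_{-1}$ through $(P_1,Q_1)$, so it cannot be absorbed into $\wt\MM^s_{\omega_0}$.

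\textbf{What is missing.} You need $\|P_1\|+\|Q_1\|\lesssim\rr^{\ero_*}|\log\rr|$, and this does not follow from the stated bound $\FF^{ij}_{p,1}=\OO(\log\rr)$. It requires structural information about the separatrix map. Suppose the coordinates $(\wh I,\wh s,\wh p,\wh q)$ of Theorem~\ref{thm:FormulasSMii} straighten the local stable and unstable manifolds of $\wt\Lambda_{e_0}$. Then the $p$-component of the $N$-step local passage stays proportional to $p$, and $\FF^{ij}_{p,e_0}$ keeps the prefactor $e^{-\Theta N}$ also at order $e_0$. This prefactor satisfies $e^{-\Theta N}\lesssim\rr^{\ero_*}$, since $P_0(\sigma\omega,I)\approx e^{-\Theta N}p^{\omega_0}_0(I)$.

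It follows that $\FF^{ij}_{p,1}=\OO(\rr^{\ero_*}|\log\rr|)$, so $P_1$ and then $Q_1$ are of that size. Your cross terms in both $\BB_\omega$ and $\DD_\omega$ then become $\OO(\rr^{\ero_*}|\log\rr|)$; absorb the logarithm by slightly decreasing $\ero_*$.

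Geometrically, the leaves shadow $W^u(\wt\Lambda_{e_0})$ in the $p$-direction. So $P_1$ is small only in coordinates in which that manifold does not move with $e_0$. This property has to be taken from, and checked against, the construction of $\SM_{e_0}$.
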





We study the stochastic behavior of the iteration of the map 
\eqref{def:stochasticmap} in time scales
$n\sim e_0^{-2}$.  Note that when $e_0=0$, $I$ is a constant of motion and therefore this map fits into the framework of compact Lie 
group extensions of hyperbolic maps. Indeed, is a skew-shift that can be seen as a random iteration of rigid rotations on a circle.  Such models are considered in 
\cite{FieldMT03} and analyzed in Section \ref{sec:LiegroupExtensions} below. 

Note that, since $\rr\ll 1$, the formula for $\beta_{\omega}$  in \eqref{def:Omega}
implies that each of these circle  maps (for fixed $\omega$) encounter many resonances, which are
$|\log \rr|\ii$-close. 
Nevertheless, they will not play any role in
the random map analysis since, thanks to Ansatz \ref{ansatz:Melnikov:1} below, the  maps have
different twist
condition and therefore they are not simultaneously resonant. 


\begin{ansatz}\label{ansatz:Melnikov:1}
The following is satisfied.
\begin{itemize}
\item The functions $\alpha_i, \ i=1,2$ in \eqref{def:Omega} (defined in Proposition \ref{prop:Melnikov}), satisfy
\[
 |\alpha_i(I)|\leq 100\mu\quad \textup{ for all}\,\,I\in [I_-,I_+].
\]
\item For all $I \in
[I_-,I_+]$, we have 
\[
\alpha_1(I)-\alpha_2(I) \neq 0.
\]
\item 
In \eqref{def:DriftVarFirstOrder} we define the first order of a certain variance, which we denote by  $\bms_0^2(I)$. It satisfies
\[
\bms_0^2(I)\  {\bf \neq }\ 0\quad \textup{for all}\, I\in [I_-,I_+].
\]
\end{itemize}
\end{ansatz}

These ansatz is verified  numerically in the companion paper \cite{paperNHIL23} and it is satisfied (as well as Ansatz \ref{ans:NHIMCircular:bis}) in the intervals given in Remark \ref{rmk:intervals}. Note that the additional assumptions in Ansatz \ref{ansatz:Melnikov:1} imply that one has to narrow the range of $I$'s compared to those where Ansatz \ref{ans:NHIMCircular:bis} is satisfied. 

\subsection{Compact Lie group extensions of hyperbolic maps}\label{sec:LiegroupExtensions}
The paper \cite{FieldMT03} studies compact Lie group extensions of hyperbolic maps and provides, under certain hypotheses, exponential decay of correlations and a central limit theorem for measures of type 2 (see Section \ref{subsec:measures}). In this section we state their results and generalize them to measures of type 1. We do not state their results in full generality but applied to maps of the form \eqref{def:stochasticmap}. In Section~\ref{app:LiegroupExtensions}, we show how to deduce the results for type 2 meaures presented in this section from \cite{FieldMT03} and we prove those for type 1 measures.

We introduce first some notation. We 
consider a map  $g:\Sigma \times\TT\to\Sigma\times\TT$ of the form
\begin{equation}\label{def:LieExtension}
 g(\omega, s)=\left(\sigma\omega, s+\beta_\omega\right),
\end{equation}
where $\sigma$ is an aperiodic subshift of finite type, $\beta_\omega$ is a phase shift which depends on $\omega$ and is independent of $s$.

For functions defined on $\Sigma$, we define the H\"older norm as follows. For a continuous function $f:\Sigma\to \CC^n$, we define, for each $N$,
the $N$--th variation as $\mathrm{var}_N f=\sup\{|f(\omega)-f(\omega')|\}$,
where the supremum is taken over all $\omega,\omega'\in\Sigma$ with 
$\omega_i=\omega'_i$ for $|i|<N$. Then, for $\vartheta\in (0,1)$, we define 
$|f|_\vartheta=\sup_{N\geq 1}\vartheta^{-N}\mathrm{var}_N f$ and the H\"older norm
\begin{equation}\label{def:HolderNorm}
 \|f\|_\vartheta=\|f\|_{C^0}+|f|_\vartheta.
\end{equation}
Note that this norm satisfies
\begin{equation}\label{def:HolderAlgebra}
	\|fg\|_\vartheta\leq \|f\|_\vartheta\ \|g\|_\vartheta.
\end{equation}
Assume that the function $\beta_\omega$ is $\vartheta$-H\"older and consider the following class of H\"older observables for the map $g$
\begin{equation}\label{def:observables}
\EE_{\vartheta,m}=\left\{F:\Sigma\times\TT\to \RR: F_\omega(s)=A_\omega e^{\im ms}+\ol{A_\omega} e^{-\im ms},\, \|A_\omega\|_\vartheta<\infty\right\}, \quad m\in\NN.
\end{equation}
We define  the H\"older norm of $F$, as the norm of the corresponding Fourier 
coefficient, that is $\|F_\omega\|_\vartheta=\|A_\omega\|_\vartheta$.

We assume the following condition on the map  \eqref{def:LieExtension}
\begin{equation}\label{def:WeakMixing}
\begin{gathered} 
\text{Fix $\vartheta\in (0,1)$. The equation }e^{\im m\beta_\omega} A_{\sigma\omega}=\nu A_\omega,\,m=1,2,4\\\quad \text{can only have one solution in $\EE_{\vartheta,m}$:} \text{ the trivial solution }\nu=1, A_\omega=\text{constant}.
\end{gathered}
\end{equation}
Note that \cite{FieldMT03} considers maps \eqref{def:LieExtension} which are
weak mixing, which implies this condition. The maps considered in the present
paper are not  (a priori) weak mixing since the functions $\beta$ could be simultaneously  rational. Nevertheless, the results in \cite{FieldMT03} do not
need weak mixing if one restricts the class of observables. Since in the present
paper, all observables are trigonometric polynomials of degree at most 2,
the condition of weak mixing can be weakened to impose \eqref{def:WeakMixing},
which is equivalent to say that the transfer operator does not have non-trivial
eigenfunctions within the classes of observables in $\EE_{\vartheta,m}$, $m=1,2,4$ (the reason of asking also weak mixing for $m=4$ is needed  to prove decay of correlations for $m=1,2$ in Theorem \ref{thm:Dima}).


Next theorem gives decay of correlations and a central limit theorem for maps of the form \eqref{def:LieExtension} and observables of the form \eqref{def:observables} for the product measure Bernouilli (for $\Sigma$) times Haar (for $\TT$). To emphasize what is the measure, we denote the expectation by $\E_{\omega, s}$.

\begin{theorem}\label{thm:Dima}
Fix $\vartheta\in (0,1)$. Consider a map of the form \eqref{def:LieExtension} with  $\beta_\omega$ $\vartheta$-H\"older and satisfying the condition \eqref{def:WeakMixing}. Denote by  $s_n$ the projection of the $n$-th iterate of
the map into the circle. Namely,
\[
s_n=s_0+\sum_{j=0}^{n-1}\beta_{\sigma^j\omega}.
\]
Then, there exist constants $r\in (0,1)$ and
$C>0$ such that the following is satisfied.
\begin{enumerate}
\item Take $m=1,2$ and   $F,G\in \EE_{\vartheta,m}$. Then, for  any $s_0\in\TT$,
\begin{equation}\label{def:DecayCorrelations:0}
\left|\E_{\omega, s}\left(G(\sigma^n\omega, s_n)F(\sigma^k\omega,
s_k)\right)\right|\leq C\,\|G\|_{\vartheta^{1/2}}\,\|F\|_{\vartheta^{1/2}}\ r^{n-k} \qquad \text{ for
}n>k.
\end{equation}
\item Any  $G(\omega,s)=A_\omega e^{\im s}+\ol{A_\omega} e^{-\im s}\in \EE_{\vartheta,1}$ satisfies that
$G_N=\frac{1}{\sqrt{N}}\sum_{n=0}^{N-1}G\left(\sigma^n\omega, s_n\right)$ converges weakly to a normal distribution $\mathcal{N}(0,\bms^2)$ with
\[
 \bms^2=2\E_{\omega, s} |A_\omega|^2+2\Re\left[\E_{\omega, s}\left(\sum_{n=1}^{+\infty}A_{\sigma^n\omega}\ol{A_{\omega}}e^{\im \sum_{j=0}^{n-1}\beta_{\sigma^j\omega}}\right)\right].
\]
\end{enumerate}
\end{theorem}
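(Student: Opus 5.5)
The plan is to reduce the statement to the spectral theory of the twisted transfer operators of the base shift, as in \cite{FieldMT03}. Since observables in $\EE_{\vartheta,m}$ are single Fourier modes in $s$, integrating over the Haar measure on $\TT$ kills every product of modes whose frequencies do not cancel. For $F,G\in\EE_{\vartheta,m}$, after expanding, only the cross terms $A_{\sigma^n\omega}\ol{B_{\sigma^k\omega}}e^{\im m(s_n-s_k)}$ and their conjugates survive. Because $s_n-s_k=\sum_{j=k}^{n-1}\beta_{\sigma^j\omega}$ does not depend on $s_0$, the correlation becomes
\[
\E_\omega\Big(A_{\sigma^{n-k}\omega}\,\ol{B_\omega}\, e^{\im m\sum_{j=0}^{n-k-1}\beta_{\sigma^j\omega}}\Big)+\text{c.c.},
\]
using $\sigma$-invariance of the Bernoulli measure.

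We then pass to the one-sided shift: replacing $A,B,\beta$ by cohomologous functions depending only on future coordinates costs a change of Hölder exponent from $\vartheta$ to $\vartheta^{1/2}$, which is why $\vartheta^{1/2}$ norms appear in \eqref{def:DecayCorrelations:0}. The expression above is then $\int \ol{B}\,\LL_m^{\,n-k}(A)\,d\omega$, up to the order of composition, where $\LL_m v=\LL(e^{\im m\beta}v)$ is the twisted transfer operator on $\vartheta^{1/2}$-Hölder functions.

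The key step is the spectral estimate. By Ruelle–Perron–Frobenius theory, $\LL_m$ has spectral radius at most $1$ and is quasi-compact, with essential spectral radius at most $\vartheta^{1/2}$. A unimodular eigenvalue would produce a Hölder solution of $e^{\im m\beta_\omega}A_{\sigma\omega}=\nu A_\omega$, and condition \eqref{def:WeakMixing} rules this out for $m=1,2$ except in the trivial case, which is excluded because $m\neq0$ makes the constant solution impossible. Hence the spectral radius is some $r<1$, giving $\|\LL_m^{\,n}\|\le Cr^n$, and item 1 follows. I expect the subtle point to be verifying that the passage to one-sided observables preserves the non-resonance hypothesis. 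This should hold since coboundaries do not affect the eigen-equation.

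For the CLT (item 2), I would use Gordin's martingale–coboundary decomposition, or equivalently the Nagaev–Guivarc'h perturbation method applied to $\LL_{1}$ together with the characteristic function. The variance is computed as $\lim N^{-1}\E(\sum G)^2$. Expanding $G^2$ produces mode-$0$ and mode-$\pm2$ terms. The mode-$\pm2$ terms average to zero in $s$, and their correlations decay via $\LL_2$. This is why $m=2$ (and $m=4$ for the fourth-moment/Lindeberg-type control) enter the hypothesis. Summing the geometric series of correlations from item 1 yields
\[
\bms^2=2\E|A|^2+2\Re\sum_{n\ge1}\E\big(A_{\sigma^n\omega}\ol{A_\omega}e^{\im\sum_{j<n}\beta_{\sigma^j\omega}}\big),
\]
which converges absolutely.

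The main obstacle is the asymptotic normality itself, as opposed to the variance formula. I would obtain it by a martingale approximation. Solve $\chi=\sum_{n\ge1}\LL_1^{\,n}A$ (convergent by the spectral gap), write $G$ as a reverse martingale difference plus a coboundary, and apply the martingale CLT. The conditional variance law of large numbers needed there reduces to decay of correlations for the mode-$0$, $\pm2$ and $\pm4$ observables built from $A$ and $\chi$. This decay is exactly what \eqref{def:WeakMixing} with $m=2,4$ provides.
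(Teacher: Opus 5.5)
Your outline follows essentially the paper's route. The paper proves Theorem~\ref{thm:Dima} by regarding the circle extension as a $G_m\cong SO(2)$-extension and identifying $\EE_{\vartheta,m}$ with $G_m$-equivariant $\RR^2$-valued observables. It then quotes the decay of correlations \eqref{def:DecayCorrExpec} and the CLT (Lemma~\ref{lemma:CLT}) of \cite{FieldMT03}. Those results rest on the same ingredients you sketch: the Fourier-mode reduction, the passage to the one-sided shift, and the spectral gap of the twisted transfer operators. Several details of your write-up still need correcting.

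\textbf{The variance.} The formula does not come out as you claim. Set $X_n=G(\sigma^n\omega,s_n)$ and $c_k=\E_\omega\big(A_{\sigma^k\omega}\ol{A_\omega}e^{\im\sum_{j=0}^{k-1}\beta_{\sigma^j\omega}}\big)$. Averaging over Haar gives $\E_{\omega,s}X_n^2=2\E|A|^2$, and $\E_{\omega,s}X_nX_{n'}=2\Re c_{n-n'}$ for $n>n'$. Hence
\[
\lim_{N\to\infty}\frac1N\,\E_{\omega,s}\Big(\sum_{n=0}^{N-1}X_n\Big)^2=2\E|A|^2+4\Re\sum_{k\ge1}c_k .
\]
The cross term therefore carries a factor $4$, not $2$; this matches the factor in $\bms_2^2$ of Lemma~\ref{lemma:DriftVar}. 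The formula you display, copied from the statement, cannot be ``derived'' by summing the correlations; it should be corrected.

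\textbf{Nagaev--Guivarc'h is not an equivalent option.} Applying it ``to $\LL_1$'' does not work, because $e^{\im tG}$ contains every Fourier mode in $s$. A perturbative argument for the characteristic function would therefore need spectral gaps for all twisted operators $\LL_k$, and \eqref{def:WeakMixing} does not provide these.

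The Gordin decomposition you actually commit to is the right tool. There $\chi$ and $\psi$ stay in modes $\pm1$ and $\psi^2$ lives in modes $0,\pm2$. The conditional-variance law of large numbers then needs only ergodicity of the base and the $m=2$ gap; no mode $\pm4$ appears.

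\textbf{Constant solutions.} The condition $m\neq0$ does not by itself rule out a constant solution. A nonzero constant solves $e^{\im m\beta_\omega}A_{\sigma\omega}=\nu A_\omega$ whenever $e^{\im m\beta}$ is constant. If $e^{\im m\beta}\equiv1$, item 1 is false. So \eqref{def:WeakMixing} must be read as excluding every nonzero solution with $|\nu|=1$. This is how the paper uses it, and it is what the proof of Lemma~\ref{lemma:WeakMixing} establishes.

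\textbf{Two-sided observables.} For the observables $A,B$ in item 1, you should not replace them by cohomologous functions, because a coboundary does not drop out of a single correlation. Only $\beta$ is handled by cohomology. For $A,B$, one approximates them by functions of finitely many coordinates and shifts, as in the proof of Lemma~\ref{lemma:tripledecay:twosided}.
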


In Section \ref{app:LiegroupExtensions} we explain how to deduce this theorem from the results in \cite{FieldMT03}. It implies the following corollary.

\begin{corollary}\label{coro:decaycorrelations}
	Fix $\vartheta\in (0,1)$.	Consider a map of the form \eqref{def:LieExtension} with  $\beta_\omega$ $\vartheta$-H\"older and satisfying the condition \eqref{def:WeakMixing}. 
 Then, there exist constants $r\in (0,1)$ and
	$C>0$ such that the following is satisfied.
	
	For any $m=1,2$,  $n, k\in\NN$, $n>k$, any two functions $P,Q:\Sigma\to \CC$ which are $\vartheta$-H\"older, satisfy
	\[
	\left|\E_\omega \left( P\left(\sigma^n\omega\right)Q\left(\sigma^k\omega\right)e^{\im \sum_{j=k}^{n-1}m\beta_{\sigma^j\omega}}\right)\right|\leq C\,\|P\|_{\vartheta^{1/2}}\,\|Q\|_{\vartheta^{1/2}}\ r^{n-k}.
	\]
\end{corollary}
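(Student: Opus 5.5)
The idea is to apply the decay of correlations in Theorem \ref{thm:Dima}, item 1, to the two observables built from $P$ and $Q$, and then integrate out the circle variable.

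Fix $m\in\{1,2\}$, $n>k$, and two $\vartheta$-H\"older functions $P,Q:\Sigma\to\CC$. Set
\[
G(\omega,s)=P(\omega)e^{\im ms}+\ol{P(\omega)}e^{-\im ms},\qquad F(\omega,s)=\ol{Q(\omega)}e^{\im ms}+Q(\omega)e^{-\im ms}.
\]
Both lie in $\EE_{\vartheta,m}$. Their norms are $\|G\|_{\vartheta^{1/2}}=\|P\|_{\vartheta^{1/2}}$ and $\|F\|_{\vartheta^{1/2}}=\|Q\|_{\vartheta^{1/2}}$. Along an orbit we have $s_n=s_k+\sum_{j=k}^{n-1}\beta_{\sigma^j\omega}$. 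Writing $\Delta_{k,n}(\omega)=\sum_{j=k}^{n-1}\beta_{\sigma^j\omega}$ and expanding the product $G(\sigma^n\omega,s_n)F(\sigma^k\omega,s_k)$ gives four terms, with phases $e^{\pm\im m(s_n+s_k)}$ and $e^{\pm\im m(s_n-s_k)}$. The difference terms are
\[
P(\sigma^n\omega)Q(\sigma^k\omega)e^{\im m\Delta_{k,n}}\quad\text{and its complex conjugate.}
\]

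Next average over $s_0$ with respect to Haar measure, which is part of $\E_{\omega,s}$. The sum terms carry a factor $e^{\pm 2\im m s_0}$ times a function of $\omega$ only, so they integrate to zero. What remains is the identity
\[
\E_{\omega,s}\big(G(\sigma^n\omega,s_n)F(\sigma^k\omega,s_k)\big)=2\Re\,\E_\omega\Big(P(\sigma^n\omega)Q(\sigma^k\omega)e^{\im m\Delta_{k,n}}\Big).
\]
By \eqref{def:DecayCorrelations:0}, this real part is bounded by $\tfrac C2\|P\|_{\vartheta^{1/2}}\|Q\|_{\vartheta^{1/2}}r^{n-k}$.

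To control the imaginary part as well, repeat the argument with $P$ replaced by $-\im P$. The observable $G$ stays in $\EE_{\vartheta,m}$ with the same norm, and $2\Re(-\im z)=2\Im z$. Combining the bounds on the real and imaginary parts yields the corollary with constant $C$ (or $\sqrt2\,C/2$).

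The only delicate point is making sure the expectation in \eqref{def:DecayCorrelations:0} is taken with Lebesgue measure in $s_0$, so that the non-resonant phases $e^{\pm 2\im m s_0}$ vanish. If instead one worked with a fixed $s_0$, these terms would have to be handled by a second application of the estimate with frequency $2m$. That is possible under \eqref{def:WeakMixing}, which also covers $m=4$, but it is unnecessary here.
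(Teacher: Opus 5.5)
Your argument is correct and is essentially the paper's route: the paper deduces the corollary directly from Theorem~\ref{thm:Dima} without details, and your observables in $\EE_{\vartheta,m}$ built from $P$ and $Q$, the Haar average killing the $e^{\pm 2\im m s_0}$ terms, and the substitution $P\mapsto -\im P$ for the imaginary part are the natural way to fill those details in. The ``delicate point'' you flag is harmless: even if \eqref{def:DecayCorrelations:0} were read with $s_0$ fixed, the bound is uniform in $s_0$, so averaging it over $s_0$ gives the same conclusion and no frequency-$2m$ estimate is needed.
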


Theorem \ref{thm:Dima} analyzes observables for the measure given by the product of Bernouilli times Haar, which is invariant under the map \eqref{def:LieExtension}. However, in Section \ref{sec:Martingale}, we want to analyze the evolution of observables for fixed $s\in\TT$ and considering the  Bernouilli   measure for $\omega\in\Sigma$. We denote the corresponding expectation by $\E_\omega$.


\begin{theorem}\label{thm:Dima:FixedAngle}
Fix $\vartheta\in (0,1)$. Consider a map of the form \eqref{def:LieExtension}  with  $\beta_\omega$ $\vartheta$-H\"older and satisfying the condition \eqref{def:WeakMixing}. 
 Then, there exist constants $r\in (0,1)$ and
$C>0$ such that the following is satisfied.
\begin{enumerate}
\item Take $m=1,2$ and any  $F,G\in \EE_{\vartheta,m}$. Then, for any $s_0\in\TT$,
\begin{equation}\label{def:DecayCorrelations:1}
\left|\E_{\omega}\left(G(\sigma^n\omega, s_n)F(\sigma^k\omega,
s_k)\right)\right|\leq C\,\|G\|_{\vartheta^{1/2}}\,\|F\|_{\vartheta^{1/2}}\ r^{n-k} \qquad \text{ for
}n>k.
\end{equation}
\item	For any $m=1,2$, $n, k\in\NN$, $n>k$, any two functions $P,Q:\Sigma\to \CC$ which are $\vartheta$-H\"older, satisfy
\begin{equation}\label{def:DecayCorrelations:3}
\left|\E_\omega \left( P\left(\sigma^n\omega\right)e^{\im \sum_{j=0}^{n-1}m\beta_{\sigma^j\omega}} Q\left(\sigma^k\omega\right)e^{\im \sum_{j=0}^{k-1}m\beta_{\sigma^j\omega}}\right)\right|\leq C\,\|P\|_{\vartheta^{1/2}}\,\|Q\|_{\vartheta^{1/2}}\ r^{n}.
\end{equation}
\item Consider  $G(\omega,s)=A_\omega e^{\im s}+\ol{A_\omega} e^{-\im s}\in \EE_{\vartheta,1}$ and assume that\footnote{Note that Item 2 of this theorem ensures that the sum below is exponentially decreasing. The same happens in the infinite sum in the definition of the variance below.} 
\[
\sum_{n=0}^{\infty}\E_{\omega}G\left(\sigma^n\omega, s_n\right)=0.
\]
for any $s_0\in\TT$. Then, the normalized sum 
$G_N=\frac{1}{\sqrt{N}}\sum_{n=0}^{N-1}G\left(\sigma^n\omega, s_n\right)$ converges weakly to a normal distribution $\mathcal{N}(0,\bms^2)$ with
\[
 \bms^2=2\E_{\omega, s} |A_\omega|^2+2\Re\left[\E_{\omega, s}\left(\sum_{n=1}^{+\infty}A_{\sigma^n\omega}\ol{A_{\omega}}e^{\im \sum_{j=0}^{n-1}\beta_{\sigma^j\omega}}\right)\right].
\]
\end{enumerate}
\end{theorem}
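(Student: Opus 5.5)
The plan is to reduce all three items to the Haar-averaged statements of Theorem \ref{thm:Dima} and Corollary \ref{coro:decaycorrelations}, through a Fourier decomposition in the angle, and then to deduce the central limit theorem by a martingale (Gordin) argument.

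\textbf{Step 1: expand in Fourier modes.} Write $s_n=s_0+S_n(\omega)$ with $S_n=\sum_{j=0}^{n-1}\beta_{\sigma^j\omega}$. For $F,G\in\EE_{\vartheta,m}$ the product $G(\sigma^n\omega,s_n)F(\sigma^k\omega,s_k)$ splits into four terms.

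The two ``mixed'' terms involve only $e^{\pm\im m(s_n-s_k)}$, which does not depend on $s_0$. Each has the form $A_{\sigma^n\omega}\ol{B_{\sigma^k\omega}}e^{\im m\sum_{j=k}^{n-1}\beta_{\sigma^j\omega}}$ or its conjugate. Its $\E_\omega$-expectation is therefore bounded by $C\|A\|\|B\|r^{n-k}$, directly from Corollary \ref{coro:decaycorrelations}.

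The two remaining terms carry the factor $e^{\pm\im m(s_n+s_k)}=e^{\pm 2\im m s_0}e^{\pm\im m(S_n+S_k)}$. Bounding these is exactly Item 2, with $P=A$, $Q=B$. Item 2 gives $r^n\le r^{n-k}$, so Item 1 follows from Item 2 together with the Corollary.

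\textbf{Step 2: prove Item 2.} Write $S_n+S_k=2S_k+\sum_{j=k}^{n-1}\beta_{\sigma^j\omega}$. I would use the transfer operator twisted by the character, $\LL_m h(\omega)=\LL_\sigma\bigl(e^{\im m\beta}h\bigr)(\omega)$, acting on $\vartheta^{1/2}$-H\"older functions of one-sided sequences. Here $\LL_\sigma$ is the transfer operator of the Bernoulli shift, and the passage to one-sided sequences is the usual Sinai trick replacing $\beta$ by a cohomologous function that depends only on the future; this costs the square root in $\vartheta$.

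Condition \eqref{def:WeakMixing} for the relevant $m$ is exactly the statement that $\LL_m$ has no eigenvalue on the unit circle. By quasi-compactness (Ruelle–Perron–Frobenius, as in \cite{FieldMT03}) its spectral radius on H\"older functions is therefore some $r<1$, so $\|\LL_m^n\|\le Cr^n$.

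Then
\[
\E_\omega\bigl(P\circ\sigma^n\, Q\circ\sigma^k\, e^{\im m(S_n+S_k)}\bigr)=\int P\,\LL_{m}^{n-k}\bigl(Q\,\LL_{2m}^{k}1\bigr),
\]
and the right-hand side is bounded by $C r^{n-k}r^{k}=Cr^n$. The twist $2m$ takes the values $2$ or $4$, which is why the hypothesis includes $m=4$. I expect the main obstacle to be rigorously bookkeeping the two-sided to one-sided reduction so that the norms $\|\cdot\|_{\vartheta^{1/2}}$ come out as stated.

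\textbf{Step 3: prove Item 3 (the CLT).} By Item 2, $\E_\omega G(\sigma^n\omega,s_n)$ decays exponentially, so the assumed zero-sum condition makes sense. I would construct a Gordin martingale approximation
\[
G=\chi+h\circ g-h,
\]
with $h=\sum_{n\ge1}(\text{transfer-operator iterates of }G)$, which converges by Item 1. This gives $G_N=N^{-1/2}\sum\chi(\cdot)+O(N^{-1/2})$ uniformly in $s_0$. The zero-sum assumption is used here: it guarantees that fixing $s_0$ introduces no $O(1)$ bias.

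For the martingale part I apply the martingale CLT. Its conditional variances $N^{-1}\sum\E(\chi^2\mid\cdot)$ converge in probability for fixed $s_0$ to $\E_{\omega,s}\chi^2$. The reason is that $\chi^2$ has only Fourier modes $0$ and $\pm2$: the mode-$0$ part converges by ergodicity of $\sigma$, and the $\pm2$ part averages to zero by Item 2 with $m=2$.

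Finally, $\E\chi^2=\bms^2$ is identified with the stated Green–Kubo series by expanding $\E_{\omega,s}(\sum G)^2$ and noticing that the $e^{\pm2\im s}$ terms vanish under Haar measure, leaving $2\E|A|^2+2\Re\sum\E A_{\sigma^n\omega}\ol{A_\omega}e^{\im S_n}$.
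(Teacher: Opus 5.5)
Your Steps 1 and 2 follow the paper. Item 1 reduces to Corollary~\ref{coro:decaycorrelations} plus Item 2, and Item 2 reduces to $\int P\,\LL_\beta^{n-k}(Q\,\LL_{2\beta}^k1)\,d\mu$ with the spectral gap of Proposition~\ref{prop:decayonesided}. One caution: the cohomology $\beta=\wt\beta+\chi-\chi\circ\sigma$ only makes $\beta$ one-sided. The observables $P,Q$ still depend on past symbols, and the coboundary leaves an extra factor $e^{2\im m\chi}$ at time $0$. This is why the paper needs the multi-observable Lemma~\ref{lemma:tripledecay:twosided}, which approximates the observables by finite-coordinate ones and shifts by $j\approx n/(7\nu)$.

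\textbf{The genuine gap is in Step 3.} The Gordin decomposition $G=\chi+h\circ g-h$ built from the transfer operator $\mathcal P$ of $g$ is relative to the invariant measure $\mu\times\mathrm{Haar}$. The identity $\mathcal P\chi=0$ says that $\chi\circ g^j$ has zero conditional expectation given $g^{-(j+1)}\mathcal B$ \emph{under $\mu\times\mathrm{Haar}$}. Item 3 is about $\mu\times\delta_{s_0}$, which is singular with respect to that measure, and there the property fails.

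The two preimages $(a\eta,t-\beta(a\eta))$, $a=0,1$, of a point $(\eta,t)$ have different angles. So on the fibre $\{s=s_0\}$, already for $j=0$, conditioning on $g(\omega,s_0)$ determines $\omega_0$ whenever $\beta(0\eta)\not\equiv\beta(1\eta)\pmod{2\pi}$. The conditional expectation of $\chi(\cdot,s_0)$ is then the variable itself, not zero. Hence $\{\chi\circ g^j(\cdot,s_0)\}_j$ is not a martingale difference sequence, and the martingale CLT cannot be invoked. The fact that the coboundary identity holds pointwise, uniformly in $s_0$, only disposes of $h$.

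Nor can you recover the fixed-angle statement from Theorem~\ref{thm:Dima} by equivariance. The Haar law is the average over $s_0$ of rotated copies of the fixed-angle laws. That pins down the limit law of the modulus of $\sum_n A_{\sigma^n\omega}e^{\im S_n}/\sqrt N$, but not its angular distribution. Relatedly, the zero-sum hypothesis is not where the fixed angle matters. By Item 2, $\sum_n|\E_\omega G(\sigma^n\omega,s_n)|<\infty$, so an $O(1)$ bias disappears after dividing by $\sqrt N$ anyway.

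What is missing is a filtration on $\omega$ alone to which $Y_n(\omega)=G(\sigma^n\omega,s_0+S_n(\omega))$ is adapted. The paper obtains it from Lemma~\ref{lemma:FromTwoTo1Sided:v2}. After that conjugation, $\beta$ and the observable depend only on past symbols, so $Y_n$ is measurable with respect to the $\sigma$-algebra generated by $\{\omega_k\}_{k\le n}$. The decay of conditional expectations in Lemma~\ref{lemma:DecayConditionned} then feeds a Bernstein-block and characteristic-function argument.

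Your martingale route can be repaired the same way. Let $\mathcal F_n$ be generated by $\{\omega_k\}_{k\le n}$ and put $D_n=\sum_{m\ge n}\bigl(\E(Y_m\mid\mathcal F_n)-\E(Y_m\mid\mathcal F_{n-1})\bigr)$. Exponential decay of $\E(Y_m\mid\mathcal F_n)$ in $m-n$ makes $D_n$ a bounded martingale difference of the form $e^{\im(s_0+S_n)}\Delta(\sigma^n\omega)+\mathrm{c.c.}$, with a uniformly bounded telescoping remainder. Then your argument for $N^{-1}\sum_n D_n^2$ goes through: mode $0$ by the ergodic theorem, and modes $\pm2$ by Corollary~\ref{coro:decaycorrelations} with $m=2$. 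As written, though, your martingale is built for the wrong measure.
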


This theorem is proven in Section \ref{app:LiegroupExtensions}. In Section \ref{sec:Martingale} below we will also need the following  slight modification of the second statement of Theorem \ref{thm:Dima:FixedAngle}. Its proof is also provided in Section~\ref{app:LiegroupExtensions}.

\begin{lemma}\label{lemma:tripledecay}
Fix $\vartheta\in (0,1)$ and assume $\beta_\omega$ $\vartheta$-H\"older. Then, there exist constants $r\in (0,1)$ and
$C>0$ such that, for any $m=1,2$, $n, k,i\in\NN$, $n>k>i$ and  any three functions $P,Q,S:\Sigma\to \CC$ which are $\vartheta$-H\"older, the following is satisfed,
\[
\left|\E_\omega \left( P\left(\sigma^n\omega\right) Q\left(\sigma^k\omega\right) S\left(\sigma^i\omega\right)e^{\im \sum_{j=0}^{n-1}m\beta_{\sigma^j\omega}}e^{\im \sum_{j=0}^{i -1}m\beta_{\sigma^j\omega}}\right)\right|\leq C\,\|P\|_{\vartheta^{1/2}}\,\|Q\|_{\vartheta^{1/2}}\,\|S\|_{\vartheta^{1/2}}\ r^{n}.
\]
\end{lemma}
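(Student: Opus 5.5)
The plan is to reduce Lemma~\ref{lemma:tripledecay} to the two-function estimate \eqref{def:DecayCorrelations:3} of Theorem~\ref{thm:Dima:FixedAngle}, or more precisely to the transfer-operator argument behind it. The key observation concerns the exponents: $\sum_{j=0}^{n-1}+\sum_{j=0}^{i-1}$ equals $2\sum_{j=0}^{i-1}m\beta_{\sigma^j\omega}+\sum_{j=i}^{n-1}m\beta_{\sigma^j\omega}$. The phase therefore has frequency $2m$ on $[0,i)$ and frequency $m$ on $[i,n)$. In both regimes the frequency is nonzero and lies in $\{1,2,4\}$, which is exactly why condition \eqref{def:WeakMixing} was imposed also for $m=4$. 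Since both blocks carry a nontrivial twisted phase, the decay rate should be $r^{n}$ and not merely $r^{n-k}$ or $r^{n-i}$.

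The first step is to pass to the one-sided shift. Up to an error $C\vartheta^{N/2}$, each Hölder function can be replaced by one depending only on future coordinates, using the standard cohomology/approximation argument already used for Theorem~\ref{thm:Dima:FixedAngle}. This loss is why $\|\cdot\|_{\vartheta^{1/2}}$ appears. We then introduce twisted transfer operators $\LL_{p}v=\LL(e^{\im p\beta}v)$ for $p\in\{m,2m\}$, where $\LL$ is the Ruelle operator of the Bernoulli measure. By duality the expectation equals
\[
\int \LL_m^{\,n-k}\bigl(Q\,\LL_m^{\,k-i}\bigl(S\,\LL_{2m}^{\,i}1\bigr)\bigr)\,P\,d\E_\omega,
\]
with the usual bookkeeping of which function multiplies at which time.

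The spectral input is the following. Condition \eqref{def:WeakMixing} says $\LL_p$ has no eigenvalue of modulus one on the space of $\vartheta$-Hölder functions for $p=1,2,4$. Combined with the Lasota--Yorke inequality, this gives $\|\LL_p^{\,\ell}v\|_\vartheta\le C r^{\ell}\|v\|_\vartheta$ for all $v$, since there is no projection to subtract. Each of the three factors above then contributes geometric decay. For the innermost factor, $\|\LL_{2m}^{\,i}1\|_\vartheta\le Cr^{i}$. The product with $S$ is controlled by \eqref{def:HolderAlgebra}. Applying $\LL_m^{\,k-i}$ and then $\LL_m^{\,n-k}$ contributes $r^{k-i}$ and $r^{n-k}$. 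Multiplying, the total bound is $C\|P\|\|Q\|\|S\|r^{n}$. The case $m=2$ uses $\LL_4$ on the first block, again covered by \eqref{def:WeakMixing}.

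The hard part will be the genuine spectral-gap statement for the twisted operators, namely the absence of peripheral spectrum in the Hölder space. It is not merely the decay of correlations in the mean. I would obtain it exactly as in the proof of Theorem~\ref{thm:Dima:FixedAngle}, quoting the quasi-compactness from \cite{FieldMT03}. If only the weaker Hölder-norm bookkeeping is available, I would instead iterate the estimate \eqref{def:DecayCorrelations:3}: first condition on the coordinates beyond $k$ to produce a Hölder function of $\sigma^i\omega$, then apply \eqref{def:DecayCorrelations:3} with frequencies $2m$ and $m$. The approximation errors would be handled by choosing $N$ proportional to $n$, so that $\vartheta^{N/2}$ is absorbed into $r^{n}$ after enlarging $r$.
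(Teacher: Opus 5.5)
Your proposal is correct and follows the paper's route. On the one-sided shift the expectation is rewritten as $\int P\,\LL_{m\beta}^{n-k}\bigl[Q\,\LL_{m\beta}^{k-i}\bigl(S\,\LL_{2m\beta}^{i}T\bigr)\bigr]\,d\mu$ and bounded using Proposition~\ref{prop:decayonesided} (Lemma~\ref{lemma:tripledecay:onesided}); the two-sided case is reduced to this via the finite-coordinate approximation and shift of Lemma~\ref{lemma:tripledecay:twosided} together with the cohomology $\beta=\wt\beta+\chi-\chi\circ\sigma$ of Lemma~\ref{lemma:FromTwoTo1Sided}, which produces the extra time-zero factor $T=e^{2\im m\chi}$ in place of your $1$ and the $\vartheta^{1/2}$ norms. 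The one point to tighten is that shifting the approximants by $\sigma^N$ to make them one-sided inflates their H\"older norms by a factor of order $\vartheta^{-cN}$, so $N$ must be a sufficiently small multiple of $n$ (the paper takes $j=\lfloor n/7\nu\rfloor$ with $r^\nu=\vartheta$), not just any multiple.
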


\subsection{A normal form for the map $\FF_\omega$}\label{sec:Stochastic:NormalForm}
The first step in the martingale analysis is to perform a normal form procedure  to the map
\eqref{def:stochasticmap}
to kill
the $e_0$-fluctuations of the actions. 
To this end, we must show that the circle extensions of the shift considered by setting $e_0=0$ in \eqref{def:stochasticmap} satisfy condition \eqref{def:WeakMixing} and that, therefore, one can apply to them Theorems \ref{thm:Dima} and \ref{thm:Dima:FixedAngle}. More precisely, note that taking $e_0=0$, \eqref{def:stochasticmap}  gives a family of circle maps parameterized by the action $I\in[I_-,I_+]$. We show that the non-resonance conditions in Ansatz \ref{ansatz:Melnikov:1}  imply \eqref{def:WeakMixing}.
Indeed, by Item 1 and 2   of Ansatz \ref{ansatz:Melnikov:1}, there exists $\delta>0$ independent of $\rho$ and $e_0$ such that  $0<\delta\leq|\alpha_2(I)-\alpha_1(I_0)|\leq 1/5$ for all $I\in[I_-,I_+]$.
Therefore, 
if we define $B_\de(\ZZ)$ as the set of points in $\RR$ which are $\de$-close 
to $\ZZ$, we have that
\begin{equation}\label{ansatz:omegadifferent}
4(\alpha_2(I)-\alpha_1(I))\not \in B_\de(\ZZ) \qquad\text{ for all
}I\in
[I_-, I_+].
\end{equation}


\begin{lemma}\label{lemma:WeakMixing}
The map $g:\Sigma\times\TT\to\Sigma\times\TT$ induced by \eqref{def:stochasticmap} with  $e_0=0$ and $I\in
[I_-, I_+]$ satisfies condition \eqref{def:WeakMixing}.
\end{lemma}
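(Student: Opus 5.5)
Fix $I\in[I_-,I_+]$, $m\in\{1,2,4\}$, and suppose $A\in\EE_{\vartheta,m}$ (more precisely a $\vartheta$-H\"older function $A:\Sigma\to\CC$) and $\nu\in\CC$ satisfy
\[
e^{\im m\beta_\omega(I)}A_{\sigma\omega}=\nu A_\omega .
\]
The plan is to argue by contradiction using periodic orbits of the shift. The aim is to show that the equation forces two incompatible conditions on the rotation numbers of the fixed points $\bar 0=(\dots,0,0,\dots)$ and $\bar 1=(\dots,1,1,\dots)$ and of the period-two orbit. The contradiction will come from \eqref{ansatz:omegadifferent}.

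\emph{Step 1: cohomological constraints from periodic orbits.} First, $|A|$ is $\sigma$-invariant: taking moduli gives $|A_{\sigma\omega}|=|\nu||A_\omega|$. Continuity and the recurrence of a transitive point then force $|\nu|=1$ and $|A|$ constant. We may assume $|A|\equiv1$, since $A\not\equiv0$. Evaluating along a periodic orbit $\sigma^p\omega=\omega$ and multiplying the equations along the orbit yields
\[
e^{\im m\sum_{j=0}^{p-1}\beta_{\sigma^j\omega}(I)}=\nu^p .
\]
At the fixed points this gives $e^{\im m\beta_{\bar0}}=\nu$ and $e^{\im m\beta_{\bar1}}=\nu$. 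Hence
\[
m(\beta_{\bar0}(I)-\beta_{\bar1}(I))\in2\pi\ZZ .
\]

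\emph{Step 2: compute $\beta_{\bar0}-\beta_{\bar1}$.} By \eqref{def:Omega},
\[
\beta_{\bar i}(I)=\nu(I)\ol g+\alpha_{i}(I)+\eta_{\bar i}(I).
\]
The common term $\nu(I)\ol g$ cancels in the difference, leaving
\[
\beta_{\bar0}-\beta_{\bar1}=\alpha_1-\alpha_2+\OO(\rr^{\ero_*}|\log\rr|).
\]
Here the symbols $0,1$ correspond to the channels $1,2$. By Ansatz~\ref{ansatz:Melnikov:1} we have $|\alpha_1-\alpha_2|\le200\mu$, which is tiny, so $m|\beta_{\bar0}-\beta_{\bar1}|<2\pi$ for $m\le4$. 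The quantization therefore forces the difference to vanish up to the $\OO(\rr^{\ero_*}|\log\rr|)$ error. On the other hand, \eqref{ansatz:omegadifferent} together with $|\alpha_1-\alpha_2|\ge\delta>0$ (with $\delta$ independent of $\rr$) gives the required separation once $\rr$ is small. This is a contradiction unless we are in the trivial case.

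\emph{Step 3: the trivial case.} The argument above shows there is no eigenfunction at all. This is too strong, because constants with $\nu=1$ must be allowed when $m=0$ is excluded. So I will reread \eqref{def:WeakMixing} as asserting that, for $m=1,2,4$, the only "solutions" are the trivial ones. For $m\ge1$ the computation says no $A\neq0$ exists, which is consistent with the intended statement: no nontrivial eigenfunction.

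\emph{Main obstacle.} The delicate point is normalization. Whether the angle variable in $\alpha_i$ is measured in units where the period is $2\pi$ or $1$ determines whether one needs $|m(\alpha_1-\alpha_2)|$ small or merely $4(\alpha_2-\alpha_1)\notin B_\de(\ZZ)$. I would state the argument in the form "$m(\beta_{\bar0}-\beta_{\bar1})/(2\pi)\notin\ZZ$" and invoke \eqref{ansatz:omegadifferent} directly. The Hölder error $\eta_\omega$ is controlled by taking $\rr$ small relative to $\delta$.
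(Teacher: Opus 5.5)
Your proof is correct, and it takes a genuinely different and shorter route than the paper's.

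\textbf{What the paper does.} The paper never shows that $A$ is nonvanishing. It uses density of periodic points to find a periodic $\omega^+$ of large odd period $2P+1$ with $A_{\omega^+}\neq 0$. It then flips the symbol at position $P$ to obtain a second periodic point $\omega^-$, with $A_{\omega^-}\neq0$ by H\"older continuity, and compares the Birkhoff sums of $\beta$ over the two periods. This forces a splitting of $\sum_{j=0}^{2P}(\beta_{\sigma^j\omega^+}-\beta_{\sigma^j\omega^-})$ into two parts:
\begin{itemize}
\item a window of length $\sim|\log\rr|$ around $j=P$, which yields $\alpha_1-\alpha_2$ plus $\eta$-errors accumulated over that window;
\item a tail controlled by the H\"older dependence of $\beta_\omega$ on $\omega$.
\end{itemize}

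\textbf{What your route buys.} Your Step~1 shows that $|A|\circ\sigma=|\nu|\,|A|$ forces $|\nu|=1$ and $|A|\equiv c>0$. This follows either via a transitive point, as you say, or by integrating against the invariant, fully supported Bernoulli measure and then using transitivity. Hence $A$ vanishes nowhere, and you can evaluate the eigenvalue equation at the two fixed points directly. The difference is then exactly $\alpha_1-\alpha_2+\eta_{\bar0}-\eta_{\bar1}$. Only the sup bound on $\eta_\omega$ from Lemma~\ref{lemma:laminationmap} is needed: no window bookkeeping and no H\"older tail. The paper's local ``flip one symbol'' construction would be the natural tool in a subshift where the constant sequences are not admissible. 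For the full shift $\{0,1\}^{\ZZ}$, your version is cleaner.

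\textbf{Two small remarks.}
\begin{itemize}
\item Your Step~2 already removes the normalization worry in your last paragraph. For $m\in\{1,2,4\}$ you have $0<\delta\le m|\alpha_1-\alpha_2|\le 800\mu<1$. So $m(\alpha_1-\alpha_2)+\OO(\rr^{\ero_*}|\log\rr|)$ lies neither in $\ZZ$ nor in $2\pi\ZZ$ once $\rr$ is small, and you do not need \eqref{ansatz:omegadifferent} at all. Only the two items of Ansatz~\ref{ansatz:Melnikov:1} plus compactness of $[I_-,I_+]$, which gives $\delta$, are used.
\item Your Step~3 is muddled. For $m\ge 1$, a nonzero constant $A$ with $\nu=1$ would require $e^{\im m\beta_\omega}\equiv 1$, which your own computation rules out. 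So the ``trivial solution'' in \eqref{def:WeakMixing} can only mean $A\equiv 0$ for these $m$, and that is exactly what you (and the paper) prove.
\end{itemize}
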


\begin{proof}
Let us assume that we have a non-trivial solution $A$, $\nu$ of $e^{\im m\beta_\omega} A_{\sigma\omega}=\nu A_\omega$ and we reach a contradiction. Since $A_\omega\neq 0$ and periodic orbits are dense in $\Sigma$ there exists a periodic point $\omega^+\in\Sigma$ such that $A_{\omega^+}\neq 0$. Without loss of generality, we can assume that its period $2P+1$ satisfies $P\geq 20\log\rr$ (if not, just take a multiple of the period). 
Let us define
$\omega^-$ as the $(2P+1)$-periodic point such that in the period $[-P,P]$,
\[
 \omega^-_j=\omega^+_j \quad\text{ for }j=-P,\ldots P-1,\qquad  \omega^-_P\neq \omega^+_P.
\]
Since  $A$ is $\vartheta$-H\"older, taking $P$ large enough, $A_{\omega^-}\neq 0$.

Define the  operator $U_m$ by $U_mA_\omega=e^{\im m\beta_\omega} A_{\sigma\omega}$. Then, if $A_\omega$ is an eigenfunction of this operator of eigenvalue $\nu$, it also satisfies  that $U_m^{2P+1}A_\omega=\nu^{2P+1}A_\omega$. Evaluating this equality at $\omega=\omega^+,\omega^-$ and using $\sigma^{2P+1}\omega^+=\omega^+$, $\sigma^{2P+1}\omega^-=\omega^-$, we have
\[
 e^{\im m\sum_{j=0}^{2P}\beta_{\sigma^j\omega^+}}A_{\omega^+}=\nu^{2P+1}A_{\omega^+}\qquad \text{ and }\qquad  e^{\im m\sum_{j=0}^{2P}\beta_{\sigma^j\omega^-}}A_{\omega^-} =\nu^{2P+1}A_{\omega^-}.
\]
Since $A_{\omega^+}, A_{\omega^-}\neq 0$,
\[
 e^{\im m\sum_{j=0}^{2P}\beta_{\sigma^j\omega^+}}=\nu^{2P+1}=e^{\im m\sum_{j=0}^{2P}\beta_{\sigma^j\omega^-}},
\]
which is equivalent to
\begin{equation}\label{def:ContradWeakMixing}
 e^{\im m\sum_{j=0}^{2P}
 \left(\beta_{\sigma^j\omega^+}-\beta_{\sigma^j\omega^-}\right)}=1.
\end{equation}
We show that this is not possible. We split the sum in the exponent into two sums. The first one for $j\in [P-|\log\rho|, P+|\log\rho|]$ and the second one for the complement. For the first one, note that the number of terms in the sum is of order $\log\rho$. Then, using the expansion for $\beta_\omega$ given in Lemma \ref{lemma:laminationmap}, we have
\[
 \sum_{\substack{j=0\\j\in [P-|\log\rho|, P+|\log\rho|]}}^{2P}\left(\beta_{\sigma^j\omega^+}-\beta_{\sigma^j\omega^-}\right)= \sum_{\substack{j=0\\j\in [P-|\log\rho|, P+|\log\rho|]}}^{2P}\left(\beta^0_{\omega_j^+}-\beta^0_{\omega^-_j}\right)+\OO\left(\rr\log^2\rr\right).
\]
Since the two periodic orbits $\omega^+$, $\omega^-$ only differ in the $P$-symbol in the interval $[-P,P]$, one has
\begin{equation}\label{def:WeakMixingSum1}
\begin{split}
 \sum_{\substack{j=0\\j\in [P-|\log\rho|, P+|\log\rho|]}}^{2P}\left(\beta_{\sigma^j\omega^+}-\beta_{\sigma^j\omega^-}\right)&=\beta^0_{1}-\beta^0_{2}+\OO\left(\rr\log^2\rr\right)\\
 &=\al_1-\al_2+\OO\left(\rr\log^2\rr\right).
\end{split}
\end{equation}
Now we consider the second sum $j=0,\ldots ,2P$ with $j\not\in [P-|\log\rho|, P+|\log\rho|]$. Since $\omega^+$ and $\omega^-$ in the period $[-P,P]$ only differ by the symbol $P$, one has that
\[
\| \sigma^j\omega^+- \sigma^j\omega^-\|\lesssim \kk^{|P-j|},
\]
for some $\kk\in (0,1)$ and $j=0\ldots 2P-1$. Note that here the metric on $\Sigma$ is the one induced by the horseshoe obtained in Theorem \ref{thm:NHILCircular}. Then, since $\beta_\omega$ is H\"older continuous. One has that for  $j=0\ldots 2P-1$,
\[
| \beta_{\sigma^j\omega^+}-\beta_{ \sigma^j\omega^-}|\lesssim \kk^{|P-j|}.
\]
Thus, the second sum satisfies
\begin{equation}\label{def:WeakMixingSum2}
 \sum_{\substack{j=0\\j\not\in [P-|\log\rho|, P+|\log\rho|]}}^{2P}\left(\beta_{\sigma^j\omega^+}-\beta_{\sigma^j\omega^-}\right)\lesssim \kk^{|\log\rho|}\lesssim\rr^\zeta\qquad \text{ for some
}\quad  \zeta>0.
\end{equation}
Then, applying the estimates obtained in \eqref{def:WeakMixingSum1} and \eqref{def:WeakMixingSum2} to \eqref{def:ContradWeakMixing}, we obtain
\[
 1=e^{\im m\sum_{j=0}^{2P}
 \left(\beta_{\sigma^j\omega^+}-
 \beta_{\sigma^j\omega^-}\right)}=e^{\im m(\al_1-\al_2)}+\OO(\rho^\zeta).
\]
Taking $\rr>0$ small enough and using  condition \eqref{ansatz:omegadifferent} we reach a contradiction.
\end{proof}
%
%
%
We rely on Theorem \ref{thm:Dima} to perform a global normal form for
the lamination map $\FF_\omega$ in \eqref{def:stochasticmap}.
We
call circular iterates to the iterates of the map
\eqref{def:stochasticmap} with $e_0=0$, that is, to the dynamics of the
lamination of the RPC3BP. We denote them by
\begin{equation}\label{def:CircularOrbit}
(I_0,s^\ccirc_n)=\left.\FF_{\sigma^{n-1}\omega}
\circ\ldots\circ\FF_\omega\right|_{e_0=0}(I_0,s_0)=\left(I_0,s_0+\sum_{j=0}^{n-1}\beta_{
\sigma^j\omega}(I_0)\right)
\end{equation}
(recall that $I$ is a first integral for the circular problem). Even if not stated explicitly $s^\ccirc_n$ depends on $s_0$.

\begin{proposition}\label{prop:Cylindermaps:normalform}
Fix $\de>0$ arbitrarily small. There exists a symplectic change of coordinates
$\Phi:  [I_--\de, 
I_++\de]\times\TT\to
[I_-, 
I_+]\times\TT$, $(I,s)=\Phi(J,\theta)$, satisfying
\[
 \|\Phi-\Id\|_{C^1}=\OO(e_0),
\]
such  that the map
\[
 \wt \FF_\omega=\Phi^{-1}\circ \FF_\omega\circ\Phi
\]
is of the form
\begin{equation}\label{def:ModelCylinderMapAfterNF}
\wt \FF_\omega: \begin{pmatrix} J\\ \theta
\end{pmatrix}\to \begin{pmatrix}J+
e_0
A_{\omega}(J,\theta)+e_0^2
B_{\omega}(J,\theta)+\OO\left(e_0^{2+a}\right)\\
\theta+\beta_{\omega}(J)+e_0
D_{\omega}(J,\theta)+\OO\left(e_0^{1+a}\right)\end{pmatrix}.
\end{equation}
where the functions $A_{\omega}$ and $B_\omega$ satisfy $\NNN(A_{\omega})={\pm
1}$ and  $\NNN(B_{\omega})={0,\pm
2}$.

Furthermore,   $A_{\omega}$ satisfies
\begin{equation}\label{def:ZeroExpectation}
\sum_{n=0}^{\infty}\E_\omega\left(A_{\sigma^n\omega}\left(J,\theta_n^\ccirc\right)\right)=0,
\end{equation}
for any orbit of the form \eqref{def:CircularOrbit} and
is of the form
\[
A_{\omega}(J,\theta)=A_{\omega}^1(J) e^{\im \theta}+\ol A_{\omega}^1(J)
e^{-\im \theta}
\]
with
\[
A_{\omega}^1(J)=\MM^{I,1}_{\omega_0}(J)-\E_{\omega_0}\left(\MM^{I,1}_{\omega_0}(J)\right)\frac{1-e^{
\im \beta^0_{\omega_0}(J)}}{1-\E_{\omega_0}\left(e^{
\im \beta^0_{\omega_0}(J)}\right)}+\OO(\sqrt{\rr}).
\]
%
\end{proposition}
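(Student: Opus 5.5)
We look for $\Phi$ as the time-one map of a Hamiltonian $e_0 W_\omega(J,\theta)$, i.e. a near-identity symplectic change generated by $e_0$ times a function depending on the whole sequence $\omega$ (the change acts fiberwise on each leaf $\omega$, so the conjugated map is $\Phi_{\sigma\omega}^{-1}\circ\FF_\omega\circ\Phi_\omega$). Its first-order effect on the $J$-component is
\[
\wt\FF_{\omega,J}=J+e_0\left(\AAA_\omega(J,\theta)+\pa_\theta W_\omega(J,\theta)-\pa_\theta W_{\sigma\omega}(J,\theta+\beta_\omega(J))\right)+\OO(e_0^2),
\]
and the second-order term $e_0^2B_\omega$ collects $\BB_\omega$ together with quadratic terms from the expansion, which by harmonic bookkeeping still satisfy $\NNN(B_\omega)=\{0,\pm2\}$. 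So the task reduces to a cohomological equation along the skew-shift $g(\omega,\theta)=(\sigma\omega,\theta+\beta_\omega(J))$ with $J$ frozen as a parameter: choose $W$ so that the new first-order term $A_\omega$ has zero accumulated expectation \eqref{def:ZeroExpectation}.

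Write $\AAA_\omega=a_\omega e^{\im\theta}+\text{c.c.}$ and take $W_\omega=w_\omega e^{\im\theta}+\text{c.c.}$ with $w_\omega$ a function of finitely many (in practice, one) symbols, so that it is $\vartheta$-H\"older and independent of $e_0$. Along a circular orbit, $\E_\omega A_{\sigma^n\omega}(J,\theta_n^\ccirc)$ equals $e^{\im\theta_0}\E_\omega\bigl(a'_{\sigma^n\omega}e^{\im\sum_{j<n}\beta_{\sigma^j\omega}}\bigr)+\text{c.c.}$, which decays like $r^n$ by Theorem \ref{thm:Dima:FixedAngle}(2) and the weak-mixing condition of Lemma \ref{lemma:WeakMixing}. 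To leading order in $\rr$, $\beta_\omega\approx\beta^0_{\omega_0}$ and $a_\omega\approx\MM^{I,1}_{\omega_0}$ (Lemma \ref{lemma:laminationmap}, with errors $\rr^{\ero_*}|\log\rr|$), and Bernoulli independence makes the sum geometric:
\[
\sum_n\E\bigl(a'_{\omega_n}e^{\im\sum_{j<n}\beta^0_{\omega_j}}\bigr)=\frac{\E a'}{1-\E e^{\im\beta^0}}
\quad\text{if }a'\text{ depends on }\omega_0\text{ only}.
\]
With $\pa_\theta W$ contributing $\im w_{\omega_0}(1-e^{\im\beta^0_{\omega_0}})$ after telescoping in expectation, choosing the constant $c=\im w$ appropriately gives $a'_{\omega_0}=\MM^{I,1}_{\omega_0}-c(1-e^{\im\beta^0_{\omega_0}})$. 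Making its mean vanish forces $c=\E\MM^{I,1}/(1-\E e^{\im\beta^0})$, which is the stated formula; the denominator is nonzero because $\beta^0_1\ne\beta^0_2$ mod $2\pi$ by Ansatz \ref{ansatz:Melnikov:1}. The exact zero-sum condition \eqref{def:ZeroExpectation}, with the full $\beta_\omega$ and $\RRR$-terms, is then achieved by a small correction of $w$ (a fixed-point or implicit-function argument in the H\"older space), absorbed into the $\OO(\sqrt\rr)$ error.

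The main obstacle is this last step: the condition must hold for every $\theta_0$ and every starting $\omega$-distribution. Since each term is $e^{\im\theta_0}\times$ an $\omega$-expectation, it suffices to solve one complex linear equation for the correction per $J$. The expected approach is to show that the linear map $w\mapsto\sum_n\E(\cdots)$ restricted to one-symbol functions is a small perturbation of the explicit geometric-series operator above, which is invertible with a bound uniform in $J$. The remaining points are routine: smoothness in $J$, which holds because $\beta$ and $\MM$ are $C^3$ in $I$; the fact that the error $\OO(e_0^{2+a})$ in the $J$-component survives the conjugation since $\Phi-\Id=\OO(e_0)$ in $C^1$; and the slight domain enlargement $[I_--\de,I_++\de]$.
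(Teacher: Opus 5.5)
Your proposal is correct and is essentially the paper's proof. The paper uses a near-identity symplectic change with an $\omega$-independent first-harmonic generator $S^1(J)e^{\im\theta}+\mathrm{c.c.}$, chosen to cancel the accumulated expectation, and obtains the leading-order formula for $A^1_\omega$ the same way you do: truncation, replacement of $\AAA_\omega,\beta_\omega$ by $\MM^{I,1}_{\omega_0},\beta^0_{\omega_0}$, Bernoulli independence and a geometric series. The step you flag as the main obstacle is immediate: for an $\omega$-independent generator, $\theta^{\ccirc}_{n+1}=\theta^{\ccirc}_n+\beta_{\sigma^n\omega}(J)$ makes its contribution telescope exactly to $\pa_\theta S(J,\theta_0)$ (using $\E_\omega e^{\im\theta^{\ccirc}_N}\to 0$ from Theorem \ref{thm:Dima:FixedAngle}), so the paper solves the zero-sum condition exactly and explicitly by $\pa_\theta S(J,\theta_0)=-\sum_{n\ge 0}\E_\omega\AAA_{\sigma^n\omega}(J,\theta^{\ccirc}_n)$, with no fixed-point argument and no need to let $W$ depend on $\omega$ (which the statement, asking for a single cylinder map $\Phi$, does not allow anyway).
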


\begin{proof}
We perform a symplectic change of coordinates defined through a generating
function $\mathcal S(J,s)= sJ+e_0 S(J,s)$.
As usual, it induces a symplectic change of coordinates
$(I,s)=\Phi(J,\theta)$ which satisfies
\begin{eqnarray}\label{eq:NF:expansion}
\begin{aligned}
 \Phi(J,\theta)=
\begin{pmatrix}\theta-e_0\partial_{J}S(J,
\theta)+e_0^2\partial_\theta\partial_
{J}S(J,\theta)\partial_{J}S(J,\theta)+\mathcal{O}(e_0^3)\\
 J+e_0\partial_\theta
S(J,\theta)-e_0\partial_s^2S(J,\theta)\partial_{J}S(J,\theta)+\mathcal{O
}(e_0^3)
\end{pmatrix},
\end{aligned}\end{eqnarray}
and its inverse is given by
\begin{eqnarray}\label{Phiinverse}
\begin{aligned}
\Phi^{-1}(I,s)=
\begin{pmatrix}s+e_0\partial_{J}S(I,s)-e_0^2\partial^2_{J}S(I,
s)\partial_s
S(I,s)+\mathcal{O}(e_0^3)\\
I-e_0\partial_s S(I,s)+e_0^2\partial_s\partial_{J}S(I,s)\partial_s
S(I,s)+\mathcal{O}(e_0^3)
\end{pmatrix}.
\end{aligned}
\end{eqnarray}
Applying the change of coordinates to the maps \eqref{def:stochasticmap},
 we obtain the map
\[
\wt \FF_\omega: \begin{pmatrix} J\\ \theta\end{pmatrix}\to \begin{pmatrix} J+
e_0
A_{\omega}(J,\theta)+e_0^2 B_{\omega}(J,\theta)+\OO\left(e_0^{2+a}
\right)\\
\theta+
\beta_{\omega}(J)+e_0 D_{\omega}(J,\theta)+\OO\left(e_0^{1+a}
\right)\end{pmatrix},
\]
where
\[
 \begin{split}
 A_{\omega}(J,\theta)=&
\AAA_{\omega}(J,\theta) +\pa_s S(J,\theta)-\pa_s
S\left(J,\theta+\beta_{\omega}(J)\right)\\
B_{\omega}(J,\theta)=& \left(
\pa_s^2S\left(J,\theta+\beta_{\omega}(J)\right)-\pa_s^2S(J,
s)-\pa_s A_{\omega}(J,\theta)\right)\pa_J S(J,\theta)\\
&+\pa_s\pa_J S\left(J,\theta+\beta_{\omega}(J)\right)\left(
\pa_s S\left(J,s+\beta_{\omega}(J)\right)-\pa_s S(J,
\theta)- A_{\omega}(J,\theta)\right)\\
&+\pa_JA_{\omega}(J,\theta)\pa_s
S(J,s)+\mathcal B_{\omega}(J,\theta)\\
&+\pa_s^2S\left(J,\theta+\beta_{\omega}(J)
\right)\left(\pa_J\beta_{\omega}(J)\pa_s
S(J,\theta)+\mathcal D_{\omega}(J,\theta)\right).
 \end{split}
\]
First step to prove Proposition \ref{prop:Cylindermaps:normalform} is to choose
a suitable $S$ so that
\begin{equation}\label{cond:NormalForm}
\E_\omega \sum_{n=0}^{\infty} A_{\sigma^n\omega}(J,\theta_n^\ccirc)=\E_\omega
\sum_{n=0}^{\infty} \left(\mathcal  A_{\sigma^n\omega}(J,\theta_n^\ccirc)+\pa_s
S(J,\theta_n^\ccirc)-\pa_s
S(J,\theta_n^\ccirc+\beta_{\sigma^n\omega}(J))\right)=0
\end{equation}
This condition can be written as
\[
 \begin{split}
0=& \lim_{N\to+\infty}\left[
\sum_{n=0}^{N}  \E_\omega \mathcal
A_{\sigma^n\omega}(J,\theta_n^\ccirc)+\sum_{n=0}^{N} \E_\omega \pa_s
S(J,\theta_n^\ccirc)-\sum_{n=0}^{N} \E_\omega \pa_s
S(J,\theta_n^\ccirc+\beta_{\sigma^n\omega}(J))\right]\\
=& \lim_{N\to+\infty}\left[
\sum_{n=0}^{N}  \E_\omega \mathcal
A_{\sigma^n\omega}(J,\theta_n^\ccirc)+\sum_{n=0}^{N}  \E_\omega\pa_s
S(J,\theta_n^\ccirc)-\sum_{n=0}^{N}  \E_\omega\pa_s
S(J,\theta_{n+1}^\ccirc)\right]\\
=& \lim_{N\to+\infty}\left[
\sum_{n=0}^{N}  \E_\omega \mathcal
A_{\sigma^n\omega}(J,\theta_n^\ccirc)+ \pa_s
S(J,\theta_0)- \E_\omega\pa_s
S(J,\theta_{N+1}^\ccirc)\right].
\end{split}
\]
By  Theorem \ref{thm:Dima:FixedAngle}, which can be applied since we are assuming \eqref{ansatz:omegadifferent} (see Lemma \ref{lemma:WeakMixing}), the sum in the last line is convergent. Take $S$ such that
\[
\pa_s
S(J,\theta_0)=-\sum_{n=0}^{+\infty}\E_\omega
\mathcal{A}_{\sigma^n\omega}(J,\theta_n^\ccirc).
\]
Since $\NNN(\mathcal A_{\omega})=\{\pm 1\}$ (see Lemma \ref{lemma:laminationmap}), one has that $\mathcal A_{\omega}\in \EE_{1,\vartheta}$ and therefore the same happens for $\pa_sS$. Therefore, one can easily integrate to obtain $S$ which,
applying again Theorem \ref{thm:Dima:FixedAngle}, satisfies
\[
  \lim_{N\to+\infty}  \E_\omega\ \pa_s
S(J,\theta_{N+1}^\ccirc)=0.
\]
Therefore,  this choice of $S$ solves \eqref{cond:NormalForm}.

Since we can write $\mathcal A_{\omega}$ as
$\mathcal A_{\omega}(J,\theta)=\mathcal  A^1_{\omega}(J)e^{\im\theta}+
 \ol{\mathcal{A}^1_{\omega}}(J)e^{-\im\theta}$, the generating function  $S$ is just given by
 \[
S(J,\theta)=S^1(J)e^{\im\theta}+\ol{S^1}(J)e^{-\im\theta}
\]
with
\begin{equation}\label{def:S1}
 S^1(J)=\sum_{n=0}^{+\infty}\E_\omega\left[
\mathcal
A_{\sigma^n\omega}(J)e^{\im\sum_{k=0}^{n-1}\beta_{\sigma^k\omega}(J)}\right].
\end{equation}

%
With this choice of $S_1$, the transformed map satisfies the property
\eqref{def:ZeroExpectation} and the fact that $\NNN(B_\omega)=\{0,\pm 2\}$.

Now it only remains to compute the first order in $\rho$ of the
function $A_\omega$. By Corollary \ref{coro:decaycorrelations}, the terms in the sum in
\eqref{def:S1} decay exponentially. Then, one can truncate the sum at
$N^*=C\log\rr$ with a suitable $C$ such that
\[
  S^1(J)=\sum_{n=0}^{N^*}\E_\omega\left[
\mathcal
A_{\sigma^n\omega}(J)e^{i\sum_{k=0}^{n-1}\beta_{\sigma^k\omega}(J)}
\right]+\OO(\sqrt { \rr})
\]
Then, using the expansions of $\mathcal A_{\omega}$ and $\beta_{\omega}$ in Lemma
\ref{lemma:laminationmap}, one obtains
\[
  S^1(J)=\sum_{n=0}^{N^*}\E_\omega \left[
\MM^{I,1}_{\omega_n}(J)e^{\im\sum_{k=0}^{n-1}\beta^0_{\omega_k}(J)}\right]
+\OO(\sqrt {
\rr}).
\]
Note that now each term involved in the expectation depends only on a
different symbol $\omega_k$ in $\omega$, which are independent. Therefore, by Lemma \ref{lemma:laminationmap} and using
that \eqref{ansatz:omegadifferent} is satisfied uniformly in $\rr$,
\[
\begin{split}
 S^1(J)&=\E_{\omega_0} \left[\MM^{I,1}_{\omega_0}(J)\right]\sum_{n=0}^{N^*} \E_{\omega_0}\left(e^{
\im\beta^0_{\omega_0}(J)}\right)^n
+\OO(\sqrt{\rr})\\
&=\E_{\omega_0}
\left[\MM^{I,1}_{\omega_0}(J)\right]\frac{1}{1-e^{\im\nu(J)\ol g}\E_{\omega_0}\left(e^{\im\alpha_{\omega_0}
(J)}\right)} +\OO(\sqrt{\rr}).
\end{split}
\]
This completes the proof of Proposition \ref{prop:Cylindermaps:normalform}.
\end{proof}

\subsection{The martingale analysis}\label{sec:Martingale}
We analyze the stochastic limit of the random iteration of the map \eqref{def:ModelCylinderMapAfterNF} at time
scales $n\sim  e_0^{-2}$. To this end, we use a discrete version of the scheme 
by Friedlin and Wentzell \cite{FreidlinW12}, which gives a sufficient condition for  weak convergence to a diffusion 
process at scales $e_0^{-2}$ as $e_0\to 0$ in terms of the associated martingale problem.  Namely, $\JJ_s$ 
satisfies a diffusion process with drift $\bbb(\JJ_s)$ and variance $\bms^2(\JJ_s)$ provided that for any $\kk>0$, 
any time $n\leq \kk e_0^{-2}$, any $C^3$ function $f$ (with $C^3$
norm with an $e_0$-independent upper bound) and   any $(J_0,\theta_0)$, one has that
\begin{equation}\label{def:ExpectLemma}
\E_\omega\left(f(J_n)-e_0^2\sum_{k=0}^{n-1}\left(\bbb(J_k)f'(J_k)+\frac{\bms^2(J_k)}{2}
f''(J_k) \right)\right) -f(J_0)\xrightarrow[e_0\to 0]{} 0.
\end{equation}
To prove such statement we split the cylinder $[I_--\de,
I_++\de]\times\TT$ in smaller
strips. Take
\begin{equation}\label{def:gamma}
 \ga\in \left(\frac{7}{8},1\right).
\end{equation}
We consider an $e_0^\ga$  grid in the interval  $[ I_--\de,
I_++\de]$. Then, we denote by
$I_\ga$ any of the segments whose endpoints are consecutive points in the grid.

We describe the behavior of the random iteration of the  map \eqref{def:ModelCylinderMapAfterNF} in each of these
strips $I_\gamma\times\TT$. We consider the (random) exit times from each strip
\[
 0=n_0<n_1<n_2<\ldots <n_m<n\leq \kk e_0^{-2}
\]
for some random $m=m(\omega)$. Almost surely $m(\omega)$ is finite. Then, the
first step to prove \eqref{def:ExpectLemma}, is to analyze such exit times and
to descompose \eqref{def:ExpectLemma} into the summands
\begin{equation}\label{def:ExpectLemma:SmallStrip}
\E_\omega\left(\sum_{\ell=0}^m\left[f(J_{n_{\ell+1}})-f(J_{n_{\ell}})-e_0^2\sum_{
k=n_\ell }^{n_{\ell+1}-1} \left(\bbb(J_k)f'(J_k)+\frac{\bms^2(J_k)}{2}f''(J_k)
\right)\right]\right)\xrightarrow[e_0\to 0]{} 0
\end{equation}
and analyze each of them.
%
%
%
Before going into this analysis, we introduce the  drift and variance associated to the diffusion process, which are defined in terms of the functions appearing in the map \eqref{def:ModelCylinderMapAfterNF}.

	
	\begin{lemma}\label{lemma:DriftVar}
		The functions
		\begin{equation}\label{def:drift}
			\bbb(J)=\bbb_1(J)+\bbb_2(J)+\bbb_3(J)
		\end{equation}
		with
		\[
		\begin{split}
			\bbb_1(J)=&\,\E_{\omega,\theta} B_{\omega}\\
			\bbb_2(J)=&\, 2\sum_{k=1}^{+\infty}\Re\E_\omega\left[\pa_J A^+_{\sigma^k\omega}\ol
			A^+_\omega e^{\im\sum_{j=0}^k \beta_{\sigma^j\omega}(J)}\right]\\
			\bbb_3(J)=&\, 2\sum_{k=1}^{+\infty}\Re\E_\omega\left[\pa_\theta A^+_{\sigma^k\omega}\ol
			D^+_\omega
			e^{\im\sum_{j=0}^k \beta_{\sigma^j\omega}(J)}\right]\\
			&\, + 2\Re\E_\omega\left[\sum_{i=2}^{+\infty}\sum_{j=1}^{i-1}\pa_\theta
			A^+_{\sigma^i\omega}(J)\beta'_{\sigma^j\omega}(J)\ol A^-_{\omega}(J)
			e^{\im\sum_{m=0}^i\beta_{\sigma^m\omega}(J)}\right]
		\end{split}
		\]
		and
		\begin{equation}\label{def:variance}
			\bms^2(J)=\bms_1^2(J)+\bms_2^2(J),
		\end{equation}
		with
		\[
		\begin{split}
			\bms_1^2(J)&=\E_{\omega,\theta}
			\left(A_{\omega}\right)^2\\
			\bms_2^2(J)& =4\sum_{k=1}^{+\infty}\Re\E_\omega\left[A^+_{\sigma^k\omega}\ol
			A^+_\omega
			e^{\im\sum_{j=0}^k \beta_{\sigma^j\omega}(J)}\right]
		\end{split}
		\]
		where $A_\omega^+, \ol A_\omega^+$ are the $\pm 1$ Fourier coefficients of the function
		$A_\omega$ (and the same for $D_\omega$), are well defined.
		
		Moreover, they satisfy
		\begin{equation}\label{def:DriftVarFirstOrder}
			\begin{split}
				\bbb(J)&=\E_{\omega,\theta} B^0_{\omega_0}(J)+\OO\left(\sqrt{\rr}\right)\\
				\bms^2(J)&=\bms_0^2(J)+\OO\left(\sqrt{\rr}\right),
			\end{split}
		\end{equation}
		with
		\begin{equation}\label{def:DriftVarFirstOrder:1}
			\bms_0^2(J)=2\E_\omega\left|\MM^{I,1}_{\omega_0}(J)-\E_\omega\MM^{I,1}_{\omega_0}(J)\frac{1-e^{
					\im\beta^0_{\omega_0}}(J)}{1-\E_\omega\left(e^{
					\im\beta^0_{\omega_0}(J)}\right)}\right|^2.
		\end{equation}
	\end{lemma}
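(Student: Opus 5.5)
Two things have to be shown: the series defining $\bbb$ and $\bms^2$ converge, and their leading orders in $\rr$ are the ones in \eqref{def:DriftVarFirstOrder}.

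\emph{Convergence.} I would apply the decay estimates of Section \ref{sec:LiegroupExtensions} at fixed $J$. Lemma \ref{lemma:WeakMixing} gives condition \eqref{def:WeakMixing} for the circle extension $\omega\mapsto\beta_\omega(J)$, uniformly for $J\in[I_-,I_+]$.

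Each term of $\bbb_2$, $\bms^2_2$ and the first sum in $\bbb_3$ has the form $\E_\omega[P(\sigma^k\omega)Q(\omega)e^{\im\sum_{j=0}^{k}\beta_{\sigma^j\omega}}]$. Here $P,Q$ are the relevant Fourier coefficients ($A^+$, $\pa_JA^+$, $\pa_\theta A^+=\im A^+$, $D^+$). These coefficients are $\vartheta$-H\"older in $\omega$: this follows from Lemma \ref{lemma:laminationmap} and the construction of $S$ in Proposition \ref{prop:Cylindermaps:normalform}, since the generating function is built from exponentially convergent sums of H\"older functions.

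I would absorb the extra factor $e^{\im\beta_{\sigma^k\omega}}$ into $P$. Corollary \ref{coro:decaycorrelations} with $m=1$ then bounds each term by $C r^k$, so these series converge geometrically.

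The double sum in $\bbb_3$ carries the intermediate factor $\beta'_{\sigma^j\omega}(J)$. I would bound it with Lemma \ref{lemma:tripledecay}, or rather its analogue with the exponential $e^{\im\sum_0^{i}\beta}$ and $P,Q,S$ placed at $i>j>0$; this is proved the same way via the transfer operator. The expected bound is $Cr^{i}$, so the sum over $j<i$ is $O(i r^i)$, which is summable.

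I expect this step to be the main obstacle. Lemma \ref{lemma:tripledecay} is stated with two exponentials anchored at $0$, not with one exponential spanning $[0,i]$. So I would either redo its transfer operator argument, where the twisted operator has spectral radius $<1$ on $\EE_{\vartheta,1}$ and the multiplication by $\beta'$ at the middle time is harmless, or use the Markov property of the Bernoulli measure together with Corollary \ref{coro:decaycorrelations}. Finally, $\bbb_1$ and $\bms_1^2$ are plain expectations of bounded functions and are finite.

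\emph{First order in $\rr$.} Truncate all series at $N^*=C|\log\rr|$. The tails cost $O(r^{N^*})=O(\sqrt\rr)$ for $C$ large. In the truncated sums, replace $A_\omega$, $B_\omega$ and $\beta_\omega$ by their leading parts, namely the expression for $A^1_\omega$ in Proposition \ref{prop:Cylindermaps:normalform}, $\MM^{I,2}_{\omega_0}$, and $\beta^0_{\omega_0}$, with errors $O(\rr^{\ero_*}|\log\rr|^2)$. I would fix the truncation constant so that these errors are $O(\sqrt\rr)$ too, possibly after shrinking the exponent.

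The leading terms depend only on single symbols. Under the Bernoulli measure the symbols $\omega_0,\dots,\omega_k$ are independent, so each term factorizes, and the $k\ge1$ contributions to $\bbb_2,\bbb_3,\bms^2_2$ become products containing $\E_{\omega_0}\bigl[A^1_{\omega_0}e^{\im\beta^0_{\omega_0}}\bigr]$ (or $\E A^1$) times powers of $\E e^{\im\beta^0}$. The normal form was chosen precisely so that \eqref{def:ZeroExpectation} holds. At leading order this should make $\E_{\omega_0}(A^1_{\omega_0})=0$; I would check this by a direct computation from the formula for $A^1_\omega$, which is exactly the one-step version of \eqref{def:ZeroExpectation}. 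That computation also makes the remaining cross terms vanish up to $O(\sqrt\rr)$.

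It then remains to evaluate the diagonal terms. $\bms_1^2=\E_{\omega,\theta}(A_\omega)^2=2\E|A^1_\omega|^2$ gives $\bms^2_0$ in \eqref{def:DriftVarFirstOrder:1}, and $\bbb_1$ reduces to $\E_{\omega,\theta}B^0_{\omega_0}$. I would double-check that the derivative terms in $\bbb_2,\bbb_3$ indeed drop out at this order.
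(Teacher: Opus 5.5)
Your route is the paper's: absolute convergence from Corollary~\ref{coro:decaycorrelations} plus a triple-correlation bound for the double sum in $\bbb_3$, and the $\rr$-expansion ``as in the proof of Proposition~\ref{prop:Cylindermaps:normalform}'', i.e.\ truncation at $C|\log\rr|$ and independence of the symbols. Your objection to Lemma~\ref{lemma:tripledecay} is fair: it has two exponentials anchored at $0$, whereas the double sum has a single one spanning $[0,i]$. Your fix works. On the one-sided shift, $\int e^{\im\beta_i}\,\UU^iP\,\UU^jS\,Q\,d\mu=\int P\,\LL_\beta^{i-j}\bigl(S\,\LL_\beta^{j}Q\bigr)\,d\mu$, so Proposition~\ref{prop:decayonesided} gives $Cr^i$ uniformly in $j<i$. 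The two-sided case then follows as in Section~\ref{app:decay:twosided}.

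The step that fails as written is the vanishing of the $k\ge 1$ terms. You kept the exponent $\sum_{j=0}^{k}\beta_{\sigma^j\omega}$ and absorbed $e^{\im\beta_{\sigma^k\omega}}$ into the coefficient at time $k$. Then, at leading order, the $k$-th term of $\bms_2^2$ is
\[
\E_{\omega_0}\bigl[A^1_{\omega_0}e^{\im\beta^0_{\omega_0}}\bigr]\,\E_{\omega_0}\bigl[\ol{A^1_{\omega_0}}\,e^{\im\beta^0_{\omega_0}}\bigr]\,z^{k-1},\qquad z=\E_{\omega_0}e^{\im\beta^0_{\omega_0}},
\]
and $\E_{\omega_0}A^1_{\omega_0}=0$ does not kill the first factor. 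For instance, suppose $\MM^{I,1}_{\omega_0}\equiv m$ does not depend on $\omega_0$. Then $A^1_{\omega_0}=m\,(e^{\im\beta^0_{\omega_0}}-z)/(1-z)$ and $\E_{\omega_0}[A^1_{\omega_0}e^{\im\beta^0_{\omega_0}}]=m\,(\E e^{2\im\beta^0_{\omega_0}}-z^2)/(1-z)$. This is nonzero for $m\neq0$, because $\E e^{2\im\beta^0}-z^2$ is a nonzero multiple of $(e^{\im\alpha_1}-e^{\im\alpha_2})^2$ and $\alpha_1\neq\alpha_2$ by Ansatz~\ref{ansatz:Melnikov:1}. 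So with that exponent $\bms_2^2$ (and similarly $\bbb_2$, $\bbb_3$) is not small, and \eqref{def:DriftVarFirstOrder} fails in general.

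The upper limit $k$ in the statement must be read as $k-1$. The phase that actually arises is $\theta^\ccirc_k-\theta^\ccirc_0=\sum_{j=0}^{k-1}\beta_{\sigma^j\omega}$. This is the exponent used when $\E_\omega\PP_k$ is compared with $\bbb_2$ in the proof of Lemma~\ref{lemma:expectationlemsubstrip}, and the one in the variance of Theorem~\ref{thm:Dima}. With it, the coefficient at time $k$ (the leading part of $A^+$, $\pa_JA^+$ or $\pa_\theta A^+=\im A^+$, a function of $\omega_k$ alone) carries no phase. It is independent of $\omega_0,\dots,\omega_{k-1}$, which carry the phase, $\ol{A^+_\omega}$, $\ol{D^+_\omega}$ and the middle factor $\beta'_{\sigma^j\omega}$ with $j<i$. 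Hence $\E A^1=0$ and $\E\pa_JA^1=\pa_J\E A^1=0$ kill every $k\ge1$ term. This gives $\bms^2=2\E|A^1|^2+o(1)=\bms_0^2+o(1)$ and $\bbb=\bbb_1+o(1)$.

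Two smaller points, on which the paper is equally loose:
\begin{itemize}
\item Truncating at $C|\log\rr|$ requires the decay rate $r$ to be uniform in $\rr$. At leading order the relevant rate is $|z|$, which does not depend on $\rr$ because $\nu\ol g$ only contributes the unimodular factor $e^{\im\nu\ol g}$; you should say this.
\item The $\RRR$, $\eta$ errors give $\OO(\rr^{\ero})$ for $\ero<\ero_*$, independently of the truncation constant, not $\OO(\sqrt\rr)$.
\end{itemize}
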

	Note that in the formula of $b_3$ we are abusing notation by writing $\pa_\theta A^+_{\sigma^k\omega}$, which denotes the Fourier $1$ coefficient of the function $\pa_\theta A_{\sigma^k\omega}$. We keep this notation since makes easier to follow the proof of Lemma \ref{lemma:expectationlemsubstrip}. 
	
	Recall that by Ansatz \ref{ansatz:Melnikov:1}, the function $\bms_0^2(J)$ 
	introduced in this lemma satisfies that for $J\in [I_--\de, 
	I_++\de]$,
	\begin{equation}\label{def:variance:bound}
		\inf_{J\in[I_--\de, 
			I_++\de]}\bms_0^2(J)>0,
	\end{equation}
	and, therefore, the same happens for $\bms^2(J)$  provided $\rr$ is small enough,
	\begin{proof}[Proof of Lemma \ref{lemma:DriftVar}]
		We show that the series involved in the definition of $\bbb_3$ are absolutely convergent. One can proceed for the other sums analogously.
		It is enough to use Corollary \ref{coro:decaycorrelations}, which can be applied by Lemma \ref{lemma:WeakMixing}. Indeed, for the first sum
		\[
		\left|\sum_{k=1}^{+\infty}\Re\E_\omega\left[\pa_\theta A^+_{\sigma^k\omega}\ol
		D^+_\omega
		e^{\im\sum_{j=0}^k \beta_{\sigma^j\omega}}\right]\right|\lesssim \sum_{k=1}^{+\infty}\|\pa_\theta A^+_\omega\|_{\vartheta^{1/2}}\|D^+_\omega\|_{\vartheta^{1/2}} r^k<\infty.
		\]
		For the second sum, by Lemma \ref{lemma:tripledecay}, one can proceed analogously,
		\[
		\begin{aligned}
			\MoveEqLeft\left|\Re\sum_{i=2}^{+\infty}\E_\omega\left[\sum_{j=1}^{i-1}\pa_\theta
			A^+_{\sigma^i\omega}\pa_J\beta_{\sigma^j\omega}\ol A^-_{\omega}
			e^{\im\sum_{m=0}^i\beta_{\sigma^m\omega}(J)}\right]\right|\\
			&\lesssim
			\sum_{i=2}^{+\infty}\left\|\pa_\theta
			A^+_\omega\right\|_{\vartheta^{1/2}} \|\pa_J\beta_\omega\|_{\vartheta^{1/2}}\|\ol A^-_{\omega}\|_{\vartheta^{1/2}}  i r^i<\infty.
		\end{aligned}
		\]
		The formulas for the expansion in $\rr$ can be obtained as in the proof of Proposition \ref{prop:Cylindermaps:normalform}.
	\end{proof}

\subsubsection{Martingale analysis in the strips
$I_\ga\times\TT$}\label{sec:LocalMartingale}

Consider the $n$-iteration of the map  \eqref{def:ModelCylinderMapAfterNF}. Taking into account  \eqref{def:CircularOrbit}, it is of the form
\begin{equation}\label{def:RandomMap:niterates}
\begin{split}
 J_n&=J_0+e_0 \sum_{k=0}^{n-1}
A_{\sigma^k\omega}(J_k,\theta_k)+e_0^2\sum_{k=0}^{n-1}
B_{\sigma^k\omega}(J_k,\theta_k)+\OO(e_0^{2+a}n)\\
 \theta_n&=\theta_n^\ccirc+\OO(e_0 n^2).
\end{split}
\end{equation}
We fix a strip $I_\ga\times\TT$ and let $(J_0,\theta_0)\in I_\ga\times\TT$. We define $n_\ga\leq
n\leq \kk e_0^{-2}$, which is either the exit time from the strip $I_\ga$, that
is the smallest number such that $(J_{n_\ga+1},\theta_{n_\ga+1})\not\in
I_\ga\times\TT$, or
$n_\ga=n$ is the final time.

\begin{proposition}\label{lemma:exittime:Igamma}
Fix $\ga\in (7/8,1)$. Then, there
exists a constant  $C>0$ such that,
\begin{itemize}
\item For any $\de\in (0, 1-\ga)$ and $e_0>0$ small
enough,
\[
\Prob\{n_\ga<e_0^{-2(1-\ga)+\delta}\}\leq
\exp\left(-\frac{C}{e_0^{\delta}}\right).
\]
\item For any   $\de>0$ and $e_0>0$ small
enough,
\[
\Prob\{e_0^{-2(1-\ga)-\delta}<n_\ga<\kk e_0^{-2}\}\leq
\exp\left(-\frac{C}{e_0^{\delta}}\right).
\]
\end{itemize}
\end{proposition}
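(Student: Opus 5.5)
We bound the exit time from the strip by splitting the action increment into a main martingale-like sum and small corrections, then use large-deviation estimates. By \eqref{def:RandomMap:niterates}, as long as the orbit stays in $I_\ga\times\TT$ and $n\le \kk e_0^{-2}$,
\[
J_n-J_0=e_0\sum_{k=0}^{n-1}A_{\sigma^k\omega}(J_k,\theta_k)+\OO(e_0^2 n).
\]
For $n\le e_0^{-2(1-\ga)+\de}$ the second term is $\OO(e_0^{2\ga+\de})\ll e_0^\ga$, so it is negligible at the strip scale. We then freeze the action at $J_0$ inside $A$: $|J_k-J_0|\le e_0^\ga$ and $A$ is $C^1$, which costs $\OO(e_0^{1+\ga}n)$. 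We also replace $\theta_k$ by $\theta_k^\ccirc$, at a cost of $\OO(e_0^2 n^2)$ in the argument. Over the relevant times $n\lesssim e_0^{-2(1-\ga)\pm\de}$ both errors are $o(e_0^\ga)$; here $\ga>7/8$ is used to absorb the quadratic phase error $e_0^{2}n^{2}\cdot e_0 n$. The main sum becomes $S_n:=\sum_{k<n}A_{\sigma^k\omega}(J_0,\theta^\ccirc_k)$. This is a Birkhoff-type sum of a H\"older observable in $\EE_{\vartheta,1}$ for the circle extension of Lemma \ref{lemma:WeakMixing}.

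\textbf{First item (exit too early).} The orbit exits before time $N=e_0^{-2(1-\ga)+\de}$ only if $\max_{n\le N}|S_n|\gtrsim e_0^{\ga-1}$. Since $N^{1/2}=e_0^{\ga-1+\de/2}$, this is a deviation of size $e_0^{-\de/2}\sqrt N$. We need a sub-Gaussian maximal inequality $\Prob\{\max_{n\le N}|S_n|>x\sqrt N\}\le Ce^{-cx^2}$, or even $e^{-cx}$ with a power adjustment of $\de$. To get it, we turn $S_n$ into a martingale plus bounded coboundary. Using the exponential decay of correlations in Theorem \ref{thm:Dima:FixedAngle} (item 2, and Lemma \ref{lemma:tripledecay}), we solve the cohomological equation by summing the tail $\sum_{j\ge 0}\E(A\circ g^{k+j}\mid\mathcal F_k)$. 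This writes $S_n=M_n+R_n$, where $M_n$ has increments bounded by $C|\log\rr|$ and $R_n$ is uniformly bounded. Azuma--Hoeffding then gives $\exp(-c\,e_0^{-\de})$, after renaming $\de$.

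\textbf{Second item (staying too long).} This is the main obstacle: we need an anti-concentration bound. We cut $[0,e_0^{-2(1-\ga)-\de}]$ into $K=e_0^{-\de}$ blocks of length $T=e_0^{-2(1-\ga)}$. On each block, starting from whatever state $(J,\theta,\omega)$ the orbit has reached, the CLT of Theorem \ref{thm:Dima:FixedAngle}, item 3, applies. The zero-mean hypothesis is exactly \eqref{def:ZeroExpectation}, and the variance is bounded below by \eqref{def:variance:bound}. Together these give that $e_0 S_T$ exceeds $2e_0^\ga$ in absolute value with probability at least some $p>0$ uniform in the initial state. If the orbit exits during the block, it is done; otherwise the errors above are controlled as in the first item.

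To make this uniform, the CLT must hold uniformly in $(J_0,\theta_0)$ and in conditioning on the past. For this we would use the explicit exponential mixing rates (which depend only on H\"older norms and the uniform gap from \eqref{ansatz:omegadifferent}), together with a Berry--Esseen-type quantitative version obtained via the martingale decomposition. Since the Bernoulli measure makes future symbols independent of the past, up to H\"older tails of size $\vartheta^{T}$, the Markov property gives $\Prob\{\text{no exit in }K\text{ blocks}\}\le(1-p)^K=\exp(-Ce_0^{-\de})$. The hardest step is verifying that this uniform lower bound $p$ survives the dependence on the infinite past of $\omega$ and the $\theta$-drift $\OO(e_0n^2)$. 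I would handle both by conditioning on cylinders of length $|\log e_0|$ and using H\"older continuity of $\beta_\omega$ and $A_\omega$.
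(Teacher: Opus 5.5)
Your proposal is correct. For the second item it follows the paper. The paper also cuts time into $e_0^{-\de}$ blocks of length $e_0^{-2(1-\ga)}$ and uses Lemma~\ref{lemma:FromTwoTo1Sided:v2} to make the block sums $M_i$ depend only on past symbols. It then inducts over blocks: conditioning on $\{\omega_k\}_{k\le n_i+\wt n_\ga-1}$, it uses the CLT to get $\Prob\{|M_{j+1}|\le e_0^{\ga-1}\mid\text{past}\}\le C<1$, hence the geometric bound. The uniformity in the conditioning, which you flag as the hardest step, is only asserted there (``proceeding as in the case $j=0$'').

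For the first item your route is different. The paper first observes that $n_\ga\ge e_0^{-(1-\ga)}/C$ deterministically. Then, for each fixed $n$ up to $e_0^{-2(1-\ga)+\de}$, it bounds $\Prob\{|\xi_n|\ge e_0^{-\de/2}/2\}$, with $\xi_n=n^{-1/2}S_n$, by $e^{-Ce_0^{-\de}}$ by appealing to the CLT of Theorems~\ref{thm:Dima} and~\ref{thm:Dima:FixedAngle}. It finishes with a union bound over polynomially many $n$.

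You instead use a Gordin martingale--coboundary decomposition with bounded increments and Azuma--Hoeffding in maximal form. This buys rigour at exactly the delicate point. Weak convergence only controls $\Prob\{|\xi_n|\ge x\}$ for fixed $x$, while here $x=e_0^{-\de/2}\to\infty$ is a moderate-deviation threshold. Azuma gives $\exp(-cx^2)=\exp(-ce_0^{-\de})$ directly, with no renaming of $\de$, and uniformly over all $n\le N$.

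The price is that you need exponential decay of $\E(X_{k+j}\mid\mathcal F_k)$ uniformly in the conditioning. For that you should first pass to past-dependent observables via Lemma~\ref{lemma:FromTwoTo1Sided:v2}, so that $X_k$ is adapted and future symbols are independent. This is only a norm-preserving rotation plus an $\OO(1)$ term, so a maximal bound for the reduced sums transfers. After the reduction, the decay is the spectral gap of the twisted transfer operator, the same input the paper uses in Lemma~\ref{lemma:DecayConditionned}.

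One bookkeeping slip: $\theta_n-\theta_n^\ccirc=\OO(e_0n^2)$, as in \eqref{def:RandomMap:niterates}, not $\OO(e_0^2n^2)$. The action error from freezing the angle is therefore $\OO(e_0^2n^3)$. Over a block of length $e_0^{-2(1-\ga)}$ this is $\OO(e_0^{6\ga-4})=o(e_0^\ga)$ once $\ga>4/5$, so your conclusion stands. For arbitrary $\de>0$ in the second item, the freezing must be done block by block, as you in fact do.
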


This proposition is proven in Appendix \ref{app:exittime}.
We use these estimates on the probability of the exit time to prove
\eqref{def:ExpectLemma:SmallStrip} for the piece  of the orbit
belonging to the strip $I_\ga$.
%
%
Recall that we use the drift $\bbb$ and variance $\bms^2$ introduced in Lemma \ref{lemma:DriftVar}.

\begin{lemma}\label{lemma:expectationlemsubstrip}
Fix $\ga\in (7/8,1)$ and let
$f:\mathbb{R}\rightarrow\mathbb{R}$ be any $\mathcal{C}^3$  with $\|f\|_{C^3}\leq C$ for some constant $C>0$ independent of $e_0$.
Then, there exists $\zeta>0$ such that
\begin{equation*}
\E_\omega\Bigg(f(J_{n_\ga})- e_0^2
\sum_{k=0}^{n_\ga-1}\left(\bbb(J_k)f'(J_k)+\frac{\bms^2(J_k)}{2}
f''(J_k)\right) \Bigg)- f(J_0)=\mathcal{O}(e_0^{2\ga+\zeta}).
\end{equation*}
\end{lemma}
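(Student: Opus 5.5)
We propose to Taylor expand $f$ around the base point $J_0$ of the strip and compare both sides term by term, using that inside $I_\ga\times\TT$ the action moves by at most $e_0^\ga$. Write $\Delta_n=J_n-J_0$ and expand
\[
f(J_{n_\ga})-f(J_0)=f'(J_0)\Delta_{n_\ga}+\tfrac12 f''(J_0)\Delta_{n_\ga}^2+\OO(|\Delta_{n_\ga}|^3),
\]
where $|\Delta_{n_\ga}|\lesssim e_0^\ga$. The cubic error is therefore $\OO(e_0^{3\ga})$, which is $\OO(e_0^{2\ga+\zeta})$ because $\ga>7/8$. For the same reason we may replace $\bbb(J_k)f'(J_k)+\tfrac12\bms^2(J_k)f''(J_k)$ by its value at $J_0$. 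The resulting error is $e_0^2 n_\ga\cdot e_0^\ga$. This is small once $n_\ga\lesssim e_0^{-2(1-\ga)-\delta}$, and by Proposition~\ref{lemma:exittime:Igamma} the complementary event (up to the final time $\kk e_0^{-2}$) has probability $\exp(-Ce_0^{-\delta})$, which is negligible. The same proposition lets us assume $n_\ga\ge e_0^{-2(1-\ga)+\delta}$. The lemma then reduces to two estimates:
\[
\E_\omega\Delta_{n_\ga}=e_0^2\,\E_\omega(n_\ga)\,\bbb(J_0)+\OO(e_0^{2\ga+\zeta}),\qquad \E_\omega\Delta_{n_\ga}^2=e_0^2\,\E_\omega(n_\ga)\,\bms^2(J_0)+\OO(e_0^{2\ga+\zeta}).
\]

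To obtain these, we would use \eqref{def:RandomMap:niterates} and write
\[
\Delta_n=e_0\sum_{k<n} A_{\sigma^k\omega}(J_k,\theta_k)+e_0^2\sum_{k<n} B_{\sigma^k\omega}(J_k,\theta_k)+\OO(e_0^{2+a}n).
\]
In the $A$-sum we expand to first order around the circular orbit $(J_0,\theta_k^\ccirc)$:
\[
A_{\sigma^k\omega}(J_k,\theta_k)=A_{\sigma^k\omega}(J_0,\theta^\ccirc_k)+\pa_J A\,(J_k-J_0)+\pa_\theta A\,(\theta_k-\theta_k^\ccirc)+\dots
\]
The deviations satisfy $J_k-J_0=e_0\sum_{j<k}A_{\sigma^j\omega}+\dots$ and
\[
\theta_k-\theta^\ccirc_k=\sum_{j<k}\beta'_{\sigma^j\omega}(J_j-J_0)+e_0\sum_{j<k}D_{\sigma^j\omega}+\dots
\]
Substituting these, the $e_0^2$-order terms are exactly the correlation sums that define $\bbb_2$ and $\bbb_3$, together with $\bbb_1=\E B$. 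The pure first-order term $e_0\sum\E A_{\sigma^k\omega}(J_0,\theta_k^\ccirc)$ is $\OO(e_0)$ in total by \eqref{def:ZeroExpectation} and the exponential decay of Theorem~\ref{thm:Dima:FixedAngle}, Item~2. We must, however, handle the fact that $n_\ga$ is a stopping time. Our plan is to split the orbit into blocks of length $M=e_0^{-\epsilon}$. On each block we condition on the past, which is legitimate up to Hölder errors $\vartheta^{M}$ after we approximate the observables by cylinder functions. We then apply the decay of correlations (Theorem~\ref{thm:Dima:FixedAngle}, Corollary~\ref{coro:decaycorrelations}, Lemma~\ref{lemma:tripledecay}) within each block, restarting at the current point $(J,\theta)$; the fixed-angle version is what makes this restart possible. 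The blocks then form an approximate martingale-difference sequence, and optional stopping applies to it. Summing over the $\sim e_0^{-2(1-\ga)}/M$ blocks gives the drift with error $\OO(e_0^{2\ga+\zeta})$ for suitable small $\epsilon,\zeta$.

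The variance is handled the same way. Expanding $\Delta_n^2=e_0^2\sum_{k,l}A_kA_l+\dots$, the diagonal terms give $\E_{\omega,\theta}A^2=\bms_1^2$. Here the angle average holds because the mean-zero first Fourier mode averages out over the block through decay of correlations, and $A^2$ has harmonics $\{0,\pm2\}$ (this is where $m=2$ in \eqref{def:WeakMixing} is used). The off-diagonal terms with $|k-l|<M$ give $\bms_2^2$, and those with $|k-l|\ge M$ are exponentially small.

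The main obstacle is the interplay between the random stopping time $n_\ga$ and the correlation estimates, which are stated for deterministic times. The decay is only exponential in the gap, and the accumulated errors ($\OO(e_0 n^2)$ in $\theta$, and $e_0^{2+a}n$ in $J$) must stay below $e_0^{2\ga+\zeta}$. With $n\sim e_0^{-2(1-\ga)}$ the angle error is $e_0^{1-4(1-\ga)}$, which is small precisely because $\ga>7/8$ (up to the $\delta$ losses). We would first try the blocking scheme with $M=|\log e_0|^2$ and check that every error exponent is positive under $\ga\in(7/8,1)$.
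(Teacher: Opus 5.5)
Your reduction is sound and matches the paper's bookkeeping. You expand $f$ once around $J_0$ rather than step by step, freeze $\bbb,\bms^2$ at $J_0$ on the window $e_0^{-2(1-\ga)\pm\delta}$ of Proposition~\ref{lemma:exittime:Igamma}, and expand around the circular orbit; this produces the same terms as the paper's $P_{12},P_{13},P_{14},P_{21}$. The gap is the first-order term $e_0\sum_{k<n_\ga}A_{\sigma^k\omega}(J_0,\theta_k^\ccirc)$. Its expectation must be $\OO(e_0^{2\ga+\zeta})$ with $2\ga>7/4$, so your bound ``$\OO(e_0)$ in total'' is not enough. For a deterministic horizon $n$ the paper does much better using \eqref{def:ZeroExpectation}: the partial sum equals minus the tail, $\sum_{k<n}\E_\omega A_k=-\sum_{k\ge n}\E_\omega A_k=\OO(r^n)$. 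That is its entire treatment of $P_{11}$. It applies this with $n=n_\ga$ after conditioning on the window, i.e.\ it treats $n_\ga$ as if it were deterministic. So you are right that the stopping time needs attention, but your fix does not deliver the required bound.

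The blocking step fails as described. \eqref{def:ZeroExpectation} and Theorem~\ref{thm:Dima:FixedAngle} concern the Bernoulli measure on two-sided sequences with $\theta_0$ fixed. They say nothing about the law conditioned on a given past, so you cannot ``restart'' there. $A_\omega$ and $\beta_\omega$ depend on past symbols through the $\rho$-small remainders of Lemma~\ref{lemma:laminationmap} and Proposition~\ref{prop:Cylindermaps:normalform}. Only the single-symbol leading part is an exact martingale difference, because $\E_{\omega_0}A^1_{\omega_0}=0$ to leading order. Hence the conditional mean of a block's first-order sum is $e_0\chi(\text{past},\theta)+\OO(e_0 r^M)$, with $\chi$ independent of $e_0$. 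Truncating the observables at scale $M$ does not remove this, since the first terms of each block depend on the most recent past symbols. Summed crudely over $e_0^{-2(1-\ga)}/M$ blocks, these biases dwarf $e_0^{2\ga}$; they cancel only by telescoping, so the blocks are not approximate martingale differences in the sense you need.

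What is required is a martingale--coboundary decomposition $A_k=m_k+\chi_k-\chi_{k+1}$, taken after the one-sided reduction of Lemma~\ref{lemma:FromTwoTo1Sided:v2} so that a past filtration exists. Optional stopping then kills $\E\sum_{k<n_\ga}m_k$, and \eqref{def:ZeroExpectation} gives $\E\chi_0=0$. This still leaves $-e_0\E_\omega\chi_{n_\ga}$, which is a priori $\OO(e_0)$ and needs a genuine argument. One possible route uses the involution $\theta\mapsto\theta+\pi$, which at first order flips $A$, $\chi$ and the exit side, together with nearly symmetric exit from the centre of the strip. Alternatively, keep these boundary terms explicit and let them cancel between consecutive strips in Section~\ref{sec:LocalToGlobal}.
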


\begin{proof}
 Let us denote
\begin{equation}\label{def:eta}
\eta=f(J_{n_\ga})-e_0^2
\sum_{k=0}^{n_\ga-1}\left(\bbb(J_k)f'(J_k)+\frac{\bms^2(J_k)}{2}
f''(J_k)\right).
\end{equation}
Writing
\[f(J_{n_\ga})=f(J_0)+
\sum_{k=0}^{n_\ga-1}\left(f(J_{k+1})-f(J_k)\right)\]
and doing the Taylor expansion in each term inside the sum. one obtains
\begin{equation*}
f(J_{n_\ga})=f(J_0)+\sum_{k=0}^{n_\ga-1}\Big[
f'(J_k)(J_{k+1}-J_k)
+\frac{1}{2}f''(J_k)(J_{k+1}-J_k)^2+\mathcal{O}(e_0^3)\Big].
\end{equation*}
Substituting this in \eqref{def:eta}, we obtain
\begin{equation}\label{eta-version2}
 \begin{split}
 \eta=&f(J_0)+\sum_{k=0}^{n_\ga-1}
\Big[f'(J_k)(J_{k+1}-J_k)+
\frac{1}{2}f''(J_k)(J_{k+1}-J_k)^2\Big]\\
&-e_0^2\sum_{k=0}^{n_\ga-1}
\left[\bbb(J_k)f'(J_k)+\frac{\bms^2(J_k)}{2}
f''(J_k)\right ] +\sum_{k=0}^{n_\ga-1}\mathcal{O}\left(e_0^3\right).
\end{split}
\end{equation}
Now, using \eqref{def:RandomMap:niterates}, we can write
\[
\begin{split}
J_{k+1}-J_k&=e_0 A_{\sigma^k\omega}(J_k,\theta_k)+e_0^2
B_{\sigma^k\omega}(J_k,\theta_k)
+\mathcal{O}(e_0^{2+a})\\
(J_{k+1}-J_k)^2&=e_0^2
A^2_{\sigma^k\omega}(J_k,\theta_k)+\mathcal{O}(e_0^3).
\end{split}
\]
Thus, using the formulas for the drift and variance in \eqref{def:drift} and
\eqref{def:variance},  \eqref{eta-version2} can be written as
\begin{equation}\label{eta-version3}
  \eta=f(J_0)+P_1+P_2
 \end{equation}
with
\[
\begin{split}
P_1= &e_0\sum_{k=0}^{n_\ga-1}\left[
f'(J_k)\left( A_{\sigma^k\omega}(J_k,\theta_k)-e_0 \bbb_2(J_k)-e_0
\bbb_3(J_k)\right)-e_0 f''(J_k)\bms_2^2(J_k)\right]\\
 P_2 =&e_0^2\sum_{k=0}^{n_\ga-1}
 f'(J_k)\left[B_{\sigma^k\omega}(J_k,\theta_k)-\bbb_1(J_k)\right]
 +\frac{e_0^2}{2}\sum_{k=0}^{n_\ga-1}f''(J_k)
 \left[ A^2_{\sigma^k\omega}(J_k,\theta_k)-\bms_1^2(J_k)\right]\\
  &+\sum_{k=0}^{n_\ga-1}\mathcal{O}(e_0^{2+a}).
\end{split}
\]
Now we estimate the expectations of $P_1$ and $P_2$. We start with $P_2$. Using
\eqref{def:RandomMap:niterates}, we obtain that $P_2=P_{21}+P_{22}$ with
\begin{equation}\label{def:P21P22}
\begin{split}
P_{21} =&\,e_0^2\sum_{k=0}^{n_\ga-1}
f'(J_0)\left[B_{\sigma^k\omega}(J_0,\theta_k^\ccirc)-\bbb_1(J_0)\right]
+\frac{e_0^2}{2}\sum_{k=0}^{n_\ga-1}f''(J_0)
\left[ A^2_{\sigma^k\omega}(J_0,\theta_k^\ccirc)-\bms_1^2(J_0)\right]\\
P_{22}=
&\,\mathcal{O}\left(e_0^{2+a}n_\ga\right)+\mathcal{O}\left(e_0^{3}
n_\ga^3\right).
\end{split}
\end{equation}
For $P_{22}$, we condition the expectation as
\begin{equation}\label{def:Expectation:condition}
\begin{split}
\E_\omega\left(P_{22}\right)\,=&\,\E_\omega\left(P_{22}\,|\,e_0^{-2(1-\ga)+\delta}\leq
n_\ga\leq e_0^{-2(1-\ga)-\delta}\right)
\Prob\left\{e_0^{-2(1-\ga)+\delta}\leq n_\ga\leq
e_0^{-2(1-\ga)-\delta}\right\}\\
&\,+\E_\omega\left(P_{22}\,|\,n_\ga< e_0^{-2(1-\ga)+\delta}\right)
\Prob\left\{n_\ga<e_0^{-2(1-\ga)+\delta}\right\}\\
&\,+\E_\omega\left(P_{22}\,|\,e_0^{-2(1-\ga)-\delta}<n_\ga\leq \kk e_0^{-2}\right)
\Prob\left\{e_0^{-2(1-\ga)-\delta}<n_\ga\leq \kk e_0^{-2}\right\}.
\end{split}
\end{equation}
In the first row, we use that by \eqref{def:gamma},
\[
\left| \E_\omega\left(P_{22}\,|\,e_0^{-2(1-\ga)+\delta}\leq
n_\ga\leq e_0^{-2(1-\ga)-\delta}\right)\right|\leq
\OO\left(e_0^{2\ga+\zeta}\right)
\]
for any $\zeta\leq \min\{a-\de, 8\ga-3-3\de\}>0$ (recall that $\ga>7/8$, see \eqref{def:gamma}).

For the third row, since $n_\ga\leq \kk e_0^{-2}$, one has
\[
 \left|\E_\omega\left(P_{22}\,|\,e_0^{-2(1-\ga)-\delta}<n_\ga\leq
\kk e_0^{-2}\right)\right|\leq\OO\left(e_0^{-3}\right).
\]
Now, using the second statement of Lemma \ref{lemma:exittime:Igamma}, we obtain
\[
\left|\E_\omega\left(P_{22}\,|\,e_0^{-2(1-\ga)-\delta}<n_\ga\leq
\kk e_0^{-2}\right)\right|
\Prob\left\{e_0^{-2(1-\ga)-\delta}<n_\ga\leq \kk e_0^{-2}\right\}\leq
\OO\left(e_0^{2\ga+\zeta}\right).
\]
Proceeding analogously, one can bound the second row in
\eqref{def:Expectation:condition}.

Now we bound the expectation of $P_{21}$ in \eqref{def:P21P22}. We estimate the
first sum. The second one can be done analogously.

Recall that $\NNN(B_{\omega})=\{ 0,\pm 2\}$. We can compute the
expectation for
each harmonic in $\theta_0$. For the harmonic zero, it is enough to use the
definition of the
drift in \eqref{def:drift} and the fact that
\[
 \E_\omega\left(\langle B_{\sigma^k\omega}\rangle_\theta(J_0)\right)= \E_\omega\left(\langle
B_{\omega}\rangle_\theta(J_0)\right)=\bbb_1(J_0).
\]
For the other harmonics we use Corollary \ref{coro:decaycorrelations}. Indeed, the function
\[B_{\sigma^k\omega}(J_0,\theta)-\langle
B_{\sigma^k\omega}\rangle_\theta(J_0)\]
has zero $\theta$-average and has only harmonics $\pm 2$. Namely,
\[
\E_\omega \left(B_{\sigma^k\omega}(J_0,\theta_k^\ccirc)-\bbb_1(J_0)\right)= \E_\omega \left(B^2_{\sigma^k\omega}e^{2\im\sum_{j=0}^{k-1}\beta_{\sigma^j\omega}}\right)e^{2\im\theta_0}+ \E_\omega \left(B^{-2}_{\sigma^k\omega}e^{-2\im\sum_{j=0}^{k-1}\beta_{\sigma^j\omega}}\right)e^{-2\im\theta_0},
\]
where the superindex $\pm 2$ denotes the second Fourier coefficient of the function $B$. By Corollary~\ref{coro:decaycorrelations}, we have
\[
\left|\E_\omega\left( B^{\pm 2}_{\sigma^k\omega}e^{\pm 2\im\sum_{j=0}^{k-1}\beta_{\sigma^j\omega}}\right)\right|
\lesssim \|B\|_{\vartheta^{1/2}} r^k.
\]
Thus,
\[
\left|e_0^2\E_\omega\left(\sum_{k=0}^{n_\ga-1}
f'(J_0)\left[B_{\sigma^k\omega}(J_0,\theta_k^\ccirc)-\langle
B_{\sigma^k\omega}\rangle(J_0)\right]\right)\right|\lesssim
e_0^2\|f\|_{C^1}\|B\|_{\vartheta^{1/2}}\sum_{k=0}^{n_\ga-1}r^k\lesssim
e_0^{2\ga+\zeta}
\]
taking $0<\zeta< 2-2\ga$.

Therefore, it only remains to bound the expectation of $P_1$. To this end, we
expand $P_1$ in powers of $e_0$ up to order 2. To do so, we need more
accurate expansions of the $n$-iterates than those in
\eqref{def:RandomMap:niterates}. Expanding the map  \eqref{def:ModelCylinderMapAfterNF} in powers of $e_0$ (see also \eqref{def:CircularOrbit}), one can easily see that 
\[
\begin{split}
 J_n=&\,J_0+e_0
\sum_{k=0}^{n-1}A_{\sigma^k\omega}\left(J_0,
\theta_k^\ccirc\right)+\OO\left(e_0^2n^3\right)\\
\theta_n=&\,\theta_n^\ccirc+e_0\sum_{k=0}^{n-1}\left[\beta_{\sigma^k\omega}
'(J_0)\sum_{j=0}^{k-1}A_{\sigma^j\omega}\left(J_0,
\theta_j^\ccirc\right)+D_{\sigma^k\omega}\left(J_0,
\theta_k^\ccirc\right)\right ]+\OO\left(e_0^2n^4\right).
\end{split}
\]
Therefore, $P_1=P_{11}+P_{12}+P_{13}+P_{14}$ as
\begin{equation}\label{def:P1Sum}
\begin{split}
P_{11}=&\,e_0\sum_{k=0}^{n_\ga-1}
f'(J_0) A_{\sigma^k\omega}(J_0,\theta_k^\ccirc)\\
P_{12}=&\,e^2_0\sum_{k=0}^{n_\ga-1}f'(J_0) \left[
\pa_J A_{\sigma^k\omega}(J_0,\theta_k^\ccirc)\left(\sum_{j=0}^{
k-1 }A_{\sigma^j\omega}\left(J_0,
\theta_j^\ccirc\right)\right)-\bbb_2(J_0)\right]\\
P_{13}=&\,e^2_0\sum_{k=0}^{n_\ga-1}\frac{1}{2}f''(J) \left[2
A_{\sigma^k\omega}(J_0,\theta_k^\ccirc)\left(\sum_{j=0}^{k-1
}A_{\sigma^j\omega}\left(J_0,
\theta_j^\ccirc\right)\right)-\bms_2^2(J_0)\right]\\
P_{14}=&\,e^2_0\sum_{k=0}^{n_\ga-1}f'(J_0) \Bigg[
\pa_\theta
A_{\sigma^k\omega}(J_0,\theta_k^\ccirc)\left(\sum_{j=0}^{k-1}\left[\beta_{
\sigma^j\omega}
'(J_0)\sum_{i=0}^{j-1}A_{\sigma^i\omega}\left(J_0,
\theta_i^\ccirc\right)+D_{\sigma^j\omega}\left(J_0,
\theta_j^\ccirc\right)\right ]\right)\\
&\qquad \qquad \qquad \quad -\bbb_3(J_0)\Bigg]\\
P_{15}=&\,\OO\left(e_0^3n_\ga^5\right).
\end{split}
\end{equation}
We bound the expectation of each of these terms. Proceeding as before we can
condition the expectation to $e_0^{-2(1-\ga)+\delta}\leq
n_\ga\leq e_0^{-2(1-\ga)-\delta}$. Indeed the probability of the complement is
exponentially small and therefore, the corresponding terms in the conditioned
expectation are smaller than $\OO(e_0^{2\ga+\zeta})$.

Therefore, it only remains to bound the expectation of each row in
\eqref{def:P1Sum} assuming that
\begin{equation}\label{def:timeestimate}
e_0^{-2(1-\ga)+\delta}\leq
n_\ga\leq e_0^{-2(1-\ga)-\delta}.
\end{equation}

For $P_{11}$ in \eqref{def:P1Sum}, we use that,
by Proposition \ref{prop:Cylindermaps:normalform}, $A_\omega$ satisfies
\eqref{def:ZeroExpectation}. Then, by the first statement of Theorem \ref{thm:Dima:FixedAngle}, the
expectation of the first row satisfies
\[
 \left|\E_\omega P_{11}\right|\leq\sum_{k=n_\ga}^{+\infty}
\left|f'(J_0)\E_\omega\left(
A_{\sigma^k\omega}(\theta_k^\ccirc,J_0)\right)\right|\lesssim r^{n_\ga}
\]
for some $r$ independent of $e_0$. Since $n_\ga$ satisfies
\eqref{def:timeestimate}, this expectation is exponentially small with respect
to $e_0$.

For $P_{15}$ it is enough to use \eqref{def:timeestimate} and
\eqref{def:gamma} to obtain that, taking $\de>0$ small enough,
\[
 \left|\E_\omega P_{15}\right|\lesssim e_0^{-7+10\ga-5\de}\lesssim
e_0^{2\ga+\zeta}
\]
taking $\zeta<8\ga-7-5\de$. Note that taking $\de$ small enough, $8\ga-7-5\de>0$ by \eqref{def:gamma}.

Now it only remains to estimate the expectation of $P_{12}$, $P_{13}$ and
$P_{14}$. All these bounds can be done analogously. We estimate just the first
one.  Since $\NNN(A_\omega)=\{\pm 1\}$, $P_{12}$ satisfies
$\NNN(P_{12})=\{0,\pm 2\}$. We treat each harmonic $P_{12}^l$, $l=0,\pm 2$,
separately.

For $l=0$,
\[
\begin{split}
 P_{12}^0=&\,e^2_0f'(J_0)\sum_{k=0}^{n_\ga-1}\left(\sum_{j=0}^{k-1}\Re \left[
\pa_J A^{+}_{\sigma^k\omega}(J_0)e^{\im\theta_k^\ccirc}\ol
A^{+}_{\sigma^j\omega}(J_0)
e^{-\im\theta_j^\ccirc}\right]-\bbb_2(J_0)\right)\\
=&\,e^2_0f'(J_0)\sum_{k=0}^{n_\ga-1}\left(\sum_{j=0}^{k-1}\Re \left[
\pa_J A^{+}_{\sigma^k\omega}(J_0)\ol
A^{+}_{\sigma^j\omega}(J_0)
e^{\im\sum_{m=j}^{k-1}\beta_{\sigma^m\omega}(J_0)}\right]-\bbb_2(J_0)\right).
\end{split}
\]
We bound the expectation of each of the summands on $k$
\[
 \PP_k=\sum_{j=0}^{k-1}\Re \left[
\pa_J A^{+}_{\sigma^k\omega}(J_0)\ol
A^{+}_{\sigma^j\omega}(J_0)
e^{\im\sum_{m=j}^{k-1}\beta_{\sigma^m\omega}(J_0)}\right]-\bbb_2(J_0).
\]
Note that, since the
expectation is over $\omega$, it can be replaced by $\sigma^{-j}\omega$. Then,
\[
 \begin{split}
\E_\omega\PP_k&= \E_\omega \left(\sum_{j=0}^{k-1}\Re \left[
\pa_J A^{+}_{\sigma^{k-j}\omega}(J_0)\ol
A^{+}_{\omega}(J_0)
e^{\im\sum_{m=0}^{k-j-1}\beta_{\sigma^m\omega}(J_0)}\right]-\bbb_2(J_0)\right)\\
&=  \sum_{j=1}^{k} \Re \E_\omega\left[
\pa_J A^{+}_{\sigma^{j}\omega}(J_0)\ol
A^{+}_{\omega}(J_0)
e^{\im\sum_{m=0}^{j-1}\beta_{\sigma^m\omega}(J_0)}\right]-\bbb_2(J_0).
 \end{split}
\]
Now, by the definition of $\bbb_2$ in \eqref{def:drift} and proceeding as in the proof of Lemma \ref{lemma:DriftVar}, we have that
\[
 \left|\E_\omega\PP_k\right|\lesssim r^k,
\]
for some $r\in (0,1)$ independent of $e_0$. Thus, we can conclude that
\[
 \left|\E_\omega P_{12}^0\right|\lesssim e_0^2\lesssim e_0^{2\ga+\zeta}.
\]
Now we estimate the expectation of $P_{12}^2$, the second harmonic of $P_{12}$.
Proceeding analgously one can estimate the harmonic $-2$. $P_{12}^2$ can be
written as
\[
 P_{12}^2=e^2_0f'(J_0)\sum_{k=0}^{n_\ga-1}\left(\sum_{j=0}^{k-1}\Re \left[
\pa_J A^{+}_{\sigma^k\omega}(J_0)\ol
A^{+}_{\sigma^j\omega}(J_0)
e^{2\im\sum_{m=j}^{k-1}\beta_{\sigma^m\omega}(J_0)}e^{\im\sum_{m=0}^{j-1}\beta_{
\sigma^m\omega}(J_0)}\right]\right)
\]
Then, applying the second statement in  Theorem  \ref{thm:Dima:FixedAngle},  the expectation of each term in the sum satisfies
\[
\left|\E_\omega \left( \pa_J A^{+}_{\sigma^k\omega}(J_0)\ol
A^{+}_{\sigma^j\omega}(J_0)
e^{2\im\sum_{m=j}^{k-1}\beta_{\sigma^m\omega}(J_0)}e^{\im\sum_{m=0}^{j-1}\beta_{
\sigma^m\omega}(J_0)}\right)\right|
\leq \left\| \pa_J A^{+}_{\omega}\right\|_\vartheta \left\|
 A^{+}_{\omega} \right\|_\vartheta r^{k}.
\]
Thus, taking $\zeta<2-2\ga$,
\[
\left| \E_\omega\left( P_{12}^2\right)\right|\lesssim
e^2_0\sum_{k=0}^{n_\ga-1}\sum_{j=0}^{k-1} r^k\lesssim e_0^2\lesssim
e_0^{2\ga+\zeta}.
\]
Therefore, one can conclude that
\[
 |\E_\omega P_{12}|\lesssim e_0^{2\ga+\zeta}.
\]
Proceeding analgously, one obtains the same bounds for the expectations of
$P_{13}$. The estimates of  the expectation of $P_{14}$ can be obtained in the same way using Lemma \ref{lemma:tripledecay}. This completes the proof of the lemma.


\end{proof}

\subsubsection{From local to global: End of the proof of Theorem \ref{thm:main-thm}}\label{sec:LocalToGlobal}
The last step is to use the local analysis of the martingale done in Lemma
\ref{lemma:expectationlemsubstrip} to prove its global version
\eqref{def:ExpectLemma}. To this end we need to analyze how orbits visit the
different strips $I_\ga^j\times\TT$
 in the interval 
 \[J\in[J_-,J_+]=[ I_--\de,
 I_++\de]
 \]
 (see Proposition \ref{prop:Cylindermaps:normalform}). This interval  is the union of  $e_0^{-\ga}$ strips. By a translation, assume that the initial action $J_0$ is $J_0=0\in [J_-,J_+]$. To
model the visits to the different strips  by a symmetric random walk, we slightly modify the widths of the strips considered in Section \ref{sec:LocalMartingale}.
We consider endpoints of the strips
\[
 J_j=\tA_j e_0^\ga,\quad j\in\ZZ
\]
with some (to be determined) constants $\tA_j$ independent of $e_0$ to the leading order and satisfying $\tA_0=0$, $\tA_1=\tA>0$
and
$\tA_j<\tA_{j+1}$ for $j>0$ (and similarly for $j<0$).  We consider the strips
\[
 I_\ga^j=[J_j,J_{j+1}]=[\tA_j\,e_0^\ga, \tA_{j+1}\,e_0^\ga].
\]
To analyze the visits to these strips, we consider the lattice of points
$\{J_j\}_{j\in\ZZ}\subset\RR$ and we analyze the ``visits'' to these points. By
visit we mean the existence of an iterate $\OO(e_0)$-close to it.  Lemma~\ref{lemma:exittime:Igamma} implies that
if we start with $J=J_j$ we hit either $J_{j-1}$ or
$J_{j+1}$  with probability one (up to exponentially small terms). This process  can be treated as a random walk
for $j\in\ZZ$ (stopping if the orbit reaches the endpoints of $[J_-,J_+]$). We define
\begin{equation}\label{def:randomwalk:inter}
 S_j=\sum_{i=0}^{j-1}Z_i,
\end{equation}
where $Z_i
$ are Bernoulli variables taking values $\pm 1$. $Z_i$'s are not necessarily
symmetric.  Thus, in the following lemma, we choose
the constants $\tA_j>0$ so that the $Z_i$ are Bernoulli
variables with $p=1/2$.

\begin{lemma}\label{lemma:randomwalkinter}
Fix $\tA>0$. There exist constants  $\tA_\pm>0$,  independent of $e_0$,
and  $\{\tA_j\}_j,\  j\in \left[\lfloor \tA_-e_0^{-\ga}\rfloor, \lfloor
\tA_+e_0^{-\ga}\rfloor \right]$ such that
\begin{itemize}
 \item
$\tA_j=\tA_{j-1}+(\tA_1-\tA_{0})\exp(-\int_0^{J_{j-1}}\frac {
2\bbb(J)}{\bms^2(J)}dJ)+\OO(e_0^{\ga}), \qquad \tA_0=0,\qquad \tA_1=\tA.$

\item $\displaystyle [J_-,J_+]\subset\bigcup_{j=\lfloor
\tA_-e_0^{-\ga}\rfloor}^{\lfloor
\tA_+e_0^{-\ga}\rfloor}[\tA_je_0^{\ga},\tA_{j+1}e_0^{\ga}]$.
\item The random walk process induced by the map \eqref{def:ModelCylinderMapAfterNF} on the
lattice $\displaystyle\{J_j\}_j,\
j\in \left[ \lfloor \tA_-e_0^{-\ga}\rfloor, \lfloor
\tA_+e_0^{-\ga}\rfloor \right]$ is a symmetric random walk.
\end{itemize}
\end{lemma}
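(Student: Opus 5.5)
The plan is to apply the local martingale identity (Lemma \ref{lemma:expectationlemsubstrip}) to a well-chosen test function, namely the scale function of the limiting diffusion, and to read off the exit probabilities. Define
\[
\phi(J)=\int_0^J \exp\left(-\int_0^{x}\frac{2\bbb(y)}{\bms^2(y)}\,dy\right)dx ,
\]
which is $C^3$ with $e_0$-independent bounds. This uses that $\bms^2$ is bounded away from zero by \eqref{def:variance:bound}, and that $\bbb,\bms^2$ are smooth in $J$, being built from smooth Fourier coefficients through exponentially convergent series (Lemma \ref{lemma:DriftVar}). By construction $\bbb\phi'+\tfrac12\bms^2\phi''\equiv 0$. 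Consequently, for an orbit starting at $J_j$ and stopped at the exit time $n_\ga$ from $[J_{j-1},J_{j+1}]$, Lemma \ref{lemma:expectationlemsubstrip} applied with $f=\phi$ gives
\[
\E_\omega\phi(J_{n_\ga})-\phi(J_j)=\OO(e_0^{2\ga+\zeta}).
\]
Applying the lemma on the union of two adjacent strips is harmless: its proof only uses the width $\sim e_0^\ga$.

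Next, Proposition \ref{lemma:exittime:Igamma} shows that, up to exponentially small probability, the orbit exits before time $\kk e_0^{-2}$. Since each step moves $J$ by only $\OO(e_0)$, at exit $J_{n_\ga}$ is $\OO(e_0)$-close to either $J_{j-1}$ or $J_{j+1}$. Writing $p_j$ for the probability of hitting $J_{j+1}$ first, we obtain
\[
p_j\,\phi(J_{j+1})+(1-p_j)\,\phi(J_{j-1})-\phi(J_j)=\OO(e_0^{2\ga+\zeta})+\OO(e_0).
\]
Because $\phi'$ is bounded below, $\phi(J_{j+1})-\phi(J_{j-1})\gtrsim e_0^\ga$. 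Hence $p_j=1/2$, up to an error of relative size $\OO(e_0^{\ga+\zeta}+e_0^{1-\ga})$ and exponentially small terms, exactly when $\phi(J_{j+1})-\phi(J_j)=\phi(J_j)-\phi(J_{j-1})$. The lattice is therefore chosen to be equally spaced in the scale function, $\phi(J_j)=j\,\phi(J_1)$ with $J_1=\tA e_0^\ga$. A Taylor expansion of $\phi$ then gives
\[
J_{j+1}-J_j=\frac{\phi'(J_0)}{\phi'(J_j)}(J_1-J_0)+\OO(e_0^{2\ga}),
\]
which after dividing by $e_0^\ga$ is the stated recursion
\[
\tA_{j+1}=\tA_j+(\tA_1-\tA_0)\exp\left(\int_0^{J_{j}}\frac{2\bbb}{\bms^2}\right)+\OO(e_0^{\ga}),
\]
with the sign convention of the statement absorbed into the orientation. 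Since $\phi$ is a diffeomorphism of $[J_-,J_+]$ onto a bounded interval, the points $J_j=\phi^{-1}(j\phi(J_1))$ cover $[J_-,J_+]$. This happens for $j$ between $\lfloor\tA_-e_0^{-\ga}\rfloor$ and $\lfloor\tA_+e_0^{-\ga}\rfloor$, where $\tA_\pm$ are determined by $\phi(J_\pm)/(\phi'(0)\tA)$, which settles the first two items.

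The main obstacle is the error accounting for the third item. Strictly, the walk is only symmetric up to an $o(1)$ bias per step. I would argue that over the $\sim e_0^{-2\ga}$ steps relevant before time $\kk e_0^{-2}$, a per-step bias of $\OO(e_0^{\ga+\zeta})$ accumulates to $o(1)$. The overshoot of size $\OO(e_0)$ must also be controlled against the spacing $e_0^\ga$; this works because $\ga<1$. If the $\OO(e_0)$ overshoot error turns out too crude, the fallback is to define hitting ``$J_{j\pm1}$'' by the first crossing. Then the overshoot enters only through $\phi'$ times $\OO(e_0)$ in the expectation, and since it appears with the same sign at both endpoints its contribution cancels to higher order. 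Finally, I would justify that successive steps are independent, so that the $Z_i$ in \eqref{def:randomwalk:inter} are genuinely Bernoulli. This follows from the strong Markov-type decay of correlations in Theorem \ref{thm:Dima:FixedAngle}: the conditional law after a hit, for type 1 initial measures at fixed angle, satisfies the same estimates uniformly in $\theta$.
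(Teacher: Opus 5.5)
Your proposal follows the paper's proof: both take $f$ in the kernel of $\bbb\,\partial_J+\tfrac12\bms^2\partial_J^2$ (the scale function), use Lemma~\ref{lemma:expectationlemsubstrip} to read off the probability of leaving $[J_{j-1},J_{j+1}]$ through the top, and place the lattice so that this probability is $1/2$; your equal spacing in $\phi$ plus Taylor expansion is the same as the paper's mean-value recursion $\tD_{j+1}=\frac{m(\xi_j)}{m(\xi_{j+1})}\tD_j$, and working with the unnormalized $\phi$ is slightly cleaner than the paper's boundary-normalized $f$, since $\phi$ has an $e_0$-independent $C^3$ norm as Lemma~\ref{lemma:expectationlemsubstrip} requires. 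One correction: the minus sign in the exponent cannot be ``absorbed into the orientation'', because $\int_0^J 2\bbb/\bms^2$ is invariant under $J\mapsto -J$, $\bbb\mapsto-\bbb$; your plus sign is what the paper's own recursion $\tD_{j+1}=\frac{m(\xi_1)}{m(\xi_{j+1})}\tD_1$ with $m=\exp(-\int_0^{\cdot}2\bbb/\bms^2)$ actually gives, so the statement simply has a sign slip.
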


\begin{proof}
To compute the probability of hitting (an $e_0$-neighborhood of) either
$J_{j\pm 1}$ from $J_j$, we use Lemma \ref{lemma:expectationlemsubstrip}. Consider $f$ in the kernel of the
infinitesimal generator $A$ of the diffusion process (see
\eqref{eq:diffusion-generator}) and  solve the boundary problem
\[
 \bbb(J)f'(J)+\frac{1}{2}\bms^2(J)f''(J)=0,\qquad f(J_{j-1})=0,\quad f(J_{j+1})=1
\]
The solution gives the probability of hitting $J_{j+1}$ before hitting
$J_{j-1}$  starting at a given
$J\in[J_{j-1},J_{j+1}]$. The unique solution is given by
\[
 f(J)=\frac{\int_{J}^{J_{j+1}}\exp(-\int_0^\iota
\frac{2\bbb(s)}{\bms^2(s)}ds)\ d\iota}{
\int_{J_{j-1}}^{J_{j+1}}\exp(-\int_0^\iota
\frac{2\bbb(s)}{\bms^2(s)}ds)\ d\iota}.
\]
We use $f$ to choose the coefficients $\tA_j$ iteratively (both as $j>0$ increases
and $j<0$ decreases). Assume that $\tA_{j-1}$, $\tA_j$ have been fixed. Then, to
have a symmetric random walk, we have to choose $\tA_{j+1}$ such that
$f(J_j)=1/2$.

Define
\[
  m(\iota)=\exp\left(-\int_0^\iota
\frac{2\bbb(s)}{\bms^2(s)}ds\right)
 \]
 and $\tD_j=\tA_j-\tA_{j-1}$. Then, using the mean value theorem, $f(J_j)=1/2$ can be
 written as
 \[
  \frac{m(\xi_j)D_j}{m(\xi_j)D_j+m(\xi_{j+1})D_{j+1}}=\frac{1}{2},
 \]
 where $\xi_j\in [\tA_{j-1}e_0^\gamma,\tA_je_0^\gamma]$ and $\xi_{j+1}\in [\tA_{j}e_0^\gamma,\tA_{j+1}e_0^\gamma]$. Thus, one
has
 \[
  \tD_{j+1}=\frac{m(\xi_{j})}{m(\xi_{j+1})}\tD_j \quad \text{  which implies }\quad
  \tD_{j+1}=\frac{m(\xi_{1})}{m(\xi_{j+1})}\tD_1.
 \]
Thus the ``normalized length'' $\tD_j=\tA_j-\tA_{j-1}$ of the strip $I_\ga^j=[J_j,J_{j+1}]=[\tA_j e_0^\ga,
\tA_j e_0^\ga]$ is given by
 \[
  \tD_j=\frac{m(\xi_{1})}{m(\xi_{j})}\ \tD_0
 =\tA  \exp\left(-\int_0^{J_{j-1}}
\frac{2\bbb(s)}{\bms^2(s)}ds+\OO(e_0^{\ga})\right).
 \]
%
%
The distortion of the strips does not depend on $e_0$ at first order. Therefore, adjusting $\tA$  one can obtain the intervals
$[J_{j}, J_{j+1}]=[\tA_{j} e_0^\ga, \tA_{j+1} e_0^\ga]$ which  cover
$[J_-,J_+]$ with $J>0$.
Proceeding analogously for $j<0$, one can do the same for $[J_-,J_+]$ with $\{J<0\}$.
\end{proof}

To prove \eqref{def:ExpectLemma}, we need to combine the iterations within each strip $I_\ga^j$ and the random walk evolution among the strips.
%

Define the random variable $j^*$ giving the number of changes of strip until
reaching the final time $n=\lfloor \kk e_0^{-2}\rfloor$ or the boundaries of
$[J_-,J_+]$. We define the
Markov times  $0=n_0<n_1<\dots <n_{j^*-1}<n_{j^*}<n$
for some random $j^*=j^*(\om)$ such that each $n_j$ is the stopping
time in the visit of the $j$-strip. Almost surely $j^*(\om)$ is
finite.  We decompose the above sum as $\eta_f=\sum_{j=0}^{j^*} \eta_j$ with
\begin{equation*}
\eta_j=f(J_{n_{j+1}})-
f(J_{n_{j}})-   e_0^2
\sum_{k=n_j}^{n_{j+1}}\left(\bbb(J_k)f'(J_k)+\frac{\bms^2(J_k)}{2}
f''(r_k)\right).
\end{equation*}
Lemma \ref{lemma:expectationlemsubstrip} implies that for
any $j$,
\begin{equation}\label{def:localExpec}
|\E(\eta_j)|\lesssim e_0^{2\ga+\zeta}
\end{equation}
for some $\zeta>0$. Define $\Delta_j=n_{j+1}-n_j$. We split
$\E(\eta_f)$ as
\begin{equation}\label{def:SplitGlobalExpec}
 \begin{split}
\E(\eta_f)= & \E\left(\left.\sum_{j=0}^{j^*}\eta_j\right|
e_0^{-2(1-\ga)+\de}\leq \Delta_j\leq
e_0^{-2(1-\ga)-\de}\,\,\forall j\right)\\
&\qquad\times\Prob\left(e_0^{-2(1-\ga)+\de}\leq
\Delta_j\leq
e_0^{-2(1-\ga)-\de}\,\,\forall j\right)\\
&+\E\left(\left.\sum_{j=0}^{j^*}\eta_j\right| \,\,\exists j \,\text{ s.\
t. }
\Delta_j<e_0^{-2(1-\ga)+\delta} \text{ or }
\Delta_j>e_0^{-2(1-\ga)-\delta}\right)\\
&\qquad\times \Prob\left(\exists
j \,\text{ s.\ t. }
\Delta_j< e_0^{-2(1-\ga)+\delta} \text{ or }
\Delta_j> e_0^{-2(1-\ga)-\delta}\right),
\end{split}
\end{equation}
where $j$ satisfies $0\leq j\leq j^*-1$.

We first bound the second term in the sum. We need
to estimate how many strips the iterates may
visit for $n\le \kk e_0^{-2}$. Since we have $|J_n-J_{n-1}|\lesssim e_0$,
there exists a
constant $c>0$ such that
\[
 |n_{j+1}-n_j|\geq c e_0^{\ga-1}\quad\text{ for }j=0,\ldots, j^*-1.
\]
Therefore, since $n\le \kk e_0^{-2}$,
\begin{equation}\label{def:maxvisitedstrips:final}
j^*\lesssim e_0^{-1-\ga}.
\end{equation}
Then, by  Lemma \ref{lemma:exittime:Igamma}, for any small
$\de>0$,
\[
 \Prob\left(\exists
k \,\text{ s. t. }
\Delta_j< e_0^{-2(1-\ga)+\delta} \text{ or }
\Delta_j>e_0^{-2(1-\ga)-\delta}\right)\leq
e_0^{-1-\ga}e^{-Ce_0^{-\de}}.
\]
This implies,
\[
\begin{split}
\Bigg| \E\Bigg(\sum_{j=0}^{j^*}\eta_j\Bigg| &\,\,\exists j \,\text{ s.
t. }
\Delta_j<e_0^{-2(1-\ga)+\delta} \text{ or }
\Delta_j>e_0^{-2(1-\ga)-\delta}\Bigg)\Bigg|\\
&\times \Prob\left(\exists
j \,\text{ s. t. }
\Delta_j<e_0^{-2(1-\ga)+\delta} \text{ or }
\Delta_j>e_0^{-2(1-\ga)-\delta}\right)\\
&\leq
e_0^{-1-\ga}\cdot e_0^{2\ga+\zeta}\cdot e_0^{-1-\ga}e^{
-Ce_0^{-\de}}.
\end{split}
\]
Now we bound the first term in \eqref{def:SplitGlobalExpec}. The assumptions on the exit times $\Delta_j$ when conditioning the probability imply
\begin{equation}\label{eq:j*:final}
 e_0^{-2\ga+\de}\leq j^*\leq e_0^{-2\ga-\de}.
\end{equation}
Now we are ready to prove that the first term in \eqref{def:SplitGlobalExpec}
tends to zero with $e_0$. We bound the probability by one. To prove
that
the conditioned expectation in the first line tends to zero with $e_0$,
it is enough to take into account \eqref{def:localExpec} and
\eqref{eq:j*:final}, to obtain
\[
\left|\E\left(\left.\sum_{j=0}^{j^*}\eta_j\right|
e_0^{-2(1-\ga)+\de}\leq \Delta_j\leq
e_0^{-2(1-\ga)-\de}\,\,\forall j\right)\right|
\lesssim
e_0^{2\ga+\zeta}\cdot e_0^{-2\ga-\de}
\leq e_0^{\zeta-\de}.
\]
Therefore, taking $\de>0$ small enough we obtain \eqref{def:ExpectLemma}.
In conclusion, we obtain  that the $J$ behaves as a diffusion process
\[
 d\JJ_\kk=\bbb(\JJ)d\kk+\bms^2(\JJ)dB_\kk,
\]
where the drift $\bbb$ and the variance $\bms^2$ are those defined in Lemma \ref{lemma:DriftVar}.

Since the change of coordinates considered in Proposition \ref{prop:Cylindermaps:normalform}
is $e_0$-close to the identity, we obtain that the original variable $I$
behaves following the same diffusion process. Finally it is enough to recall that the Jacobi constant $\tJ$ in \eqref{eq:Jacobi} satisfies $\tJ=-I+\OO(e_0)$ (see \eqref{def:HamDelaunayRot}). This completes the proof of Theorem \ref{thm:main-thm}.

\subsection{Stochastic evolution of the eccentricity (Theorem \ref{thm:main-thm-bis})}\label{sec:maintheorem2}

To deduce Theorem \ref{thm:main-thm-bis} from Theorem \ref{thm:main-thm}, we proceed as follows. 
First, note that one can deduce the angular momentum $G$ from the other variables and the Jacobi constant. Indeed,
\[
\tJ=-\frac{1}{2L^2}-G-\Delta H_\ccirc+\OO(\mu e_0),
\]
which, implies that one can write the angular momentum $G$ as
$G=\bG(L,\ell, g,\tJ, t)$, 
where $\bG$ is a function satisfying
\[
 \bG(L,\ell, g,\tJ, t)=-\tJ-\frac{1}{2L^2}+\OO(\mu).
\]
Then, if we define 
\[
\bE(\tJ,L)=\sqrt{1-\frac{(\tJ+\frac{1}{2L^2})^2}{L^2}},
\]
(see \eqref{def:JtoE}), 
we can write the eccentricity 
\[
 \ecc=\bE(\tJ,L_0)+E,\qquad L_0=3^{-1/3}
\]
where
\[
E=\sqrt{1-\frac{ \bG(L,\ell, g,\tJ, t)^2}{L^2}}-\bE(\tJ,L_0),
\]
which is $\OO(\mu)$. Now, to prove Theorem \ref{thm:main-thm-bis}, it only remains to: \begin{itemize}
\item[$(a)$] Show that if $\tJ_s$ is a diffusion process with drift $\bbb(\tJ_s)$ and variance $\bms^2(\tJ_s)$ (as stated in Theorem \ref{thm:main-thm}), then  $\tilde \ecc_s=\bE(\tJ_s,L_0)$ is also a diffusion process for suitable $\wt \bbb(\tilde\ecc_s)$ and $\wt\bms^2(\tilde\ecc_s)$.
\item[$(b)$] Show that $E$ can be split into the two correction terms provided by Theorem \ref{thm:main-thm-bis}.
\end{itemize}	
Item $(a)$ is a direct consequence of It\^o Lemma. Indeed, if we denote by $\tJ=g(\wt\ecc,L_0)$ the inverse of the function $\tJ\mapsto \tilde\ecc=\bE(\tJ,L_0$), it ensures that $\tilde\ecc_s$ also satisfies a diffusion process with drift and variance given by
\[
\begin{split}
	\wt \bbb(\tilde\ecc_s)&=\pa_\tJ \bE(g(\tilde \ecc_s,L_0),L_0)\bbb (g( \tilde\ecc_s,L_0))+\frac{1}{2}\pa^2_\tJ \bE(g(\tilde\ecc_s,L_0),L_0)\bms^2 (g( \tilde\ecc_s,L_0)),\\
	\wt \bms(\tilde\ecc_s)&=\pa_\tJ \bE(g(\tilde\ecc_s,L_0),L_0)\bms (g(\tilde\ecc_s,L_0)).
\end{split}
\]

Now we prove Item $(b)$. To this end, let us consider a section $\Pi=\{q=\rho\}$, which is transverse to the unstable manifold of the normally hyperbolic invariant cylinder ($q$ is the coordinate introduced in Lemma \ref{lemma:MoserNF}). Note that the orbits in the normally hyperbolic invariant lamination keep crossing this section at each loop (until hitting the stoping time in the boundary). For each trajectory $\wt \Phi_t(X)$ with $X$ in the support of the measure (i.e in the normally hyperbolic invariant lamination), we denote by $t_j$, $j\geq 1$, the sequence of times these orbits hit the section $\Pi$. Since the lamination is $\rr$-close to the normally hyperbolic invariant cylinder, these times satisfy $t_{j+1}-t_{j}\sim|\log\rr|$ for any $X\in \mathrm{supp}\nu_{\tJ^*, e_0}$. 

Now, for $e_0>0$ small enough, each of the points $\wt \Phi_t(X)$ are $\rr$-close to a homoclinic orbit of the RPC3BP ($e_0=0$) at the same Jacobi constant as $\wt \Phi_t(X)$ (provided by Ansatz \ref{ans:NHIMCircular:bis}). Let us denote  by $(L_j,\ell_j, g_j)(t,\tJ)$, $j=1,2$, the time parameterization of these homoclinics. Then, one can define the two corrections as follows. Note that we have to analyze
\[
\EE(X,t)=E(\wt \Phi_t(X)),
\]
which we split it as  $\EE=\EE_1+\EE_2$ as 
\[
\begin{split}
	\EE_1(X,t)&=\bE(\tJ_t(X),L_t(X))-\bE(\tJ_t(X),L_0)\\
	\EE_2(X,t)&=E_2(\wt \Phi_t(X)),
\end{split}
\]
where $z_t(X)=\pi_z\wt \Phi_t(X)$, for $z=L,\ell, g,\tJ, t$, and 
\[
E_2(X)=\sqrt{1-\frac{ \bG(L,\ell, g,\tJ, t)^2}{L^2}}-\bE(\tJ,L).
\]
We analyze each of them separately for $t\in (t_j,t_{j+1})$ and we denote $\tJ=\tJ_{t_j}(X)$. We write the first one as $\EE_1=\EE_1^1+\EE_1^2$ with 
\[
\begin{split}
	\EE_1^1(X,t)&=\bE(\tJ,L_j(t,\tJ))-\bE(\tJ,L_0)\\
	\EE_1^2(X,t)&=\left[\bE(\tJ_t(X),L_t(X))-\bE(\tJ,L_j(t,\tJ))\right]+\left[\bE(\tJ,L_0)-\bE(\tJ_t(X),L_0)\right].
\end{split}
\]
Note that since we are considering the regime $0<e_0\ll \rr\ll 1$,  $\EE_1^2$ satisfies $|\EE_1^2|\lesssim\rr^{\zeta^*}$ since the orbits in the lamination are $\rr^{\zeta^*}$-close to the homoclinic orbit (see Theorem \ref{thm:NHILElliptic}). Therefore this term is of the form of the correction $E$ in Theorem \ref{thm:main-thm-bis}. The term $\EE_1^1$ only depends on the choice of homoclinic orbit, the  Jacobi constant at time $t_j$ and on the time $t\in (t_j,t_{j+1})$, and therefore it is of the form of the correction $\de\ecc$. Moreover, by Ansatz \ref{ans:NHIMCircular:bis},
\[
|\EE_1^1(X,t)|\leq 126 \mu.
\]
Moreover, one can easily check that, by Ansatz \ref{ans:NHIMCircular:bis} and Theorem \ref{thm:NHILElliptic}, $E_2$ satisfies
\[
|E_2(X)|\leq 126\mu.
\]
We split $\EE_2=\EE_2^1+\EE_2^2$  as
\[
\begin{split}
	\EE_2^1(t,\tJ)&=E_2(L_j(t,\tJ),\ell_j(t,\tJ), g_j(t,\tJ), \tJ, t)\\
	\EE_2^2(X,t)&=E_2(\wt \Phi_t(X))-E_2(L_j(t,\tJ),\ell_j(t,\tJ), g_j(t,\tJ), \tJ, t).
\end{split}
\]
Then, proceeding analogously as for $\EE_1$, one has that $|\EE_2^2(X,t)|\lesssim\rr^{\zeta^*}$ and contributes to the correction term $E$ in Theorem \ref{thm:main-thm-bis} and that $\EE_2^1$ is of the form of the correction term $\de\ecc$ in the theorem. 

%

\section{Compact Lie group extensions of hyperbolic maps}\label{app:LiegroupExtensions}
We devote this section to prove Theorems \ref{thm:Dima} and \ref{thm:Dima:FixedAngle} and Lemma \ref{lemma:tripledecay}. Theorem \ref{thm:Dima} is a direct consequence of \cite{FieldMT03}. This is explained in Section \ref{app:proofdecaycorrLeb}. The proofs of Theorem  \ref{thm:Dima:FixedAngle} and Lemma \ref{lemma:tripledecay} require more work, which deeply relies on the tools developed in  \cite{FieldMT03} and also on a Central Limit Theorem proven in \cite{IbragimovL71}. This is explained in Section \ref{app:proofdecaycorrfixedtheta}.

\subsection{Decay of correlations and a CLT for type 2 measures (Theorem \ref{thm:Dima})}\label{app:proofdecaycorrLeb}
We start by explaining the setting of  \cite{FieldMT03} and the results that we use. Consider $\Sigma$ an aperiodic subshift of finite type and a compact Lie group $G$.  Assume that $G$ acts (orthogonally) on $\RR^n$ by identifying $G$ with a subgroup of $O(n)$ (for some $n$).

Fix $\vartheta\in (0,1)$ and consider a map $h:\Sigma\to G$ with $\|h\|_\vartheta<\infty$ (see \eqref{def:HolderNorm}). We define the skew product $\sigma_h:\Sigma\times G\to \Sigma\times G$ as
\begin{equation}\label{def:sigmahapp}
	\sigma_h(\omega,g)=\left(\sigma\omega, gh(\omega)\right),
\end{equation}
where the second component is just the (right) Lie group product by $h$. Consider the $\sigma$-equilibrium state $\mu$ and the Haar measure $\nu$ on $G$. Then, the product measure $\tm=\mu\times\nu$ is invariant under $\sigma_h$.

We consider $G$-equivariant   observables $\phi:\Sigma\times G\to \RR^n$, that is observables $\phi$ satisfying 
\begin{equation}\label{def:EquivObsApp}
	\phi(\omega, ag)=a\phi(\omega,g)\quad \text{ for all }\quad a\in G. 
\end{equation}
Equivalently, $\phi(\omega,g)=g V(\omega)$ where $V:\Sigma\to\RR^n$. We assume that $V$ is H\"older and satisfies $\|V\|_\vartheta<\infty$ (see \eqref{def:HolderNorm}). We consider the space of those observables
\[
\FF_\vartheta^G=\left\{\phi:\Sigma\times G\to\RR^n: \phi(\omega,g)=gV(\omega), \|V\|_\vartheta<\infty\right\}.
\]
Consider the space $L^2(\Sigma\times G,\RR^n)$. Note that $ \FF_\vartheta^G\subset L^2(\Sigma\times G,\RR^n)$.  It is easy to see that one can scale the norm $\|\cdot\|_\vartheta$ so that it satisfies $\|\phi\|_{L^2}\leq \|\phi\|_\vartheta$.

We also define the Koopman operator 
\begin{equation}\label{def:transferoperator}
	U:L^2(\Sigma\times G,\RR^n)\to L^2(\Sigma\times G,\RR^n),\qquad  U\phi=\phi\circ\sigma_h.
\end{equation}
Assume that
\begin{equation}\label{def:equivWM}
	\begin{gathered}	
		\text{The only solutions of \, $U\phi=\alpha\phi$ \, of the form \, $\phi(\omega,g)=gV(\omega)$}\\\text{are $\alpha=1$ and  $V=\text{constant}$.}
	\end{gathered}
\end{equation} 
Note that this is implied by  weak mixing. However, it is a weaker statement since only rules out the existence of equivariant eigenfunctions of the Koopman operator $U$.

Under the condition \eqref{def:equivWM}, in \cite{FieldMT03}, it is shown that 
there exists  $r\in (0,1)$ and $C>0$ such that for any $j\geq 1$ and $\phi,\psi\in \FF_\vartheta^G$,
\begin{equation}\label{def:DecayCorrExpec}
	\left|\int \phi\circ\sigma_h^j\psi^Td\tm-\int \phi\, d\tm\int\psi^Td\tm\right|\leq C\|\phi\|_{\vartheta^{1/2}} \|\psi\|_{\vartheta^{1/2}}\, r^j.
\end{equation}
The paper \cite{FieldMT03} also provides a Central Limit Theorem under assumption \eqref{def:equivWM}. To state it, we first need to introduce the covariance matrix. For a $G$-equivariant observable $\phi:\Sigma\times G\to\RR^n$, we define
\[
\phi_N=\sum_{j=0}^{N-1}\phi\circ\sigma_h^j.
\]
Assume that $\phi$ has  zero mean, that is $\int\phi d\tm=0$. Then, one can easily check that \eqref{def:DecayCorrExpec} implies that  the limit $\lim_{N\to\infty}N^{-1}\int \phi_N\phi_N^Td\tm$ exists, which allows to define the covariance matrix
\[
\Sigma_\phi=\lim_{N\to\infty}\frac{1}{N}\int \phi_N\phi_N^T\,d\tm.
\]
\begin{lemma}[\cite{FieldMT03}]\label{lemma:CLT}
	Assume that the matrix $\Sigma_\phi$ is non-singular. Then, for each $c\in\RR^n$, the sequence $\frac{1}{\sqrt{N}}c^T\phi_N$, $N\geq 1$, converges in distribution to a normal distribution with zero mean  and variance $\sigma^2=c^T\Sigma_\phi c$.
\end{lemma}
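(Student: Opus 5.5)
The plan is to deduce Lemma \ref{lemma:CLT} from a martingale CLT via a Gordin-type decomposition, using the exponential decay of correlations \eqref{def:DecayCorrExpec} as the only dynamical input. The first step is a reduction. By the Cramér--Wold device it suffices to treat the scalar observable $\psi=c^T\phi$. This $\psi$ is no longer equivariant, but it is a finite linear combination of coordinates of elements of $\FF_\vartheta^G$. Therefore \eqref{def:DecayCorrExpec}, applied componentwise, yields $|\int \psi\circ\sigma_h^j\,\psi\,d\tm|\le C\|V\|_{\vartheta^{1/2}}^2|c|^2 r^j$, given zero mean. This gives summable autocorrelations. Hence the variance $\sigma^2=\lim N^{-1}\int(c^T\phi_N)^2d\tm=c^T\Sigma_\phi c$ exists and equals $\int\psi^2+2\sum_{j\ge1}\int\psi\circ\sigma_h^j\psi$. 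Non-singularity of $\Sigma_\phi$ guarantees $\sigma^2>0$ when $c\neq0$; for $c=0$ the statement is trivial.

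The second step is to pass to a natural-extension/one-sided setting and write $\psi=\chi+\xi-\xi\circ\sigma_h$, where $\chi$ is a reverse martingale difference with respect to the filtration $\mathcal G_k=\sigma_h^{-k}\mathcal B$. The standard route is to define $\xi=\sum_{j\ge1}P^j\psi$, where $P=U^*$ is the transfer operator (the $L^2$-adjoint of the Koopman operator \eqref{def:transferoperator}). Convergence of this series in $L^2$ is what must be verified. Here I would use the fact from \cite{FieldMT03} underlying \eqref{def:DecayCorrExpec}: on the equivariant H\"older space $\FF_\vartheta^G$ (after quotienting by the stable direction, i.e. working with one-sided observables via a Sinai-type cohomology $V\sim V^+ +w-w\circ\sigma$ with $w$ being $\vartheta^{1/2}$-H\"older), the operator $P$ has spectral radius $<1$ on the zero-mean part. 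This holds because assumption \eqref{def:equivWM} excludes peripheral equivariant eigenvalues. So $\|P^j\psi\|_{\vartheta^{1/2}}\lesssim r^j$, and $\xi$ lies in the same space and is bounded.

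The third step applies the martingale CLT (Billingsley--Ibragimov for stationary ergodic martingale differences) to $\sum_{j<N}\chi\circ\sigma_h^j$. The coboundary term telescopes to $\xi-\xi\circ\sigma_h^N$, which is bounded, so it vanishes after dividing by $\sqrt N$. Ergodicity of $\tm$ under $\sigma_h$ is needed for the martingale CLT. It also follows from \eqref{def:equivWM} combined with mixing of the Gibbs measure on $\Sigma$, at least on the relevant invariant subspace of equivariant functions. If full ergodicity is unavailable, I would instead use the version of the CLT with conditional variance, checking $N^{-1}\sum \E(\chi^2\circ\sigma_h^j\mid\cdot)\to\sigma^2$ in $L^1$ via decay of correlations for $\chi^2$.

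The main obstacle is the second step. It requires justifying the spectral gap of $P$ on the equivariant class together with the reduction from two-sided to one-sided observables while preserving equivariance. Since $\sigma_h$ acts on $G$ by right multiplication and equivariance is with respect to left multiplication, these commute, and the cohomological reduction can be done fiberwise in $V$. This is exactly the part carried out in \cite{FieldMT03}, so in practice I would cite it.
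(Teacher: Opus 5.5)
Your outline (one-sided reduction, then the Gordin decomposition $\psi=\chi+\xi-\xi\circ\sigma_h$ with $\xi=\sum_{j\ge1}P^j\psi$ controlled by the spectral gap of the twisted transfer operator on $\FF_\vartheta^G$, then a martingale CLT) is the standard route. The paper gives nothing beyond the citation to \cite{FieldMT03}. There is, however, a genuine gap in your third step. You claim that ergodicity of $\tm$ under $\sigma_h$ follows from \eqref{def:equivWM} and mixing of $\mu$ "on the relevant subspace of equivariant functions", but that is the wrong subspace. Write $\chi(\omega,g)=c^Tg\,Y(\omega)$. The martingale CLT needs $\E(\chi^2\mid\mathcal I)$ to be constant, and $\chi^2$ lives in the tensor square of the defining representation (for $G_1$, angular harmonics $0,\pm2$). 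Condition \eqref{def:equivWM} only excludes eigenfunctions in the defining representation itself. Your fallback fails for the same reason: \eqref{def:DecayCorrExpec} concerns $\FF_\vartheta^G$ only and says nothing about correlations of $\chi^2$.

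This cannot be repaired from the stated hypotheses; \eqref{def:equivWM} is weaker than the weak mixing used in \cite{FieldMT03}. Here is a counterexample.
\emph{Setup.} On the full $2$-shift with the $(\frac12,\frac12)$-Bernoulli measure, take $G=G_1$, $h(\omega)=(-1)^{\omega_0}\Id$ and $V\equiv e_1$. Then $\phi_N(\omega,g)=s_N(\omega)\,ge_1$, where $s_N=\sum_{j<N}(-1)^{\omega_0+\dots+\omega_{j-1}}$ is a simple random walk.
\emph{Hypotheses hold.} We have $\int\phi\,d\tm=0$, correlations vanish for $j\ge1$, and $\Sigma_\phi=\frac12\Id$ is non-singular. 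Condition \eqref{def:equivWM} holds: a nonzero H\"older $u:\Sigma\to\CC^2$ with $h(\omega)u(\sigma\omega)=\alpha u(\omega)$ has $|u|$ constant, and the fixed points $0^\infty$, $1^\infty$ would force both $\alpha=1$ and $\alpha=-1$.
\emph{Conclusion fails.} Here $P\psi=0$, so $\chi=\psi$, and $\chi^2=(c^Tge_1)^2$ is $\sigma_h$-invariant and non-constant. Accordingly $N^{-1/2}c^T\phi_N\Rightarrow|c|\cos\Theta\cdot Z$, with $\Theta$ uniform and independent of $Z\sim\mathcal N(0,1)$. Its characteristic function $\frac1{2\pi}\int_0^{2\pi}e^{-t^2|c|^2\cos^2\theta/2}\,d\theta$ strictly exceeds $e^{-t^2|c|^2/4}$ by Jensen, so the limit is not $\mathcal N(0,c^T\Sigma_\phi c)$.

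You therefore need an extra hypothesis. One option is ergodicity of $\sigma_h$, for instance the weak mixing assumed in \cite{FieldMT03}. For $G_1$, another is the $m=2$ case of \eqref{def:WeakMixing}, read as excluding every nonzero solution, as in the proof of Lemma \ref{lemma:WeakMixing}. With either, your fallback closes:
\emph{Harmonic $0$.} The angle-independent part of $\chi^2$ depends only on $\omega$, and its Birkhoff averages converge by ergodicity of $\mu$.
\emph{Harmonics $\pm2$.} Their Birkhoff averages tend to $0$ in $L^2(\tm)$ by the spectral gap of the operator twisted by $e^{2\im\beta}$.
Hence $N^{-1}\sum_{j<N}\chi^2\circ\sigma_h^j\to\sigma^2$ in $L^1$, and the martingale CLT applies; Lindeberg is automatic since $\chi$ is bounded.

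Separately, your one-sided reduction needs a cocycle-twisted coboundary. First straighten $h=\hat M\tilde h(\hat M\circ\sigma)^{-1}$, then write $\hat M^{-1}V=W+w-\tilde h\,(w\circ\sigma)$, as in Lemma \ref{lemma:FromTwoTo1Sided:v2}. An untwisted $w-w\circ\sigma$ does not telescope in $\sum_j gh_j\,V\circ\sigma^j$.
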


Note that the setting of Theorem \ref{thm:Dima} fits the one in \cite{FieldMT03}. Indeed, it is enough to consider 
the Lie Groups
\begin{equation}\label{def:LieGroup}
	G_m=\left\{g_\al=\begin{pmatrix}\cos(m\al) &\sin(m\al)\\ -\sin(m\al) &\cos(m\al) \end{pmatrix}: \al \in \RR/(2\pi/m)\ZZ\right\}\qquad \text{with}\qquad m=1,2.
\end{equation}
Then, for a given   $\vartheta$-H\"older function $\beta:\Sigma\to \RR$, one can  define $g_\beta\in  \FF_\vartheta (\Sigma,G_m)$ and the associated map
\[
\sigma_\beta(\omega,g_\al)=g_{\al+\beta(\omega)}V(\sigma\omega).
\]
For this map, the absence of non-trivial eigenvalues of the associated  operator $U$ in the unit circle is given by assumption \eqref{def:WeakMixing}. Then, we can consider the associated $G_m$-equivariant observables,
\begin{equation}\label{def:observablephi}
	\phi(\omega,\al)=\begin{pmatrix}\cos(m\al) &\sin(m\al)\\ -\sin(m\al) &\cos(m\al) \end{pmatrix}\begin{pmatrix}P_1(\omega)\\ P_2(\omega) \end{pmatrix}
\end{equation}
for $\vartheta$-H\"older functions $P_1,P_2:\Sigma\to \RR$. Note that all equivariant observables over $G_m$ have zero mean with respect to the measure $\tm$.

In this setting, the decay of correlations statement \eqref{def:DecayCorrExpec} and the Central Limit Theorem (Lemma \ref{lemma:CLT}) imply Theorem \ref{thm:Dima}.

\subsection{Decay of correlations and a  CLT for type 1 measures (Theorem \ref{thm:Dima:FixedAngle})}\label{app:proofdecaycorrfixedtheta}
The statements on decay of correlations and Central Limit Theorem in Theorem \ref{thm:Dima} consider as measure the Gibbs measure on $\Sigma$ times the Haar measure on the Lie group $G$. Now we prove Theorem \ref{thm:Dima:FixedAngle}, which gives the same statements  as  Theorem \ref{thm:Dima} but for a different measure, which is not invariant: fixing  any initial condition  $s_0\in\TT$ and considering the Gibbs measure  on $\Sigma$, that is $\tm_{s_0}=\mu\times\delta_{s_0}$.

We start by proving that the first item of the theorem  is a direct consequence of Corollary~\ref{coro:decaycorrelations} and the second item of Theorem \ref{thm:Dima:FixedAngle}. Below, in Sections \ref{app:decay:onesided} and \ref{app:decay:twosided} we prove the second item of the theorem.

If we write $F$ and $G$ as 
\[
F(\omega,s)=F^+(\omega)e^{\im ms}+\ol{F^+}(\omega)e^{-\im ms},\quad G(\omega,s)=G^+(\omega)e^{\im m s}+\ol{G^+}(\omega)e^{-\im ms},\quad m=1,2,
\]
we have that, for $n>k$,
\[
\begin{split}
	\E_{\omega}\left(G(\sigma^n\omega, s_n)F(\sigma^k\omega,
	s_k)\right)=&2\Re\left(F^+(\sigma^n\omega)\ol{G^+}(\sigma^k\omega)e^{\im m\sum_{j=k}^{n-1}\beta^{\sigma^j\omega}}\right)\\
	&+2\Re\left(F^+(\sigma^n\omega)G^+(\sigma^k\omega)e^{\im m\sum_{j=0}^{n-1}\beta^{\sigma^j\omega}}e^{\im m\sum_{j=0}^{k-1}\beta^{\sigma^j\omega}}\right).
\end{split}
\]
Then, the term in the first line can be bounded applying Corollary \ref{coro:decaycorrelations}.  The term in the second line is estimated by the second item of Theorem \ref{thm:Dima:FixedAngle}. 

We prove now the second item of Theorem \ref{thm:Dima:FixedAngle}. We strongly rely on the tools developed in \cite{FieldMT03}. This is done in two steps. First, in Section \ref{app:decay:onesided} we prove it under the assumption that  $\beta$, $F$ and $G$ only depend on future symbols $\omega_k$, $k\geq 0$. That is, we prove decay of correlations for skew-products over the one-sided shift. Then, in Section \ref{app:decay:twosided}, we explain how to adapt the proof to the general case.

\subsubsection{Decay of correlations and Lemma \ref{lemma:tripledecay} for one-sided shifts}\label{app:decay:onesided}
We first prove Item 2 of Theorem \ref{thm:Dima:FixedAngle}  for skew-products over the one-sided shift. To simplify the notation we consider $m=1$. The same proof applies to $m=2$.

Let us define $\Sigma^+=\{0,1\}^\NN$, call $\sigma$ the shift on $\Sigma^+$ and $\mu$ the Gibbs measure on $\Sigma^+$. For a function $P:\Sigma^+\to\CC$, we can define the Koopman and transfer operators associated to $\sigma$ as 
\[
\UU:L^\infty(\Sigma^+)\to L^\infty(\Sigma^+)\qquad \text{and}\qquad \LL:L^1(\Sigma^+)\to L^1(\Sigma^+),
\]
by
\[
\UU P=P\circ\sigma\qquad\text{and}\qquad \int_{\Sigma^+}\LL P\, Q\,d\mu=\int_{\Sigma^+}P\, \UU Q\,d\mu.
\]
Now, we consider the Lie group extension of the shift $\sigma$,  $\sigma_\beta:\Sigma^+\times\TT\to\Sigma^+\times\TT$ defined as
\begin{equation}\label{def:mapSigmaplus}
	\sigma_\beta(\omega,s)=(\sigma \omega, s+\beta(\omega))
\end{equation}
where $\beta:\Sigma^+\to\RR$ is a $\vartheta$-H\"older function (see \eqref{def:HolderNorm}) and, following \cite{FieldMT03}, we consider equivariant observables
\begin{equation}\label{def:observables:equiv}
	F(\omega,s)=P(\omega)e^{\im s},\qquad G(\omega,s)=Q(\omega)e^{\im s},
\end{equation}
where $P,Q:\Sigma^+\to\CC$ are also $\vartheta$-H\"older functions. We denote by $F_\vartheta(\Sigma^+)$ the space of $\vartheta$-H\"older functions $P:\Sigma^+\to\CC$.

Associated to this skew-shift and the equivariant observables, we define the twisted transfer operator
\[
\LL_\beta P=\LL(e^{\im \beta}P),
\]
which satisfies that, for $n\geq 1$,
\begin{equation}\label{def:decay:adjointequiv}
	\int_{\Sigma^+}\LL_\beta^n P\, Qd\mu=\int_{\Sigma^+}e^{\im \beta_n}P\, \UU^n Qd\mu\qquad \text{where}\qquad \beta_n=\sum_{j=0}^{n-1}\beta\circ\sigma^j.
\end{equation}
Note that the right hand side above can be seen as a Koopman operator for equivariant observables \eqref{def:observables:equiv},
\[
F\circ\sigma_\beta=e^{\im \beta}\UU P e^{\im s}.
\]
The following proposition is proven in \cite[Corollary 4.1]{FieldMT03}.
\begin{proposition}\label{prop:decayonesided}
	Fix $\vartheta\in (0,1)$. Consider the map $\sigma_\beta$ in \eqref{def:mapSigmaplus} with $\beta:\Sigma^+\to\RR$ $\vartheta$-H\"older and assume  that the equation $e^{\im \beta(\omega)} A(\sigma\omega)=\nu A(\omega) $ has at most one solution in $F_\vartheta(\Sigma_+)$: the trivial solution $\nu=1$, $A_\omega=\text{constant}$. Then, there exists $C>0$ and $r\in (0,1)$ such that 
	\[
	\left\| \LL_\beta^n P\right\|_{\vartheta}\leq Cr^n\|P\|_\vartheta\qquad \text{for all}\quad P\in F_\vartheta (\Sigma_+).
	\]
\end{proposition}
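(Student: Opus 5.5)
We prove the exponential contraction of $\LL_\beta$ on $F_\vartheta(\Sigma^+)$ by the standard spectral route for twisted transfer operators: a Lasota--Yorke inequality, quasi-compactness via Hennion's theorem, and exclusion of peripheral eigenvalues using the non-existence hypothesis. We normalise the Gibbs measure so that $\LL 1=1$, i.e. $\LL P(\omega)=\sum_{\sigma\omega'=\omega}e^{\phi(\omega')}P(\omega')$ with a normalised $\vartheta$-H\"older potential $\phi$.

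\textbf{Step 1 (Lasota--Yorke inequality).} Write
\[
\LL_\beta^nP(\omega)=\sum_{\sigma^n\omega'=\omega}e^{\phi_n(\omega')+\im\beta_n(\omega')}P(\omega').
\]
Since $|e^{\im\beta_n}|=1$ and $\LL^n1=1$, we get $\|\LL^n_\beta P\|_{C^0}\le\|P\|_{C^0}$. For the H\"older seminorm, take $\omega,\tilde\omega$ agreeing on the first $N$ symbols and pair their preimages along common cylinders; each such pair agrees on $N+n$ symbols. Bounded distortion of $\phi_n$ and of $\beta_n$ (a geometric sum of the variations of $\beta$) then gives
\[
|\LL_\beta^nP|_\vartheta\le C\vartheta^n|P|_\vartheta+C\|P\|_{C^0},
\]
with $C$ depending on $\|\beta\|_\vartheta$ and $\|\phi\|_\vartheta$ but not on $n$.

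\textbf{Step 2 (quasi-compactness).} The unit ball of $F_\vartheta$ is compact in $C^0$ (Arzel\`a--Ascoli on the compact space $\Sigma^+$). By Hennion's theorem, $\LL_\beta$ has spectral radius at most $1$ and essential spectral radius at most $\vartheta$. So it suffices to show that $\LL_\beta$ has no eigenvalue $\lambda$ with $|\lambda|=1$.

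\textbf{Step 3 (no peripheral spectrum).} This is the main step. Suppose $\LL_\beta P=\lambda P$ with $|\lambda|=1$ and $P\in F_\vartheta$, $P\neq0$. The operator $\LL_\beta$ averages the values $e^{\im\beta}P$ over preimages with probability weights $e^{\phi}$. Hence $|P|=|\LL_\beta P|\le\LL|P|$, so $\LL|P|-|P|\ge0$. Its integral vanishes because $\mu$ is $\LL$-invariant, so $\LL|P|=|P|$, and therefore $|P|$ is constant; normalise $|P|\equiv1$. Equality in the triangle inequality with strictly positive weights then forces every preimage term to have the same phase:
\[
e^{\im\beta(\omega')}P(\omega')=\lambda P(\sigma\omega')\quad\text{for all }\omega'.
\]
Setting $A=\ol P$ gives $e^{\im\beta}A\circ\sigma\cdot e^{-2\im\beta}$; the cleaner way is to rewrite the identity as $e^{-\im\beta}\ol{P}(\omega')\lambda=\ol P(\sigma\omega')$, i.e. $e^{\im\beta(\omega)}A(\sigma\omega)=\nu A(\omega)$ with $A=\ol P$ and $\nu=\lambda^{-1}$, up to the sign/conjugation convention for $\beta$. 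By hypothesis the only solution is $\nu=1$ with $A$ constant, which forces $e^{\im\beta}\equiv1$. In the degenerate case where $\beta$ is cohomologous to $0$ mod $2\pi$, the statement genuinely fails, so I read the hypothesis as excluding it (nontrivial twist); under that reading no unimodular eigenvalue exists.

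\textbf{Step 4 (conclusion).} The spectral radius of $\LL_\beta$ on $F_\vartheta$ is therefore some $r_0<1$. Gelfand's formula gives, for any $r\in(r_0,1)$, a constant $C$ with $\|\LL_\beta^n\|_\vartheta\le Cr^n$, which is the claimed bound.

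The delicate point is Step 3: matching the sign and conjugation conventions between the eigen-equation and the hypothesis of the proposition, and handling the trivially twisted case. The Lasota--Yorke inequality and quasi-compactness in Steps 1--2 are routine.
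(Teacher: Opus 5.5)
Your proof is correct. The paper gives no argument of its own here and simply invokes \cite[Corollary~4.1]{FieldMT03}; your combination of a Lasota--Yorke estimate, Hennion quasi-compactness, and the exclusion of unimodular eigenvalues (via $|P|\le\LL|P|$ and equality in the triangle inequality) is essentially the standard spectral proof behind that result, specialised to circle extensions.

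There are two minor remarks. First, from $e^{\im\beta}P=\lambda\,P\circ\sigma$ and $|P|\equiv1$ one gets $e^{\im\beta}\,\ol P\circ\sigma=\lambda\,\ol P$, so $\nu=\lambda$ rather than $\lambda^{-1}$; this does not affect the argument. Second, you are right that the hypothesis, as literally worded, does not exclude $e^{\im\beta}\equiv1$. In that case $\LL_\beta=\LL$ fixes the constant $1$ and the conclusion fails, so the hypothesis must be read as ruling out every nonzero solution, which is the situation in the paper's application.
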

Note that the assumption \eqref{def:WeakMixing} implies that there exists $C>0$ and $r\in (0,1)$ such that $\LL_{\beta}$ and  $\LL_{2\beta}$ satisfy the proposition.

Assumption \eqref{def:WeakMixing} and this proposition will imply Item 2 of Theorem \ref{thm:Dima:FixedAngle} for the one-sided shift, that is 
\[
\left|\E_\omega \left( P\left(\sigma^n\omega\right)e^{\im \beta_n(\omega)} Q\left(\sigma^k\omega\right)e^{\im \beta_k(\omega)}\right)\right|\leq 
C\,\|P\|_{\vartheta}\,\|Q\|_{\vartheta}\ r^{n},
\]
for any $n> k$ and $P, Q\in F_\vartheta(\Sigma_+)$.

Note that the left hand side can be written as 
\[
J_{n,k}=\E_\omega \left( P\left(\sigma^n\omega\right)e^{\im \beta_n(\omega)} Q\left(\sigma^k\omega\right)e^{\im \beta_k(\omega)}\right)=\int_{\Sigma^+}(\UU^n P)e^{i\beta_n}(\UU^k Q)e^{i\beta_k}d\mu.
\]
Now, since $\beta_n=\beta_k+\UU^k\beta_{n-k}$, one has that 
\begin{equation}\label{def:betashifts}
	e^{i\beta_n}=e^{i\beta_k}\UU^ke^{i\beta_{n-k}}. 
\end{equation}
Then, using also \eqref{def:decay:adjointequiv},
\[
\begin{split}
	J_{n,k}&=\int_{\Sigma^+}e^{2\im\beta_k}\UU^k \left(e^{i\beta_{n-k}}Q\,\UU^{n-k} P\right)d\mu=\int_{\Sigma^+} Q \LL_{2\beta}^k 1 e^{i\beta_{n-k}}\UU^{n-k} Pd\mu\\
	&=\int_{\Sigma^+} \LL_{\beta}^{n-k}\left(Q \LL_{2\beta}^k 1 \right) Pd\mu.
\end{split}
\]
Therefore, denoting by $|\cdot |_\infty$ the norm associated to $L^\infty(\Sigma_+)$ and recalling that $|\cdot|_\infty\leq \|\cdot\|_{\vartheta}$, Proposition \ref{prop:decayonesided} and \eqref{def:HolderAlgebra} imply 
\[
\begin{split}
	|J_{n,k}|&\leq \left|\LL_{\beta}^{n-k}\left(Q \LL_{2\beta}^k 1 \right)\right|_\infty |P|_\infty\leq Cr^{n-k}\|Q \LL_{2\beta}^k 1\|_\vartheta |P|_\infty\\
	&\leq Cr^{n-k}\|Q\|_\vartheta \| \LL_{2\beta}^k 1\|_\vartheta |P|_\infty\leq C^2r^{n}\|Q\|_\vartheta \|  1\|_\vartheta |P|_\infty\leq C^2r^{n}\|Q\|_\vartheta  |P|_\infty.
\end{split}
\]
This completes the proof of Item 2 of Theorem \ref{thm:Dima:FixedAngle} for one-sided shifts. 

Actually,  to generalize this result to two-sided shifts we need a more involved decay of correlations statement which involves more observables. This is stated in the following lemma, which will be also used to prove Lemma \ref{lemma:tripledecay}.



\begin{lemma}\label{lemma:tripledecay:onesided}
	Fix $\vartheta\in (0,1)$. Then, there exist constants $r\in (0,1)$ and
	$C>0$ such that, for any $n, k,i\in\NN$, $n>k>i$ and  any  $\vartheta$-H\"older functions $P,Q,S, T:\Sigma^+\to \CC$, the following is satisfied,
	\[
	\begin{split}
		\left|\E_\omega \left( e^{\im \beta_n(\omega)} P\left(\sigma^n\omega\right) e^{\im \beta_k(\omega)} Q\left(\sigma^k\omega\right) S\left(\omega\right)\right)\right|&\leq C\,|P|_{\infty}\,\|Q\|_{\vartheta}\,\|S\|_{\vartheta}\ r^{n}\\
		\left|\E_\omega \left( e^{\im \beta_n(\omega)} P\left(\sigma^n\omega\right) Q\left(\sigma^k\omega\right) e^{\im \beta_i(\omega)} S\left(\sigma^i\omega\right)T(\omega)\right)\right|&\leq C\,|P|_{\infty}\,\|Q\|_{\vartheta}\,\|S\|_{\vartheta}\,\|T\|_{\vartheta}\ r^{n}.	
	\end{split}
	\]
\end{lemma}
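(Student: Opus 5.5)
We follow the same transfer-operator scheme used just above for $J_{n,k}$. The tools are the identity \eqref{def:decay:adjointequiv}, the cocycle relation \eqref{def:betashifts} in the form $e^{\im\beta_n}=e^{\im\beta_k}\,\UU^k e^{\im\beta_{n-k}}$, and Proposition~\ref{prop:decayonesided} applied to $\LL_\beta$ and $\LL_{2\beta}$. We also use that the untwisted transfer operator $\LL$ is a contraction in $|\cdot|_\infty$ and bounded in $\|\cdot\|_\vartheta$ uniformly in the number of iterates. The estimate for $\LL$ in $\|\cdot\|_\vartheta$ is the standard Lasota--Yorke bound for the Gibbs measure: $\|\LL^j f\|_\vartheta\le C\|f\|_\vartheta$.

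For the first inequality, write the integrand as $e^{2\im\beta_k}\,S\,\UU^k\bigl(Q\,e^{\im\beta_{n-k}}\,\UU^{n-k}P\bigr)$. Moving $\UU^k$ to the other side turns $S e^{2\im\beta_k}$ into $\LL^k_{2\beta}S$. Then \eqref{def:decay:adjointequiv} applied to $e^{\im\beta_{n-k}}\UU^{n-k}P$ gives
\[
\E_\omega(\cdots)=\int_{\Sigma^+}\LL_\beta^{n-k}\bigl(Q\,\LL_{2\beta}^kS\bigr)\,P\,d\mu .
\]
Proposition~\ref{prop:decayonesided}, together with the algebra property \eqref{def:HolderAlgebra}, bounds this by $Cr^{n-k}\|Q\|_\vartheta\,Cr^k\|S\|_\vartheta\,|P|_\infty$. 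This is the claim, and it is the computation already done for $J_{n,k}$ with $1$ replaced by $S$.

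For the second inequality, decompose the exponent as $\beta_n+\beta_i=2\beta_i+\UU^i(\beta_{k-i})+\UU^k(\beta_{n-k})$. The integrand then becomes
\[
T\,e^{2\im\beta_i}\,\UU^i\Bigl(S\,e^{\im\beta_{k-i}}\,\UU^{k-i}\bigl(Q\,e^{\im\beta_{n-k}}\UU^{n-k}P\bigr)\Bigr).
\]
Peel off the operators successively by duality. Moving $\UU^i$ produces $\LL_{2\beta}^iT$. Moving $e^{\im\beta_{k-i}}\UU^{k-i}$ produces $\LL_\beta^{k-i}\bigl(S\,\LL^i_{2\beta}T\bigr)$. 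The last step is $\LL_\beta^{n-k}$ applied to $Q$ times the previous function, so
\[
\E_\omega(\cdots)=\int\LL_\beta^{n-k}\Bigl(Q\,\LL_\beta^{k-i}\bigl(S\,\LL_{2\beta}^iT\bigr)\Bigr)P\,d\mu .
\]
Applying Proposition~\ref{prop:decayonesided} three times with \eqref{def:HolderAlgebra} gives $C^3r^{(n-k)+(k-i)+i}=C^3r^n$ times the stated norms.

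The main obstacle is bookkeeping rather than analysis. Each exponential must be correctly matched to a twisted operator, and at every stage the twist must be $e^{\im m\beta}$ with $m\in\{1,2\}$, so that \eqref{def:WeakMixing} applies and never requires a twist of order $0$ or $3$. The decomposition above does this. An untwisted factor would give only boundedness and destroy the decay; no such factor appears, since $Q$ sits between twisted blocks. The $m=2$ version replaces $\beta$ by $2\beta$ and requires $\LL_{4\beta}$, which is covered by $m=4$ in \eqref{def:WeakMixing}.
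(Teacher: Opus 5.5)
Your proposal is correct and is essentially the paper's proof: the same cocycle splitting of $e^{\im\beta_n}e^{\im\beta_i}$, duality peeling to $\int P\,\LL_\beta^{n-k}\bigl[Q\,\LL_\beta^{k-i}\bigl(S\,\LL_{2\beta}^iT\bigr)\bigr]\,d\mu$, and three applications of Proposition~\ref{prop:decayonesided} together with \eqref{def:HolderAlgebra}. The only differences are cosmetic: you prove the first inequality directly instead of setting $Q=1$ in the second, and the Lasota--Yorke bound for the untwisted $\LL$ that you mention is never actually used.
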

\begin{proof}
	Note that the first statement in Lemma \ref{lemma:tripledecay:onesided} is implied by the second one just taking $Q=1$. Therefore, it is enough to prove the second one.

	We denote
	\[
	J_{nki}=\E_\omega \left( e^{\im \beta_n(\omega)} P\left(\sigma^n\omega\right) Q\left(\sigma^k\omega\right) e^{\im \beta_i(\omega)} S\left(\sigma^i\omega\right)T(\omega)\right)\\
	=\int_{\Sigma^+}e^{\im \beta_n}\UU^nP\,\UU^kQ \, e^{\im \beta_i} \UU^iS\, Td\mu.
	\]
	Then, using \eqref{def:betashifts},
	\[
	\begin{split}
		J_{nki}=&\int_{\Sigma^+}e^{\im \beta_k}\UU^k\left(Q e^{\im \beta_{n-k}}\UU^{n-k}P\right) e^{\im \beta_i}\UU^iS\, Td\mu\\
		=&\int_{\Sigma^+}e^{2\im \beta_i}\UU^i\left[S e^{\im \beta_{k-i}}\UU^{k-i}\left(Q e^{\im \beta_{n-k}}\UU^{n-k}P\right)\right] Td\mu
	\end{split}
	\]
	and,  by \eqref{def:decay:adjointequiv}, 
	\[
	\begin{split}
		J_{nki}=&\int_{\Sigma^+} e^{\im \beta_{k-i}}\UU^{k-i}\left(Q e^{\im \beta_{n-k}}\UU^{n-k}P\right)S\LL_{2\beta}^iT d\mu\\
		=&\int_{\Sigma^+}Q e^{\im \beta_{n-k}}\UU^{n-k}P\LL_{\beta}^{k-i}\left(S\LL_{2\beta}^iT\right) d\mu\\
		=&\int_{\Sigma^+}P\LL_{\beta}^{n-k}\left[Q\LL_{\beta}^{k-i}\left(S\LL_{2\beta}^iT\right) \right]d\mu.
	\end{split}
	\]
	Then, Proposition \ref{prop:decayonesided} and \eqref{def:HolderAlgebra} imply 
	\[
	\begin{split}
		|J_{nki}|\leq &|P|_\infty\left|\LL_{\beta}^{n-k}\left[Q\LL_{\beta}^{k-i}\left(S\LL_{2\beta}^iT\right) \right]\right|_\infty \leq C r^{n-k}|P|_\infty\left\|Q\right\|_\vartheta\left\|\LL_{\beta}^{k-i}\left(S\LL_{2\beta}^iT\right)\right\|_\vartheta\\
		\leq &C^2 r^{n-i} |P|_\infty\left\|Q\right\|_\vartheta\left\|S\right\|_\vartheta\left\|\LL_{2\beta}^iT\right\|_\vartheta\leq C^3 r^{n}|P|_\infty\left\|Q\right\|_\vartheta\left\|S\right\|_\vartheta\|T\|_\vartheta.
	\end{split}
	\]
\end{proof}

\subsubsection{Decay of correlations and Lemma \ref{lemma:tripledecay} for two-sided shifts}\label{app:decay:twosided}
To go from the one-sided shift to the two-sided shift, we  proceed in two steps. We  first  consider observables defined in $\Sigma=\{0,1\}^\ZZ$ but $\beta$ still defined in $\Sigma^+$ (i.e depending only on future symbols). Then, in the second step we consider also a general $\beta:\Sigma\to \RR$.

\begin{lemma}\label{lemma:tripledecay:twosided}
	Fix $\vartheta\in (0,1)$ and consider the map $\sigma_\beta$ in~\eqref {def:mapSigmaplus} where $\beta:\Sigma^+\to\RR$ is a $\vartheta$-H\"older function. Assume that $\sigma_\beta$ satisfies \eqref{def:WeakMixing}. Then, there exist constants $r\in (0,1)$ and
	$C>0$ such that, for any  $n, k,i\in\NN$, $n>k>i$ and  any  $\vartheta$-H\"older observables $P,Q,S, T:\Sigma\to \CC$, the following is satisfied,
	\[
	\begin{split}
		\left|\E_\omega \left( e^{\im \beta_n(\omega)} P\left(\sigma^n\omega\right) e^{\im \beta_k(\omega)} Q\left(\sigma^k\omega\right) S\left(\omega\right)\right)\right|&\leq C\,\|P\|_{\vartheta}\,\|Q\|_{\vartheta}\,\|S\|_{\vartheta}\ r^{n}\\
		\left|\E_\omega \left( e^{\im \beta_n(\omega)} P\left(\sigma^n\omega\right) Q\left(\sigma^k\omega\right) e^{\im \beta_i(\omega)} S\left(\sigma^i\omega\right)T(\omega)\right)\right|&\leq C\,\|P\|_{\vartheta}\,\|Q\|_{\vartheta}\,\|S\|_{\vartheta}\,\|T\|_{\vartheta}\ r^{n}.	
	\end{split}
	\]
\end{lemma}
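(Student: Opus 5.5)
We reduce Lemma \ref{lemma:tripledecay:twosided} to its one-sided counterpart, Lemma \ref{lemma:tripledecay:onesided}, by the standard approximation of two-sided Hölder observables by functions of finitely many past symbols, followed by a shift. The twisting cocycle $\beta$ already depends only on future symbols, so only the observables $P,Q,S,T$ need treatment. It suffices to prove the second inequality, since the first follows by taking $Q\equiv 1$ (after relabeling $k$ as $i$).

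\textbf{Step 1: truncate the past.} Fix $n>k>i$ and a truncation depth $\ell$, to be chosen proportional to $n$ (say $\ell=\lfloor c n\rfloor$ with $c\in(0,1/2)$ small). For each observable $R\in\{P,Q,S,T\}$, let $R^{(\ell)}(\omega)=R(\bar\omega)$, where $\bar\omega$ agrees with $\omega$ in coordinates $j\ge -\ell$ and is fixed (say $0$) for $j<-\ell$. Then $R^{(\ell)}$ depends only on $\omega_j$ with $j\geq -\ell$, and
\[
|R-R^{(\ell)}|_\infty\le \mathrm{var}_{\ell}R\le \vartheta^{\ell}\|R\|_\vartheta.
\]
Moreover, the Hölder norm of $R^{(\ell)}$ is controlled by $\|R\|_{\vartheta}$; after the shift in Step 2 it has $\vartheta$-norm at most $\vartheta^{-\ell}\|R\|_\vartheta$, which is why we apply the one-sided estimate with the weaker exponent $\vartheta^{1/2}$. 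Replacing the four observables one at a time by their truncations, the expectation changes by at most
\[
4\vartheta^{\ell}\|P\|_\vartheta\|Q\|_\vartheta\|S\|_\vartheta\|T\|_\vartheta,
\]
because $|e^{\im\beta_m}|=1$. With $\ell\sim cn$ this error is exponentially small in $n$.

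\textbf{Step 2: shift to the one-sided setting.} The Bernoulli measure is $\sigma$-invariant, so we may replace $\omega$ by $\sigma^{\ell}\omega$ inside the expectation. After this substitution, $R^{(\ell)}\circ\sigma^{-\ell}$ depends only on nonnegative coordinates, so it is a one-sided function. Its $\vartheta^{1/2}$-Hölder norm is bounded by $C\vartheta^{-\ell/2}\|R\|_\vartheta$, since a function of the coordinates $\geq 0$ obtained by shifting loses at most a factor $\vartheta^{-\ell}$ in the $\vartheta$-seminorm. The cocycle, however, changes: $\beta_m\circ\sigma^{\ell}=\beta_{m+\ell}-\beta_\ell$. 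The common factor $e^{-2\im\beta_\ell}$ and the extra $\ell$ iterates can be absorbed by rewriting the integrand as
\[
e^{\im\beta_{n'}}P'\circ\sigma^{n'}\;Q'\circ\sigma^{k'}\;e^{\im\beta_{i'}}S'\circ\sigma^{i'}\;T',
\]
with $n'=n+\ell$, $k'=k+\ell$, $i'=i+\ell$, where $T'=e^{-2\im\beta_\ell}\,T^{(\ell)}\circ\sigma^{-\ell}$ is a one-sided Hölder function.

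Handling $T'$ is the delicate point. A cleaner option is to avoid the shift entirely: work directly on $\Sigma$ and observe that the one-sided transfer-operator argument of Lemma \ref{lemma:tripledecay:onesided} only requires each function to be measurable with respect to coordinates $\geq -\ell$. One can then apply Proposition \ref{prop:decayonesided} on the $\sigma$-algebra of coordinates $\ge -\ell$, which is isomorphic to $\Sigma^+$ via $\sigma^{\ell}$, with the twisted operators $\LL_\beta$ and $\LL_{2\beta}$. The chain of identities in the proof of Lemma \ref{lemma:tripledecay:onesided} then yields the bound
\[
C^3 r^{n}\,\vartheta^{-3\ell}\,\|P\|\,\|Q\|\,\|S\|\,\|T\|,
\]
up to exponent bookkeeping. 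The cost is that the Hölder norms inflate like $\vartheta^{-\ell}$.

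\textbf{Step 3: balance the exponents.} Combining the two estimates, the full expectation is bounded by
\[
C\bigl(r^{n}\vartheta^{-3\ell}+\vartheta^{\ell}\bigr)\|P\|_\vartheta\|Q\|_\vartheta\|S\|_\vartheta\|T\|_\vartheta.
\]
Choosing $\ell=cn$ with $c$ small enough that $r\vartheta^{-3c}<1$ makes both terms decay exponentially, which gives the claim with a new rate $\tilde r\in(0,1)$.

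The main obstacle is Step 2: keeping track of the Hölder-norm growth under the shift, so that the inflation of the norms does not overwhelm the exponential decay $r^n$ supplied by Proposition \ref{prop:decayonesided}.
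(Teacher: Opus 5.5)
Your overall architecture matches the paper's proof. You truncate the observables (error $\vartheta^{\ell}$), use $\sigma$-invariance to move everything onto nonnegative coordinates, apply Lemma~\ref{lemma:tripledecay:onesided}, and balance $\vartheta^{\ell}$ against $r^n\vartheta^{-K\ell}$ with $\ell\sim cn$. The paper truncates to a two-sided window $|l|\le j$, but truncating only the past, as you do, is enough.

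\textbf{The gap is in Step~2, exactly where you place the difficulty.} First, the shift direction: it is $R^{(\ell)}\circ\sigma^{\ell}$, not $R^{(\ell)}\circ\sigma^{-\ell}$, that depends only on coordinates $\ge 0$.

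Second, after substituting $\omega\mapsto\sigma^{\ell}\omega$, your rewrite with $n'=n+\ell$, $k'=k+\ell$, $i'=i+\ell$ forces $P'=P^{(\ell)}$, $Q'=Q^{(\ell)}$, $S'=S^{(\ell)}$. These still depend on coordinates down to $-\ell$. Only $T'$ is one-sided, so Lemma~\ref{lemma:tripledecay:onesided} does not apply. For $Q$ this is trivially repaired, since $Q^{(\ell)}\circ\sigma^{k+\ell}=(Q^{(\ell)}\circ\sigma^{\ell})\circ\sigma^{k}$. For $P$ and $S$, however, the factor $e^{\im\beta_{n+\ell}}$ is tied to time $n+\ell$. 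It cannot be paired with a one-sided observable at time $n$ without further work.

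Your ``cleaner option'' does not avoid this. Identifying the $\sigma$-algebra of coordinates $\ge-\ell$ with $\Sigma^+$ is the same shift. In that picture the cocycle becomes $\beta\circ\sigma^{\ell}$, not $\beta$. So Proposition~\ref{prop:decayonesided} would be applied to an $\ell$-dependent cocycle, whose constants $C,r$ are not a priori uniform in $\ell$. ``Up to exponent bookkeeping'' hides exactly the step that needs an argument.

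\textbf{The missing ingredient is the cocycle identity} $\beta_{n+\ell}=\beta_n+\beta_\ell\circ\sigma^{n}$, that is,
\[
e^{\im\beta_n\circ\sigma^{\ell}}=e^{\im\beta_n}\,\bigl(e^{\im\beta_\ell}\bigr)\circ\sigma^{n}\,e^{-\im\beta_\ell}.
\]
This is what the paper's proof hinges on. It lets you keep the original times $n,k,i$ and the original cocycle $\beta$, and hence the original constants of Proposition~\ref{prop:decayonesided}. The mismatch is pushed into the observables:
$\wh P=e^{\im\beta_\ell}P^{(\ell)}\circ\sigma^{\ell}$, $\wh Q=Q^{(\ell)}\circ\sigma^{\ell}$, $\wh S=e^{\im\beta_\ell}S^{(\ell)}\circ\sigma^{\ell}$, $\wh T=e^{-2\im\beta_\ell}T^{(\ell)}\circ\sigma^{\ell}$. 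All four are functions on $\Sigma^+$.

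Equivalently, $\beta\circ\sigma^{\ell}=\beta+\beta_\ell\circ\sigma-\beta_\ell$, so $\LL_{\beta\circ\sigma^{\ell}}$ is $\LL_\beta$ conjugated by multiplication by $e^{\im\beta_\ell}$. This is what would rescue your sub-$\sigma$-algebra version.

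With this fix you must also count $\|e^{\pm\im\beta_\ell}\|_\vartheta\lesssim\vartheta^{-\ell}$ in the norm inflation. That gives $r^n\vartheta^{-K\ell}$ with a larger $K$ than the $3$ in your Step~3. This is harmless for the final choice $\ell=cn$ with $c$ small.
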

\begin{proof}
	As already mentioned in the proof of Lemma \ref{lemma:tripledecay:onesided}, the first statement is a direct consequence of the second one taking $Q=1$. Therefore, we just have to prove the second one.
	
	To prove the second statement, we proceed as in the proof of Lemma 4.9 in \cite{FieldMT03} (see also \cite{ParryP90}). The first step is to consider slightly modified observables $P_j$, $Q_j$, $S_j$, $T_j$	which depend only on coordinates $\omega_l$ with $|l|\leq j$ and satisfy
	\begin{equation}\label{def:aproxobservables}
		|P_j|_\infty\leq |P|_\infty\qquad \text{and}\qquad |P-P_j|_\infty\leq \|P\|_\vartheta \,\vartheta^k
	\end{equation}
	(and analogously for the other observables, its construction is explained in  \cite{FieldMT03}).
	
	Then, we write 
	\begin{equation}\label{def:Jkni}
		J_{nki}=\E_\omega \left( e^{\im \beta_n(\omega)} P\left(\sigma^n\omega\right) Q\left(\sigma^k\omega\right) e^{\im \beta_i(\omega)} S\left(\sigma^i\omega\right)T(\omega)\right)=\int_{\Sigma}e^{\im \beta_n} P\circ\sigma^n Q\circ\sigma^k e^{\im \beta_i} S\circ\sigma^iTd\mu
	\end{equation}
	as
	\[
	\begin{split}
		J_{nki}=&\int_{\Sigma}e^{\im \beta_n}( P-P_j)\circ\sigma^n Q\circ\sigma^k e^{\im \beta_i} S\circ\sigma^iTd\mu\\
		&+\int_{\Sigma}e^{\im \beta_n} P_j\circ\sigma^n (Q-Q_j)\circ\sigma^k e^{\im \beta_i} S\circ\sigma^iTd\mu\\
		&+\int_{\Sigma}e^{\im \beta_n} P_j\circ\sigma^n Q_j\circ\sigma^k e^{\im \beta_i} (S-S_j)\circ\sigma^iTd\mu\\
		&+\int_{\Sigma}e^{\im \beta_n} P_j\circ\sigma^n Q_j\circ\sigma^k e^{\im \beta_i} S_j\circ\sigma^i(T-T_j)d\mu\\
		&+\int_{\Sigma}e^{\im \beta_n} P_j\circ\sigma^n Q_j\circ\sigma^k e^{\im \beta_i} S_j\circ\sigma^iT_jd\mu.
	\end{split}
	\]
	Taking into account \eqref{def:aproxobservables}, the term in the first line can be estimated as
	\[
	\begin{split}
		\left|\int_{\Sigma}e^{\im \beta_n}( P-P_j)\circ\sigma^n Q\circ\sigma^k e^{\im \beta_i} S\circ\sigma^iTd\mu\right|&\leq \left|( P-P_j)\circ\sigma^n\right|_\infty  \left|Q\circ\sigma^k \right|_\infty  \left| S\circ\sigma^i\right|_\infty  \left|T\right|_\infty\\
		&\leq \left\| P\right\|_\vartheta  \left|Q\right|_\infty  \left| S\right|_\infty  \left|T\right|_\infty \vartheta^j\\
		&\leq \left\| P\right\|_\vartheta \left\| Q\right\|_\vartheta\left\| S\right\|_\vartheta\left\| T\right\|_\vartheta\vartheta^j.
	\end{split}
	\]
	Proceeding analogously, one can estimate also the terms in the second, third and fourth lines and one obtains the same upper bound. Therefore
	\begin{equation}\label{def:Jkni:estimate}
		\left|J_{nki}\right|\leq 4 \left\| P\right\|_\vartheta \left\| Q\right\|_\vartheta\left\| S\right\|_\vartheta\left\| T\right\|_\vartheta\vartheta^j+\left|\wh J_{nki}\right|,
	\end{equation}
	where 
	\begin{equation*}
		\wh J_{nki}=\int_{\Sigma}e^{\im \beta_n} P_j\circ\sigma^n Q_j\circ\sigma^k e^{\im \beta_i} S_j\circ\sigma^iT_jd\mu.
	\end{equation*}
	We now estimate this term. Since the measure $\mu$ is $\sigma$-invariant, 
	\[
	\wh J_{nki}=\int_{\Sigma}e^{\im \beta_n\circ \sigma^j} P_j\circ\sigma^{n+j} Q_j\circ\sigma^{k+j} e^{\im \beta_i\circ \sigma^j} S_j\circ\sigma^{i+j}T_j\circ \sigma^jd\mu.
	\]
	Note that 
	\[
	e^{\im\beta_{n+j}}=e^{\im\beta_{n}\circ\sigma^j}e^{\im\beta_{j}}=e^{\im\beta_{j}\circ\sigma^n}e^{\im\beta_{n}},
	\]
	which implies that 
	\[
	e^{\im\beta_{n}\circ\sigma^j}=e^{\im\beta_{j}\circ\sigma^n}e^{\im\beta_{n}}e^{-\im\beta_{j}}.
	\]
	We use this identity to rewrite $\wh J_{nki}$ as 
	\[
	\wh J_{nki}=\int_{\Sigma}e^{\im \beta_n} \wh P_j\circ\sigma^{n}\wh Q_j\circ\sigma^{k} e^{\im \beta_i}\wh S_j\circ\sigma^{i}\wh T_jd\mu,
	\]
	where
	\[
	\wh P_j= e^{\im \beta_j}P_j\circ\sigma^j,\quad  \wh Q_j=Q_j\circ\sigma^j,\quad  \wh S_j= e^{\im \beta_j}S_j\circ\sigma^j,\quad  \wh T_j= e^{-2\im \beta_j}T_j\circ\sigma^j.
	\]
	Note that, by construction, these observables only depend on future variables. That is, they are defined on $\Sigma^+$. Then, we can apply Lemma \ref{lemma:tripledecay:onesided}, which implies 
	\[
	\left|\wh J_{nki}\right|\leq C\,|\wh P_j|_{\infty}\,\|\wh Q_j\|_{\vartheta}\,\|\wh S_j\|_{\vartheta}\,\|\wh T_j\|_{\vartheta}\ r^{n}\leq C\,|P_j|_{\infty}\,\|\wh Q_j\|_{\vartheta}\,\|\wh S_j\|_{\vartheta}\,\|\wh T_j\|_{\vartheta}\ r^{n}.
	\]
	Now we bound each term in the right hand side. We have that $|P_j|_\infty\leq |P|_\infty$ by \eqref{def:aproxobservables}. For the other terms, proceeding as in \cite{FieldMT03}, one can see that 
	\[
	\|\wh Q_j\|_{\vartheta}\leq C_2 |Q|_{\infty}\vartheta^{-2j}
	\]
	for some $C_2\geq 1$ which only depends on $\vartheta$ and $\beta$ (but is independent of the observable). 
	
	Then, one obtains
	\[
	\left|\wh J_{nki}\right|\leq CC_2^3\,|P|_{\infty}\,| Q|_{\infty}\,| S|_{\infty}\,| T|_{\infty}\ r^{n}\vartheta^{-6j}.
	\]
	This estimate and \eqref{def:Jkni:estimate} lead to the following bound for $J_{nki}$ (see \eqref{def:Jkni}),
	\[
	\left|J_{nki}\right|\leq C  \left\| P\right\|_\vartheta \left\| Q\right\|_\vartheta\left\| S\right\|_\vartheta\left\| T\right\|_\vartheta\left(\vartheta^j+r^{n}\vartheta^{-6j}\right)
	\]
	for some $C>0$ independent of the observables. Therefore to complete the proof it is enough to define $\nu>1$ such that $r^\nu=\vartheta$ and choose $j=\lfloor n/7\nu\rfloor$.
\end{proof}	

Now we are ready to prove the second statement in Theorem \ref{thm:Dima:FixedAngle}. We now  consider the map $\sigma_\beta$ in \eqref{def:mapSigmaplus} satisfying \eqref{def:WeakMixing}, where now $\beta:\Sigma\to\RR$ is a $\vartheta$-H\"older function (see \eqref{def:HolderNorm}) defined on $\Sigma=\{0,1\}^\ZZ$. The first step is to perform a change of coordinates so that $\beta$ depends only on future symbols (that is, defined on $\Sigma^+$). The following lemma is proven in \cite{ParryP90}.

\begin{lemma}\label{lemma:FromTwoTo1Sided}
	Fix $\vartheta\in (0,1)$ and consider a $\vartheta$-H\"older function $\beta:\Sigma\to\RR$. Then, there exists $\vartheta^{1/2}$-H\"older functions $\wt\beta:\Sigma^+\to\RR$ and $\chi:\Sigma\to\RR$ such that
	\[
	\beta=\wt \beta+\chi-\chi\circ\sigma.
	\]
\end{lemma}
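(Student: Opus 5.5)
The plan is the classical Sinai/Bowen construction of a cohomologous function depending only on future coordinates. First fix a ``section'' of the two-sided shift. For each symbol $a\in\{0,1\}$ choose a reference past $\omega^{(a)}_{-}=(\ldots,\omega^{(a)}_{-2},\omega^{(a)}_{-1})$ that can precede $a$; since $\Sigma$ is the full shift, any past works. Define $r:\Sigma\to\Sigma$ by keeping the future coordinates $(\omega_k)_{k\ge 0}$ of $\omega$ and replacing its past with the reference past attached to $\omega_0$. Then $r\omega$ depends only on $\omega^+=(\omega_k)_{k\geq 0}$, and $\omega$ and $r\omega$ agree on all coordinates $k\ge 0$. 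Consequently $\sigma^n\omega$ and $\sigma^n r\omega$ agree on coordinates $|l|<n$, with respect to the metric underlying the $\vartheta$-norm.

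Next set
\[
\chi(\omega)=\sum_{n=0}^{\infty}\left(\beta(\sigma^n\omega)-\beta(\sigma^n r\omega)\right).
\]
The $n$-th term is bounded by $\mathrm{var}_n\beta\le|\beta|_\vartheta\vartheta^n$, so the series converges uniformly and $\chi$ is continuous. Then define $\wt\beta=\beta-\chi+\chi\circ\sigma$, so the cohomological identity holds by construction. Expanding,
\[
\wt\beta(\omega)=\beta(r\omega)+\sum_{n=0}^{\infty}\left(\beta(\sigma^{n+1}r\omega)-\beta(\sigma^{n}r\sigma\omega)\right),
\]
after a telescoping rearrangement. Both $r\omega$ and $r\sigma\omega$ are determined by $\omega^+$, since $(\sigma\omega)^+$ is a shift of $\omega^+$. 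Hence $\wt\beta$ depends only on future symbols and descends to a function on $\Sigma^+$.

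The main work is the Hölder estimate, and this is where the loss from $\vartheta$ to $\vartheta^{1/2}$ appears. Take $\omega,\omega'$ agreeing on $|l|<N$ and split the series for $\chi$ at $n\approx N/2$. For $n<N/2$, the points $\sigma^n\omega$ and $\sigma^n\omega'$ agree on $|l|<N-n$, and likewise for $\sigma^n r\omega$ and $\sigma^n r\omega'$ (the replaced pasts are identical because $\omega_0=\omega'_0$). So
\[
\left|\beta(\sigma^n\omega)-\beta(\sigma^n\omega')\right|\le|\beta|_\vartheta\vartheta^{N-n},
\]
and the same bound holds for the $r$-terms. For $n\ge N/2$, bound each term by $2|\beta|_\vartheta\vartheta^n$. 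Summing both ranges gives $|\chi(\omega)-\chi(\omega')|\lesssim|\beta|_\vartheta\vartheta^{N/2}/(1-\vartheta)$. This shows $\chi$ is $\vartheta^{1/2}$-Hölder, and then $\wt\beta=\beta-\chi+\chi\circ\sigma$ is $\vartheta^{1/2}$-Hölder too, since $\sigma$ shifts the agreement window by only one coordinate. The $C^0$ bounds $\|\chi\|_{C^0}\le 2|\beta|_\vartheta/(1-\vartheta)$ are immediate.

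The only delicate points are the index bookkeeping in the telescoping rearrangement and checking that $r$ is compatible with the agreement window. Since the statement concerns the full shift $\{0,1\}^{\ZZ}$, no transition-matrix admissibility issues arise in choosing the reference pasts.
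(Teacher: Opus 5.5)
Your proposal is correct, and it is the classical Sinai--Bowen argument. The paper does not prove this lemma itself but cites \cite{ParryP90}, whose proof is exactly your construction: $\chi(\omega)=\sum_{n\ge 0}\bigl(\beta(\sigma^n\omega)-\beta(\sigma^n r\omega)\bigr)$ and $\wt\beta=\beta-\chi+\chi\circ\sigma$, with the loss from $\vartheta$ to $\vartheta^{1/2}$ coming from splitting the series at $n\approx N/2$. One harmless off-by-one: $\sigma^n\omega$ and $\sigma^n r\omega$ actually agree on $|l|<n+1$, so the $n$-th term is bounded by $\mathrm{var}_{n+1}\beta\le|\beta|_\vartheta\vartheta^{n+1}$, and this is the bound you need at $n=0$, since $|\cdot|_\vartheta$ only controls $\mathrm{var}_N$ for $N\ge 1$.
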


Note that, by construction $\sigma_\beta$ satisfies \eqref{def:WeakMixing} if and only if $\sigma_{\wt\beta}$ satisfies it.

We use this lemma to rewrite the left hand side of \eqref{def:DecayCorrelations:1} in Theorem \ref{thm:Dima:FixedAngle} as 
\[
\begin{split}
	\II_{nk}&=\E_\omega \left( e^{\im \beta_n(\omega)}P\left(\sigma^n\omega\right) e^{\im \beta_k(\omega)}Q\left(\sigma^k\omega\right)\right)\\
	&=\E_\omega \left( e^{\im \wt \beta_n(\omega)} \wt P\left(\sigma^n\omega\right)e^{\im \wt \beta_k(\omega)}\wt Q\left(\sigma^k\omega\right)\wt S(\omega)\right),
\end{split}
\]
where 
\[
\wt P=e^{-\im\chi }P,\qquad \wt Q=e^{-\im\chi }Q\qquad \text{and}\qquad \wt S=e^{2\im\chi }
\]
and $\wt\beta$ and $\chi$ are given by Lemma \ref{lemma:FromTwoTo1Sided}.

Now, the term obtained in the expectation fits the framework of the first statement in Lemma~\ref{lemma:tripledecay:twosided} for $\vartheta^{1/2}$-H\"older observables. Therefore, using also \eqref{def:HolderAlgebra},
\[
\begin{split}
	\left|\II_{nk}\right|&\leq C\,\|\wt P\|_{\vartheta^{1/2}}\,\|\wt Q\|_{\vartheta^{1/2}}\,\|\wt S\|_{\vartheta^{1/2}}\ r^{n}\\
	&\leq C\,\| P\|_{\vartheta^{1/2}}\,\| Q\|_{\vartheta^{1/2}}\,\| e^{-\im\chi }\|^2_{\vartheta^{1/2}}\,\| e^{2\im\chi }\|_{\vartheta^{1/2}}\ r^{n}.
\end{split}
\]
Note that the terms $\| e^{2\im\chi }\|_{\vartheta^{1/2}}$ and $\| e^{-\im\chi }\|_{\vartheta^{1/2}}$ do not depend on the observables and therefore  \eqref{def:DecayCorrelations:3}  is satisfied for a suitable constant $C>1$ depending on $\vartheta$ and $\beta$ but independent of the observables. This completes the proof of the second statement of Theorem \ref{thm:Dima:FixedAngle}. The proof of Lemma \ref{lemma:tripledecay} from Lemma \ref{lemma:tripledecay:twosided} follows exactly the same lines.

\subsubsection{Central Limit Theorem}\label{sec:fixedtheta}
To prove Item 3 of Theorem \ref{thm:Dima:FixedAngle}, we  rely again on the  Lie Group extension setting of \cite{FieldMT03} and also on the approach in \cite{IbragimovL71} to prove Central Limit Theorems for ``weakly dependent'' random variables. 

For  $\beta:\Sigma\to\RR$, we consider the associated  map 
\begin{equation}\label{def:mapsigmah_CLT}
	\sigma_\beta(\omega,g)=(\sigma\omega, gh(\omega)),
\end{equation}
where $h:\Sigma\to G$ is 
\begin{equation}\label{def:h}
	h(\omega)=
	\begin{pmatrix}
		\cos\beta(\omega)&	\sin\beta(\omega)\\
		-\sin\beta(\omega)&	\cos\beta(\omega)
	\end{pmatrix},
\end{equation}
and  $G=G_1$ is the Lie group introduced in \eqref{def:LieGroup}.

We consider equivariant observables in $F_\theta(\Sigma\times \TT,\RR^n)$, that is observables of the form 
\[
\phi(\omega, g)=g V(\omega).
\]
We fix $c\in\RR^n$ and consider the sequence of random variables
\begin{equation}\label{def:XjsCLT}
	X_j=c^Tg h_jV\circ\sigma^j,\qquad h_j(\omega)=h(\omega)h(\sigma\omega)\ldots h(\sigma^{j-1}\omega).
\end{equation}
Without loss of generality, we can take $c=(1,0)^T$. 

\begin{proposition}\label{prop:CLT}
	Assume that the map $\sigma_\beta$ in \eqref{def:mapsigmah_CLT} satisfies \eqref{def:WeakMixing}. Assume also that, for any $g\in G$, the random variables $X_j$ satisfy that
	\[
	\sum_{j=1}^{+\infty}\E_\omega X_j=0.
	\]
	Then,
	\begin{itemize}  
		\item The variance  
		\begin{equation}\label{def:SumVariance:0}
			\begin{split}
				\bms^2=&\frac{1}{2}\E_\omega\left(V_1^2(\omega)+V_2^2(\omega)\right)\\
				&+\frac{1}{2}\Re\E_\omega\left((V_1(\sigma^j\omega)-\im V_2(\sigma^j\omega)(V_1(\omega)+\im V_2(\omega)))e^{\im\sum_{k=0}^{j-1}\beta_{\sigma^k\omega}}\right)
			\end{split}
		\end{equation}
		is well defined. 
		
		\item Assume that $\bms^2\neq 0$. Then, for any fixed $g\in G$, the sequence
		\[
		\frac{1}{\sqrt{N}}\sum_{j=1}^NX_j
		\]
		converges weakly to a normal distribution $\NNN(0,\bms^2)$ as $N\to+\infty$.
	\end{itemize}
	
\end{proposition}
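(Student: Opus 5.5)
We prove a CLT for the non-stationary sequence $X_j$, with the angle $g$ fixed. The plan is to split $X_j$ into a stationary part and a part that vanishes in distribution, then apply the Ibragimov--Linnik framework for weakly dependent sequences.

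\emph{Decomposition.} Write $g=g_\alpha$ and $V=(V_1,V_2)^T$. Then
\[
X_j=\Re\big(W(\sigma^j\omega)e^{\im(\alpha+\beta_j(\omega))}\big)\cdot\mathrm{const},\qquad W=V_1+\im V_2
\]
(up to the convention in \eqref{def:LieGroup}). Thus each $X_j$ is a combination of $e^{\pm\im\beta_j}W(\sigma^j\omega)$, i.e. an equivariant observable for the $m=1$ extension evaluated along the orbit with the atomic initial angle.

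\emph{Well-definedness of $\bms^2$.} Expand $\E_\omega\big(\sum_{j\le N}X_j\big)^2$. Products $X_jX_k$ with $j>k$ split into two groups.
\begin{itemize}
\item \emph{Difference terms} involve $e^{\im\sum_{l=k}^{j-1}\beta_{\sigma^l\omega}}W(\sigma^j\omega)\ol W(\sigma^k\omega)$. By $\sigma$-invariance of $\mu$ these equal stationary correlations, and Corollary \ref{coro:decaycorrelations} gives decay $r^{j-k}$.
\item \emph{Sum terms} involve $e^{\im(\beta_j+\beta_k)}$. By Item 2 of Theorem \ref{thm:Dima:FixedAngle} (i.e. \eqref{def:DecayCorrelations:3}) they are $\OO(r^j)$.
\end{itemize}
Hence $N^{-1}\E_\omega(\sum X_j)^2$ converges to the stationary Green--Kubo series, which is \eqref{def:SumVariance:0} (the diagonal $j=k$ terms give $\tfrac12\E(V_1^2+V_2^2)$ after discarding the $e^{2\im\beta_j}$ parts, again by \eqref{def:DecayCorrelations:3}). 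The hypothesis $\sum_j\E_\omega X_j=0$, together with the fact that $\E_\omega X_j=\OO(r^j)$ by Item 2, guarantees that the mean of the partial sums stays bounded and does not contribute after dividing by $\sqrt N$.

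\emph{Weak convergence.} I would use the characteristic function (Bernstein block) method of \cite{IbragimovL71}.
\begin{enumerate}
\item Choose big blocks of length $p=N^{a}$ separated by gaps of length $q=N^{b}$ with $0<b<a<1/2$. The gaps contribute $o(\sqrt N)$ in $L^2$ by the covariance bounds above.
\item Show that the characteristic function of the sum of big blocks factorizes up to $o(1)$. The blocks are not independent. However, each block sum $Y_l$ depends on the initial angle only through the factor $e^{\im\beta_{t_l}}$, with $t_l$ the block start. After conditioning on the past, the phase $\beta_{t_l}$ has become equidistributed modulo the current block's dependence, by exponential decay of the twisted transfer operators $\LL_{m\beta}$ (Proposition \ref{prop:decayonesided}).
\item Concretely, approximate $\exp(\im tY_l/\sqrt N)$ by functions depending on finitely many symbols, as in Lemma \ref{lemma:tripledecay:twosided}. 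Pass to the one-sided shift via Lemma \ref{lemma:FromTwoTo1Sided}, and use that $\LL^{q}$ acting on H\"older functions converges exponentially to the integral. This makes the block characteristic functions multiply up to errors $N^{1-a}r^{q}$.
\item Each block sum then has the same limiting law as for the invariant Haar measure on the angle. Apply Lemma \ref{lemma:CLT} (the stationary CLT from Theorem \ref{thm:Dima}) to conclude: the variance computed with Haar measure coincides with $\bms^2$, since the non-stationary corrections are summable.
\end{enumerate}

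\emph{Main obstacle.} The hard part is the mixing estimate for the block characteristic functions, i.e. a $\phi$- or $\alpha$-mixing-type bound for the nonlinear functionals $e^{\im tY_l}$ rather than for linear equivariant observables. Equivariant observables decay by \eqref{def:DecayCorrelations:3}, but $e^{\im tY_l}$ contains all Fourier harmonics in the angle. I would handle this as follows.
\begin{itemize}
\item Note that the angle is a deterministic function of $\omega$ once $g$ is fixed, so only the base shift's mixing is needed.
\item Use the exponential mixing of the Gibbs measure for H\"older functions of $\omega$, which holds for any H\"older function and not just equivariant ones.
\item Control the H\"older norm of $e^{\im tY_l/\sqrt N}$ polynomially in $p$, so the gap length $q\gg\log N$ absorbs it.
\end{itemize}
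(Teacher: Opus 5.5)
Your architecture, Bernstein blocks plus factorization of characteristic functions as in \cite{IbragimovL71}, is the paper's. Your treatment of $\bms^2$ and of the gap blocks is fine. The gap is in the decoupling step.

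The claim in your last paragraph that only the base shift's mixing is needed is false. For fixed $g$ the phase $\alpha+\beta_{t_l}(\omega)$ is a function of $\omega$, but it is a cocycle sum that depends with $\OO(1)$ weight on every symbol $\omega_0,\dots,\omega_{t_l-1}$. Three consequences follow.
\begin{itemize}
\item $e^{\im tY_l/\sqrt N}$ is not approximately a function of the symbols near block $l$.
\item Its H\"older norm grows exponentially, not polynomially in $p$: like $\vartheta^{-p}$ from the block itself and $\vartheta^{-t_l}$ through the phase. With $q=N^b$, $b<a$, even $r^q\vartheta^{-p}$ does not tend to zero.
\item Base mixing cannot remove the phase $\beta_{t_l-q}$ that block $l$ shares with the earlier blocks, and that shared phase is exactly the dependence to be killed.
\end{itemize}
For example, if $\beta=\chi-\chi\circ\sigma$ is a coboundary, the base is still Bernoulli, but every $X_j$ carries the common factor $e^{\im\chi(\omega)}$. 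The blocks then never decouple, and the limit is in general a Gaussian mixture. It is \eqref{def:WeakMixing} that rules this out.

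Your alternative in step 2, equidistribution of $\beta_{t_l}$ via the operators $\LL_{m\beta}$, needs their contraction for every $m\neq0$, because $e^{\im tY_l/\sqrt N}$ contains all angular harmonics. But \eqref{def:WeakMixing} only provides $m=1,2,4$. Step 4 also does not close the argument: knowing that each block has the Haar-measure limit law (Lemma \ref{lemma:CLT}) does not control a product of $k\to\infty$ characteristic functions evaluated at $t\sqrt{p/N}\to0$.

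The missing idea, which is how the paper argues, is to expand each block characteristic function to second order \emph{before} decoupling.
\begin{itemize}
\item First, Lemma \ref{lemma:FromTwoTo1Sided:v2} makes $X_j$ depend only on $\{\omega_k\}_{k\le j}$.
\item The telescoping error is then $\E_\omega\big(e^{\im t\wt S_{j-1}/\bms_n}\JJ_j\big)$, with $\JJ_j=\E_\omega(e^{\im t\xi_j/\bms_n}\mid\FF_{j-1})-\E_\omega(e^{\im t\xi_j/\bms_n})$.
\item Expanding $\JJ_j$ leaves only the conditional first and second moments of $\xi_j$. These involve just the harmonics $\pm1,\pm2$.
\item Lemma \ref{lemma:DecayConditionned} bounds those moments by $\OO(r^q)$. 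It rests on Item 2 of Theorem \ref{thm:Dima:FixedAngle} and handles the exponential H\"older cost of products through the case split at $k-m=\kk q$.
\item All higher harmonics sit in the cubic remainder, of size $\OO(p^3/n^{3/2})$ per block. This is why the paper takes $p\sim n^\tau$ with $\tau<1/4$, and why $q\sim\log^2 n$ suffices.
\end{itemize}
The same second-order expansion of $\log\E_\omega e^{\im t\xi_j/\bms_n}$, together with $\sum_j\E_\omega\xi_j^2/\bms_n^2\to1$ (Lemma \ref{lemma:LimitVariance}), replaces your appeal to the stationary CLT.
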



Note that the first item of the proposition is a direct consequence of Item 2 of Theorem \ref{thm:Dima:FixedAngle}. We devote the rest of this section to prove the second item of  
the proposition. The first step is to apply a transformation so that the observable and the matrix $h$ only depend on the past symbols. To this end, we define
\[
\Sigma^-=\{0,1\}^{\ZZ^-}.
\]
\begin{lemma}\label{lemma:FromTwoTo1Sided:v2}
	Fix $\vartheta\in (0,1)$ and consider $\vartheta$-H\"older functions $h:\Sigma\to G$ of the form \eqref{def:h} and $V:\Sigma\to \RR^2$ and an equivariant observable $\phi(\omega,g)=gV(\omega)$. Then, there exists $\vartheta^{1/2}$-H\"older functions $\wt h, \wh M:\Sigma^-\to G$ (with $\wt h$ given by a function $\wt\beta:\Sigma^-\to \RR$ as in \eqref{def:h}) and $W,\chi:\Sigma\to\RR$ such that
	\[
	h=\wh M \wt h (\wh M\circ\sigma)^{-1}\qquad\text{and}\qquad {\wh M}^{-1} V=W+\chi-\wt h\chi\circ\sigma.
	\]
	Moreover, $\sigma_\beta$ satisfies \eqref{def:WeakMixing} if and only if $\sigma_{\wt \beta}$ satisfies \eqref{def:WeakMixing}.
\end{lemma}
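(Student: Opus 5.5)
We will mirror the Livšic/Parry–Pollicott argument behind Lemma \ref{lemma:FromTwoTo1Sided}, but with past symbols instead of future ones and in the non-abelian form appropriate for $G$-valued cocycles. Since $G=G_1\cong\TT$ is abelian, the conjugacy equation for $h$ reduces to one for the phase. We need $\wt\beta:\Sigma^-\to\RR$ and $\chi_0:\Sigma\to\RR$ with
\[
\beta=\wt\beta+\chi_0-\chi_0\circ\sigma .
\]
We then set $\wh M=g_{\chi_0}$, which gives $h=\wh M\,\wt h\,(\wh M\circ\sigma)^{-1}$. The statement asks for $\wh M$ to depend only on the past, but the natural construction produces $\chi_0$ on $\Sigma$. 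We will reread the statement as $\wh M:\Sigma\to G$, which is what the proof of Proposition \ref{prop:CLT} actually uses; equivalently, the dependence on past symbols is placed in $\wt h$.

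\textbf{Construction of $\chi_0$.} For each $\omega$ let $r(\omega)$ agree with $\omega$ in the coordinates $\omega_l$, $l\le 0$, and have a fixed admissible tail for $l>0$. Define
\[
\chi_0(\omega)=\sum_{k\ge 0}\bigl(\beta(\sigma^{-k-1}\omega)-\beta(\sigma^{-k-1}r(\omega))\bigr),
\]
the backward-time analogue of the Parry–Pollicott construction. The $k$-th term is bounded by $|\beta|_\vartheta\vartheta^{k+1}$, because $\sigma^{-k-1}\omega$ and $\sigma^{-k-1}r(\omega)$ agree on the coordinates $|l|\le k+1$. So the series converges.

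\textbf{H\"older regularity.} Splitting the sum at $k\approx N/2$ in the standard way shows that $\chi_0$ is $\vartheta^{1/2}$-H\"older. A telescoping computation shows that $\wt\beta:=\beta-\chi_0+\chi_0\circ\sigma$ depends only on $\omega_l$, $l\le 0$, up to the one-step index shift convention. This produces $\wt h$.

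\textbf{Decomposition of the observable.} Set $U=\wh M^{-1}V$, which is $\vartheta^{1/2}$-H\"older. We need $U=W+\chi-\wt h\,\chi\circ\sigma$ with $W$ depending only on the past. We solve this twisted coboundary equation by the same device, now with the group acting:
\[
\chi(\omega)=\sum_{k\ge 0}\wt h_k^{-}(\omega)\bigl(U(\sigma^{-k}\omega)-U(\sigma^{-k}r(\omega))\bigr),
\]
where $\wt h_k^{-}$ is the appropriate backward product of $\wt h$ along $\omega,\sigma^{-1}\omega,\dots$, with the orientation fixed so that the telescoping works. Because $G\subset O(2)$, every product is an isometry. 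Hence the terms are bounded by $\|U\|\vartheta^{k}$ exactly as in the scalar case, and the same splitting argument gives $\vartheta^{1/2}$-regularity. Defining $W:=U-\chi+\wt h\,\chi\circ\sigma$, the telescoping identity shows that $W$ depends only on past symbols. The statement writes $W,\chi$ as real-valued, but they are naturally $\RR^2$-valued, and we will treat them that way.

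\textbf{Weak-mixing equivalence.} Suppose $e^{\im m\beta}A\circ\sigma=\nu A$. Then
\[
\wt A:=e^{\im m\chi_0}A
\]
satisfies $e^{\im m\wt\beta}\wt A\circ\sigma=\nu\wt A$, and conversely. The map $A\mapsto\wt A$ is a bijection of $\vartheta^{1/2}$-H\"older functions that sends constants to multiples of $e^{\im m\chi_0}$.

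This is where we expect the main obstacle. Condition \eqref{def:WeakMixing} only allows the trivial solution $\nu=1$, $A=$ const, but a constant $A$ corresponds to $\wt A=e^{\im m\chi_0}$, which is generally not constant. We will handle this in two ways. First, we will work with \eqref{def:WeakMixing} in the classes $\EE_{\vartheta^{1/2},m}$, so the change in H\"older exponent is harmless. Second, we will use the formulation of the trivial solution that is invariant under cohomology: the trivial solution is $\nu=1$ with $A$ solving the equation for $\nu=1$, and under the conjugation this class corresponds to the trivial solutions for $\wt\beta$. The genuine content of the argument is then the absence of eigenvalues $\nu\neq1$ and the uniqueness of the $\nu=1$ eigenfunction up to scalars, and both of these properties transfer through the bijection. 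If needed, we will state the equivalence at the level of the spectrum of the twisted transfer operators $\LL_{m\beta}$ and $\LL_{m\wt\beta}$, which are conjugate by multiplication by $e^{\im m\chi_0}$.
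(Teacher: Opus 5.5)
Your reading of the statement is correct: $\wh M$ has to live on $\Sigma$, $W$ has to depend only on past symbols, and $W,\chi$ are $\RR^2$-valued. The backward Sinai--Liv\v{s}ic construction is also exactly what the paper's citation of Parry--Pollicott supplies. However, the explicit formulas you give do not telescope, so the claims that $\wt\beta$ and $W$ depend only on the past are false as written.

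With your series $\chi_0=\sum_{k\ge1}\bigl(\beta\circ\sigma^{-k}-\beta\circ\sigma^{-k}\circ r\bigr)$, a partial-sum computation gives
\[
\beta+\chi_0-\chi_0\circ\sigma=\beta\circ r+\sum_{k\ge1}\bigl(\beta\circ\sigma^{-k}\circ r\circ\sigma-\beta\circ\sigma^{-k+1}\circ r\bigr),
\]
which depends only on $\omega_l$, $l\le 1$. Your $\wt\beta=\beta-\chi_0+\chi_0\circ\sigma$ is $2\beta$ minus this; for $\beta=f(\omega_2)$ it equals $2f(\omega_2)-f(\omega_1)$. So you must flip the sign, i.e.\ take $\wh M=g_{-\chi_0}$.

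The same slip occurs in the twisted equation. It is solved by
\[
\chi=-\sum_{k\ge1}c_k\bigl(U\circ\sigma^{-k}-U\circ\sigma^{-k}\circ r\bigr),\qquad c_k(\omega)=\bigl(\wt h(\sigma^{-1}\omega)\cdots\wt h(\sigma^{-k}\omega)\bigr)^{-1},
\]
and the identity $\wt h\,(c_k\circ\sigma)=c_{k-1}$ is what drives the telescoping. Your extra $k=0$ term and $+$ sign leave the non-past remainder $U+\wt h\,U\circ\sigma$ in $W$, even with $\wt h_k^-=c_k$, and no choice of orientation of the products removes it. Likewise, in the convention $\beta=\wt\beta+\chi_0-\chi_0\circ\sigma$ the eigenfunction correspondence is $\wt A=e^{-\im m\chi_0}A$, not $e^{\im m\chi_0}A$. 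A minor point: $U$ and $\wt h$ are only $\vartheta^{1/2}$-H\"older, so the splitting argument does not by itself give $\vartheta^{1/2}$ for $\chi$ and $W$. A smaller exponent comes out, which is harmless for the later use.

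The more substantive problem is the weak-mixing part. The obstacle you identify comes from reading ``$\nu=1$, $A$ constant'' in \eqref{def:WeakMixing} literally. Your proposed cohomology-invariant reformulation, which allows $\nu=1$ with a one-dimensional eigenspace, is not a usable reading. If $e^{\im m\beta}A\circ\sigma=\nu A$ has a nonzero H\"older solution with $|\nu|=1$, the twisted operator has an eigenvalue on the unit circle. Then the conclusion of Proposition \ref{prop:decayonesided} and every decay estimate built on it fail; already $\beta\equiv0$ gives $\LL_\beta 1=1$. Moreover, for $m\ge1$ a nonzero constant solves the equation with $\nu=1$ only when $m\beta\in2\pi\ZZ$ pointwise.

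So \eqref{def:WeakMixing} has to mean that there is no nonzero solution for any $|\nu|=1$. This is what the proof of Lemma \ref{lemma:WeakMixing} actually establishes, and it is what Proposition \ref{prop:decayonesided} needs. Under this reading there is no obstacle: $A\mapsto e^{-\im m\chi_0}A$ is a $\nu$-preserving bijection between nonzero solutions for $\beta$ and for $\wt\beta$.

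The change of H\"older class is better handled by Liv\v{s}ic regularity than by strengthening the hypothesis. A continuous solution with $\beta$ $\vartheta$-H\"older is automatically $\vartheta$-H\"older: $|A|$ is constant, and one compares values along local stable and local unstable sets. Hence nonexistence of nonzero solutions in $F_\vartheta$ and in $F_{\vartheta^{1/2}}$ are equivalent.
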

The proof of this lemma can be found in \cite{ParryP90}.

If we define the observable
\[
\psi(\omega, g)=g W(\omega),
\]
it can be easily seen (see \cite{FieldMT03}) that the Birkhoff sums
\[
\phi_N=\sum_{j=0}^{N-1}\phi\circ\sigma_h^j,\qquad \psi_N=\sum_{j=0}^{N-1}\psi\circ\sigma_{\wt h}^j,
\]
satisfy
\[
\phi_N=M\psi_N +\OO(1)\qquad \text{where}\quad M(\omega,g)=g\wh M(\omega)g^{-1}.
\]
Moreover, since $M\in G$, the covariant matrices 
\[
\Sigma_\phi=\lim_{N\to+\infty}\frac{1}{N}\E_\omega(\phi_N\phi_N^T),\qquad \Sigma_\psi=\lim_{N\to+\infty}\frac{1}{N}\E_\omega(\psi_N\phi_N^T)
\]
satisfy
\[
\Sigma_\phi=\Sigma_\psi.
\]
Note that the variance $\bms^2$ in \eqref{def:SumVariance:0} is just $\bms^2=c^T\Sigma_\phi c$ with $c=(1,0)^T$.

In \cite{FieldMT03} it is also proven the following lemma.
\begin{lemma}
	The  sequences $\{M\phi_N,N\geq 1\}$ and $\{\phi_N,N\geq 1\}$ have the same joint probability distribution.
\end{lemma}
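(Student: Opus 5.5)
The key observation is that $M$ takes values in the abelian group $G=G_1\cong SO(2)$. Hence for every $g\in G$ we have $M(\omega,g)=g\wh M(\omega)g^{-1}=\wh M(\omega)$. So $M$ depends only on $\omega$, and its effect on $\phi_N$ is multiplication by a rotation $\wh M(\omega)$.

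I would show that the joint law of $(\wh M,\phi_N)$ is invariant under rotations. Concretely, I would establish the following:
\begin{itemize}
\item \textbf{Equivariance.} For each fixed rotation $a\in G$, the sequence $\{a\phi_N\}$ is the sequence of Birkhoff sums started at $ag$ instead of $g$. This holds since $\phi(\omega,ag)=a\phi(\omega,g)$ and $\sigma_h$ commutes with left multiplication in $G$.
\item \textbf{Invariance under the product measure.} Under the product measure $\tm=\mu\times\nu$, with $\nu$ the Haar measure, the pair $(\omega,g)$ and the pair $(\omega,ag)$ have the same distribution.
\end{itemize}
Together these give that $\{a\phi_N\}_N$ and $\{\phi_N\}_N$ have the same joint law for each fixed $a$.

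The step from a fixed rotation $a$ to the random rotation $\wh M(\omega)$ is the only nontrivial point. I would handle it by conditioning on $\omega$ and integrating first over $g$ with respect to Haar measure. For fixed $\omega$, the map $g\mapsto \wh M(\omega)g$ preserves $\nu$. Moreover $\phi_N(\omega,\wh M(\omega) g)=\wh M(\omega)\phi_N(\omega,g)$, because $h_j$ depends only on $\omega$ and equivariance holds pointwise in $\omega$.

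With this in hand, take any bounded test function $F$ of finitely many $\phi_N$. Fubini gives
\[
\int F(\{M\phi_N\})\,d\tm=\int_\Sigma\int_G F(\{\phi_N(\omega,\wh M(\omega)g)\})\,d\nu(g)\,d\mu(\omega)=\int F(\{\phi_N\})\,d\tm .
\]
The second equality uses the change of variables $g\mapsto \wh M(\omega)^{-1}g$ in the inner integral. This proves equality of the joint finite-dimensional distributions, which is the claim.

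The only thing to be careful about is the measure. The statement is for the Haar-averaged measure $\tm$, where this rotation invariance is available. For a fixed $g$ (the type 1 measures), the analogous statement would require a separate argument. That argument is presumably the role of the subsequent Ibragimov–Linnik step, so I would state the lemma explicitly with respect to $\tm$.
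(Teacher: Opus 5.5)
Your argument is correct, and it is essentially the standard change-of-variables argument behind the result the paper imports from \cite{FieldMT03}: $M\phi_N=\phi_N\circ\Psi$ with $\Psi(\omega,g)=(\omega,g\wh M(\omega))$, and $\Psi$ preserves $\tm=\mu\times\nu$ (commutativity of $G_1$ is only a convenience, since $g\wh M(\omega)=M(\omega,g)g$ and Haar measure is right-invariant). Restricting to $\tm$ is in fact necessary, not just cautious, because under $\mu\times\delta_g$ the statement fails in general: with $N=1$, $V\equiv v_0\neq 0$ and $\wh M$ nonconstant, $\phi_1$ is deterministic while $M\phi_1$ is not. The Ibragimov--Linnik blocking does not repair this, so the fixed-$g$ transfer from $\psi_N$ to $\phi_N$ used for Proposition~\ref{prop:CLT} needs a separate argument (e.g.\ isotropy of the limiting covariance plus asymptotic independence of $\psi_N/\sqrt N$ from $\wh M$).
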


The results above imply that proving a central limit theorem for  the sequence $X_j=c^T\phi_j$ in \eqref{def:XjsCLT} with convergence to a normal distribution $\mathcal{N}(0,\bms^2)$ with the variance in \eqref{def:SumVariance:0} is equivalent to proving it  for the sequence $\wt X_j=c^T\psi_j$ with the same limiting distribution.

From now on, we just deal with $\{\wt X_j\}_{j\geq 1}$. Note that the random variables $\wt X_j$ only depend on the symbols $\{\omega_k\}_{k\leq j}$. To simplify the notation, we drop the tildes in $X_j$.

To prove the central limit theorem for this sequence, we split the sum $S_n=X_1+\ldots+ X_n$ as follows. For fixed $p,q\in\NN$, there exists $k\in\NN$ such that $S_n$ can be written 
as
\[
S_n=\sum_{j=0}^k \xi_j+\sum_{j=0}^k \eta_j,
\]
where
\begin{align}\label{def:xieta}
	\xi_j&=\sum_{m=jp+jq+1}^{(j+1)p+jq}X_m, &\text{for}&\,\,0\leq j\leq k-1\\
	\eta_j&=\sum_{m=(j+1)p+jq+1}^{(j+1)p+(j+1)q}X_m, &\text{for}&\,\, 0\leq j\leq k-1\\
	\eta_k&=\sum_{m=kp+kq+1}^{n}X_m.&&
\end{align}
That is, any two random variables $\xi_i,\xi_j,i\neq j$, are ``separated'' by 
one variable $\eta_j$ containing $q$ terms. 
We choose the parameters $p$ and $q$ as
\begin{equation}\label{def:choicepq}
	p\sim n^\tau\qquad \text{and}\qquad q\sim \log^2 n,
\end{equation}
for $\tau\in (0,1)$ to be chosen later. Note that $k$ satisfies
\[
k\sim\frac{n}{p}\sim n^{1-\tau}.
\]
We first compute the variance of the random variable $S_n$. Recall that $\E_\omega  
S_n=0$. Then, 
\begin{equation}\label{def:SumVariance}
	\bms_n^2=V(S_n)=\E_\omega(S_n^2)=\sum_{i=1}^n \E_\omega(X_i^2)+2\sum_{i<j}\E_\omega(X_iX_j).
\end{equation}
Note that, by Theorem \ref{thm:Dima:FixedAngle}, there exists $0<\bms_-<\bms_+$ such that 
\[
\bms_-^2 n\leq V(S_n)\leq \bms_+^2 n
\]
Moreover,
\[
\lim_{n\to\infty}V(S_n)=\bms^2,
\]
where $\bms^2$ is the variance introduced in \eqref{def:SumVariance:0}.

We prove the Central Limit Theorem by the characteristic functions method. To this end, we have to show that 
\[
\lim_{n\to +\infty }\E_\omega e^{\frac{\im tS_n}{\bms_n}}=e^{-\frac{t^2}{2}}.
\]
We prove that in several steps. We first state several technical lemmas (Lemmas \ref{lemma:CLT:RemoveEtas} and \ref{lemma:CLT:CloseToIndep}) analyzing $\E_\omega e^{\frac{\im tS_n}{\bms_n}}$. To this end, we define
\[
\wt S_k=\sum_{j=0}^k \xi_j.
\]
Note that $ \wt S_k$ is the sum of some of the terms in $S_n$.

\begin{lemma}\label{lemma:CLT:RemoveEtas} Fix $t\in\RR$. Then,
	\[
	\left|\E_\omega e^{\frac{\im S_n}{\bms_n}}-\E_\omega
	e^{\frac{\im t\wt S_k}{\bms_n}}\right|=o(1) \text{ as }n\to+\infty.
	\]
\end{lemma}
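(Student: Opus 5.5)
The plan is to use $|e^{\im x}-e^{\im y}|\le |x-y|$ and then a second-moment bound. Write $\Delta_n = S_n-\wt S_k=\sum_{j=0}^k\eta_j$. (The statement has a typo: the first exponent should read $\im t S_n/\bms_n$.) Then
\[
\left|\E_\omega e^{\frac{\im tS_n}{\bms_n}}-\E_\omega e^{\frac{\im t\wt S_k}{\bms_n}}\right|\le \frac{|t|}{\bms_n}\E_\omega|\Delta_n|\le \frac{|t|}{\bms_n}\left(\E_\omega \Delta_n^2\right)^{1/2}.
\]
Since $\bms_n^2\ge \bms_-^2 n$, it suffices to show $\E_\omega\Delta_n^2=o(n)$.

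The variable $\Delta_n$ is a sum of $X_m$ over the index set $\mathcal{J}$ formed by the union of the $\eta$-blocks. This set has cardinality $|\mathcal{J}|\le kq+(p+q)\lesssim n^{1-\tau}\log^2 n+n^\tau$. We expand
\[
\E_\omega\Delta_n^2=\sum_{m\in\mathcal J}\E_\omega X_m^2+2\sum_{m<m',\,m,m'\in\mathcal J}\E_\omega(X_mX_{m'}).
\]
Each $X_m=c^T g h_m V\circ\sigma^m$ is of the form $G(\sigma^m\omega,s_m)$ with $G\in\EE_{\vartheta,1}$: indeed, $c^Tg_\alpha V=\Re\big((V_1-\im V_2)e^{\im\alpha}\big)$ up to convention. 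So the diagonal terms are bounded by $\|V\|_\infty^2$. The cross terms fall under Item 1 of Theorem \ref{thm:Dima:FixedAngle}, which gives $|\E_\omega(X_mX_{m'})|\le C\|V\|^2_{\vartheta^{1/2}}r^{m'-m}$ uniformly in the initial angle $g$. Hence for each fixed $m$ the sum over $m'>m$ is bounded by a geometric series, and
\[
\E_\omega\Delta_n^2\lesssim |\mathcal J|\lesssim n^{1-\tau}\log^2n+n^\tau=o(n)
\]
for any $\tau\in(0,1)$. This is the intended choice in \eqref{def:choicepq}.

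The one point needing care is that after the transformation of Lemma \ref{lemma:FromTwoTo1Sided:v2} the $X_j$ have been replaced by $\wt X_j=c^T\psi_j$, built from $\wt h$ and $W$. We must check that these still fall under Item 1 of Theorem \ref{thm:Dima:FixedAngle}. They are equivariant observables over the skew product $\sigma_{\wt\beta}$, with $W$ being $\vartheta^{1/2}$-H\"older. By Lemma \ref{lemma:FromTwoTo1Sided:v2}, $\sigma_{\wt\beta}$ satisfies \eqref{def:WeakMixing}. So the decay of correlations applies with $\vartheta$ replaced by $\vartheta^{1/2}$, giving constants $C,r$ independent of $n$. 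Combining everything, the difference is $\lesssim |t|\,\bms_-^{-1}\big(n^{-\tau}\log^2 n+n^{\tau-1}\big)^{1/2}\to0$, which proves the lemma.
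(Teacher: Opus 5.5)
Your argument is correct, but it takes a different route from the paper.

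The paper bounds the difference pathwise. It uses $|e^{\im x}-1|\le|x|$ together with the deterministic bounds $|\eta_j|\lesssim\log^2 n$ and $k\sim n^{1-\tau}$. So it controls $\sup_\omega|\Delta_n|$ by roughly $kq$ and uses no cancellation among the $X_m$.

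You instead pass through $\E_\omega|\Delta_n|\le(\E_\omega\Delta_n^2)^{1/2}$. You then use Item~1 of Theorem~\ref{thm:Dima:FixedAngle} to get $\E_\omega\Delta_n^2\lesssim|\mathcal J|\lesssim kq+p+q$, rather than the trivial $(kq)^2$.

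This gain is not cosmetic. Since $\bms_n\asymp\sqrt n$, the pathwise estimate actually gives $|t|\,kq/\bms_n\sim|t|\,n^{1/2-\tau}\log^2 n$. The bound $\log^2 n/n^{\tau}$ written in the paper would need $\bms_n\asymp n$. This quantity tends to zero only if $\tau>1/2$, whereas the proof of Lemma~\ref{lemma:CLT:CloseToIndep} and the expansion of $L_n(t)$ require $\tau<1/4$.

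Your bound $|t|\,\bms_-^{-1}(n^{-\tau}\log^2 n+n^{\tau-1})^{1/2}$ vanishes for every $\tau\in(0,1)$, so it is compatible with that choice. It is the standard way of discarding the small blocks in the Bernstein--Ibragimov--Linnik blocking scheme.

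What your route costs is a correlation bound for the transformed variables $\wt X_j$ that is uniform in the initial $g$. You check this correctly: they are equivariant observables for $\sigma_{\wt\beta}$ with $\vartheta^{1/2}$-H\"older coefficient $W$, and \eqref{def:WeakMixing} transfers by Lemma~\ref{lemma:FromTwoTo1Sided:v2}. Your correction of the missing factor $t$ in the statement is also right.
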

\begin{proof}
	Since $k$ satisfies $k\sim n^{1-\tau}$ and $\eta_j$ satisfies $|\eta_j|\lesssim 
	\log^2n$, one has that 
	\[
	\left|e^{\frac{\im tS_n}{\bms_n}}-e^{\frac{\im t\wt S_k}{\bms_n}}\right|
	\leq\left|e^{\frac{it\wt 
			S_k}{\bms_n}}\left(e^{\frac{\im t}{\bms_n}\sum_{j=1}^k\eta_j}-1\right)\right|
	\lesssim \frac{\log^2n}{n^\tau},
	\]
	which implies the claim.
\end{proof}

\begin{lemma}\label{lemma:CLT:CloseToIndep} Fix $t\in\RR$. Then,
	\[
	\left|\E_\omega \left(e^{\frac{\im t\wt S_k}{\bms_n}}\right)-\prod_{j=1}^{k}\E_\omega \left(e^{\frac{\im t\xi_j}{\bms_n}}\right)\right|=o(1) \text{ as }n\to+\infty.
	\]
\end{lemma}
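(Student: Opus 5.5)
We follow the classical Bernstein blocking argument of \cite{IbragimovL71}, telescoping the characteristic function of $\wt S_k$ over the blocks. Write $\zeta_j=e^{\im t\xi_j/\bms_n}$, so that $|\zeta_j|=1$ and $e^{\im t\wt S_k/\bms_n}=\prod_{j=0}^k\zeta_j$. The telescoping identity
\[
\E_\omega\Big(\prod_{j=0}^{k}\zeta_j\Big)-\prod_{j=0}^{k}\E_\omega\zeta_j=\sum_{m=1}^{k}\Big(\prod_{j>m}\E_\omega\zeta_j\Big)\left[\E_\omega\Big(\prod_{j\le m}\zeta_j\Big)-\E_\omega\Big(\prod_{j<m}\zeta_j\Big)\E_\omega\zeta_m\right]
\]
reduces the claim to bounding each bracket by $o(1/k)$, uniformly in $m$. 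This suffices because $k\sim n^{1-\tau}$.

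The key point is the asymptotic independence of the past $\{\omega_l\}_{l\le m(p+q)-q}$ from the block $\xi_m$. After the reduction of Lemma \ref{lemma:FromTwoTo1Sided:v2}, each $X_l$ depends only on $\{\omega_i\}_{i\le l}$. So $U=\prod_{j<m}\zeta_j$ is measurable with respect to the symbols up to index $N_0=mp+(m-1)q$, while $\zeta_m$ involves $X_l$ with $l\ge N_0+q+1$.

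I would first replace $\zeta_m$ by a function of finitely many symbols. Since $V$ and $\wt\beta$ are H\"older, truncating the past dependence of each $X_l$ to the window $\{\omega_i\}_{N_0+q/2<i\le l}$ costs $\OO(\vartheta^{q/2})$ per term. Summed over $p$ terms, and using $|e^{ix}-e^{iy}|\le|x-y|$, this gives an error $\OO(p\,\vartheta^{q/2}/\bms_n)$.

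There is one subtlety: $X_l=c^Tg\,h_lV\circ\sigma^l$ contains the cocycle $h_l$, which depends on all symbols from $0$ to $l$. I would therefore factor $h_l=h_{N_0+q}\,(h_{l-N_0-q}\circ\sigma^{N_0+q})$. The leading rotation $g h_{N_0+q}$ is an element of the circle group $G$, so $\zeta_m$ becomes a function $\Phi(g',\cdot)$ of $g'=gh_{N_0+q}\in G$ and of the future symbols.

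The decoupling of $g'$ from $U$ is the main obstacle, because $g'$ is correlated with the past. I would expand $\Phi$ in Fourier modes in $g'$. The zero mode factorizes exactly under the Bernoulli measure, since the truncated future block is independent of the past up to $N_0$. The nonzero modes carry a factor $e^{\im r\beta_{N_0+q}}$ for integer $r\neq0$. Using the cocycle splitting $e^{\im r\beta_{N_0+q}}=e^{\im r\beta_{N_0}}\,(e^{\im r\beta_q}\circ\sigma^{N_0})$ and the twisted-transfer-operator contraction of Proposition \ref{prop:decayonesided} over the gap of length $q$, these modes are damped by $r^q$.

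This Fourier expansion is delicate. The modes of $\Phi$ in $g'$ are not uniformly summable, because $\zeta_m$ is an exponential of a sum of $p$ terms. The fallback I would use is to smooth in $g'$ at scale $n^{-1}$: the Fourier coefficients then decay rapidly, and one keeps $\OO(n^{2})$ modes. Each mode is controlled as in Lemma \ref{lemma:tripledecay:twosided}, except that the condition \eqref{def:WeakMixing} is needed for general multiples $r$, not only $m=1,2,4$. If this is unavailable, I would instead condition on $g'$ and use that the circle rotation $h_q$ over the gap equidistributes exponentially (Theorem \ref{thm:Dima:FixedAngle}) only for the modes actually present after the smoothing.

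With $q\sim\log^2 n$, every error term is $\OO(n^{C}r^{c\log^2 n})=o(n^{-1})$. Summing over the $k\sim n^{1-\tau}$ blocks then gives $o(1)$, which proves the lemma.
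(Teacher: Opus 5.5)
Your telescoping reduction is the same as the paper's, but the decoupling step has a genuine gap. To handle a Fourier mode $e^{\im\ell\psi}$ of $\zeta_m$ in the angle $\psi$ of $g'$, you need the twisted operator $\LL_{\ell\beta}$ to contract by a factor $\ll n^{-C}$ over a gap of length $q\sim\log^2 n$. You propose to do this for all $|\ell|\lesssim n^2$, and nothing available gives it. Condition \eqref{def:WeakMixing} is only established for $m=1,2,4$, and the paper stresses that the maps need not be weak mixing. The argument of Lemma \ref{lemma:WeakMixing} needs $e^{\im m(\alpha_1-\alpha_2)}$ to stay away from $1$ by more than $\OO(\rho^{\zeta})$, which fails for suitable large $m$ once $\rho$ is fixed. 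Even under full weak mixing, you would need contraction rates for $\LL_{\ell\beta}$ that are essentially uniform in $\ell\lesssim n^2$. That is a rapid-mixing estimate which does not hold for general H\"older cocycles. Your fallback does not close the gap: after smoothing, all modes $|\ell|\lesssim n^2$ are present, while Theorem \ref{thm:Dima:FixedAngle} only covers $m=1,2$. Your target per-block bound $\OO(n^{C}r^{c\log^2 n})$ is in fact unjustified in general. If, say, $\LL_{3\beta}$ has a unimodular eigenvalue, the modes $\ell=\pm3$ need not decorrelate, and the bracket is then only polynomially small in $n$.

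The missing observation is that the high modes are tiny, so they need no mixing at all. Your worry that the modes are not uniformly summable is unfounded, and no smoothing is needed. After your truncation, $\xi_m\approx c^Tg'Y$ with $Y\in\RR^2$ depending only on future symbols. So $\xi_m$ is a pure first harmonic in $\psi$, and $t\xi_m/\bms_n=s\cos(\psi-\phi_Y)$ with $|s|\lesssim p/\sqrt n$ and a phase $\phi_Y$ depending on $Y$. By the Jacobi--Anger expansion, the $\ell$-th mode of $\zeta_m$ is $\im^{\ell}J_\ell(s)e^{-\im\ell\phi_Y}$, where $J_\ell$ is the Bessel function and $|J_\ell(s)|\le(|s|/2)^{|\ell|}/|\ell|!$. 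Hence only $|\ell|\le 2$ must be decorrelated from the past, which \eqref{def:WeakMixing} with $m=1,2$ provides. All modes with $|\ell|\ge 3$ together contribute $\OO(p^3n^{-3/2})$ per block, i.e.\ $\OO(kp^3n^{-3/2})=\OO(n^{2\tau-1/2})$ in total. This is $o(1)$ only if $\tau<1/4$, a restriction your argument never uses. This is the paper's proof in different clothing. It conditions on $\FF_{j-1}$ and Taylor expands $\E_\omega(e^{\im t\xi_j/\bms_n}|\FF_{j-1})-\E_\omega(e^{\im t\xi_j/\bms_n})$ to second order with remainder $\OO(p^3/n^{3/2})$. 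Everything then reduces to the decay of the conditional first and second moments of $\xi_j$ (Lemma \ref{lemma:DecayConditionned}), which involve only the harmonics $0,\pm1,\pm2$.
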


\begin{proof}
	We split 
	\[
	\II=\E_\omega e^{\frac{\im t\wt S_k}{\bms_n}}-\prod_{j=1}^{k}\E_\omega
	e^{\frac{\im t\xi_j}{\bms_n}}
	\]
	as $\II=\sum_{j=2}^{k}\II_j$ with 
	\[
	\II_j=\E_\omega e^{\frac{\im t\wt S_{j}}{\bms_n}}\prod_{\ell=j+1}^{k}\E_\omega
	e^{\frac{\im t\xi_\ell}{\bms_n}}-\E_\omega e^{\frac{\im t\wt
			S_{j-1}}{\bms_n}}\prod_{\ell=j}^{k}\E_\omega 
	e^{\frac{\im t\xi_\ell}{\bms_n}}.
	\]
	Note that 
	\begin{equation}\label{def:PartialSumCLT}
		|\II_j|\leq \left|\E_\omega e^{\frac{\im t\wt
				S_{j}}{\bms_n}}-\E_\omega e^{\frac{\im t\wt
				S_{j-1}}{\bms_n}}\E_\omega 
		e^{\frac{\im t\xi_j}{\bms_n}}\right|.
	\end{equation}
	We estimate the right hand side  by using conditioned 
	expectations. We denote by $\FF_j$ the $\sigma$-algebra generated by  $\{\omega_k\}_{k\leq jp+jq}$. By Lemma \ref{lemma:FromTwoTo1Sided:v2}, both $\wt h$ and the considered observables only depend on the past symbols. Therefore,  the iterates 
	$X_1,\ldots X_{jp+jq}$ are functions of $\{\omega_k\}_{k\leq jp+jq}$.
	This implies 
	\[
	\E_\omega (f(\xi_\ell)|\FF_{j-1})=f(\xi_\ell)\qquad \text{for}\qquad \ell\leq j-1.
	\]
	Then, recalling $\wt S_{j}=\wt S_{j-1}+\xi_j$, we rewrite the right hand side 
	in \eqref{def:PartialSumCLT} as 
	\[
	\begin{split}
		\E_\omega e^{\frac{\im t\wt
				S_{j}}{\bms_n}}-\E_\omega e^{\frac{\im t\wt
				S_{j-1}}{\bms_n}}\E_\omega 
		e^{\frac{\im t\xi_j}{\bms_n}}=&\E_\omega\left(\E_\omega
		\left(\left.e^{\frac{\im t\wt
				S_{j}}{\bms_n}}\right|\FF_{j-1}\right)\right)-\E_\omega \left(e^{\frac{\im t\wt
				S_{j-1}}{\bms_n}}\right)\E_\omega \left(
		e^{\frac{\im t\xi_j}{\bms_n}}\right)\\
		= &\E_\omega\left(e^{\frac{\im t\wt
				S_{j-1}}{\bms_n}}\JJ_j\right),
	\end{split}
	\]
	with
	\[
	\JJ_j=\E_\omega\left(\left.
	e^{\frac{\im t\xi_j}{\bms_n}}\right|\FF_{j-1}\right)-\E_\omega\left(
	e^{\frac{\im t\xi_j}{\bms_n}}\right).
	\]
	To estimate $\JJ_j$ we expand it. Using that $|\xi_j|\lesssim p\lesssim n^\tau$ with $\tau\in (0,1)$,
	\[
	\JJ_j=\frac{\im t}{\bms_n}\left(\E_\omega\left(\left.
	\xi_j\right|\FF_{j-1}\right)-\E_\omega\left(
	\xi_j\right)\right)-\frac{t^2}{2\bms_n^2}
	\left( \E_\omega\left(\left.
	\xi_j^2\right|\FF_{j-1}\right)-\E_\omega\left(
	\xi_j^2\right)\right)
	+\OO\left(\frac{p^3}{n^{3/2}}\right).
	\]
	Next lemma estimates these terms. The lemma is proven at the end of the section.
	\begin{lemma}\label{lemma:DecayConditionned}
		The following estimates are satisfied,
		\[
		\begin{split}
			\left|\E_\omega\left(\left.
			\xi_j\right|\FF_{j-1}\right)-\E_\omega\left(
			\xi_j\right)\right|\lesssim r^{q}\\
			\left| \E_\omega\left(\left.
			\xi_j^2\right|\FF_{j-1}\right)-\E_\omega\left(
			\xi_j^2\right)\right|\lesssim r^{q}.
		\end{split}
		\]
	\end{lemma}
	
	Using this lemma and taking into account \eqref{def:choicepq},
	\[
	|\II|\lesssim n^{1-\tau} r^{\log^2n}+n^{2\tau-\frac{1}{2}}.
	\]
	Then, taking $\tau<1/4$, one obtains that $ |\II|=o(1)$ as $n\to \infty$.
\end{proof}

Lemmas \ref{lemma:CLT:RemoveEtas} and \ref{lemma:CLT:CloseToIndep} imply that 
\[
\lim_{n\to +\infty }\E_\omega e^{\frac{\im tS_n}{\bms_n}}=\lim_{n\to +\infty }
\prod_{j=1}^{k}\E_\omega 
e^{\frac{\im t\xi_j}{\bms_n}}.
\]
Therefore, it only remains to prove that 
\[
\lim_{n\to +\infty }
\prod_{j=1}^{k}\E_\omega 
e^{\frac{\im t\xi_j}{\bms_n}} =  e^{-\frac{t^2}{2}}.
\]
To this end, we define the function 
\[
L_n(t)=\log\left[\prod_{j=1}^{k}\E_\omega 
e^{\frac{\im t\xi_j}{\bms_n}}\right]=\sum_{j=1}^k
\log\left[\E_\omega 
e^{\frac{\im t\xi_j}{\bms_n}}\right].
\]
Then, taking into account that $\E_\omega(\xi_j)=0$ and that $k\sim n/p\sim n^{1-\tau}$,
\[
\begin{split}
	L_n(t)=&\sum_{j=1}^k 
	\log\left(1+\frac{\im t}{\bms_n}\E_\omega(\xi_j)-\frac{t^2}{\bms^2_n}
	\E_\omega(\xi_j^2)+\OO\left(\frac{p^3}{n^{3/2}}\right) \right)\\
	&=-\frac{t^2}{\bms^2_n} \sum_{j=1}^k 
	\E_\omega(\xi_j^2)+\OO\left(\frac{p^2}{n^{1/2}}\right).
\end{split}
\]
Since, $p=n^\tau$ and $\tau\in (0,1/4)$, 
\[
\lim_{n\to+\infty} L_n(t)= -\frac{t^2}{2}  
\lim_{n\to+\infty}\left(\frac{\sum_{j=1}^k 
	\E_\omega(\xi_j^2)}{\bms_n^2}\right).
\]
It only remains to analyze the limit on the right hand side. 

\begin{lemma}\label{lemma:LimitVariance}
	The following limit is satisfied,
	\[
	\lim_{n\to+\infty}\left(\frac{\sum_{j=1}^k 
		\E_\omega(\xi_j^2)}{\bms_n^2}\right)=1.
	\]
\end{lemma}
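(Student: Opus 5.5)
The plan is to compare $\sum_{j}\E_\omega(\xi_j^2)$ with $\bms_n^2=\E_\omega(S_n^2)$ by expanding the latter into block contributions. Write $S_n=\wt S_k+\sum_{j=0}^k\eta_j$ as in \eqref{def:xieta}. Then
\[
\bms_n^2=\sum_j\E_\omega(\xi_j^2)+2\sum_{i<j}\E_\omega(\xi_i\xi_j)+2\sum_{i,j}\E_\omega(\xi_i\eta_j)+\E_\omega\Big(\big(\textstyle\sum_j\eta_j\big)^2\Big).
\]
Since $\bms_n^2\geq \bms_-^2 n$, the claim follows once the three last groups are shown to be $o(n)$.

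\textbf{Step 1: the $\eta$-blocks.} Use the decay of correlations for pairs. This is Item 1 of Theorem \ref{thm:Dima:FixedAngle}, combined with the zero-mean assumption and Item 2, which together give $|\E_\omega(X_aX_b)|\lesssim r^{|a-b|}$ and hence $|\E_\omega(X_a)|\lesssim r^{a}$. Summing over $a,b$ within a single block of length $m$ then gives $\E_\omega\big((\sum X_a)^2\big)\lesssim m$. The same correlation bound shows that the cross terms between different $\eta$-blocks are summable. Therefore
\[
\E_\omega\Big(\big(\textstyle\sum_j\eta_j\big)^2\Big)\lesssim kq+p\lesssim n^{1-\tau}\log^2 n+n^\tau=o(n).
\]

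\textbf{Step 2: mixed terms.} For $\E_\omega(\xi_i\eta_j)$ and $\E_\omega(\xi_i\xi_j)$ with $i\neq j$, write each as a double sum over indices $a\in$ block $i$ and $b\in$ block $j$, and bound each summand by $r^{|a-b|}$. Adjacent blocks contribute $O(1)$ per pair, since the double geometric sum is bounded uniformly in the block lengths. Non-adjacent $\xi$-blocks are separated by at least $q\sim\log^2 n$ indices and so contribute $O(p^2r^{q})$, which is superpolynomially small.

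The total of the mixed terms is then $O(k)+O(k^2p^2r^{\log^2n})=o(n)$, because $k\sim n^{1-\tau}$. By Cauchy--Schwarz the $\xi$--$\eta$ cross terms are at most $\big(\sum_j\E\xi_j^2\big)^{1/2}\big(\E(\sum_j\eta_j)^2\big)^{1/2}=O\big(n^{1/2}(n^{1-\tau}\log^2n)^{1/2}\big)=o(n)$, which gives the same conclusion as the direct correlation estimate.

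\textbf{Step 3: conclusion and main obstacle.} Dividing by $\bms_n^2\geq\bms_-^2n$ shows that the ratio tends to $1$. The only delicate point is the uniformity of the correlation bound $|\E_\omega(X_aX_b)|\lesssim r^{|a-b|}$ for the fixed-angle (non-invariant) measure. The cross term is of the form $\E_\omega(X_aX_b)$ minus nothing, because $\E X_a$ is not exactly zero. Item 1 of Theorem \ref{thm:Dima:FixedAngle} handles the correlations after splitting into Fourier modes as done there. The residual means $\E X_a$ are exponentially small by Item 2, so the error they induce is also $O(1)$ per block. With that uniform bound in hand, the rest is bookkeeping of geometric sums.
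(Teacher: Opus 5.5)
Your proposal is correct and follows essentially the same route as the paper: you expand $\bms_n^2=\E_\omega(S_n^2)$ into block contributions and use the pairwise decay of correlations (Item 1 of Theorem \ref{thm:Dima:FixedAngle}) to show that everything except $\sum_j\E_\omega(\xi_j^2)$ is $o(n)$. As in the paper, adjacent $\xi$--$\eta$ pairs contribute $O(1)$ each, separated pairs are superpolynomially small, and the $\eta$-blocks are negligible because of their small total length; the only differences are cosmetic (you bound $\E_\omega(\eta_j^2)$ by $O(q)$ instead of the trivial $O(q^2)$, and you offer Cauchy--Schwarz as an alternative for the cross terms), and both arguments use the same lower bound $\bms_n^2\gtrsim n$ to pass to the ratio.
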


\begin{proof}
	By the  definition of $\bms_n$ in \eqref{def:SumVariance} and recalling that $\E_\omega(S_n)=0$, one has 
	\[
	\begin{split}
		\bms_n^2&=\E_\omega(S_n)\\
		&=\sum_{j=1}^k\left(\E_\omega(\xi_j^2)+\E_\omega(\eta_j^2)\right)+2\sum_{j<i}\left(\E_\omega(\xi_i\xi_j)+\E_\omega(\eta_i\eta_j)\right)+2\sum_{i,j=1}^k\E_\omega(\eta_i\xi_j).
	\end{split} 
	\]
	To prove the lemma, it is enough to show that 
	\begin{equation}\label{def:ExtraTermsVar}
		\sum_{j=1}^k\E_\omega(\eta_j^2)+2\sum_{j<i}\left(\E_\omega(\xi_i\xi_j)+\E_\omega(\eta_i\eta_j)\right)+2\sum_{i,j=1}^k\E_\omega(\eta_i\xi_j)=o(n).
	\end{equation}
	Indeed, the first term can be estimated as 
	\[
	\sum_{j=1}^k  \left|\E_\omega(\eta_j^2) \right|\lesssim n^{1-\tau}\log^4n=o(n).
	\]
	For the second term, we use the decay of correlations estimate in Item 1 of Theorem \ref{thm:Dima:FixedAngle} and the fact that the $X_i's$ in different $\xi_j$'s (or $\eta_j's$) are $\gtrsim\log^2n$ separated. Therefore
	\[
	\left|\E_\omega(\xi_i\xi_j)\right|+\left|\E_\omega(\eta_i\eta_j)\right|\lesssim n^{2\tau}r^{\log^2n}, 
	\]
	wich implies 
	\begin{equation}\label{def:ExtraTermsVar2}
		\sum_{j<i}\left( \left|\E_\omega(\xi_i\xi_j)\right|+ \left|\E_\omega(\eta_i\eta_j)\right|\right)\lesssim n^{2(1-\tau)}n^{2\tau}r^{\log^2n}=o(n).
	\end{equation}
	For the last term in \eqref{def:ExtraTermsVar}, we consider two cases. If $i\neq j-1,1$ one can proceed as for the second term to obtain an estimate analogous to \eqref{def:ExtraTermsVar2}. When $i=j-1,j$ on has to proceed more carefully. We analyze only $i=j$ since $i=j-1$ can be estimated analgously. Note that 
	\[
	\xi_j\eta_j=\sum_{\ell_1=1}^p\sum_{\ell_2=1}^q X_{jp+j+q+\ell_1}X_{(j+1)p+jq+\ell_2}.
	\]
	Then,
	\[
	\begin{split}
		\left|\E_\omega( \xi_j\eta_j)\right|\leq&\,\sum_{\ell_1=1}^p\sum_{\ell_2=1}^q \left|\E_\omega(X_{jp+jq+\ell_1}X_{(j+1)p+jq+\ell_2})\right|\\
		\lesssim&\,\sum_{\ell_1=1}^p\sum_{\ell_2=1}^q r^{p+\ell_2-\ell_1}\lesssim 1, 
	\end{split}
	\]
	which implies 
	\[
	\sum_{k=1}^j\left| \E_\omega( \xi_j\eta_j)\right|\lesssim n^{1-\tau}.
	\]
	This gives the estimate for the third term in \eqref{def:ExtraTermsVar} and therefore completes the proof of the lemma.
\end{proof}

Now it only remains to prove lemma \ref{lemma:DecayConditionned}.
\begin{proof}[Proof of Lemma \ref{lemma:DecayConditionned}]
	We first obtain the first estimate. Note that, by hypothesis, $\E_\omega(\xi_j)=0$.
	Then, note that, by \eqref{def:xieta},
	\[
	\E_\omega(\xi_j|\FF_{j-1})-\E_\omega(\xi_j)=\sum_{m=jp+jq+1}^{(j+1)p+jq}\E_\omega(X_m|\FF_{j-1}).
	\]
	The random variables $X_m$ only have harmonics $\pm 1$. Conditioning by $\FF_{j-1}$ corresponds to fixing the values of the symbols $\{\omega_0,\ldots, \omega_{jp+(j-1)q}\}$. Therefore, the first estimate in Lemma \ref{lemma:DecayConditionned} is a direct consequence of the second statement of Theorem \ref{thm:Dima:FixedAngle}.
	
	Now we prove the second estimate. Note that 
	\begin{equation}\label{def:xideg2}
		\xi_j^2=\sum_{m=jp+jq+1}^{(j+1)p+jq}X_m^2+2\sum_{m,k=jp+jq+1, m<k}^{(j+1)p+jq}X_mX_k.
	\end{equation}
	Each of these terms have harmonics $0$ and $\pm 2$. Let us start by the harmonic $0$. It contains terms of the form 
	\[
	\PP_0= P(\sigma^k\omega)\ol P(\sigma^m\omega)e^{\im \sum_{i=m}^{k-1}\beta(\sigma^i\omega)}.
	\]
	We need to estimate the term
	\[
	\E_\omega\left(\left. \PP_0\right|\FF_{j-1}\right)-\E_\omega\left( \PP_0\right).
	\]
	Now, we write $\E_\omega=\E_{\omega_\leq}\E_{\omega_>}$, where $\E_{\omega_>}$ refers to the expectation with respect to the symbols $\{\omega_k\}_{k\geq jp+(j-1)q+1}$ and $\E_{\omega_\leq}$ refers to the expectation with respect to $\{\omega_0,\ldots, \omega_{jp+(j-1)q}\}$. Then, we can write 
	\[
	\begin{split}
		\E_\omega\left(\left. \PP_0\right|\FF_{j-1}\right)-\E_\omega\left( \PP_0\right)
		&=  \E_{\omega,>}(\PP_0)-\E_{\omega,>}\E_{\omega,\leq}(\PP_0)\\
		&=\E_{\omega,>}\left[\PP_0-\E_{\omega,\leq}(\PP_0)\right].
	\end{split}
	\]
	Then, since $\PP_0$ is H\"older with respect to $\omega$, one has that there exists $r>0$ such that 
	\[
	\left|\PP_0-\E_{\omega,\leq}(\PP_0)\right|\lesssim r^{\log^2n},
	\]
	which implies 
	\[
	\left|\E_\omega(\PP_0|\FF_{j-1})-\E_\omega(\PP_0)\right|\lesssim r^{\log^2n}.
	\]
	Now we consider the harmonic 2 in \eqref{def:xideg2} (the harmonic -2 can be estimated analogously). It contains terms of the form 
	\[
	\PP_2= P(\sigma^k\omega)\ol P(\sigma^m\omega)e^{\im \sum_{i=m}^{k-1}\beta(\sigma^i\omega)}e^{2\im \sum_{i=0}^{m-1}\beta(\sigma^i\omega)}.
	\]
	We consider two cases depending on $|k-m|$. 
	
	Note that we have two parameters: $r$ given by the decay of correlations (see Item 2 in Theorem \ref{thm:Dima:FixedAngle}) and $\vartheta$ coming from the $\vartheta$-H\"older dependence of $P$ and $\beta$ on $\omega$. We choose $\kk>0$ such that 
	\[
	\vartheta^{-\kk}r\leq \wt r<1.
	\]
	Then, we consider first $k-m\leq \kk q$ and then $k-m\geq \kk q$. 
	
	For the first case, we apply Item 2 of Theorem \ref{thm:Dima:FixedAngle} to the functions 
	\[
	\phi(\al,\omega)=Q(\omega)e^{2\im \al}+ \ol{Q}(\omega)e^{-2\im \al},\quad \psi(\al,\omega)=e^{2\im \al}+e^{-2\im \al},
	\]
	where
	\[
	Q(\omega)=P(\sigma^{k-m}\omega)P(\omega)e^{2\im \sum_{i=0}^{k-m-1}\beta(\sigma^i\omega)}.
	\]
	Then,
	\[
	|\E_\omega(\PP_2|\FF_{j-1})|\leq \|Q\|_\vartheta r^q.
	\]
	Since $\|Q\|_\vartheta\leq \vartheta^{-|k-m|}$ and $k-m\leq \kk q$, we have
	\[
	|\E_\omega(\PP_2|\FF_{j-1})|\leq \vartheta^{-\kk q}r^q \leq \wt r^{\log^2n}.
	\]
	For the second case, $k-m\geq \kk q$, we further condition the expectation to 
	\[
	\E_\omega(\PP_2|\FF_{j-1})=\E_\omega\left(\E_\omega(\PP_2|\{\omega_k\}_{k\leq m-1})|\FF_{j-1}\right).
	\]
	Note that,
	\[
	\E_\omega(\PP_2|\{\omega_k\}_{k\leq m-1})= e^{\im \sum_{i=0}^{m-1}\beta(\sigma^i\omega)}\E_\omega\left(\left. P(\sigma^k\omega)\ol P(\sigma^m\omega)e^{2\im \sum_{i=m}^{k-1}\beta(\sigma^i\omega)}\right|\{\omega_k\}_{k\leq m-1}\right).
	\]
	Now, for any fixed values for $\{\omega_k\}_{k\leq m-1}$, one can apply Item 2 of Theorem \ref{thm:Dima:FixedAngle} to obtain 
	\[
	\left|\E_\omega\left(\left. P(\sigma^k\omega)\ol P(\sigma^m\omega)e^{2\im \sum_{i=m}^{k-1}\beta(\sigma^i\omega)}\right|\{\omega_k\}_{k\leq m-1}\right)\right|\leq r^{k-m}.
	\]
	Since, by hypothesis $k-m\geq \kk q$, we obtain 
	\[
	|\E_\omega(\PP_2|\FF_{j-1})|\leq r^{\log^2 n}.
	\]
	Note that one can obtain analogous estimates for $\E_\omega(\PP)$ just writing
	\[
	\E_\omega\left( \E_\omega(\PP_2|\FF_{j-1})\right).
	\]
\end{proof}

\appendix

\section{Wisdom mechanism}\label{wisdom}

The first explanation of formation of the Kirkwood gaps was
proposed by Wisdom \cite{Wisdom82} and
Neishtadt \cite{Neishtadt87} (see also \cite{NeishtadtS04}). Suppose that  the mass ratio between Jupiter and the Sun  and
eccentricity of Jupiter satisfy
\[
\frac{\sqrt \mu}{e_0}\ \ll\  1,
\]
then the dynamics in the mean motion resonances of the restricted
planar 3-body problem can be described by a system with
three time scales: fast, intermediate and slow. According to Wisdom
in the main approximation, the secular evolution of the asteroid motion
near the 3:1 mean motion resonance with Jupiter is described by the canonical equations
\[
\dot \varphi =\{\varphi,H_P\},\qquad
\dot \Phi =\{\Phi,H_P\}, \qquad\dot x =\{y,H_P\}, \qquad\dot y =\{x,H_P\}.
\]
Here $\varphi$ is the critical angle (three times the mean longitude
of Jupiter minus the mean longitude of the asteroid), $\Phi$ is some
function of the asteroid’s semimajor axis $a$, $x$ and $y$ are
proportional to the Laplace vector components
\[
x\approx a^{1/4} \left( \frac{\ecc}{e_0} \right) \cos \hat g,  \qquad
x\approx a^{1/4} \left( \frac{\ecc}{e_0} \right) \sin \hat g,
\]
where $\hat g$ is the argument of the asteroid perihelion.

The Poisson brackets $\{\cdot,\cdot\}$ in the above Hamiltonian
equations are defined in a way such that they split the components into slow and fast,
\[
\{\varphi,\Phi\}=1, \qquad \{x,y\}=\varepsilon,\qquad
\{\varphi,x\}=\{\varphi,y\}=\{\Phi,x\}=\{\Phi,y\}=0.
\]
The averaged Hamiltonian $H_P$, given by Wisdom \cite{Wisdom82},
can be rewritten as
\[
H_P(\Phi,\varphi,x,y)=\frac 12 \alpha \Phi^2+
A(x,y) \cos \varphi+
B(x,y)\sin \varphi+C(x,y),
\]
where $A(x,y), B(x,y), C(x,y)$ are quadratic polynomials in $x,y$ (
see \cite{NeishtadtS04}).

Observe that the Hamiltonian $H_P$ has two fast components
$(\Phi,\varphi)$ and two slow components $(x,y)$. If the fast
components are on a level set $E_f$ not passing through its
equilibrium there is a second averaged system, giving a slow
Hamiltonian $H_{\text{slow}}(x,y,E_f)$ and describing an evolution
of the slow component.

It is, however, possible that the fast components $(\Phi,\varphi)$,
which are a pendulum, approach the critical level set and, as a result, approach 
the saddle equilibrium. The set of values of $(x,y)$ which correspond to this 
phenomenon forms the so-called {\it uncertainty curve $\Gamma$} (see the curve 
in red in Figure \ref{fig:wisdom-slow-3/1}). Each time the slow component crosses 
$\Gamma$, there is a redistribution of slow and fast energy. See Figure \ref{fig:wisdom-3/1}
for the associate evolution of the eccentricity. 

  \begin{figure}[h]
   \begin{center}
   \includegraphics[width=6.25cm]{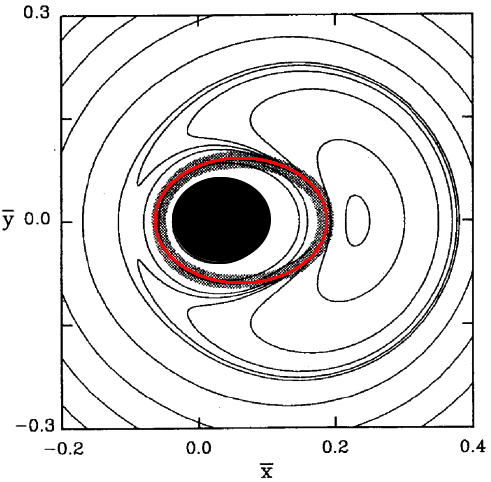}
   \end{center}
   \caption{Slow evolution and uncertainty curve (see e.g. \cite{Morb}).}
   \label{fig:wisdom-slow-3/1}
 \end{figure}

On the plane of the slow component in Figure \ref{fig:wisdom-slow-3/1}, by definition,
the radial component corresponds to the rescaled eccentricity
of the asteroid.  Crossing the ``red'' uncertainty curve can
lead to the slow component  getting outside of the ``central''
island and evolve in the exterior of it. This leads to a drastic
increase in eccentricity and to crossings with the orbit of Mars.

Even assuming that $\sqrt \mu/e_0$ is small, numerical simulations indicate that the resonances 4:1, 5:2 and 7:3  are have mixed behavior, which includes chaotic regions (see \cite{Morb, Moons199533}). In the chaotic regions,  the asteroids in these mean motion resonances can reach
very large eccentricities on a ``short” timescale:
$0.6\cdot 10^6$ and $19\cdot 10^6$ for the 5:2 and 7:3 resonances respectively (see also \cite{Gladman1997197}). However, it is hard to agree that these resonances are dominantly chaotic
as 7:3 has "plenty" of asteroids left (see Figure \ref{fig:distribution}). It is believed that in different gaps there are different
mechanisms of instabilities of the eccentricity (see \cite{Moons1996175}).

\section{Fast drift along the resonance}\label{app:fastdrift}

Theorem \ref{thm:main-thm} provides stochastic behavior in the energy drift at time scales $\sim e_0^{-2}$. Relying on the same analysis, one can construct orbits that drift at much faster pace.

\begin{theorem}\label{thm:fastdrift}
Assume Ansatz \ref{ans:NHIMCircular:bis} and \ref{ansatz:Melnikov:1} hold and consider the interval $[\tJ_-,\tJ_+]$ introduced in Remark \ref{rmk:intervals}.
Then, there exists $e_0^*>0$ and $C>0$ such that for all $e_0\in (0,e_0^*)$,  the Hamiltonian $\HH_\mu$ given by \eqref{eq:RPE} with $\mu=0.95387536\times10^{-3}$ has an orbit $(q(t), p(t))$ such that 
\[
\HH_\mu(q(0), p(0),0)<\tJ_-  \qquad \text{and}\qquad \HH_\mu(q(T), p(T),T)>\tJ_+
\]
for some  
\[\frac{1}{Ce_0}\leq T \leq\frac{C}{e_0}.\]
Moreover, along this orbit the osculating eccentricity satisfies
\[
\ecc(0)<0.676  \qquad \text{and}\qquad \ecc(T)>0.77.
\]
\end{theorem}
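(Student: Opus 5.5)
Rather than using the stochastic analysis, I will build a deterministic drift orbit on the lamination $\LL_{e_0}$ of Theorem \ref{thm:NHILElliptic}, using the first-order term of the inner map. By Lemma \ref{lemma:laminationmap}, the induced map in the coordinates $(\omega,I,s)$ is $\FF_\omega(I,s)=(I+e_0\AAA_\omega(I,s)+\OO(e_0^2),\ s+\beta_\omega(I)+\OO(e_0))$, with $\AAA_\omega=\MM^{I,1}_{\omega_0}+\RRR^{I,1}_\omega$. Here $\MM^{I,1}_{\omega_0}(I,s)=2\Re(B_{\omega_0}(I)e^{\im s})$ is given by Proposition \ref{prop:Melnikov}, and $\|\RRR^{I,1}_\omega\|\lesssim\rr^{\ero_*}$.

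The goal is to choose the symbol sequence $\omega$ adaptively, one symbol at a time, so that each step gives an increment of $I$ of order $+c\,e_0$ with $c>0$ uniform. Summing $\sim(I_+-I_-)/(c e_0)$ steps crosses the whole interval. Since each application of the separatrix map takes time $\sim|\log\rr|$ and $\rr$ is fixed, the total flow time satisfies $T\sim C e_0^{-1}$.

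The key step is a uniform nondegeneracy claim. I want $\delta>0$ such that for every $(I,s)$ there exist a symbol $i\in\{1,2\}$ and a bounded number $N$ of steps for which the following holds. Choosing the next $N$ symbols suitably gives a cumulative first-order gain $\sum_{k<N}\MM^{I,1}_{\omega_k}(I,s_k)\geq\delta$, where $s_k=s_0+\sum_{j<k}\beta^0_{\omega_j}(I)$.

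I would prove this in two cases. If $B_1(I)\neq0$, I alternate symbols and use that $\alpha_1-\alpha_2\neq0$ (Ansatz \ref{ansatz:Melnikov:1}, or \eqref{ansatz:omegadifferent}). Because the rotations $\beta^0_1,\beta^0_2$ differ by a uniformly non-resonant amount, I can steer the phase $s_k$ within boundedly many steps into an arc where $\Re(B_i e^{\im s})>\tfrac12|B_i|$. I then collect positive increments there, while the phase moves by $\OO(e_0)$ per step from the perturbation. The steering cost is bounded, and when the phase leaves the good arc I re-steer. If instead both $B_i(I)$ vanish, I need a separate argument, and I would invoke the nondegeneracy built into Ansatz \ref{ansatz:Melnikov:1}: $\bms_0^2(I)\neq0$ in \eqref{def:DriftVarFirstOrder:1} forces $\MM^{I,1}_{\omega_0}$ to be non-identically zero. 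The error terms $\rr^{\ero_*}$ and $\OO(e_0^2)$ per step are absorbed by taking $\rr$ small first and then $e_0$ small, since $N$ is bounded.

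Finally, I would shadow the constructed finite symbol word by a genuine orbit. Because $\FF$ is defined for every $\omega\in\Sigma$ and the lamination is weakly invariant, any finite word extends to a true orbit of $\SM_{e_0}$, and hence of the flow of $\HH_\mu$. The remaining steps are routine conversions. Starting with $I$ near $I_+$ (Jacobi constant below $\tJ_-$, using $\tJ=-I+\OO(e_0)$) and ending near $I_-$ requires extending slightly past the interval, which costs only an $\OO(e_0)$ margin; this is valid if Ansatz \ref{ans:NHIMCircular:bis} holds on a slightly larger interval. The eccentricity bounds $\ecc(0)<0.676$ and $\ecc(T)>0.77$ then follow from \eqref{def:JtoE} and Remark \ref{rmk:intervals}, up to the $\OO(\mu)$ deterministic corrections bounded in Section \ref{sec:maintheorem2}, provided one overshoots the endpoints slightly.

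I expect the main obstacle to be making the steering quantitative and uniform in $I$. This means showing that the phase can always be steered into a good arc in a bounded number of steps, despite the many $|\log\rr|^{-1}$-close resonances of $\beta^0_i=\nu\bar g+\alpha_i$. It also requires controlling the sign of $\Re(B_ie^{\im s})$ uniformly where $|B_i|$ is small.
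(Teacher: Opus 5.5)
\textbf{There is a genuine gap: the ``steer, then collect'' step fails, and the block claim it is meant to prove does not follow from your Case~1 hypotheses.}

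First, on the lamination the phase does not move by $\OO(e_0)$ per step. By Lemma \ref{lemma:laminationmap} it advances by $\beta_\omega(I)=\nu(I)\bar g+\alpha_{\omega_0}(I)+\eta_\omega(I)$, which does not depend on $e_0$ and is in general not small modulo $2\pi$. So once you land in the arc where $\Re(B_ie^{\im s})>\frac12|B_i|$, you leave it at the next step. Nothing in your argument controls the sign of what is accumulated while you re-steer.

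More seriously, suppose $B_i(I)=Z\,(e^{\im\beta^0_i(I)}-1)$ for $i=1,2$ with a common $Z\neq0$. This is compatible with $B_1\neq0$, $\alpha_1\neq\alpha_2$ and $|\alpha_i|\le 100\mu$. Then $\MM^{I,1}_i(I,s)=\phi(s+\beta^0_i)-\phi(s)$ with $\phi(s)=2\Re(Ze^{\im s})$. Hence for \emph{every} word, $\sum_{k<N}\MM^{I,1}_{\omega_k}(I,s_k)=\phi(s_N)-\phi(s_0)$, which is bounded by $4|Z|$. With the symbol as your only control, the first-order increments telescope and there is no first-order drift at all.

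Read $\MM^{I,1}_{\omega_0}(J)$ in \eqref{def:DriftVarFirstOrder:1} as the coefficient $B_{\omega_0}(J)$. A direct computation then shows that $\bms_0^2(I)=0$ exactly when $B_i/(e^{\im\beta^0_i}-1)$ does not depend on $i$, i.e.\ exactly in this degenerate case. So the third item of Ansatz \ref{ansatz:Melnikov:1} is not there merely to exclude $B_1=B_2=0$ (your Case~2 is vacuous). In a lamination-only proof it is the condition that must actually be used.

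One way to use it: set $Z_i=B_i/(e^{\im\beta^0_i}-1)$, $\phi_i(s)=2\Re(Z_ie^{\im s})$ and $h(s)=2\Re\big((Z_2-Z_1)(e^{\im\beta^0_2}-1)e^{\im s}\big)$. The gain is then $\phi_1(s_N)-\phi_1(s_0)+\sum_{k:\,\omega_k=2}h(s_k)$, and you play symbol~$2$ only when $h(s_k)>c$. Even then you must get uniformity near the values of $I$ where $\beta^0_i(I)\in2\pi\ZZ$ and $Z_i$ blows up. These values are $|\log\rr|^{-1}$-dense, and $I$ drifts through them.

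\textbf{The route the paper points to.} The paper refers to Piftankin's construction applied to the separatrix map of Theorem \ref{thm:FormulasSMii}. This avoids the problem because it does not work on the fixed-return-time lamination.
\begin{itemize}
\item On $\UU^{ij}_\rr$ the coordinate $p$ ranges over $[\rr^\kk,\rr]$, so the return time $N^{ij}$ takes of order $|\log\rr|$ different values.
\item One extra iterate of $\PP_{e_0}$ shifts $s$ by $2\pi\nu(I)$ up to $\OO(\rr)$, i.e.\ by $2\pi(\nu(I)-1)$ modulo $2\pi$. This angle is small but nonzero; that is what the lower bound $|T_\tJ-2\pi|>9\mu$ in Ansatz \ref{ans:NHIMCircular:bis} provides.
\item For $\rr$ small you can therefore place the phase of the next homoclinic passage anywhere on the circle up to $\OO(\mu)$. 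In particular you can put it where $\MM^{I,1}_j(I,s)\ge c>0$ for a channel with $B_j(I)\neq0$.
\end{itemize}
Every excursion then gains at least $c\,e_0$. The telescoping obstruction disappears because the rotation itself is now a control, and the only nondegeneracy needed is $B_j(I)\neq0$ for some $j$. A genuine orbit is then obtained by shadowing with the hyperbolicity in $(p,q)$. The time count $T\sim e_0^{-1}|\log\rr|$ is as in your sketch.
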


Note that the drift in energy/eccentricty was already obtained in \cite{FejozGKR15}. However in that paper no time estimates were provided. These estimates can be deduced from the analysis of the separatrix map done in Theorem  \ref{thm:FormulasSMii} (see also \cite{paperNHIL23}) following the ideas developed by Piftankin in \cite{Piftankin06}. Indeed, both our setting and that of Piftankin, who deals with the so-called Mather Problem, fall into what is usually called the \emph{a priori chaotic} setting and present exactly the same features. In \cite{Piftankin06} Piftankin, relying on the formulas he obtains for the separatrix map (analogous to those in Theorem \ref{thm:FormulasSMii}), constructs fast diffusive orbits. The same proof can be applied to prove Theorem \ref{thm:fastdrift}. Since the goal of the present paper is to obtain diffusive behavior rather than fast drift, we do not reproduce the proof here.

\section{Large deviation estimates on the exit time (Proposition \ref{lemma:exittime:Igamma})} \label{app:exittime}
We first prove the first statement.
Note that $\Prob\{n_\ga<e_0^{-(1-\ga)}/C\}=0$ since
	$|J_{k+1}-J_k|\leq Ce_0$ for some $C>0$ independent of $e_0$. Therefore,
	one needs at least
	$\lceil e_0^{-(1-\ga)}/C\rceil$ iterations to exit the strip. Thus, we only need to analyze
	$\Prob\{e_0^{-(1-\ga)}/C\leq n_\ga<e_0^{-2(1-\ga)+\delta}\}$, which is
	equivalent to
	\[
	\Prob\{\exists\, n\in [e_0^{-(1-\ga)}/C,e_0^{-2(1-\ga)+\delta}):
	|J_n-J_0|\geq e_0^\ga\}.
	\]
	We have that, for $0<n\leq n_\gamma$, 
	\[J_{n}=J_{0}+e_0
	\sum_{k=0}^{n-1}
	A_{\sigma^{k}\omega}(\theta_{k},J_{k})+\mathcal{O}( n e_0^2).\]
	Taking also into account that
	$\theta_{k}=\theta_{0}+\sum_{j=0}^{k-1}\beta_{\sigma^{j}\omega}(J_{0})+\mathcal{O}
	( n^2e_0)$ for $0\leq k\leq  n$, we can write
		\begin{equation}\label{eqHni}
		J_{n}=J_{0}+e_0
		\sum_{k=0}^{ n-1}
		A_{\sigma^{k}\omega}\left(\theta_{0}+\sum_{j=0}^{k-1}\beta_{\sigma^{j}\omega}(J_{0}),J_{0}
		\right)+\mathcal{O}( n^3e_0^2).
	\end{equation}
	Define
	\begin{equation}\label{def:xi}
		\xi_n=\frac{1}{\sqrt{n}}\sum_{k=0}^{n-1}
		A_{\sigma^{k}\omega}\left(\theta_{0}+\sum_{j=0}^{k-1}\beta_{\sigma^{j}\omega}(J_{0}),J_{0}
		\right).
	\end{equation}
	Then, for $e_0>0$ small enough and  $n\in [e_0^{-(1-\ga)}/C,e_0^{-2(1-\ga)+\delta})$,
	\[
	\begin{split}
		\Prob\left\{ |J_{n}-J_0|\geq e_0^\ga\right\}&=
		\Prob\left\{\left|e_0\sum_{k=0}^{n-1}
		A_{\sigma^k \omega}\left(\theta_{n_i}+\sum_{j=0}^{k-1}\beta_{\sigma^{j}\omega}(J_{n_i}),J_{n_i}
		\right)+\OO\left(e_0^2n^3\right)\right|\geq e_0^\ga\right\} \\
		&\leq \Prob\left\{\left|\xi_n+\OO(e_0
		n^{5/2})\right|\geq e_0^{\ga-1}n^{-1/2}\right\}.
	\end{split}
	\]
Now, using
	that $\de\in (0, 1-\ga)$ and $n\in
	[e_0^{-(1-\ga)}/C,e_0^{-2(1-\ga)+\delta})$
	we have that
	\[
	\Prob\left\{\left|\xi_n+\OO\left(e_0
	n^{5/2}\right)\right|\geq e_0^{\ga-1}n^{-1/2}\right\}\leq
	\Prob\left\{|\xi_n|\geq\frac{e_0^{-\de/2}}{2}\right\}.
	\]
	We now  analyze $\xi_{n}$ as $n\to\infty$ (equivalently $e_0\to 0$) by using the
	Central Limit Theorem given by Theorems \ref{thm:Dima} and \ref{thm:Dima:FixedAngle}, which can be applied by   \eqref{ansatz:omegadifferent} and Lemma \ref{lemma:WeakMixing}.
	Then, $\xi_n$ converges weakly to a normal distribution $\NNN(0,\bms^2(J))$ 
	where $\bms^2(J)$ is the variance introduced in \eqref{def:variance}, which satisfies 
	$\bms^2(J)\neq 0$ for  $J\in [I_--\de, I_++\de]$ by Ansatz \ref{ansatz:Melnikov:1} (see also Lemma \ref{lemma:DriftVar}).
	
Thus,
	\[
	\Prob\left\{ |J_{n}-J_0|\geq e_0^\ga\right\} \leq e^{-\frac{C}{e_0^\de}}
	\]
	for some $C>0$ independent of $e_0$. Then, since $\sharp
	[e_0^{-(1-\ga)}/C,e_0^{-2(1-\ga)+\delta})\sim e_0^{-2(1-\ga)+\delta}$,
	\[
	\Prob\{\exists\, n\in [e_0^{-(1-\ga)}/2,e_0^{-2(1-\ga)+\delta}):
	|J_n-J_0|\geq e_0^\ga\}\leq e^{-\frac{C}{e_0^\de}},
	\]
	taking a smaller $C>0$.

Now we prove the second statement of the proposition.  Let $\wt n_\ga=[e_0^{-2(1-\ga)}],$
$n_\delta=[e_0^{-\delta}]$, and
$n_i=i\wt n_\ga$. Then,
\begin{equation}\label{probN-D3}
		\Prob\left\{n_\ga>e_0^{-2(1-\ga)-\delta}\right\}\leq\,\Prob\left\{|J_{n_{i+1}}
		-J_ { n_i } |\leq
		e_0^\ga\,\textrm{ for all }i=0,\dots, n_\delta-1\right\}
\end{equation}
Proceeding as for the first statement of the proposition, we have that 
	\begin{equation}\label{eqHni:2}
	J_{n_{i+1}}=J_{n_i}+e_0 M_{i}
	+\mathcal{O}(\wt n_\ga^3e_0^2).
\end{equation}
where
\[
M_{i}=\sum_{k=0}^{\wt n_\ga-1}
A_{\sigma^{n_i+k}\omega}\left(\theta_{n_i}+\sum_{j=0}^{k-1}\beta_{\sigma^{n_i+j}\omega}(J_{n_i}),J_{n_i}
\right).
\]
Therefore we have that 
\begin{equation}\label{def:prob:exittime:ind}
	\Prob\left\{n_\ga>e_0^{-2(1-\ga)-\delta}\right\}\leq\,\Prob\left\{\left|M_{i}\right|\leq
	2e_0^{\ga-1}\,	\textrm{ for all }i=0,\dots, n_\delta-1\right\}.
\end{equation}
Note that the random variables $M_{i}$, which depend on $\omega$, are not independent. We use conditionned probabilities to estimate the right hand side above.
The first observation is that, proceeding as in Section~\ref{sec:fixedtheta} (see Lemma~\ref{lemma:FromTwoTo1Sided:v2}), we can assume that $M_{i}$ only depends on $\{\omega_k\}_{k\leq n_i+\wt n_\ga-1}$.

We estimate the probability in the right hand side of \eqref{def:prob:exittime:ind} by induction. Thus, for $j=0,\ldots, n_\de-1$, we define
\[
P_j=\Prob\left\{\left|M_{i}\right|\leq
e_0^{\ga-1}\,	\textrm{ for all }i=0,\dots, j\right\}
\]
and we prove that there exists a constant $C\in (0,1)$ such that, for any $j=0,\ldots, n_\de-1$ and $e_0$ small enough,
\begin{equation}\label{def:exittimeinduction}
P_j\leq C^{j+1}.
\end{equation}
To this end, we define the $\frac{1}{2}$-Bernoulli measure $b_k$ for the symbol $\omega_k\in \{0,1\}$ and the Bernoulli product measure
\[
\nu_i=\prod_{k\leq n_i+\wt n_\ga-1} b_k
\]
defined on the space of sequences 
\[
X_i=\left\{\omega_i^\leq=\{\omega_k\}_{k\leq n_i+\wt n_\ga-1}, \omega_k\in \{0,1\}\right\}.
\]
Note that 
\[
\nu_{i+1}=b_{n_{i+1}+\wt n_\ga-1}\times\ldots b_{n_i+\wt n_\ga}\times \nu_i.
\]

We first prove the inductive statement for $j=0$. Note that 
\[
P_0=\Prob\left\{\left|M_{0}\right|\leq
e_0^{\ga-1}\right\}, 
\]
which, relying on the random variable $\xi_n$ introduced in \eqref{def:xi}, satisfies
\[
P_0=\Prob\left\{\left|\xi_{\wt n_\ga}\right|\leq 3\right\}.
\]
Then, proceeding as in the proof of the first statement of Proposition  \ref{lemma:exittime:Igamma} and using the Central Limit Theorems provided in Theorems \ref{thm:Dima} and \ref{thm:Dima:FixedAngle}, there exists a constant $C\in (0,1)$ such that, for $e_0>0$ small enough,
\[
P_0\leq C.
\]
Now we assume that we have proven \eqref{def:exittimeinduction} for some  $j=0,\ldots, n_\gamma-2$ and we prove it for $j+1$.

Then, by conditioning the probability, 
\[
P_{j+1}=\int_{X_i} \Prob\left\{\left.\left|M_{i}\right|\leq
e_0^{\ga-1}\,	\textrm{ for all }i=0,\dots, j+1\right|\omega_i^\leq\right\} d\nu_j.
\]
Now, since $M_{i}$ for $i=0,\dots, j$ are independent of $\omega_k\in [n_i+\wt n_\ga, n_{i+1}+\wt n_\ga-1]$, this probability can be written as 
\[
\begin{split}
P_{j+1}=\int_{X_i}&\Bigg[ \Prob\left\{\left.\left|M_{j+1}\right|\leq
e_0^{\ga-1}\,	\textrm{ for all }i=0,\dots, j+1\right|\omega_i^\leq\right\}\\ 
&\times \Prob\left\{\left.\left|M_{i}\right|\leq
e_0^{\ga-1}\,	\textrm{ for all }i=0,\dots, j\right|\omega_i^\leq\right\}  \Bigg] d\nu_j.
\end{split}
\]
Now, proceeding as in the case $j=0$, 
\[
\Prob\left\{\left.\left|M_{j+1}\right|\leq
e_0^{\ga-1}\,	\textrm{ for all }i=0,\dots, j+1|\right|\omega_i^\leq\right\}\leq C
\]
and therefore
\[
P_{j+1}=C P_j,
\]
which completes the proof of the inductive statement \eqref{def:exittimeinduction}. 

Now, since $n_\delta=[e_0^{-\delta}]$, \eqref{def:exittimeinduction} implies the second statement of Proposition \ref{lemma:exittime:Igamma}.

\bibliography{references}
\bibliographystyle{alpha}
\end{document}